\newtheorem{pro}{Proposition}[section]
\newtheorem{teo}{Theorem}[section]
\newtheorem{thmx}{Theorem}
\newtheorem{cor}{Corollary}[section]
\newtheorem{lem}{Lemma}[section]
\theoremstyle{definition}
\newtheorem{defi}{Definition}[section]
\theoremstyle{remark}
\newtheorem{rem}{Remark}[section]
\begin{document}

\title[Fixed points of nilpotent actions]
{Fixed points of nilpotent actions on surfaces of negative Euler characteristic}

\author{Javier Rib\'{o}n}
\address{Instituto de Matem\'{a}tica e Estat\'\i stica \\
Universidade Federal Fluminense\\
Campus do Gragoat\'a\\
Rua Marcos Valdemar de Freitas Reis s/n, 24210\,-\,201 Niter\'{o}i, Rio de Janeiro - Brasil }

\email{jribon@id.uff.br}
 
\thanks{MSC-class. Primary: 37C85, 37E30; Secondary: 37C25, 55M20, 57S25}

\thanks{Keywords: nilpotent group, global fixed point, derived group, homeomorphism, 
diffeomorphism, finite orbit}
\maketitle

\bibliographystyle{plain}
\section*{Abstract}
 We prove that a locally nilpotent group $G$  of $C^{1}$ diffeomorphisms of a compact surface $S$  of 
 non-vanishing 
Euler characteristic has a finite orbit ${\mathcal O}$
whose cardinal is bounded by above by a function of the 
characteristic of Euler of $S$.

We focus on the case of negative Euler characteristic $\chi (S)$. Then we can choose 
${\mathcal O}$ so that it consists of  global contractible fixed points of the
subgroup $G_0$ of $G$ consisting of isotopic to the identity elements.
In particular $G$ has a global contractible fixed point if it consists of isotopic to the identity
elements. 
\section{Introduction}
Let $S$ be a compact surface of non-vanishing Euler characteristic $\chi (S)$.
A homeomorphism $f$  of $S$  
always has a finite orbit. Moreover, if $\chi (S) <0$ and $f$ is isotopic to the identity map, 
$f$ has a contractible fixed point, i.e. a point in $\pi (\mathrm{Fix}(\tilde{f}))$ where
$\pi : \tilde{S} \to S$ is the universal cover of $S$ and 
$\tilde{f}$ is the unique lift of $f$ to $\tilde{S}$  that commutes with 
all deck transformations.
Such properties can be deduced from the behavior of the Lefschetz number of the iterates of $f$.

 It is natural to try to extend the above results 
for cyclic groups of homeomorphisms 
to more general classes of groups even if we can need to replace homeomorphisms with 
flows of vector fields or $C^{1}$ diffeomorphisms.

Let us consider continuous dynamical systems. Lima shows that any continuous action of 
${\mathbb R}^{n}$ on a compact surface $S$ with $\chi (S) \neq  0$ has a global fixed point
\cite{Lima-com}. Such a result was generalized by Plante for actions of 
connected nilpotent Lie groups  $G$ \cite{Plante-fix}. The Lie algebra ${\mathfrak g}$ 
of the group $G$ is a nilpotent
Lie algebra of vector fields on $S$ and to find a global fixed point of $G$ is equivalent to obtain 
a common singular point for the vector fields in ${\mathfrak g}$.

It makes sense to replace vector fields with diffeomorphisms $C^{1}$-close to the identity 
since we can associate to them a sort of a pseudo-flow. 
Following this idea, Bonatti proved that an abelian group of homeomorphisms of $S$ whose
generators are sufficiently $C^{1}$-close to the identity has a global fixed point
\cite{Bonatti-s2, Bonatti-com}.
The analogous theorem for nilpotent groups and $S = {\mathbb S}^{2}$ whas showed by 
Druck, Fang and Firmo \cite{D-F-F}.

It is natural to approach the problem of existence of global fixed points for a subgroup 
$G$ of homeomorphisms of $S$ by considering the subgroups induced by $G$ in the mapping class 
group of certain $G$-invariant subsurfaces of $S$.
The Thurston decomposition is a valuable tool since the fixed point set of homeomorphisms in
Thurston normal form is minimal in some sense (cf. \cite{Jiang-Guo}).
This point of view was developed by Franks, Handel and Parwani to show the existence of 
a finite orbit for any abelian subgroup $G$ consisting of $C^{1}$-diffeomorphisms 
\cite{FHPs, FHP-g}. Moreover, for the case $S = {\mathbb S}^{2}$ they prove that it is possible to
find a finite orbit with at most $2$ elements for groups of orientation-preserving 
$C^{1}$-diffeomorphisms \cite{FHPs} 
whereas for the case $\chi (S) <0$ they show the
existence of a global fixed point if $G$ consists of isotopic 
to the identity diffeomorphisms \cite{FHP-g}.
In order to make the approach work, it is necessary to extend the Thurston classification to 
some infinite type surfaces and they obtain such an improvement for the case of 
abelian groups of $C^{1}$-diffeomorphisms \cite{FHPs, FHP-g}.

Frequently, the problem of existence of global fixed points is lifted to the universal cover of some 
$G$-invariant subsurface. Then  we can apply several results in the literature to find global fixed 
points for subgroups of $C^{1}$-diffeomorphisms of ${\mathbb R}^{2}$:
\cite{saponga-loc, FRV:arxiv} for the case $C^{1}$-close to identity, 
\cite{FHPs, Beg-Cal-Fir-Mie} for the abelian case and 
\cite{JR:arxivsp, Firmo-LeCalvez-Ribon:fixed} for the nilpotent case.

Since  the existence of global fixed points for nilpotent subgroups has been studied for 
the case of continuous actions \cite{Plante-fix} and groups generated by $C^{1}$-diffeomorphisms
close to identity of the sphere  \cite{D-F-F}, it is natural to consider the existence of global fixed
points and finite orbits for nilpotent subgroups of $C^{1}$-diffeomorphisms.
The existence of a finite orbit with at most $2$ elements for a nilpotent subgroup of 
orientation-preserving $C^{1}$-diffeomorphisms of the sphere was proved in \cite{JR:arxivsp},
generalizing the main result in \cite{FHPs}  to the nilpotent setting. 
The case of the torus was treated by Firmo and
the author. There exists a finite orbit if the group contains 
a $C^{1}$-diffeomorphism with non-vanishing
Lefschetz number \cite{Firmo-Ribon:finite}.  We show that 
nilpotent subgroups of irrotational $C^{1}$-diffeomorphisms of the torus  have 
global fixed points \cite{Firmo-Ribon:global}.
This paper completes the program of studying global fixed points of nilpotent subgroups of 
$C^{1}$-diffeomorphisms of compact surfaces by considering the case of negative Euler 
characteristic. 
\begin{thmx}
\label{teo:mainb}
Let $G$ be a locally nilpotent subgroup of $\mathrm{Diff}^{1}(S)$ where $S$ is a compact
surface with $\chi (S) \neq 0$. Then there exists a finite orbit ${\mathcal O}$
whose cardinal is bounded by above by a function of $\chi (S)$.
Moreover, if $\chi (S) <0$, we can choose ${\mathcal O}$ to be contained in 
${\mathrm Fix}_{c}(G_{0})$ where $G_{0}$ is the subgroup of $G$ of diffeomorphisms isotopic to
the identity.
\end{thmx}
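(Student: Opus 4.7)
Since local nilpotency means every finitely generated subgroup of $G$ is nilpotent, I would first reduce to the case where $G$ is finitely generated, recovering the general statement by a compactness argument on the intersection of finite $H$-orbits of uniformly bounded cardinal as $H$ ranges over finitely generated subgroups. The case $\chi(S)>0$ (essentially $S=\mathbb{S}^{2}$, with $\mathbb{RP}^{2}$ handled via the orientation double cover) is already in \cite{JR:arxivsp}, which produces a finite orbit of at most two points. I would therefore concentrate on $\chi(S)<0$, and in fact on constructing a contractible fixed point for $G_{0}$; the full statement for $G$ then follows because $\mathrm{Fix}_{c}(G_{0})$ is $G$-invariant and $G/G_{0}$ permutes a finite list of canonical objects attached to the Thurston decomposition.

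The core step is a nilpotent analogue of the Franks--Handel--Parwani strategy for abelian groups \cite{FHPs,FHP-g}. The plan is to build a $G$-invariant reduction system, a possibly infinite but locally finite multicurve of essential simple closed curves, such that each complementary piece carries a Thurston canonical form compatible with $G_{0}$. On pseudo-Anosov pieces no element of $G_{0}$ can act with nontrivial mapping class, because pseudo-Anosov classes are never isotopic to the identity; on annular pieces $G_{0}$ acts up to isotopy by powers of a Dehn twist, which is harmless for the purpose of producing fixed points; on the remaining pieces the mapping class image of $G_{0}$ is trivial. Realizing this requires extending the Thurston classification to some infinite-type surfaces as in \cite{FHPs,FHP-g}, but now tracking invariance under the derived series of $G$ rather than mere abelian invariance.

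With the decomposition in hand I would fix a non-annular piece $\Sigma$ and lift the action of $G_{0}$ on $\Sigma$ to its universal cover $\widetilde{\Sigma}$, a topological disk, using the canonical lifts that commute with all deck transformations. Then the planar nilpotent fixed-point theorems \cite{JR:arxivsp,Firmo-LeCalvez-Ribon:fixed} apply to the lifted action and produce a common fixed point, which projects to a contractible fixed point of $G_{0}$ in $\Sigma$. Finally, $G/G_{0}$ acts on the finite set of non-annular pieces, and the resulting action is a nilpotent action on a finite set; a standard orbit argument provides a finite $G$-orbit inside $\mathrm{Fix}_{c}(G_{0})$ whose cardinality is bounded by a complexity estimate on the reduction system in terms of $|\chi(S)|$.

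The main obstacle will be the Thurston-decomposition step itself: producing a reduction system that is invariant under the whole locally nilpotent group $G$, with a canonical form compatible with the derived series, and certifying that the induced action of $G_{0}$ on each piece is of a form to which the planar nilpotent fixed-point theorems apply. A secondary difficulty is verifying the hypotheses of those planar theorems on the non-compact covers of the pieces, which demands a careful analysis of the boundary behavior forced by the reducing curves and of the dynamics coming from Dehn twists along them. I expect the argument to close by induction on a suitable complexity (for instance genus plus number of ends of the pieces), the base cases being dealt with by the known results on $\mathbb{R}^{2}$ and $\mathbb{S}^{2}$.
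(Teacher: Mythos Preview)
Your reductions (to finitely generated $G$ via compactness, to orientable $S$, and the sphere case via \cite{JR:arxivsp}) match the paper. The genuine gap is the passage from $\mathrm{Fix}_c(G_0)\neq\emptyset$ to a finite $G$-orbit contained in it. You assert that ``$G/G_0$ permutes a finite list of canonical objects attached to the Thurston decomposition'' and that a ``standard orbit argument'' then yields a finite $G$-orbit. But $G/G_0$ embeds in $\mathrm{MCG}(S)$ and is typically \emph{infinite}: already $G=\langle\tau\rangle$ for a single Dehn twist $\tau$ has $G_0$ trivial and $G/G_0\simeq\mathbb Z$. Passing to the finite-index subgroup that stabilises every reducing curve and piece still leaves infinite-order mapping classes (Dehn twists, pseudo-Anosov iterates) acting on $\mathrm{Fix}_c(G_0)$, and nothing in your plan bounds the size of a $G$-orbit there. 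The set $\mathrm{Fix}_c(G_0)$ being $G$-invariant is not enough.

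The paper does not first produce $\mathrm{Fix}_c(G_0)$ and then upgrade; it works directly with $G$. After replacing $G$ by a subgroup of index bounded in $\chi(S)$ so that on each piece $M$ the induced class lies in $\langle\beta_M\rangle$ with $\beta_M$ pseudo-Anosov or trivial, it splits into two cases. If some $\beta_M$ is pseudo-Anosov, a Lefschetz/Euler--Poincar\'e count produces an interior fixed point of $g_M^{m}$ with $m\le 2-\chi(M)$; using it to define lifts of the subgroup $G^0=\{f:[f]_M\in\langle\beta_M^{m}\rangle\}$ to $\tilde S$, one gets $\mathrm{Fix}(\widetilde{g^m})$ compact and nonempty, so Corollary~\ref{cor:plane} yields a global fixed point of $\tilde G^0$ and hence a $G$-orbit of cardinal at most $m$ inside $\mathrm{Fix}_c(G_0)$. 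If every $\beta_M$ is trivial, $G$ is a Dehn-twist subgroup; one fixes a piece $N$, passes to the cover $S^{\dag}$ with $\pi_1(S^{\dag})\simeq\pi_1(N)$, compactifies to $\overline S^{\dag}$, checks via the boundary rotation numbers that $\mathrm{Fix}_c(G^{\dag})\cap\partial\overline S^{\dag}=\emptyset$, and applies Theorem~\ref{teo:baux} to the full lifted group $G^{\dag}$ (not just $G_0^{\dag}$) on $\overline S^{\dag}$. This gives $\mathrm{Fix}(G)\cap\mathrm{Fix}_c(G_0)\neq\emptyset$. The essential point you are missing is that the non-identity-isotopic generators must already be incorporated before one lifts and invokes the planar fixed-point results; they cannot be recovered afterwards.
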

\begin{thmx}
\label{teo:maina}
Let $G$ be a nilpotent subgroup of $\mathrm{Diff}_{0}^{1}(S)$ where $S$ is a compact 
surface of negative Euler characteristic. Then $\mathrm{Fix}_{c}(G) \neq \emptyset$.
\end{thmx}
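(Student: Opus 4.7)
The plan is to derive this theorem as a direct specialization of Theorem~\ref{teo:mainb}. First I would verify that the hypotheses of that theorem apply to $G$. A nilpotent group is automatically locally nilpotent, since the lower central series restricts to any subgroup and so every subgroup (in particular every finitely generated subgroup) of a nilpotent group is nilpotent. Moreover $\chi(S)<0$ implies $\chi(S)\neq 0$, and certainly $\mathrm{Diff}_0^1(S)\subset\mathrm{Diff}^1(S)$, so $G$ is a locally nilpotent subgroup of $\mathrm{Diff}^1(S)$ acting on a compact surface of non-vanishing Euler characteristic.

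The next observation is that the subgroup $G_0$ of isotopic-to-the-identity elements appearing in the statement of Theorem~\ref{teo:mainb} is, under our present hypothesis $G\subset\mathrm{Diff}_0^1(S)$, all of $G$. Consequently $\mathrm{Fix}_c(G_0)=\mathrm{Fix}_c(G)$.

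Applying the strengthened conclusion of Theorem~\ref{teo:mainb} available when $\chi(S)<0$, we obtain a finite orbit $\mathcal{O}$ contained in $\mathrm{Fix}_c(G_0)$; by the previous observation this is contained in $\mathrm{Fix}_c(G)$. A fortiori $\mathrm{Fix}_c(G)\neq\emptyset$, which is the desired conclusion.

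The substantive work and the main obstacle therefore lie entirely in the proof of Theorem~\ref{teo:mainb}, whose refined form for surfaces of negative Euler characteristic requires showing that the finite orbit produced by the general statement can be placed inside the common contractible fixed point set of the isotopic-to-the-identity subgroup---a sharpening that combines Thurston-type reduction arguments for nilpotent actions with lifting results for global fixed points in the universal cover. The present corollary is essentially a bookkeeping step that packages this refinement in its cleanest form under the stronger hypothesis $G\subset\mathrm{Diff}_0^1(S)$.
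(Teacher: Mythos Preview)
Your deduction of Theorem~\ref{teo:maina} from Theorem~\ref{teo:mainb} is formally valid on its face: a nilpotent group is locally nilpotent, $G_0=G$ under the hypothesis $G\subset\mathrm{Diff}_0^1(S)$, and the finite orbit $\mathcal{O}\subset\mathrm{Fix}_c(G_0)=\mathrm{Fix}_c(G)$ gives $\mathrm{Fix}_c(G)\neq\emptyset$. However, within the logical architecture of the paper this argument is circular. The paper establishes Theorem~\ref{teo:maina} first (section~\ref{sec:proofa}), as a consequence of Theorems~\ref{teo:deriso} and~\ref{teo:baux}, and only afterwards proves Theorem~\ref{teo:mainb}; and the proof of Theorem~\ref{teo:mainb} explicitly \emph{invokes} Theorem~\ref{teo:maina} in the Dehn-twist case (subsection~\ref{subsec:dehn}, case $k=0$: ``If $k=0$ the group $G$ is contained in $\mathrm{Diff}_0^1(S)$. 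The result is a consequence of Theorem~\ref{teo:maina}.''). So treating Theorem~\ref{teo:mainb} as a black box from which to read off Theorem~\ref{teo:maina} assumes what you are trying to prove.

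The paper's actual route to Theorem~\ref{teo:maina} is through the auxiliary Theorem~\ref{teo:baux}, which handles the finitely generated orientable case under the hypothesis that each ${\mathcal C}^{(k)}(G)$ is isotopically trivial rel $\mathrm{Fix}({\mathcal C}^{(k)}(G))\cup\partial S$; that hypothesis is supplied by Theorem~\ref{teo:deriso}. The passage to arbitrary (not necessarily finitely generated) $G$ is by the finite intersection property, and the non-orientable case is handled via the orientable double cover (subsection~\ref{subsec:end}). The substantive work you correctly flag is therefore carried by Theorem~\ref{teo:baux}, not by Theorem~\ref{teo:mainb}.
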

Let us remark that  $\mathrm{Diff}^{1}(S)$ and $\mathrm{Diff}_{0}^{1}(S)$ stand for the 
group of $C^{1}$-diffeomorphisms of $S$ and its subgroup of isotopic to identity elements
respectively. 
Moreover, $\mathrm{Fix}_{c} (G)$ denotes the set of contractible fixed points of $G$ 
(see Definition \ref{def:identity}).

These results are generalizations of theorems of Frank, Handel and Parwani for the abelian case
\cite{FHP-g}.  On the other hand, the existence of the uniform upper bound for the cardinal of the finite orbit 
in Theorem \ref{teo:mainb} is  new. Such a result allows to extend the existence of a finite orbit to
the case of locally nilpotent groups. 
A function on the Euler characteristic providing such an upper bound
can be derived from the proof of 
Theorem \ref{teo:mainb} but is not  sharp.
The outlook of the proofs is discussed in section \ref{sec:outlook}.
  
There are other results of independent interest on the paper. For instance, 
under natural hypothesis we show that 
the groups in the descending central series of a nilpotent subgroup of 
$\mathrm{Diff}^{1}(S)$ 
are isotopically trivial relative to their sets of global fixed points (Theorem \ref{teo:deriso}).
Moreover, we exhibit several results of localization of global fixed points of nilpotent subgroups of
$C^{1}$-diffeomorphisms for the cases of nonnegative Euler characteristic 
(Propositions \ref{pro:clp2},  \ref{pro:clp3} and \ref{pro:ann}).

\section{Outlook of the paper}
\label{sec:outlook}
First, we focus on the proof of Theorem \ref{teo:maina}.
We can suppose that $S$ is orientable and  $G$ is finitely generated without lack of generality.
It suffices to consider finitely generated groups. 
The theorem holds for cyclic groups since the Lefschetz number 
$L(\mathrm{Fix}_{c}(\phi),\phi)$ is equal to $\chi (S)$
and hence non-vanishing for any $\phi \in \mathrm{Homeo}_{0}(S)$. 
Theorem  \ref{teo:maina}
is also satisfied for abelian groups by a theorem of Franks, Handel and Parwani \cite{FHP-g}.

The group $G/G'$ is  finitely generated and abelian.
Section \ref{sec:irr} is devoted to explain that  the derived group $G'$ is isotopically trivial. 
More precisely, any $f \in G'$ satisfies that $f$ is isotopic to the identity map relative to 
$\mathrm{Fix} (G')$ (Theorem \ref{teo:deriso}).
As a consequence, many of the arguments of the abelian case can be adapted to the 
nilpotent setting.  For instance, we will see that given any $f \in G$, we have 
$\mathrm{Fix}_{c} \langle G', f  \rangle \neq \emptyset$.

We follow the approach by Franks, Handel and Parwani in \cite{FHP-g}. We 
consider a sequence $f_1, \hdots, f_q$ of elements of $G$ such that the classes of any  
$e= \mathrm{rank}(G/G')$ of them 
in $G/G'$ generate a finite index subgroup of $G/G'$. 
The number $q$ depends on the topology of the surface $S$. 
Then we study  isotopy classes of elements of $G$ relative to a 
$G$-invariant compact subset of 
$\cup_{j=1}^{q} \mathrm{Fix}_{c} \langle G', f_{j} \rangle$ in order to find
$1 \leq i_1 < \hdots < i_e \leq q$ such that 
$\cap_{j=1}^{e} \mathrm{Fix}_{c} \langle G', f_{j} \rangle \neq \emptyset$. 
Therefore $\langle G', f_{i_1}, \hdots, f_{i_e} \rangle$ is a finite index subgroup of $G$ with 
a global contractible fixed point.
Then, it is easy to use Theorem \ref{teo:plane} to show that $G$ has also a global 
contractible fixed point. 

\strut

Since in particular 
$\cup_{j=1}^{q} \mathrm{Fix}_{c} \langle G', f_{j} \rangle \subset \mathrm{Fix}(G')$, our group 
$G$ is abelian up to isotopy rel 
$\cup_{j=1}^{q} \mathrm{Fix}_{c} \langle G', f_{j} \rangle$. 
As a consequence, we can generalize the approach in  \cite{FHP-g} to obtain
a Thurston decomposition for the group $G$ relative to the 
set $\cup_{j=1}^{q} \mathrm{Fix}_{c} \langle G', f_{j} \rangle$.
Here, it arises a technical issue since the set $\mathrm{Fix}_{c} \langle G', f_{l} \rangle$
is not open in $\cup_{j=1}^{q} \mathrm{Fix}_{c} \langle G', f_{j} \rangle$ in general. 
We replace $\cup_{j=1}^{q} \mathrm{Fix}_{c} \langle G', f_{j} \rangle$ with a $G$-invariant 
compact subset $K$ such that $K \cap \mathrm{Fix}_{c} (f_{l})$ is an open and closed subset
of $K$ for any $1 \leq l \leq q$. By construction, 
such a set is complete in some sense, containing representatives
of the Nielsen-classes that are relevant in our approach (see Definition
\ref{def:good_excellent}).
Another problem is that the set $K$ is not necessarily finite. We generalize the Thurston 
decomposition provided in \cite{FHPs, FHP-g} for the infinite type case to the nilpotent setting.
The main difference is that we need Lemma  \ref{lem:guarantees_thurston} to localize
fixed points of $\mathrm{Fix}_{c} \langle G' , f_{l} \rangle$ for $1 \leq l \leq q$.

\strut

Even if $G$ is abelian up to relative isotopy, such a property does not suffices to find a global
contractible fixed point of $G$. We need to find and localize global fixed points of groups of 
the form $\mathrm{Fix}_{c} \langle G',f \rangle$.  This is a major difficulty to adapt the approach
of Franks, Handel and Parwani and it is 
the subject of sections \ref{sec:annular}, \ref{sec:cyclic} and \ref{sec:loc}.
More precisely, we want to localize global fixed points of subgroups of $G$ in connected 
components $M$ of $S \setminus {\mathcal R}_{e}$ where ${\mathcal R}_{e}$ is the set of 
essential reducing curves of the Thurston decomposition. 
Such a connected component has Euler characteristic less or equal than $0$. 
Once we obtain $M \cap K \cap \mathrm{Fix}_{c} (f_j) \neq \emptyset$ for 
sufficiently many indices, we can obtain 
global contractible fixed points for finite index subgroups of $G$ by using the properties of
the Thurton decomposition.
The results of section \ref{sec:annular} localize fixed points for the case 
where $M$ is an annulus whereas
section \ref{sec:loc} deals with the case of negative Euler characteristic.

\strut

In order to localize points of $\mathrm{Fix}_{c} \langle G',f\rangle$ in section \ref{sec:loc}
we need to show a version 
of Theorem \ref{teo:maina} for the group $\langle G', f \rangle$. 
Since $G'$ is trivial up to isotopy rel $K$, this is a sort of cyclic case and hence simpler
than the general one. It is treated in section \ref{sec:cyclic} and requires enlarging the scope of
our techniques to finite type surfaces with punctures and  boundary components. 
This is the reason behind considering such surfaces in sections 
\ref{sec:thurston}, \ref{sec:annular} and \ref{sec:cyclic}.
Let us remark that the framework in section \ref{sec:thurston}  (see Definition \ref{def:frame})
is larger than the one described so
far in this section; for instance the group $G'$ can be replaced with certain normal subgroups 
$H$ such that $G/H$ is abelian. The reason is having a unified approach for the proofs of
Theorem \ref{teo:maina} and the localization result (Theorem \ref{teo:mainc}) in section 
\ref{sec:cyclic}.
Anyway, we think that the presentation of this section is easier to understand, contains all the
main ideas and avoids technicalities. 

\strut
 
Section \ref{sec:proofa} is devoted to complete the proof of 
the auxiliary Theorem \ref{teo:baux} that implies Theorem \ref{teo:maina} in a straightforward way. 
Theorem \ref{teo:mainb} is proved in section  \ref{sec:mainb}.
Such a proof relies on applying Theorem \ref{teo:baux} to a certain group of homeomorphisms.

\section{Preliminaries}
In this section we are going to introduce some concepts that are going to be important in the 
paper, concerning the properties of nilpotent groups, isotopies and Nielsen classes, and 
localization of global fixed points.

\subsection{Nilpotent groups}
We introduce here the descending central series of a group and some of its properties that will
be used in the paper. 
\begin{defi}
Let $G$ be a group. Consider subgroups $H$ and $K$ of $G$. We define $[H,K]$ as the 
subgroup generated by the commutators $[h,k]:=hkh^{-1}k^{-1}$ where $h \in H$ and $k \in K$. 
\end{defi}
 \begin{defi}
 \label{def:nilpotent}
 Let $G$ be a group. We denote ${\mathcal C}^{(0)} (G) =G$. We define the groups 
 $({\mathcal C}^{(j)} (G))_{j \geq 0}$ in the {\it descending central series} recursively, by using the
 formula ${\mathcal C}^{(j+1)} (G) = [{\mathcal C}^{(j)} (G), G]$ for $j \geq 0$. 
 The group ${\mathcal C}^{(1)} (G)$ is called the {\it derived group} of $G$ and is also denoted
 by $G'$. 
 
 We say that a group $G$ is {\it nilpotent} if there exists $k \in {\mathbb Z}_{\geq 0}$ such that 
 ${\mathcal C}^{(k)} (G) = \{1\}$. Moreover, the minimum $k \geq 0$ such that 
 ${\mathcal C}^{(k)} (G) = \{1\}$ is called the nilpotency class of $G$. 
 \end{defi}
\begin{rem}
The subgroups in the descending central series of $G$ are normal subgroups of $G$. 
\end{rem}
\begin{rem}
\label{rem:des_abe}
The group $\langle {\mathcal C}^{(j)}(G),f \rangle / {\mathcal C}^{(j+1)}(G)$ is abelian for 
all $f \in G$ and $j \geq 0$. 
Indeed, the subgroup  $[{\mathcal C}^{(j)}(G)]$ induced by ${\mathcal C}^{(j)}(G)$ in 
$[G]:=G/ {\mathcal C}^{(j+1)}(G)$ 
is contained in the center of $[G]$. Thus any subgroup $H$ of 
$\langle {\mathcal C}^{(j)}(G),f \rangle$ containing ${\mathcal C}^{(j+1)}(G)$ is a normal 
subgroup of $\langle {\mathcal C}^{(j)}(G),f \rangle$.

Moreover, we get
${\mathcal C}^{(1)} \langle G', f\rangle \subset {\mathcal C}^{(2)} (G)$. 
We deduce ${\mathcal C}^{(j)} \langle G', f\rangle \subset {\mathcal C}^{(j+1)} (G)$
for any $j \geq 1$. 
\end{rem}
The next result is a distortion property of the elements of ${\mathcal C}^{(j)} (G)$
modulo ${\mathcal C}^{(j+1)} (G)$ for $j \geq 1$.  
\begin{lem}
\label{lem:dist}
Let   
\[  \phi = [g_{1},h_{1}] \hdots [g_{m},h_{m}] \in {\mathcal C}^{(j)} (G)  \]
where either $(g_l , h_l)$ or $(h_l, g_l)$ belong to ${\mathcal C}^{(j-1)}(G) \times G$ 
for any $1 \leq l \leq m$.  Then, we obtain 
\begin{equation}
\label{equ:k2}
\phi^{k^{2}}  = 
[g_{1}^{k},h_{1}^{k}] \hdots [g_{m}^{k},h_{m}^{k}] \alpha_{k}
\end{equation}
 for some $\alpha_{k} \in {\mathcal C}^{(j+1)} (G)$ and any $k \in {\mathbb N}$.
\end{lem}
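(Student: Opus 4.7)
The plan is to reduce to the quotient $\overline{G} := G/\mathcal{C}^{(j+1)}(G)$ and prove the equality there, since the statement of the lemma is exactly that the identity \eqref{equ:k2} holds modulo $\mathcal{C}^{(j+1)}(G)$. The key observation is that each factor $[g_l, h_l]$ becomes central in $\overline{G}$. Indeed, assuming (without loss of generality, by swapping the two arguments of the commutator) that $g_l \in \mathcal{C}^{(j-1)}(G)$ and $h_l \in G$, we have $[g_l, h_l] \in \mathcal{C}^{(j)}(G)$, and for any $x \in G$ the element $[[g_l, h_l], x]$ lies in $[\mathcal{C}^{(j)}(G), G] = \mathcal{C}^{(j+1)}(G)$, so the image of $[g_l, h_l]$ in $\overline{G}$ is central.

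Next, I would establish the single-factor identity $[g_l^k, h_l^k] \equiv [g_l, h_l]^{k^2} \pmod{\mathcal{C}^{(j+1)}(G)}$ using the standard trick that works whenever a commutator is central. Working in $\overline{G}$, since $[g_l, h_l]$ commutes with both $g_l$ and $h_l$, a quick induction on $b$ using the identity $[g_l, h_l^{b+1}] = h_l \cdot [g_l, h_l^b] \cdot h_l^{-1} \cdot [g_l, h_l]$ together with centrality yields $[g_l, h_l^b] = [g_l, h_l]^b$ in $\overline{G}$. The analogous induction on $a$ gives $[g_l^a, h_l] = [g_l, h_l]^a$. Since $[g_l^a, h_l]$ is itself central in $\overline{G}$, the same argument applied to the pair $(g_l^a, h_l)$ produces $[g_l^a, h_l^b] = [g_l^a, h_l]^b = [g_l, h_l]^{ab}$; setting $a = b = k$ gives the required identity.

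Finally, since all factors $[g_l, h_l]$ are central in $\overline{G}$, they commute with each other, so $\overline{\phi}$ is itself central and
\[
\overline{\phi}^{k^2} = \bigl([g_1, h_1] \cdots [g_m, h_m]\bigr)^{k^2} = [g_1, h_1]^{k^2} \cdots [g_m, h_m]^{k^2} = [g_1^k, h_1^k] \cdots [g_m^k, h_m^k]
\]
in $\overline{G}$. Lifting back to $G$, the difference $\alpha_k := [g_1^k, h_1^k]^{-1} \cdots [g_m^k, h_m^k]^{-1} \phi^{k^2}$ lies in $\mathcal{C}^{(j+1)}(G)$, which is \eqref{equ:k2}.

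The only real subtlety is checking that centrality modulo $\mathcal{C}^{(j+1)}(G)$ genuinely suffices to run the ``central commutator'' identity $[g^a, h^b] = [g, h]^{ab}$; this reduces to the elementary inductions above and presents no serious obstacle. The hypothesis that for each $l$ either $g_l$ or $h_l$ lies in $\mathcal{C}^{(j-1)}(G)$ is used exactly once, to guarantee this centrality.
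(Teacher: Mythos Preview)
Your proposal is correct and follows essentially the same approach as the paper: pass to the quotient $G/\mathcal{C}^{(j+1)}(G)$, observe that each $[g_l,h_l]$ is central there, reduce to a single factor, and prove the bilinearity identity $[g^a,h^b]=[g,h]^{ab}$ by the standard induction using centrality. The paper's proof is organized slightly differently (it shows $[[g],[h]]^k=[[g]^k,[h]]=[[g],[h]^k]$ and leaves the combination to get $k^2$ implicit), but the content is the same.
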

\begin{proof}
Given $\psi \in G$ we denote by $[\psi]$ its class in $[G]:=G/ {\mathcal C}^{(j+1)} (G)$. 
Notice that $[{\mathcal C}^{(j)} (G), G] = {\mathcal C}^{(j+1)} (G)$ implies
that $[{\mathcal C}^{(j)} (G)]$ is a subgroup of the center of $[G]$. Thus, we get
\[ [\phi]^{k^{2}}  =  [[g_{1}],[h_{1}]]^{k^{2}}  \hdots [[g_{m}],[h_{m}]]^{k^{2}}   \]
for any $k \in {\mathbb N}$. Therefore, it suffices to show the result for $m=1$.

Suppose that either $(g,h)$ or $(h,g)$ belongs to  ${\mathcal C}^{(j-1)}(G) \times G$.
It suffices to show $[[g], [h]]^{k} = [[g]^{k}, [h]] = [[g], [h]^{k}]$ for any $k \in {\mathbb N}$. 
Let us show the first equality by induction on $k$. It clearly 
holds for $k=1$. Suppose it holds for $k \geq 1$.  We have
\[ [[g],[h]]^{k+1} = [[g], [h]]  [[g] ,[h]]^{k} =  [g] [h] [g]^{-1} [h]^{-1}   [[g] ,[h]]^{k} =  \]
\[  [g] [[g] ,[h]]^{k}  [h] [g]^{-1} [h]^{-1} =
[g] [g]^{k} [h] [g]^{-k} [h]^{-1}  [h] [g]^{-1} [h]^{-1} = [[g]^{k+1}, [h]], \]
where the third equality holds 
since $[[g], [h]]^{k}$ belongs to the center of $[G]$ and the fourth equality is a consequence
of the hypothesis of induction. 
The proof of  $[[g], [h]]^{k} = [[g], [h]^{k}]$ for any $k \in {\mathbb N}$ is analogous. 
\end{proof}

\subsection{Groups of homeomorphisms}

We denote by $\mathrm{Homeo} (S)$ and $\mathrm{Homeo}_{0} (S)$
 the group of homeomorphisms of a manifold $S$ and its subgroup of 
isotopic to the identity elements respectively. Their analogues for diffeomorphisms of class $C^1$
will be denoted by $\mathrm{Diff}^{1} (S)$ and $\mathrm{Diff}_{0}^{1} (S)$.
If $S$ is orientable, we can define the subgroup $\mathrm{Homeo}_{+} (S)$
of $\mathrm{Homeo} (S)$ of orientation-preserving elements and its analogue 
$\mathrm{Diff}_{+}^{1} (S)$ for diffeomorphisms of class $C^{1}$.
\begin{defi}
Given a subset $C$ of $S$, we
denote by $\mathrm{MCG}(S, C)$ the mapping class group rel $C$, 
i.e. the group of isotopy classes relative to $C$
of homeomorphisms, of $S$. The mapping class group $\mathrm{MCG}(S)$ of $S$ is just
$\mathrm{MCG}(S, \emptyset)$. 
\end{defi}
\subsubsection{Nielsen classes}
\begin{defi}
Given $f \in \mathrm{Homeo} (S)$ we denote by $\mathrm{Fix} (f)$ the set of fixed points of $f$. 
Given a subgroup $G$ of $ \mathrm{Homeo} (S)$  we define the set 
$\mathrm{Fix} (G)$ of global fixed points as $\cap_{f \in G} \mathrm{Fix}(f)$.
\end{defi}
\begin{rem}
\label{rem:inv_nor}
Let $f, g \in  \mathrm{Homeo} (S)$. We have 
$\mathrm{Fix} (g f g^{-1}) = g (\mathrm{Fix}(f))$.  
As a consequence, given $G <  \mathrm{Homeo} (S)$ and a normal subgroup $H$ of $G$, 
the set $\mathrm{Fix}(H)$ is $G$-invariant.
\end{rem}
\begin{defi}
\label{def:identity}
Let $f \in \mathrm{Homeo}_{0} (S)$ where $S$ is a surface of negative Euler characteristic. 
Let $\pi: \tilde{S} \to S$ be the universal covering map of $S$. 
There exists a isotopy $(f_t)_{t \in [0,1]}$ in  $\mathrm{Homeo} (S)$ such that 
$f_0 = \mathrm{Id}_{S}$ and $f_1= f$. It can be lifted
to an isotopy $(\tilde{f}_{t})_{t \in [0,1]}$ in  $\mathrm{Homeo} (\tilde{S})$ such that 
$\tilde{f}_{0} = \mathrm{Id}_{\tilde{S}}$. The homeomorphism $\tilde{f}:= \tilde{f}_{1}$ is called
the {\it identity lift} of $f$ to $\tilde{S}$. 
Note that the identity lift does not depend on the isotopy since $\chi (S) <0$.
We define the set $\mathrm{Fix}_{c}(f)$  of contractible fixed points of $f$ as 
$\mathrm{Fix}_{c}(f) = \pi (\mathrm{Fix}(\tilde{f}))$.

We define the set $\mathrm{Fix}_{c}(G) = \cap_{f \in G} \mathrm{Fix}_{c}(f)$
of global contractible  fixed points for any subgroup $G$ of 
$\mathrm{Homeo}_{0} (S)$. 
\end{defi}
\begin{rem}
\label{rem:iso_nor}
Suppose $\chi (S) <0$. 
Let $\tilde{f}$ be the identity lift of $f \in \mathrm{Homeo}_{0}(S)$.
Consider a lift $\tilde{g}$ of $g \in  \mathrm{Homeo}(S)$.
Since $(g f_t g^{-1})_{t \in [0,1]}$ and 
$(\tilde{g} \tilde{f}_{t} \tilde{g}^{-1})_{t \in [0,1]}$ are isotopies issued from the identity map,  
it follows that  $g f g^{-1} \in \mathrm{Homeo}_{0}(S)$ and 
$\tilde{g} \tilde{f} \tilde{g}^{-1}$ is the identity lift of $g f g^{-1}$. 

Let $\tilde{\mathrm H}(S)$ be the group of all lifts of elements of $\mathrm{Homeo}(S)$.
Let $\tilde{\mathrm H}_{0}(S)$ be the group of identity lifts of elements of $\mathrm{Homeo}_{0}(S)$.
The previous discussion implies that 
$\mathrm{Homeo}_{0}(S)$ and  $\tilde{\mathrm H}_{0}(S)$  are normal subgroups of 
$\mathrm{Homeo}(S)$ and  $\tilde{\mathrm H}(S)$ respectively.
In particular, given a subgroup $G$ of $\mathrm{Homeo}(S)$, its subgroup 
$G_0 = G \cap \mathrm{Homeo}_{0}(S)$, of isotopic to the identity elements, is normal.
\end{rem}
\begin{rem}
\label{rem:inv_nor0}
Let $f \in \mathrm{Homeo}_{0} (S)$, $g \in  \mathrm{Homeo} (S)$ and $\chi (S) <0$. 
We obtain $\mathrm{Fix}_{c} (g f g^{-1}) = g (\mathrm{Fix}_{c}(f))$
by Remark \ref{rem:iso_nor}.  
In particular, given $G <  \mathrm{Homeo} (S)$ and a normal subgroup $H$ of $G$
contained in $\mathrm{Homeo}_{0}(S)$, the set $\mathrm{Fix}_{c} (H)$ is $G$-invariant. 
\end{rem}
Contractible fixed points are an example of a Nielsen class.
\begin{defi}
\label{def:Nielsen}
Let $f \in \mathrm{Homeo} (S)$. We say that $z, w \in S$ are $f$-Nielsen equivalent if 
there exists a path $\gamma:[0,1] \to S$ with $\gamma (0)= z$, $\gamma (1)=w$ and 
such that $\gamma$ and $f \circ \gamma$ are homotopic relative to ends.
\end{defi}
\begin{rem}
The $f$-Nielsen class ${\mathcal N}_{f,z}$ 
of $z \in \mathrm{Fix}(f)$ is $\pi (\mathrm{Fix}(\tilde{f}))$ where 
$\pi: \tilde{S} \to S$ is the universal covering map and $\tilde{f}$ is a lift of $f$ to $\tilde{S}$
such that $z \in \pi (\mathrm{Fix}(\tilde{f}))$.
\end{rem} 
\begin{defi}
We say that a $f$-Nielsen class ${\mathcal N}$ is {\it compactly covered}
if there exists a lift (and hence for every lift) $\tilde{f}$ of $f$ to $\tilde{S}$   such that 
$\pi (\mathrm{Fix}(\tilde{f})) = {\mathcal N}$ and $\mathrm{Fix}(\tilde{f})$ is compact.
\end{defi}
\begin{defi}
\label{def:Nielsen_iso}
Let $(f_{t})_{t \in [0,1]}$ be an isotopy  in $\mathrm{Homeo} (S)$. 
Let ${\mathcal N_0}$ be a $f_0$-Nielsen class and consider a lift 
$\tilde{f}_0$ of $f_0$ to $\tilde{S}$ with
$\pi (\mathrm{Fix}(\tilde{f}_0)) = {\mathcal N}_0$. 
Consider an isotopy $(\tilde{f}_t)_{t \in [0,1]}$ in $\mathrm{Homeo} (S)$
issued from $\tilde{f}_0$. We say that $\pi (\mathrm{Fix}(\tilde{f}_1))$
is the $f_1$-Nielsen class induced by ${\mathcal N}_0$ and the isotopy $(f_{t})_{t \in [0,1]}$.
\end{defi}
\begin{rem}
The definition does not depend on the choice of the lift 
$\tilde{f}_{0}'$ of $f_0$ to $\tilde{S}$ with 
$\pi (\mathrm{Fix}(\tilde{f}_{0}')) = {\mathcal N}_0$. Indeed, 
$\tilde{f}_{0}'$  is of the form 
$T \tilde{f}_0 T^{-1}$ for some deck transformation $T$.
We deduce that $\tilde{f}_{1}' = T \tilde{f}_{1} T^{-1}$ and hence 
$\pi (\mathrm{Fix}(\tilde{f}_{1}')) = \pi (T(\mathrm{Fix}(\tilde{f}_1))))= \pi (\mathrm{Fix}(\tilde{f}_1))$.
\end{rem}
\begin{defi}
Let $G$ be a subgroup of $\mathrm{Homeo}(S)$ and let $\varpi: S' \to S$ be a covering map. 
We say that a subgroup $G'$ of $\mathrm{Homeo}(S')$ is a lift of $G$ if 
any $f' \in G'$ is a lift of some $f \in G$ and the map 
$\tau: G' \to G$, defined by $\tau (f')=f$,  is an isomorphism of groups.
\end{defi}
\begin{rem}
Let us point out that the set ${G}''$
of all lifts of elements of $G$ to $S'$ {\it is not a lift} of $G$ if 
$\varpi$ is not a homeomorphism. Indeed, the kernel of $\tau: G'' \to G$ is the group of
deck transformations. A lift of a group 
is useful since it is nilpotent if $G$ is nilpotent. On the other hand
$G''$ is not nilpotent even if $G$ is nilpotent if $\chi (S) <0$, since the group of deck transformations 
of a surface of negative Euler characteristic is non-nilpotent.
\end{rem}


\subsection{Fixed points}
In this section we introduce some results about the existence of global fixed points in the 
compactly covered case.
\begin{defi}
Let $f \in \mathrm{Homeo}(S \setminus K)$ where $S$ is a compact surface 
(maybe with boundary) and $K$ is a closed subset of $S \setminus \partial S$. 
Consider $z \in \mathrm{Fix} (f)$. We say  that $z$ {\it peripherally contains} a puncture 
$w$ rel $K$ if  there exists a path $\gamma:[0,1) \to S \setminus K$
such that $\gamma (0)=z$, $\lim_{t \to 1} \gamma (t)=w$ and 
$\gamma$ is properly homotopic to $f \circ \gamma$ relative to $0$. 
If $\partial$ is a boundary component of $S$, we say that $z$ peripherally contains $\partial$
if $z$ peripherally contains the puncture induced by $\partial$ in the surface 
$S \setminus \partial S$. 
\end{defi}
\begin{defi}
Let $f \in \mathrm{Homeo}_{+} ({\mathbb R}^{2})$ that preserves a non-empty compact set
$K$ contained in ${\mathbb R}^{2} \setminus \mathrm{Fix}(f)$.
We denote by $P(K,f)$ the subset of $\mathrm{Fix}(f)$ of points that peripherally do not contain 
$\infty$ rel $K$.
\end{defi}
In order to find compactly covered 
 $f_{|{\mathbb R}^{2} \setminus K}$-Nielsen classes it is interesting to consider $P(K,f)$
 since all such Nielsen classes are contained in $P(K,f)$.
 
  Next, we introduce some results about global fixed points of subgroups of 
 $\mathrm{Diff}_{+}^{1}({\mathbb R}^{2})$ that will be useful in the sequel. 
 The first one is a generalization of a theorem of Franks, Handel and Parwani \cite{FHPs}
 for finitely generated abelian groups to the nilpotent setting. 
\begin{teo} \cite[Theorem 1]{Firmo-LeCalvez-Ribon:fixed}
\label{teo:plane}
Let $G$ be  a   nilpotent subgroup of $\mathrm{Diff}_{+}^{1}({\mathbb R}^{2})$ 
that preserves a non-empty compact set. Then $G$ has a global fixed point.
\end{teo}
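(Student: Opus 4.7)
The plan is to induct on the nilpotency class $k$ of $G$. Before starting the induction one reduces to the finitely generated case: any finite family of finitely generated subgroups of $G$ generates a finitely generated nilpotent group, so if the theorem is known for finitely generated nilpotent groups the collection $\{\mathrm{Fix}(H) \cap K : H \leq G \text{ finitely generated}\}$ of non-empty closed subsets of the invariant compact set $K$ has the finite intersection property, and compactness of $K$ yields $\mathrm{Fix}(G) \cap K \neq \emptyset$.

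The base case $k \leq 1$, in which $G$ is abelian and finitely generated, is exactly the theorem of Franks, Handel and Parwani in \cite{FHPs}. For the inductive step assume $G$ has nilpotency class $k \geq 2$, and set $Z := \mathcal{C}^{(k-1)}(G)$, the last non-trivial term of the descending central series. By Definition \ref{def:nilpotent} we have $[Z,G] = \{1\}$, so $Z$ is abelian and central in $G$. In particular $Z$ preserves $K$, so the base case gives $\mathrm{Fix}(Z) \neq \emptyset$, and Remark \ref{rem:inv_nor} shows that $\mathrm{Fix}(Z)$ is $G$-invariant.

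The first sub-goal is to extract from $\mathrm{Fix}(Z)$ a $G$-invariant non-empty compact subset $K'$ on which the nilpotent quotient $G/Z$ (of class at most $k-1$) can be fed to the inductive hypothesis. I would look at the connected components $U$ of $\mathbb{R}^2 \setminus \mathrm{Fix}(Z)$ whose closure meets $K$; the group $G$ permutes them, and each such $U$ is an open topological disc since $\mathrm{Fix}(Z)$ is a closed subset of $\mathbb{R}^2$. The stabilizer of such a $U$ acts on the prime ends compactification of $U$, inducing a nilpotent action on the prime ends circle. A fixed point there corresponds, via Carathéodory, to a point in $\partial U \subset \mathrm{Fix}(Z)$ that is fixed by the stabilizer, and by permuting the orbit of $U$ one upgrades this to a global fixed point of $G$.

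The hard part is producing the fixed point on the prime-ends circle and controlling the induced action of $G'$ there. This is where the distortion property of Lemma \ref{lem:dist} becomes essential: the relation $\phi^{n^{2}} = [g_1^n, h_1^n] \cdots [g_m^n, h_m^n]\, \alpha_n$ with $\alpha_n \in \mathcal{C}^{(j+1)}(G)$ exhibits the elements of $\mathcal{C}^{(j)}(G)$, and in particular of $G'$, as distortion elements in the induced group. By the Franks--Handel theory of distortion elements for $C^1$ surface diffeomorphisms, such elements must have vanishing rotation number on the prime ends circle; combined with Brouwer's plane translation theorem to rule out fixed-point-free Brouwer components, this forces $G'$ to act with a common fixed point on the prime ends circle, reducing the problem to the abelian base case for $G/G'$. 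This rotation-number/distortion step is the main technical obstacle, and it is the reason why the $C^1$ regularity hypothesis is indispensable.
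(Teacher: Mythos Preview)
First, note that the paper does not actually prove Theorem~\ref{teo:plane}: it is quoted verbatim from \cite{Firmo-LeCalvez-Ribon:fixed} and used as a black box. So there is no ``paper's own proof'' to compare to; the task is simply to assess whether your argument stands on its own.

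It does not, and the main break occurs in the inductive step. Two claims fail:

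\begin{itemize}
\item \emph{``Each such $U$ is an open topological disc since $\mathrm{Fix}(Z)$ is a closed subset of $\mathbb{R}^2$.''} This is false. A connected component of the complement of a closed set in the plane can be multiply connected (think of $\mathrm{Fix}(Z)$ a Cantor set, or even a single point, giving a punctured plane). So the prime-ends compactification of $U$ is not in general a closed disc, and there is no single ``prime-ends circle'' to work on.
\item \emph{``A fixed point there corresponds, via Carath\'eodory, to a point in $\partial U \subset \mathrm{Fix}(Z)$ that is fixed by the stabilizer.''} Even when $U$ is simply connected, this is false without local connectivity of $\partial U$. A fixed prime end is an equivalence class of chains; its impression can be a nondegenerate continuum in $\partial U$, and the stabilizer need not fix any individual point of that continuum. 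Carath\'eodory's theorem only gives a bijection between prime ends and boundary points when $\partial U$ is locally connected, which you have no control over here.
\end{itemize}

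The distortion paragraph is also too loose to repair these gaps: the Franks--Handel distortion machinery is set up for $C^1$ diffeomorphisms of a compact surface, not for the homeomorphism induced on an ideal boundary circle, and Lemma~\ref{lem:dist} gives distortion only modulo $\mathcal{C}^{(j+1)}(G)$, which is not trivial at the stage you invoke it. Finally, a minor point: your reduction to the finitely generated case asserts $\mathrm{Fix}(H)\cap K\neq\emptyset$, but the theorem only promises $\mathrm{Fix}(H)\neq\emptyset$; this is fixable, but as written it is a gap.

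The actual proof in \cite{Firmo-LeCalvez-Ribon:fixed} proceeds along quite different lines (equivariant brick decompositions and Le~Calvez's transverse foliation theory), and does not attempt to read off fixed points from prime-end dynamics.
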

The nilpotence condition is useful in the quest of global fixed points since 
 it is sort of a rigidity condition.  For instance, properties of existence of fixed points that hold
 for a single element of the group can be frequently generalized to properties of existence of
 global fixed points. The next results illustrate this point of view.
\begin{cor}
\label{cor:plane} \cite{JR:arxivsp}
Let $G$ be a  
nilpotent subgroup of $\mathrm{Diff}_{+}^{1}({\mathbb R}^{2})$ such that
$\mathrm{Fix}(\phi)$ is a non-empty compact set for some $\phi \in G$.
Then $G$ has a global fixed point.
\end{cor}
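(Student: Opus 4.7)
The plan is to apply Theorem \ref{teo:plane} iteratively, starting from the centralizer of $\phi$ and enlarging the acting group through successive normalizers, exploiting the normalizer condition for nilpotent groups.

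First, I would set $H_{0} := C_{G}(\phi)$, the centralizer of $\phi$ in $G$. Every element of $H_{0}$ commutes with $\phi$, so $h(\mathrm{Fix}(\phi)) = \mathrm{Fix}(h \phi h^{-1}) = \mathrm{Fix}(\phi)$, i.e.\ $H_{0}$ preserves the non-empty compact set $\mathrm{Fix}(\phi)$. As a subgroup of $G$, $H_{0}$ is a nilpotent subgroup of $\mathrm{Diff}_{+}^{1}({\mathbb R}^{2})$, so Theorem \ref{teo:plane} yields that $F_{0} := \mathrm{Fix}(H_{0})$ is non-empty; being a closed subset of $\mathrm{Fix}(\phi)$, it is compact.

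Then I would run a Zorn's lemma argument on the collection $\mathcal{F}$ of subgroups $H$ of $G$ containing $H_{0}$ such that $\mathrm{Fix}(H)$ is non-empty and compact. The chain condition holds: for an ascending chain $\{H_{\alpha}\}$ in $\mathcal{F}$ with union $H_{\infty}$, we have $\mathrm{Fix}(H_{\infty}) = \bigcap_{\alpha} \mathrm{Fix}(H_{\alpha})$, a decreasing intersection of non-empty compact subsets of $\mathrm{Fix}(\phi)$, which is non-empty and compact by the finite intersection property. Zorn's lemma produces a maximal $H^{*} \in \mathcal{F}$, and I would argue $H^{*} = G$. Otherwise, the normalizer condition for nilpotent groups gives $H^{*} \subsetneq N_{G}(H^{*})$: letting $i$ be the largest index with $\mathcal{C}^{(i)}(G) \not\subseteq H^{*}$, any $g \in \mathcal{C}^{(i)}(G) \setminus H^{*}$ satisfies $[g,h] \in \mathcal{C}^{(i+1)}(G) \subseteq H^{*}$ for every $h \in H^{*}$, whence $g \in N_{G}(H^{*}) \setminus H^{*}$. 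Any $g \in N_{G}(H^{*})$ preserves $F^{*} := \mathrm{Fix}(H^{*})$ because, for $h \in H^{*}$ and $x \in F^{*}$, the element $g^{-1} h g$ lies in $H^{*}$ and fixes $x$, so $h(g(x)) = g((g^{-1}hg)(x)) = g(x)$. Hence $N_{G}(H^{*})$ is a nilpotent subgroup preserving the non-empty compact $F^{*}$, and a further application of Theorem \ref{teo:plane} gives $\mathrm{Fix}(N_{G}(H^{*})) \neq \emptyset$, compact as a closed subset of $F^{*}$. This places $N_{G}(H^{*})$ in $\mathcal{F}$, contradicting the maximality of $H^{*}$. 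Thus $H^{*} = G$ and $\mathrm{Fix}(G) = F^{*} \neq \emptyset$.

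The delicate point I anticipate is that $G$ is not assumed finitely generated, so the iterative enlargement could a priori require transfinite iteration; Zorn's lemma circumvents this cleanly, while the normalizer condition (established above via the descending central series) supplies the strict enlargement that drives the contradiction. Everything else amounts to verifying that fixed-point sets behave well under intersection and under the action of the normalizer, which is routine.
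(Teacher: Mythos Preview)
Your argument is correct. The paper does not supply its own proof of this corollary; it simply cites the result from \cite{JR:arxivsp}. Your bootstrap via the normalizer condition and Zorn's lemma is a clean, self-contained derivation from Theorem~\ref{teo:plane}: starting from $C_G(\phi)$, each application of Theorem~\ref{teo:plane} produces a larger subgroup with non-empty compact fixed-point set, and the normalizer condition for nilpotent groups guarantees strict enlargement until you reach $G$. The only things to verify are that $\phi \in H_0 \subseteq H$ for every $H \in \mathcal{F}$ (so $\mathrm{Fix}(H) \subseteq \mathrm{Fix}(\phi)$ is automatically compact) and that chains are handled by the finite intersection property, both of which you address.
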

\begin{pro} \cite[Theorem 5]{Firmo-LeCalvez-Ribon:fixed}
\label{pro:clp} 
Let $K_{0}$ be a non-empty compact subset of ${\mathbb R}^{2}$.
Consider $f \in \mathrm{Diff}_{+}^{1}({\mathbb R}^{2})$ such that
$f(K_{0})=K_{0}$ and  $\mathrm{Fix}(f) \cap K_{0} = \emptyset$.
Then there exists a lift $\tilde{f}$ of $f$ to the universal covering of a connected
component of ${\mathbb R}^{2} \setminus K_{0}$ such that
$\mathrm{Fix}(\tilde{f})$ is a non-empty compact set whose projection ${\mathcal N}$
to ${\mathbb R}^{2}$ satisfies that the Lefschetz number $L({\mathcal N},f)$ is non-zero.
\end{pro}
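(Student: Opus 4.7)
The plan is to decompose the fixed point set of $f$ restricted to $f$-invariant components of $\mathbb{R}^{2}\setminus K_{0}$ into Nielsen classes, and then to extract a compactly covered class of non-zero Lefschetz index. The set $P(K_{0},f)$ is the natural place to search, since by the remark preceding the proposition every compactly covered Nielsen class of $f|_{\mathbb{R}^{2}\setminus K_{0}}$ is contained in it.

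First I would extend $f$ to $\bar{f}\in\mathrm{Homeo}_{+}(\mathbb{S}^{2})$ by declaring $\bar{f}(\infty)=\infty$. Since $\bar{f}$ is orientation-preserving it is isotopic to the identity, so $L(\bar{f})=\chi(\mathbb{S}^{2})=2$. The hypothesis $\mathrm{Fix}(f)\cap K_{0}=\emptyset$ places every finite fixed point of $\bar{f}$ in some component of $\mathbb{R}^{2}\setminus K_{0}$, and because $f$ permutes those components, fixed points appear only in the $f$-invariant ones.

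Next, for each $f$-invariant component $U$ I would fix a universal covering $\pi_{U}:\tilde{U}\to U$ and decompose $\mathrm{Fix}(f|_{U})$ into Nielsen classes $\mathcal{N}_{\alpha}=\pi_{U}(\mathrm{Fix}(\tilde{f}_{\alpha}))$, where $\tilde{f}_{\alpha}$ ranges over the lifts of $f|_{U}$. The Lefschetz--Nielsen identity gives $L(f|_{U})=\sum_{\alpha}L(\mathcal{N}_{\alpha},f)$, a finite sum over classes of non-zero index, and among these the compactly covered ones are exactly those contained in $P(K_{0},f)$. The heart of the proof is to show that, summed over $f$-invariant components $U$, the total index of compactly covered Nielsen classes is non-zero. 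For this I would combine the sphere count $L(\bar{f})=2$ with a Brouwer-translation-type argument: if every compactly covered class had zero total index, all of the Lefschetz count of $\bar{f}$ would be absorbed at $\infty$ and at peripheral fixed points (those lying in $\mathrm{Fix}(f)\setminus P(K_{0},f)$), and one could blow up or collapse the peripheral fixed points to obtain an orientation-preserving homeomorphism of $\mathbb{R}^{2}$ having $K_{0}$ as a compact invariant set but no finite fixed point with a non-trivial index relative to $K_{0}$, contradicting the compact invariance of $K_{0}$. This produces an $f$-invariant component $U^{\ast}$ and a compactly covered class $\mathcal{N}\subset U^{\ast}\cap P(K_{0},f)$ with $L(\mathcal{N},f)\neq 0$; the associated lift $\tilde{f}$ of $f|_{U^{\ast}}$ then satisfies $\mathrm{Fix}(\tilde{f})$ non-empty and compact, with $\mathcal{N}=\pi_{U^{\ast}}(\mathrm{Fix}(\tilde{f}))$, as required.

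\textbf{Main obstacle.} The hardest step is the non-vanishing of the total index of compactly covered classes, equivalently, ruling out that all of the Lefschetz count $2$ of $\bar{f}$ concentrates at $\infty$ together with the peripheral (non-compactly covered) part of $\mathrm{Fix}(f)$. This uses the orientation-preservation, the $C^{1}$ regularity (to have a well-behaved local index theory), and crucially the invariance and compactness of $K_{0}$, via a Brouwer-type translation argument tailored to the peripheral-containment framework encoded in $P(K_{0},f)$.
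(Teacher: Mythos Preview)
The paper does not give its own proof of this proposition: it is quoted verbatim as \cite[Theorem~5]{Firmo-LeCalvez-Ribon:fixed} and used as a black box. So there is nothing in the present paper to compare your argument against.

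On the substance of your sketch: the global shape (extend to $\mathbb{S}^{2}$, use $L(\bar f)=2$, split $\mathrm{Fix}(f)$ into Nielsen classes on $f$-invariant components of $\mathbb{R}^{2}\setminus K_{0}$, and isolate a compactly covered class of non-zero index) is the right intuition, but the step you yourself flag as the main obstacle is not actually carried out. The sentence ``one could blow up or collapse the peripheral fixed points to obtain an orientation-preserving homeomorphism of $\mathbb{R}^{2}$ having $K_{0}$ as a compact invariant set but no finite fixed point with a non-trivial index relative to $K_{0}$, contradicting the compact invariance of $K_{0}$'' is not a proof: you have not explained what ``blow up or collapse'' means here, why the resulting map is still a homeomorphism with $K_{0}$ invariant, or which theorem about compact invariant sets you are invoking at the end. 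A fixed-point-free orientation-preserving homeomorphism of $\mathbb{R}^{2}$ certainly has no compact invariant set (Brouwer), but your modified map is not fixed-point-free --- it still fixes $\infty$ and possibly other points --- so a bare appeal to Brouwer does not close the argument. The actual proof in the cited reference requires a genuine analysis separating the index contribution of $\infty$ and of the peripheral classes; this is where the $C^{1}$ hypothesis and the finer Nielsen-theoretic bookkeeping (cf.\ also \cite[Proposition~5.3]{FHPs}) enter, and it is not something one can wave through.
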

\begin{pro}
\label{pro:clp2} 
Let $G = \langle H, f \rangle$ be a nilpotent subgroup of $\mathrm{Diff}_{+}^{1}(S)$
where $S$ is an orientable surface and $H$ is a normal subgroup of $G$. 
Suppose that $K_{0}$ is a compact $f$-invariant subset of $S$ and 
consider a connected component $M$ of $S \setminus K_0$.
Assume that $M$ is not a sphere, a torus or an annulus such that $f$ permutes its ends. 
Fix a non-empty compactly covered $f_{|M}$-Nielsen class ${\mathcal N}$. 
Furthermore, suppose that
any $h \in H$ is isotopic to $\mathrm{Id}$
rel $K_0$. Then, there exists $z \in \mathrm{Fix} (G) \cap  {\mathcal N}$. 
Moreover,  $\mathrm{Fix} (G) \cap \mathrm{Fix}_{c}(H) \cap  {\mathcal N} \neq \emptyset$
holds if $\chi (S) <0$.
\end{pro}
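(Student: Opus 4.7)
The plan is to lift $G|_{M}$ to a nilpotent subgroup of $\mathrm{Diff}_{+}^{1}(\tilde M)$, where $\tilde M \cong {\mathbb R}^{2}$ is the universal cover of $M$, and reduce to Corollary \ref{cor:plane}. The excluded surfaces for $M$ (sphere, torus, annulus with $f$ swapping ends) are precisely those where this universal-cover approach would fail. First I verify that $G$ stabilizes $M$: $f(M)=M$ by the hypothesis that an $f|_{M}$-Nielsen class exists, and each $h \in H$ preserves $M$ since $h$ is isotopic to $\mathrm{Id}$ rel $K_{0}$ and so fixes $K_{0}$ pointwise. Because ${\mathcal N}$ is compactly covered, there exists a lift $\tilde f$ of $f|_{M}$ with $\mathrm{Fix}(\tilde f)$ non-empty and compact and $\pi(\mathrm{Fix}(\tilde f))={\mathcal N}$. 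For each $h \in H$ the restriction $h|_{M}$ is isotopic to the identity, so its identity lift $\tilde h$ to $\tilde M$ is well defined, the assignment $h \mapsto \tilde h$ is a group homomorphism, and $\tilde H:=\{\tilde h: h\in H\} \cong H$.

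Lifting the conjugated isotopy shows that $\tilde f \tilde h \tilde f^{-1}$ is the identity lift of $fhf^{-1} \in H$, so $\tilde f$ normalizes $\tilde H$. Set $\tilde G := \langle \tilde H, \tilde f \rangle$. Every element of $\tilde G$ has the form $\tilde h \tilde f^{n}$, and a direct calculation using the normalization formula yields
\[
  [\tilde h_{1} \tilde f^{n_{1}}, \tilde h_{2} \tilde f^{n_{2}}] \,=\, \widetilde{[h_{1} f^{n_{1}}, h_{2} f^{n_{2}}]},
\]
where the right-hand tilde denotes the identity lift, well defined because $G/H$ is cyclic and hence $G' \subset H$. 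Iterating, $\mathcal{C}^{(k)}(\tilde G)$ lies inside the identity-lift image of $\mathcal{C}^{(k)}(G)$, so $\tilde G$ is nilpotent of class at most that of $G$. Corollary \ref{cor:plane} applied to $\tilde G \subset \mathrm{Diff}_{+}^{1}({\mathbb R}^{2})$ with $\tilde f \in \tilde G$ (fixed set non-empty and compact) then produces $\tilde z \in \mathrm{Fix}(\tilde G) \subset \mathrm{Fix}(\tilde f)$; projecting, $z := \pi(\tilde z)$ lies in ${\mathcal N} \cap \mathrm{Fix}(G)$.

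For the moreover statement, assuming $\chi(S)<0$, I fix a lift $j: \tilde M \to \tilde S$ of the inclusion $M \hookrightarrow S$ to universal covers. Standard compatibility of identity lifts under restriction of isotopies gives $\tilde h_{S} \circ j = j \circ \tilde h$ on $\tilde M$, where $\tilde h_{S}$ is the identity lift of $h$ to $\tilde S$. Hence $j(\tilde z) \in \mathrm{Fix}(\tilde h_{S})$ and $z \in \mathrm{Fix}_{c}(h)$ for every $h \in H$. The main technical obstacle is the nilpotence check: a relation of the form $f^{k} \in H$ in $G$ need not translate into $\tilde f^{k} \in \tilde H$ in $\tilde G$ because a deck transformation can intrude, so $\tilde G/\tilde H$ can be strictly larger than $G/H$; what rescues the argument is that every commutator in $\tilde G$ is literally the identity lift of the corresponding commutator in $G$, so the deck ambiguity is absorbed and the descending central series remains trapped inside the identity-lift image of $\mathcal{C}^{(\cdot)}(G)$.
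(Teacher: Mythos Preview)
Your proof follows essentially the same strategy as the paper's: lift to the universal cover $\tilde M$, use identity lifts for elements of $H$, verify that $\tilde G=\langle \tilde H,\tilde f\rangle$ is nilpotent, and apply Corollary~\ref{cor:plane}. Your explicit commutator computation showing that every commutator in $\tilde G$ is the identity lift of the corresponding commutator in $G$ is a more detailed version of the paper's argument, which simply notes that $(G^{\sharp})'\subset H^{\sharp}$ (because $G^{\sharp}/H^{\sharp}$ is cyclic) and that $H^{\sharp}$ meets the deck group trivially; both arrive at ${\mathcal C}^{(q)}(\tilde G)=\{\mathrm{Id}\}$.

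There is one small gap. You assert that the identity lift $\tilde h$ is well defined and that $h\mapsto\tilde h$ is a group homomorphism, but this requires $M$ to be a disc or to satisfy $\chi(M)<0$ (otherwise two isotopies from $\mathrm{Id}$ to $h|_M$ can yield different lifts). The hypotheses exclude the sphere, the torus, and the annulus with $f$ swapping ends, but they do \emph{not} directly exclude the case where $M$ is an annulus and $f$ fixes its two ends. The paper handles this explicitly: in that case any lift of $f$ to $\tilde M$ commutes with the generator $T$ of the deck group, so $\mathrm{Fix}(\tilde f)$ is $T$-invariant and hence, if non-empty, non-compact---contradicting the existence of a non-empty compactly covered $f|_M$-Nielsen class. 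Once you insert this observation, your argument is complete and matches the paper's.
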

\begin{proof}
Let $\varpi: \tilde{M} \to M$ be the universal covering map.
Fix a point $w^{\sharp}$ in  $\varpi^{-1}({\mathcal N})$ and a lift ${f}^{\sharp}$ of 
$f$ such that $f^{\sharp} (w^{\sharp})= w^{\sharp}$. 

We claim that $M$ is a topological disc or $\chi (M) < 0$. 
Otherwise $M$ is a sphere, a torus or an annulus. 
We just have to consider the case where $M$ is an annulus and $f$ does not permute its ends by
hypothesis.
This never happens since it would contradict that ${\mathcal N}$ is  compactly covered. 

Given $h \in H$, consider an isotopy $(h_t)_{t \in [0,1]}$ rel $K_0$ from 
 $h_0 = \mathrm{Id}_{S}$ to $h_1 = h$.
 It induces an isotopy $({h}_t^{\sharp})_{t \in [0,1]}$ from $\mathrm{Id}_{\tilde{M}}$ to 
the identity lift  $h^{\sharp}$ of $h$ where $({h}_t^{\sharp})_{t \in [0,1]}$ is a lift of 
the isotopy $({h}_t)_{t \in [0,1]}$ 
Notice that $h^{\sharp}$ does not depend on the isotopy $({h}_t^{\sharp})_{t \in [0,1]}$
since  $M$ is a topological disc or $\chi (M) < 0$. 
We denote  
 $H^{\sharp} = \{ h^{\sharp} : h \in H \}$ and 
  $G^{\sharp} = \langle H^{\sharp}, f^{\sharp} \rangle$.

  Since $H$ is a normal subgroup of $G$, $H^{\sharp}$ is a normal subgroup of
  $G^{\sharp}$ by Remark \ref{rem:iso_nor}.
 Since the map  $h \to h^{\sharp}$ is well-defined in $H$,
 the group  $H^{\sharp}$
 does not contain non-trivial deck transformations. As a consequence 
 ${\mathcal C}^{(q)}(G)= \{ \mathrm{Id} \}$ and $H^{\sharp} \lhd G^{\sharp}$
 imply ${\mathcal C}^{(q)}(G^{\sharp})= \{ \mathrm{Id} \}$.
 In particular $G^{\sharp}$ is nilpotent.
  
 Since ${\mathcal N}$ is compactly covered, $\mathrm{Fix}(f^{\sharp})$ is a non-empty
 compact set.  There exists $z^{\sharp} \in \mathrm{Fix}(G^{\sharp})$
 by Corollary \ref{cor:plane}.
 The point $z = \varpi(z^{\sharp})$ belongs to $\mathrm{Fix} (G) \cap  {\mathcal N}$. 
 
 Finally, suppose $\chi (S) <0$. Given $h \in H$, we consider the path $\gamma_{h}:[0,1] \to S$
 defined by $\gamma_{h}(t)= h_{t} (z)$ for $t \in [0,1]$. Since $H^{\sharp}$ consists of 
 identity lifts rel $K_0$, we deduce that $[\gamma_{h}]=0$ in 
 $\pi_{1} (M, z)$. Since $M \subset S$, it follows that  $[\gamma_{h}]=0$ in 
 $\pi_{1} (S, z)$. Therefore, $z$ belongs to $\mathrm{Fix}_{c}(h)$ for any $h \in H$ and thus 
 $z \in \mathrm{Fix}_{c}(H)$.
\end{proof}
\begin{cor}
\label{cor:clp2} 
Let $G = \langle H, f \rangle$ be a nilpotent subgroup of $\mathrm{Diff}_{+}^{1}({\mathbb R}^{2})$
where $H$ is a normal subgroup of $G$. 
Suppose that $K_{0}$ is a non-empty compact subset of ${\mathbb R}^{2}$ such that 
$K_0 \cap \mathrm{Fix}(f) = \emptyset$ and any element $h$ of $H$ is isotopic to $\mathrm{Id}$
rel $K_0$. Then there exists $z \in \mathrm{Fix} (G)$ such that its 
$f_{|{\mathbb R}^{2} \setminus K_0}$-Nielsen class 
is compactly covered.
\end{cor}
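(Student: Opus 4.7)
The plan is to chain together the two previous results: Proposition \ref{pro:clp}, which manufactures a compactly covered Nielsen class from an $f$-invariant compact set disjoint from $\mathrm{Fix}(f)$, and Proposition \ref{pro:clp2}, which promotes any fixed point in such a class to a global fixed point of $G=\langle H,f\rangle$. First, I would apply Proposition \ref{pro:clp} to $f$ and $K_{0}$ (implicitly $f$-invariant) to obtain a connected component $M$ of $\mathbb{R}^{2}\setminus K_{0}$ together with a lift $\tilde{f}$ of $f|_{M}$ to the universal cover $\varpi:\tilde{M}\to M$ whose fixed-point set is non-empty and compact. The corresponding projection $\mathcal{N}=\varpi(\mathrm{Fix}(\tilde{f}))$ is then a compactly covered $f|_{M}$-Nielsen class with nonzero Lefschetz number.

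Next, I would verify the topological hypotheses that Proposition \ref{pro:clp2} imposes on $M$. Since $M$ is an open subset of $\mathbb{R}^{2}$, it is neither a sphere nor a torus. The only remaining forbidden configuration is that $M$ is an annulus whose two ends are permuted by $f$; but then any lift of $f|_{M}$ to the infinite strip $\tilde{M}$ would interchange the two ends of the strip, so its fixed-point set would be either empty or an unbounded curve, contradicting the compactness produced in the first step. This rules out the only prohibited case, so Proposition \ref{pro:clp2} applies to $S=\mathbb{R}^{2}$, the normal subgroup $H\lhd G$, the element $f$, the compact set $K_{0}$, the component $M$, and the Nielsen class $\mathcal{N}$; it yields a point $z\in\mathrm{Fix}(G)\cap\mathcal{N}$.

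It then remains to check that the $f|_{\mathbb{R}^{2}\setminus K_{0}}$-Nielsen class of $z$ is compactly covered. Since $K_{0}$ is compact, $\mathbb{R}^{2}\setminus K_{0}$ is open and $M$ is a clopen subset of it; therefore any path in $\mathbb{R}^{2}\setminus K_{0}$ connecting two points of $M$ stays in $M$, so the $f|_{\mathbb{R}^{2}\setminus K_{0}}$-Nielsen class of $z$ coincides with its $f|_{M}$-Nielsen class $\mathcal{N}$ and inherits compact coverage. I expect the only genuine obstacle to be the exclusion of the annulus-with-permuted-ends case; once that is resolved by appealing to the compactness of $\mathrm{Fix}(\tilde{f})$, the result is a direct concatenation of the two propositions.
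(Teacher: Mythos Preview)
Your overall strategy matches the paper's proof exactly: apply Proposition~\ref{pro:clp} to obtain a non-empty compactly covered $f_{|\mathbb{R}^2\setminus K_0}$-Nielsen class $\mathcal{N}$, then apply Proposition~\ref{pro:clp2} to find a point of $\mathrm{Fix}(G)$ inside $\mathcal{N}$. The paper's proof is two lines and does not spell out the verification you attempt, so your care is welcome; however, your argument excluding the case ``$M$ is an annulus with $f$ permuting its ends'' is incorrect.

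You claim that if $f$ swaps the two ends of the annulus $M$, then any lift of $f_{|M}$ to the strip $\tilde{M}$ interchanges the ends of the strip and therefore has fixed-point set that is empty or an unbounded curve. This is false: on $\tilde{M}=\mathbb{R}\times(0,1)$ the map $(x,t)\mapsto(-x,1-t)$ is a lift of an orientation-preserving, end-swapping annulus homeomorphism, and its fixed-point set is the single point $(0,\tfrac12)$, which is compact. So compactness of $\mathrm{Fix}(\tilde{f})$ does not rule out this configuration.

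The correct exclusion is purely topological and has nothing to do with lifts. Extend $f\in\mathrm{Diff}^1_+(\mathbb{R}^2)$ to a homeomorphism of $\mathbb{S}^2=\mathbb{R}^2\cup\{\infty\}$ fixing $\infty$. If $M\subset\mathbb{R}^2$ is an open annular component of $\mathbb{R}^2\setminus K_0$ with $f(M)=M$, then $\mathbb{S}^2\setminus M$ has exactly two connected components, and exactly one of them contains $\infty$. Since $f(\infty)=\infty$, $f$ preserves each of these two components, hence fixes each end of $M$. Thus the forbidden hypothesis in Proposition~\ref{pro:clp2} never occurs for components of $\mathbb{R}^2\setminus K_0$, and the proposition applies. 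With this correction your proof goes through; your final paragraph identifying the $f_{|M}$-Nielsen class with the $f_{|\mathbb{R}^2\setminus K_0}$-Nielsen class via the fact that $M$ is a connected component is fine.
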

\begin{proof}
There exists a non-empty compactly covered $f_{|{\mathbb R}^{2} \setminus K_0}$-Nielsen class 
${\mathcal N}$ 
by  Proposition \ref{pro:clp}. We are done, since 
$\mathrm{Fix} (G) \cap {\mathcal N} \neq \emptyset$ by Proposition \ref{pro:clp2}.
\end{proof}
Let us generalize the previous corollary, weaking the hypotheses that we require $H$ to satisfy.
\begin{pro}
\label{pro:clp3}
Let $G = \langle H, f \rangle$ be a nilpotent subgroup of $\mathrm{Diff}_{+}^{1}({\mathbb R}^{2})$
where $H$ is a normal subgroup of $G$. 
Suppose that $K_{0}$ is a non-empty compact subset of ${\mathbb R}^{2}$ such that 
$K_0 \cap \mathrm{Fix}(f) = \emptyset$ and $K_0 \subset \mathrm{Fix}(H)$. Then 
there exists $z \in \mathrm{Fix} (G) \cap P(K_0, f)$. 
\end{pro}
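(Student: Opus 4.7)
The plan is to reduce to the compactly covered case by mimicking the proof of Proposition \ref{pro:clp2}, except that identity lifts must be replaced by canonical lifts constructed directly from the fixation condition $K_0\subset\mathrm{Fix}(H)$. First, using that $H$ is normal in $G$, one has $f^n(K_0)\subset\mathrm{Fix}(H)$ for every $n\in\mathbb{Z}$; assuming $K_0$ is $f$-invariant (or replacing it with a larger compact $f$-invariant subset of $\mathrm{Fix}(H)\setminus\mathrm{Fix}(f)$), Proposition \ref{pro:clp} yields a connected component $M$ of $\mathbb{R}^2\setminus K_0$ with $f(M)=M$, a lift $\tilde{f}$ of $f|_M$ to the universal cover $\varpi:\tilde{M}\to M$, and a non-empty compact set $\mathrm{Fix}(\tilde{f})$ whose projection $\mathcal{N}$ is a compactly covered $f|_{\mathbb{R}^2\setminus K_0}$-Nielsen class contained in $P(K_0,f)$.

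Next, for each $h\in H$—an orientation-preserving $C^1$ diffeomorphism of $\mathbb{R}^2$ fixing $K_0$ pointwise—a local linearization argument at boundary points of $M$ shows that $h$ preserves each connected component of $\mathbb{R}^2\setminus K_0$, so $h(M)=M$. I would then construct a canonical lift $h^\sharp$ of $h|_M$ to $\tilde{M}$ by exploiting the fact that $h$ fixes $K_0$, and hence each end of $M$ attached to $K_0$, pointwise: for instance, by requiring $h^\sharp$ to preserve a chosen prime end of $\tilde{M}$ lying over a point of $K_0\cap\partial M$ in a prescribed way. The resulting assignment $h\mapsto h^\sharp$ is then a group homomorphism whose image $H^\sharp:=\{h^\sharp:h\in H\}$ contains no non-trivial deck transformation of $\varpi$, since the canonical lift of $\mathrm{Id}$ is $\mathrm{Id}_{\tilde{M}}$.

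With $H^\sharp$ in hand, the remainder parallels the proof of Proposition \ref{pro:clp2}: the group $G^\sharp:=\langle H^\sharp,\tilde{f}\rangle$ is nilpotent because $H^\sharp\lhd G^\sharp$, the projection $G^\sharp\to G$ restricts to an isomorphism on $H^\sharp$, and $\mathcal{C}^{(q)}(G)=\{\mathrm{Id}\}$ forces $\mathcal{C}^{(q)}(G^\sharp)\subset H^\sharp\cap\ker(\varpi_*)=\{\mathrm{Id}\}$. Since $\tilde{f}\in G^\sharp$ has a non-empty compact fixed-point set, Corollary \ref{cor:plane} yields $z^\sharp\in\mathrm{Fix}(G^\sharp)$, whose projection $z:=\varpi(z^\sharp)$ lies in $\mathrm{Fix}(G)\cap\mathcal{N}\subset\mathrm{Fix}(G)\cap P(K_0,f)$. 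The central obstacle is the canonical construction of $h^\sharp$ and verifying its multiplicativity: with no isotopy of $h$ to the identity rel $K_0$ available, one must pin down the lift purely from the pointwise fixation of $K_0$, for example through the prime-end or end structure of $M$, and then check compatibility under composition in $H$.
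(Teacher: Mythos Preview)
Your approach has a genuine gap at exactly the point you flag as ``the central obstacle,'' and the obstacle is not merely technical: the canonical lift $h^\sharp$ simply cannot be constructed the way you propose. Fixing a prime end (or an end, or a boundary point of $\tilde M$ in $S_\infty$) lying over a point of $K_0$ only determines the lift of $h$ up to the stabilizer of that end in the deck group, which is an infinite cyclic group generated by a peripheral element. So $h\mapsto h^\sharp$ is not single-valued, and there is no mechanism to single out one lift in a way that is simultaneously multiplicative in $h$ and compatible with conjugation by $\tilde f$. Concretely, take $K_0=\{p,q\}$ and $h$ a Dehn twist about a curve enclosing both points: $h$ fixes $K_0$ pointwise but is not isotopic to $\mathrm{Id}$ rel $K_0$, and no end-based normalization picks a preferred lift. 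Without a well-defined homomorphic lift, the normality assertion $H^\sharp\lhd G^\sharp$ is unjustified (it would require $\tilde f\,h^\sharp\,\tilde f^{-1}=(fhf^{-1})^\sharp$, which your construction does not deliver), and hence the nilpotence of $G^\sharp$ and the application of Corollary~\ref{cor:plane} collapse.

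The paper's proof explains why this is unavoidable and how to get around it. After reducing to $K_0$ $f$-minimal, it splits into two cases. If every $h\in H$ is isotopic to $\mathrm{Id}$ rel $K_0$, then identity lifts exist and your scheme (which is essentially Corollary~\ref{cor:clp2}) works. If not, one picks the deepest level $k$ of the descending central series for which $H\cap\mathcal C^{(k)}(G)$ contains an element $h$ not isotopic to $\mathrm{Id}$ rel $K_0$, considers the maximal subsurface $W$ on which $h$ is isotopically trivial (as in \cite{FHPs}), and shows $W$ is $G$-invariant up to isotopy by using that $[g,h]\in\mathcal C^{(k+1)}(G)$ is isotopically trivial. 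One then passes to the unbounded complementary piece $U$, where the induced nilpotent subgroup of $\mathrm{MCG}(U)$ admits a Thurston decomposition and a realized model $\Theta\subset\mathrm{Homeo}_+(U)$; a fixed point of $\Theta$ yields canonical lifts for all of $G$ simultaneously (not just $H$), and Corollary~\ref{cor:plane} then applies. The heart of the argument is precisely producing these coherent lifts in the non-isotopically-trivial case, and this requires the mapping-class-group machinery rather than an end-based normalization.
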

\begin{proof}
There exists a $f$-minimal set $K_{0}'$ contained in $K_0$. Since  
\[  \mathrm{Fix} (G) \cap P(K_{0}', f) \subset  \mathrm{Fix} (G) \cap P(K_0, f), \]
we can consider that $K_0$ is $f$-minimal up to replace it with $K_{0}'$.

Suppose that any $h \in H$ is isotopic to $\mathrm{Id}$ rel $K_0$. 
Then there exists $z \in \mathrm{Fix}(G)$ whose $f_{|{\mathbb R}^{2} \setminus K_0}$-Nielsen
class is compactly covered by Corollary \ref{cor:clp2} and hence 
$z \in \mathrm{Fix} (G) \cap P(K_0, f)$. So we can assume from now on that there exists 
an element of $H$ that is not isotopic to $\mathrm{Id}$ rel $K_0$.

Let $k \geq 0$ be the maximum nonnegative integer number such that 
$H \cap {\mathcal C}^{(k)} (G)$ contains an element $h$ that is not isotopic to $\mathrm{Id}$ rel 
$K_0$. Such a $k$ exists since $G$ is nilpotent.

Let ${\mathcal W} (h, K_0, \emptyset)$ be the set of finite type subsurfaces $W$ of 
${\mathbb R}^{2} \cup \{ \infty \}$ such that 
\begin{itemize}
\item $\partial W$ is a union of pairwise disjoint essential simple closed curves of 
${\mathbb R}^{2} \setminus K_0$;
\item $K_0 \setminus W$ is finite and
every connected component of $W$ contains infinitely many points of $K_0$;
\item There exists $h' \in \mathrm{Homeo}_{+} ({\mathbb R}^{2})$ such that $h$ is isotopic to 
$h'$ rel $K_0$ and $h_{|W}' \equiv \mathrm{Id}$.
\end{itemize}
We say that $W \in {\mathcal W} (h, K_0, \emptyset)$ is maximal if given 
$W' \in  {\mathcal W} (h, K_0, \emptyset)$ such that $W$ is isotopic rel $K_0$ to a subsurface 
of $W'$ then $W$ is isotopic to $W'$ rel $K_0$.
There exists a unique maximal element $W$ of ${\mathcal W} (h, K_0, \emptyset)$
\cite[Lemmas 2.6 and 2.8]{FHPs}.

Given $g \in G$, we have $\eta:=[g,h] \in  H \cap {\mathcal C}^{(k+1)} (G)$.
Since $g h g^{-1} = \eta h$, we obtain 
$g( {\mathcal W} (h, K_0, \emptyset)) = {\mathcal W} (\eta h, K_0, \emptyset)$.
By the choice of $k$ we have that $\eta$ is isotopic to $\mathrm{Id}$ rel $K_0$ and hence
$ {\mathcal W} (\eta h, K_0, \emptyset) =  {\mathcal W} (h, K_0, \emptyset)$.
We deduce that $g(W)$ is the unique maximal element of $ {\mathcal W} (h, K_0, \emptyset)$
and thus $g(W)$ is isotopic to $W$ rel $K_0$ for any $g \in G$.

Consider the unbounded connected component $U$ of ${\mathbb R}^{2} \setminus \partial W$. 
Since $U$ is $G$-invariant modulo isotopy rel $K_0$, the compact set $K_0 \cap U$ is 
$G$-invariant. We claim that $K_0 \cap U= \emptyset$. Otherwise, we have $K_0 \cap U= K_0$ 
since $K_0$ is $f$-minimal. This implies $U \subset W$ and $W= {\mathbb R}^{2}$,  
contradicting that $h$ is not isotopic to $\mathrm{Id}$ rel $K_0$.

Given $g \in G$, there exists $\theta_{g} \in \mathrm{Homeo}_{+} ({\mathbb R}^{2})$ 
and an isotopy $(g_t)_{t \in [0,1]}$ rel $K_0$ such that 
$g_0 = \theta_g$, $g_1 =g$ and $\theta_{g} (U)=U$. Since 
$\chi (U) <0$, the class $[\theta_{g}] \in \mathrm{MCG} (U)$ depends just on $g$. 
Moreover $[G]:= \{ [\theta_{g}]: g \in G \}$ is a nilpotent subgroup of $ \mathrm{MCG} (U)$. 
The group $[G]$ is  virtually abelian and finitely generated \cite{BLM}.
Since $U$ is a finite type surface, it admits a Thurston decomposition \cite[Lemma 2.2]{Handel-top}.
In particular, up to replace $U$ with the unbounded connected component of 
$U \setminus {\mathcal R}$ (where ${\mathcal R}$ is the
set of reducing curves of the Thurston decomposition), we can suppose that all elements of 
$[G]$ are irreducible, i.e. either have finite order or are pseudo-Anosov.

There exists a model $\Theta$ for $[G]$. More precisely, there exists a subgroup 
$\Theta$ of $\mathrm{Homeo}_{+} (U)$ such that the the natural map 
$\theta \to [\theta]$ from $\Theta$ to $\mathrm{MCG}(U)$ is an injective morphism of groups 
whose image is equal to $[G]$  \cite[Lemma 2.1]{JR:arxivsp}.  
We can consider $U$ as a sphere with finitely many punctures (corresponding to the boundary
components of $U$). 
A nilpotent group of orientation-preserving irreducible homeomorphisms of the sphere
that has a fixed point, has a second fixed point \cite[Remark 2.2]{JR:arxivsp}.  
In our case $\infty \in \mathrm{Fix} (\Theta)$
and $\Theta$ fixes no other puncture of $U$ since $K_0$ is $f$-minimal. 
Thus, there exists $z \in \mathrm{Fix}(\Theta) \cap U$. 

Given $g \in G$, we can suppose that $(\theta_{g})_{|U} \in \Theta$. 
Consider the universal covering $\tilde{M}$ of the connected component $M$ of 
${\mathbb R}^{2} \setminus K_0$ containing $U$. 
Fix a lift $\tilde{z}$ of $z$ in $\tilde{M}$. There exists a lift 
$\tilde{\theta}_{g} \in \mathrm{Homeo}_{+} (\tilde{M})$ of $\theta_g$ such that 
$\tilde{\theta}_{g} (\tilde{z})=\tilde{z}$. Consider the lift $\tilde{g}$ of $g$ to $\tilde{M}$
with $\tilde{g} = \tilde{g}_1$ where $(\tilde{g}_t)$ is the lift of $(g_t)$ such that 
$\tilde{g}_0 = \tilde{\theta}_g$. Since $\chi (U) < 0$, $\tilde{g}$ does not depend on $(g_t)$. Thus 
$\tilde{G} := \{ \tilde{g}: g \in G \}$ is a nilpotent subgroup of $\mathrm{Diff}_{+} (\tilde{M})$. 

Any non-trivial element $\theta \in \Theta$ satisfies that the $\theta$-Nielsen class of 
$z$ has non-vanishing fixed point index, is compactly covered and does not peripherally contain
points of $U$. In particular, given $g \in G$ with $[\theta_{g}] \neq 1$, we obtain that 
$\mathrm{Fix}(\tilde{g})$ is a non-empty compact set
and its projection to ${\mathbb R}^{2}$ is a compactly covered 
$g_{|{\mathbb R}^{2} \setminus K_0}$-Nielsen class. Since $\chi (U) <0$ and $K_0$ is 
$f$-minimal, we obtain $[\theta_f] \neq 1$. There exists $\tilde{w} \in \mathrm{Fix}(\tilde{G})$
by Corollary \ref{cor:plane}.  Its projection $w$ in ${\mathbb R}^{2}$ is a global fixed point of $G$
whose $f_{|{\mathbb R}^{2} \setminus K_0}$-Nielsen class is compactly covered and hence 
$w \in \mathrm{Fix}(G) \cap P(K_0, f)$.
\end{proof} 
\section{Irrotational nature of derived groups}
\label{sec:irr}
In order to find global fixed points for a nilpotent subgroup $G$ of $\mathrm{Diff}_{+}^{1}(S)$,
we are going to work with compact $G$-invariant sets $K$ contained in the derived group 
$G'$.  In this section we see that any $\phi \in G'$ is isotopic to identity rel $\mathrm{Fix}(G')$
and hence rel $K$. This implies that the group induced by $G$ in the group of isotopy classes
rel $K$ is an image by a group morphism of $G/G'$ and in particular it is abelian. 
Such a property will be quite useful.

In this section $S$ is an orientable compact surface. 
The main result of the section is next
proposition.
\begin{teo}
\label{teo:deriso}
Let $G$ be a nilpotent subgroup of $\mathrm{Diff}_{+}^{1}(S)$
such that $G'$ is contained in $\mathrm{Diff}_{0}^{1}(S)$.
Consider $\phi \in {\mathcal C}^{(k)}(G)$ with $k \geq 1$. Then $\phi$ is isotopic to
$\mathrm{Id}$ rel $\mathrm{Fix}(\phi) \cap \mathrm{Fix}({\mathcal C}^{(k+1)}(G))$.
In particular $\phi$ is isotopic to
$\mathrm{Id}$ rel $\mathrm{Fix}({\mathcal C}^{(k)}(G))$.
\end{teo}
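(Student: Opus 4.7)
I would argue by downward induction on $k$, using the distortion identity of Lemma \ref{lem:dist} to realize $[\phi]$ as a distortion element in a relative mapping class group, and then invoke a $C^{1}$ obstruction to distortion in the spirit of \cite{FHP-g}.

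The base case is $k$ larger than the nilpotency class of $G$, where the statement is vacuous. Fix $k\geq 1$, assume the result for $k+1$, take $\phi\in{\mathcal C}^{(k)}(G)$, and set $K:=\mathrm{Fix}(\phi)\cap\mathrm{Fix}({\mathcal C}^{(k+1)}(G))$. My first task is to check that $K$ is $G$-invariant: for every $g\in G$ one has $g\phi g^{-1}=[g,\phi]\phi$ with $[g,\phi]\in[{\mathcal C}^{(k)}(G),G]={\mathcal C}^{(k+1)}(G)$, so $[g,\phi]$ acts trivially on $\mathrm{Fix}({\mathcal C}^{(k+1)}(G))$; combined with the $G$-invariance of $\mathrm{Fix}({\mathcal C}^{(k+1)}(G))$ provided by Remark \ref{rem:inv_nor}, this yields $g(K)=K$. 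Writing $\phi=[g_{1},h_{1}]\cdots[g_{r},h_{r}]$ with each pair meeting the hypothesis of Lemma \ref{lem:dist}, the lemma produces, for every $m\in{\mathbb N}$, an element $\alpha_{m}\in{\mathcal C}^{(k+1)}(G)$ satisfying
\[ \phi^{m^{2}}=[g_{1}^{m},h_{1}^{m}]\cdots[g_{r}^{m},h_{r}^{m}]\cdot\alpha_{m}. \]
By the inductive hypothesis $\alpha_{m}$ is isotopic to $\mathrm{Id}$ rel $\mathrm{Fix}({\mathcal C}^{(k+1)}(G))\supset K$, hence $[\alpha_{m}]=1$ in $\mathrm{MCG}(S,K)$.

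Pushing the identity above down to $\mathrm{MCG}(S,K)$ gives $[\phi]^{m^{2}}=[g_{1}^{m},h_{1}^{m}]\cdots[g_{r}^{m},h_{r}^{m}]$ for every $m\in{\mathbb N}$, and the right-hand side has word length at most $4rm$ with respect to the finite symmetric set $\{[g_{i}]^{\pm 1},[h_{i}]^{\pm 1}\}$; hence $[\phi]$ is a (sub-linearly) distorted element of $\mathrm{MCG}(S,K)$. The last step is to upgrade this to $[\phi]=1$, which by the standard correspondence between isotopy and mapping class triviality yields the required isotopy rel $K$, and therefore rel $\mathrm{Fix}({\mathcal C}^{(k)}(G))\subset K$. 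For this I would appeal to the $C^{1}$ obstruction to distortion in relative mapping class groups used by Franks, Handel and Parwani in the abelian setting of \cite{FHP-g}: a pseudo--Anosov component in the Nielsen--Thurston decomposition of $[\phi]$ is ruled out because exponential stretching is incompatible with polynomial distortion under a $C^{1}$ representative, and any non-trivial periodic or multitwist component would contradict the fact that $\phi\in G'\subset\mathrm{Diff}_{0}^{1}(S)$ is globally isotopic to $\mathrm{Id}$.

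The main obstacle I anticipate is precisely this final step. The set $K$ need not be finite or even discrete, so the Nielsen--Thurston technology in $\mathrm{MCG}(S,K)$ must first be extended to $G$-invariant compact subsets of infinite type, which is the generalization alluded to in the outlook. Once such an extension is in place the $C^{1}$ distortion obstruction of \cite{FHP-g} transports to our relative, nilpotent setting, the induction closes, and the ``in particular'' clause follows from the inclusion $\mathrm{Fix}({\mathcal C}^{(k)}(G))\subset K$.
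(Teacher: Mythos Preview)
Your setup---downward induction on $k$ and the use of Lemma~\ref{lem:dist} to write $[\phi]^{m^{2}}=\prod_{i}[[g_{i}]^{m},[h_{i}]^{m}]$ in $\mathrm{MCG}(S,K)$ once $[\alpha_{m}]=1$---is exactly what the paper does. The gap is entirely in your ``final step,'' and one of the claims you make there is wrong.

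You assert that a non-trivial multitwist component of $[\phi]$ would contradict $\phi$ being globally isotopic to $\mathrm{Id}$. This is false: a Dehn twist along a curve $\gamma\subset S\setminus K$ that is essential in $S\setminus K$ but bounds a disc in $S$ containing points of $K$ is trivial in $\mathrm{MCG}(S)$ yet non-trivial in $\mathrm{MCG}(S,K)$. Global isotopy to $\mathrm{Id}$ gives you nothing here, and since $K$ can be infinite no general undistortion theorem for Dehn twists in $\mathrm{MCG}(S,K)$ is available either. This is the heart of the proof, and the paper handles it by a different mechanism: it lifts the $g_{i},h_{i}$ to the universal cover of the component $B$ of $S\setminus K$ containing the twist curve, observes that together with the relevant deck transformation $T$ they generate a \emph{nilpotent} group, so the translation number along each ideal boundary line is a group homomorphism. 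A product of commutators (times $\tilde{\alpha}$, which has zero boundary translation number by the inductive hypothesis) therefore has equal translation numbers on the two sides of the twist annulus, contradicting Lemma~\ref{lem:trans}, which says a genuine Dehn twist does not.

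For the non-annular pieces the paper also does more than your sketch suggests. It invokes a Thurston decomposition of $\phi$ rel $K$ (valid even for infinite $K$, from \cite{JR:arxivsp}); on each finite-type piece $M_{0}$ the image of the stabilizer of $M_0$ in $G$ is virtually abelian by \cite{BLM}, so for large $m$ the commutators $[[g_{i}]^{m},[h_{i}]^{m}]$ actually vanish there and $[\phi]_{M_{0}}$ is \emph{periodic}, not merely distorted. That periodicity, together with $\phi\simeq\mathrm{Id}$ in $S$, forces $[\phi]_{M_{0}}=1$ via Lemma~\ref{lem:per-inv-mcg}, whose proof (Kerckhoff realization plus a fixed-point count for finite-order homeomorphisms of the sphere and torus) is not the one-liner your appeal to ``globally isotopic to $\mathrm{Id}$'' suggests.
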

\begin{cor}
\label{cor:cont}
Let $G$ be a nilpotent subgroup of $\mathrm{Diff}_{+}^{1}(S)$
such that $G'$ is contained in $\mathrm{Diff}_{0}^{1}(S)$.
Consider $\phi$ in ${\mathcal C}^{(k)}(G)$ with $k \geq 1$. Then
$\mathrm{Fix}(\phi) \cap \mathrm{Fix}({\mathcal C}^{(k+1)}(G)) \subset \mathrm{Fix}_{c}(\phi)$.
\end{cor}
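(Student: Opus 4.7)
The plan is to deduce the corollary immediately from Theorem \ref{teo:deriso} by translating a pointwise-fixing isotopy into a statement about the identity lift. Since $\mathrm{Fix}_{c}$ is only defined in the setting of Definition \ref{def:identity}, the statement is understood under the assumption $\chi(S)<0$.

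Fix a point $z \in \mathrm{Fix}(\phi) \cap \mathrm{Fix}({\mathcal C}^{(k+1)}(G))$. By Theorem \ref{teo:deriso} there exists an isotopy $(\phi_{t})_{t\in [0,1]}$ in $\mathrm{Homeo}(S)$ from $\phi_{0}=\mathrm{Id}_{S}$ to $\phi_{1}=\phi$ that is the identity on the set $\mathrm{Fix}(\phi)\cap \mathrm{Fix}({\mathcal C}^{(k+1)}(G))$ at every time; in particular the path $t \mapsto \phi_{t}(z)$ is constant equal to $z$. Let $\pi\colon \tilde{S}\to S$ denote the universal covering map, fix any $\tilde{z} \in \pi^{-1}(z)$, and let $(\tilde{\phi}_{t})_{t\in [0,1]}$ be the unique lift of $(\phi_{t})_{t\in [0,1]}$ with $\tilde{\phi}_{0}=\mathrm{Id}_{\tilde{S}}$. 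By Definition \ref{def:identity}, $\tilde{\phi}_{1}$ coincides with the identity lift $\tilde{\phi}$ of $\phi$. The map $t \mapsto \tilde{\phi}_{t}(\tilde{z})$ is a continuous lift, starting at $\tilde{z}$, of the constant path $t \mapsto \phi_{t}(z)=z$, so by uniqueness of path lifting it is itself constant at $\tilde{z}$. Evaluating at $t=1$ gives $\tilde{\phi}(\tilde{z})=\tilde{z}$, and hence $z = \pi(\tilde{z}) \in \pi(\mathrm{Fix}(\tilde{\phi})) = \mathrm{Fix}_{c}(\phi)$, as required.

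There is no substantive obstacle to this argument: once Theorem \ref{teo:deriso} is in hand, the proof is simply the standard path-lifting identification between contractible fixed points and points lying on a stationary fiber of an isotopy issued from the identity. All of the nilpotent-group content of the statement has been packaged into the preceding theorem, so the corollary is essentially a definitional unraveling of the phrase ``isotopic to $\mathrm{Id}$ rel $\mathrm{Fix}(\phi) \cap \mathrm{Fix}({\mathcal C}^{(k+1)}(G))$''.
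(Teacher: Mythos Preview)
Your proof is correct and is exactly the intended argument: the paper states this corollary without proof, as an immediate consequence of Theorem \ref{teo:deriso}, and the only thing to check is the standard fact that a point fixed throughout an isotopy from $\mathrm{Id}$ lifts to a fixed point of the identity lift. Your remark that the statement implicitly requires $\chi(S)<0$ (so that $\mathrm{Fix}_{c}$ is defined via Definition \ref{def:identity}) is also appropriate.
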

\begin{cor}
\label{cor:cder}
Let $G$ be a nilpotent subgroup of $\mathrm{Diff}_{+}^{1}(S)$
such that $G'$ is contained in $\mathrm{Diff}_{0}^{1}(S)$.
Then we obtain $\mathrm{Fix}({\mathcal C}^{(k)}(G)) = \mathrm{Fix}_{c}({\mathcal C}^{(k)}(G))$
for any $k \geq 1$.
\end{cor}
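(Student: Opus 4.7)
The plan is to deduce Corollary \ref{cor:cder} directly from Corollary \ref{cor:cont} together with the tautological inclusion $\mathrm{Fix}_{c}(\phi) \subset \mathrm{Fix}(\phi)$. Both inclusions are required, but only one has content; the bulk of the work has already been done in Theorem \ref{teo:deriso} and Corollary \ref{cor:cont}.

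First I would verify the easy inclusion $\mathrm{Fix}_{c}({\mathcal C}^{(k)}(G)) \subset \mathrm{Fix}({\mathcal C}^{(k)}(G))$. Every element of ${\mathcal C}^{(k)}(G)$ with $k \geq 1$ belongs to $G' \subset \mathrm{Diff}_{0}^{1}(S)$, so its identity lift $\tilde{\phi}$ is well defined, and any point in $\pi(\mathrm{Fix}(\tilde{\phi}))$ is automatically fixed by $\phi = \pi \circ \tilde{\phi} \circ \pi^{-1}$. Intersecting over $\phi \in {\mathcal C}^{(k)}(G)$ gives the inclusion.

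For the nontrivial direction, fix $z \in \mathrm{Fix}({\mathcal C}^{(k)}(G))$ and an arbitrary $\phi \in {\mathcal C}^{(k)}(G)$. Since the descending central series is decreasing, ${\mathcal C}^{(k+1)}(G) \subset {\mathcal C}^{(k)}(G)$, and hence $z$ lies in both $\mathrm{Fix}(\phi)$ and $\mathrm{Fix}({\mathcal C}^{(k+1)}(G))$. Corollary \ref{cor:cont} then yields $z \in \mathrm{Fix}_{c}(\phi)$. As $\phi$ was arbitrary in ${\mathcal C}^{(k)}(G)$, we conclude $z \in \mathrm{Fix}_{c}({\mathcal C}^{(k)}(G))$, which completes the reverse inclusion.

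There is essentially no obstacle: the only thing to be careful about is that $\mathrm{Fix}_{c}$ only makes sense for elements of $\mathrm{Homeo}_{0}(S)$, but this is guaranteed for $k \geq 1$ by the hypothesis $G' \subset \mathrm{Diff}_{0}^{1}(S)$ together with ${\mathcal C}^{(k)}(G) \subset G'$. The substantive content has already been packaged into Corollary \ref{cor:cont} (and ultimately into Theorem \ref{teo:deriso}), so the proof reduces to this bookkeeping argument.
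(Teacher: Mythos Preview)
Your proof is correct and matches the paper's intended argument: the paper states Corollary \ref{cor:cder} immediately after Corollary \ref{cor:cont} without giving a separate proof, precisely because the deduction you wrote—using ${\mathcal C}^{(k+1)}(G) \subset {\mathcal C}^{(k)}(G)$ to place any $z \in \mathrm{Fix}({\mathcal C}^{(k)}(G))$ inside $\mathrm{Fix}(\phi) \cap \mathrm{Fix}({\mathcal C}^{(k+1)}(G))$ and then invoking Corollary \ref{cor:cont}—is the obvious one.
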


In the proof of Theorem \ref{teo:deriso} we associate some isotopy classes to 
$\phi \in  {\mathcal C}^{(k)}(G)$. We will show that they are trivial by using the nilpotence of $G$
to see that they have finite order and then next lemma.

\begin{lem}
\label{lem:per-inv-mcg}
Let $f$ be an orientation-preserving homeomorphism  
of an orientable finitely punctured compact surface $S$ 
such that $f$ is isotopic to  $\mathrm{Id}$.  
Suppose that there exist a finite subset $F$ of $\mathrm{Fix}(f)$ and  
$k \in {\mathbb N}$ such that  $\chi (S \setminus F) <0$ and $f^{k}$ is isotopic to $\mathrm{Id}$
relative to $F$. Then $f$ is isotopic to $\mathrm{Id}$ rel to $F$. 
\end{lem}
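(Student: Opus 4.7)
The plan is to reformulate the conclusion as the vanishing of the class $[f]$ in $\mathrm{MCG}(S,F)$, the group of isotopy classes of homeomorphisms of $S$ rel $F$. The hypothesis $f^{k}\sim\mathrm{Id}$ rel $F$ says $[f]^{k}=1$, so $[f]$ is torsion; the hypothesis $f\sim\mathrm{Id}$ (without the rel $F$ constraint) says $[f]$ lies in the kernel of the forgetful map $\mathrm{MCG}(S,F)\to\mathrm{MCG}(S)$. The goal is $[f]=1$. Since $\chi(S\setminus F)<0$, the surface $S\setminus F$ carries a complete hyperbolic metric of finite area, and I can apply Nielsen realization to the finite cyclic subgroup $\langle[f]\rangle\subset\mathrm{MCG}(S\setminus F)$ to produce $g$ isotopic to $f$ rel $F$ that is an isometry of such a metric; in particular $g$ has finite order in $\mathrm{Homeo}(S\setminus F)$.

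This isometry extends through the cusps to a finite-order orientation-preserving homeomorphism $\hat g$ of the compactification $\hat S$ (the compact surface obtained by filling the punctures of $S$ with a finite set $P$), and fixes $F\cup P$ pointwise. Because $f\sim\mathrm{Id}$ on $S$ the end-permutation induced by $f$ is trivial, so the isotopy of $f$ extends to an isotopy on $\hat S$; combined with $g\sim f$ rel $F$ this gives $\hat g\sim\mathrm{Id}_{\hat S}$ on $\hat S$.

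Now I argue $g=\mathrm{Id}$ by a Lefschetz count. If $g\neq\mathrm{Id}$, Newman's theorem forces $\mathrm{Fix}(\hat g)$ to have empty interior in $\hat S$, and the Brouwer--Ker\'ekj\'art\'o local normal form for periodic orientation-preserving planar homeomorphisms shows every fixed point of $\hat g$ is isolated with Lefschetz index $+1$. The Lefschetz fixed point formula applied to $\hat g\sim\mathrm{Id}_{\hat S}$ yields
\[
|\mathrm{Fix}(\hat g)|\;=\;L(\hat g)\;=\;L(\mathrm{Id}_{\hat S})\;=\;\chi(\hat S).
\]
Since $F\cup P\subset\mathrm{Fix}(\hat g)$, I obtain $\chi(\hat S)\geq|F|+|P|$, that is $\chi(S\setminus F)\geq 0$, contradicting the hypothesis. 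Hence $g=\mathrm{Id}$ and $[f]=1$, so $f$ is isotopic to $\mathrm{Id}$ rel $F$. The main obstacle I expect is setting up Nielsen realization on the noncompact hyperbolic surface $S\setminus F$ (with cusps corresponding both to $F$ and to the original punctures of $S$) and checking that the realizing isometry extends across those cusps to a finite-order homeomorphism of $\hat S$ with the expected local behavior at $F\cup P$; once this is in place the Lefschetz count is immediate.
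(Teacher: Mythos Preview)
Your proof is correct and shares the same opening move as the paper: invoke Nielsen realization (Kerckhoff) on the finite cyclic group $\langle [f]\rangle\subset\mathrm{MCG}(S\setminus F)$ to replace $f$ by a finite-order isometry $g$ of a complete hyperbolic metric on $S\setminus F$, then extend $g$ across the cusps to a finite-order homeomorphism $\hat g$ of the closed surface $\hat S$. The difference lies in how one concludes $\hat g=\mathrm{Id}$. The paper proceeds by a case split on $\chi(S)$: for $\chi(S)<0$ it uses that a finite-order element of $\mathrm{Homeo}_{+}(S)$ isotopic to the identity must be the identity; for $\chi(S)\geq 0$ it treats the sphere with at most two punctures and the torus separately, each time reducing to the fact that a finite-order orientation-preserving homeomorphism of the sphere with at least three fixed points is the identity. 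Your Lefschetz count handles all cases at once: if $\hat g\neq\mathrm{Id}$ then every fixed point has index $+1$, so $|F|+|P|\leq|\mathrm{Fix}(\hat g)|=L(\hat g)=\chi(\hat S)$, giving $\chi(S\setminus F)\geq 0$. This is a more uniform argument, and in fact the paper's sphere fact is the special case $\chi(\hat S)=2$ of your inequality.
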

\begin{proof}
In order to show the result we can 
replace $f$ with another element in $\mathrm{Homeo}(S)$ that is isotopic to $f$ rel $F$. 
Thus, we can suppose that $f_{|S \setminus F}$ is an isometry of a hyperbolic metric 
in $S \setminus F$ such that $f^{k} \equiv \mathrm{Id}$ by Kerckhoff's solution to the 
Nielsen realization problem \cite{Kerck}.  

Suppose $\chi (S) <0$. Thus  $f$ is the identity map 
since $f$ is a finite order element of $\mathrm{Homeo}_{+}(S)$ that is isotopic to  $\mathrm{Id}$.

Suppose $\chi (S) \geq 0$. Then either $S$ is a finitely punctured sphere 
${\mathbb S}^{2} \setminus A$ with  
$\sharp A \leq 2$ or $S$ is a torus ${\mathbb T}^{2}$. 
Suppose that we are in the former case. The result is obvious if 
$\sharp (A \cup F) \leq 3$ since $A \cup F \subset \mathrm{Fix} (f)$ and the 
mapping class group of a sphere minus $k$ points ($k \leq 3$) is isomorphic to the group of
permutations $S_k$. On the other hand, if $\sharp (A \cup F) > 3$, $f$ is an 
orientation-preserving finite order homeomorphism of the sphere 
with more than three fixed points and thus  $f \equiv \mathrm{Id}$.

Finally, let us assume $S= {\mathbb T}^{2}$. In such a case, we consider 
a lift $\tilde{f}$ of $f$ to the universal covering $\tilde{f}$ such that $\tilde{f} (\tilde{z})= \tilde{z}$
where $\tilde{z}$ is a lift of a point $z \in F$. 
Since $f^{k} \equiv \mathrm{Id}$, we obtain $\tilde{f}^{k} \equiv \mathrm{Id}$. 
Moreover, since $f$ is isotopic to $\mathrm{Id}$, it follows that any point in 
$\tilde{z} + {\mathbb Z}^{2}$ is contained in $\mathrm{Fix}(\tilde{f})$.
By compactifying ${\mathbb R}^{2}$ by adding a point at $\infty$, we can interpret 
$\tilde{f}$ as a finite order orientation-preserving homeomorphism of a sphere with infinitely many 
fixed points. Therefore $\tilde{f}$ is the identity map and $f$ is also the identity map.
\end{proof}
\subsection{Translation numbers} 
\label{subsec:translation}
In this section we study homeomorphisms $\phi \in \mathrm{Homeo}_{+} (S)$
that are isotopic to a product of finitely many Dehn twists.
Specifically, we examine translation numbers induced in the ideal boundary
of the universal cover of a connected component of $S \setminus K$ where 
$K$ is a compact subset of $\mathrm{Fix}(\phi)$.


Let $\phi \in \mathrm{Homeo}_{+} (S)$ where $S$ is an orientable compact  
surface of negative
Euler characteristic. Let $K$ be a compact subset of $\mathrm{Fix} (\phi)$. 
Moreover, assume that there are  
essential simple closed curves $\gamma_1, \hdots, \gamma_m$
in $S \setminus K$ such that the following properties hold:
\begin{itemize}
\item $\gamma_1, \hdots, \gamma_m$ are pairwise non-homotopic in $S \setminus K$;
\item There are pairwise disjoint closed annuli $A_1, \hdots, A_m$ in $S \setminus K$ whose
core curves are $\gamma_1, \hdots, \gamma_m$ respectively;
\item $\phi$ is isotopic to $f$ rel $K$, where $f_{|S \setminus \cup_{l=1}^{m} A_l}$ is the 
identity map and $f_{|A_l}$ is a Dehn twist for any $1 \leq l \leq m$. 
\end{itemize}
Consider the connected component $B$ of $S \setminus K$ containing $\gamma:=\gamma_{1}$.
We denote by $\gamma^{1}$ and $\gamma^{2}$ the connected components of  $\partial A_{1}$.
Consider a lift $\tilde{A}_1$ of $A_1$ to the universal covering $\tilde{B}$ of $B$
and denote by $\tilde{\gamma}$, $\tilde{\gamma}^{1}$ and $\tilde{\gamma}^{2}$ the lifts 
of $\gamma$, $\gamma^1$ and $\gamma^2$ respectively, contained in $\tilde{A}_1$.
Let $T$ be a deck transformation 
associated to $\gamma$ and such that 
$T(\tilde{A}_1)=\tilde{A}_1$. 

There are two possible cases, namely $B$ is an annulus or $\chi (B) <0$.   
First, suppose that $B$ is an annulus. 
Consider the prime end compactification $\hat{B}$ of $B$ in $S$ (cf. \cite{epstein:prime});
it is a closed annulus.  Let $\pi: \overline{B} \to \hat{B}$ the 
universal covering map. Then $\overline{B}$ is the union of $\tilde{B}$ and two 
disjoint topological lines $\partial_1$ and $\partial_2$. 
We can suppose that $\partial_l$ is in the same connected component of 
$\overline{B} \setminus \tilde{\gamma}$ as $\tilde{\gamma}^{l}$ for $l \in \{1,2\}$.

Suppose  $\chi (B) <0$.   
Then $B$ admits a complete hyperbolic metric such that 
if we consider the universal covering map $\pi: {\mathbb D} \to B$, where ${\mathbb D}$
is the hyperbolic disc, then any homeomorphism $g \in \mathrm{Homeo} (B)$ satisfies 
that any lift $\tilde{g}$ of $g$ to ${\mathbb D}$ has a continuous extension to a homeomorphism
of $\overline{\mathbb D} = {\mathbb D} \cup S_{\infty}$ \cite{Cantwell-Conlon:lifts}.
We stress that we do not need to suppose $\chi (B) \neq - \infty$. 
 Notice that $T$ is not parabolic since 
otherwise $\gamma$ is inessential in $S \setminus K$. 
 As a consequence we can suppose that $\gamma$ is a geodesic.
 
We define the quotient $\hat{B}$ of $\overline{\mathbb D} \setminus \mathrm{Fix}(T)$ by the group
$\langle T \rangle$. The set $\hat{B}$ is a closed annulus and the natural mapping
$\hat{\pi}: \overline{\mathbb D} \setminus \mathrm{Fix}(T) \to \hat{B}$ 
is its universal covering map.
The axis $\tilde{\gamma}$ of $T$ is a lift of $\gamma$. 
Moreover, $\tilde{\gamma}^{1}$ and $\tilde{\gamma}^{2}$ are simple curves such that
$\partial \tilde{\gamma}^{1} = \partial \tilde{\gamma}^{2} = \mathrm{Fix}(T)$.
Consider the connected component 
$E_{l}$ of ${\mathbb D} \setminus \tilde{\gamma}^{l}$ that
does not contain $\tilde{\gamma}$ for $l \in \{1,2\}$.
We define $\partial_{l} = (\partial E_{l} \cap S_{\infty}) \setminus \mathrm{Fix}(T)$
for $l \in \{1,2\}$.
 
Consider a lift   $\tilde{f}$ of   $f$   to $\tilde{B}$ 
that commutes with $T$. In both cases, 
the sets $\tilde{\gamma}^{1}$, $\tilde{\gamma}^{2}$, $\partial_1$ and $\partial_2$ are 
$\tilde{f}$-invariant. Hence, 
we can define translation numbers  $\rho (\tilde{f}, \tilde{\gamma}^{1})$, 
$\rho (\tilde{f}, \tilde{\gamma}^{2})$, 
$\rho (\tilde{f}, \partial_1)$ and  $\rho (\tilde{f}, \partial_2)$.
\begin{rem}
\label{rem:partial_rot}
Let $(I_t)_{t \in [0,1]}$ the isotopy from $f$ to $\phi$ rel $K$. 
The isotopy $(I_t)_{t \in [0,1]}$ and $\tilde{f}$ induce a lift $\tilde{\phi}$ to $\tilde{B}$.
We have 
$\tilde{\phi}_{|\overline{B} \setminus \tilde{B}} \equiv \tilde{f}_{|\overline{B} \setminus \tilde{B}}$
if $B$ is an annulus by the properties of the prime end compactification. 
Analogously,   $\tilde{\phi}_{|S_{\infty}} \equiv \tilde{f}_{|S_{\infty}}$ holds
if $\chi (B) <0$ (cf. \cite{Cantwell-Conlon:lifts}).
In particular, we obtain
$\rho (\tilde{\phi}, \partial_l) = \rho (\tilde{f}, \partial_l)$ for any $l \in \{1,2\}$.
\end{rem}
 \begin{lem}
 \label{lem:trans}
 Consider the above setting. Then 
 $\rho (\tilde{f}, \tilde{\gamma}^{1})=\rho (\tilde{f}, \partial_1)$ and 
  $\rho (\tilde{f}, \tilde{\gamma}^{2})=\rho (\tilde{f}, \partial_2)$.
 \end{lem}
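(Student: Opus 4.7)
I focus on proving $\rho(\tilde{f},\tilde{\gamma}^{1}) = \rho(\tilde{f},\partial_{1})$; the analog for superscript~$2$ is symmetric. Because $f|_{A_{1}}$ is a Dehn twist, it restricts to the identity on $\partial A_{1}$ and in particular on $\gamma^{1}$. Hence $\tilde{f}|_{\tilde{\gamma}^{1}}$ is a deck transformation of the cyclic covering $\tilde{\gamma}^{1} \to \gamma^{1}$, whose deck group is generated by $T|_{\tilde{\gamma}^{1}}$. Consequently $\tilde{f}|_{\tilde{\gamma}^{1}} = T^{k}$ for some integer $k$, giving $\rho(\tilde{f},\tilde{\gamma}^{1}) = k$. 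Replacing $\tilde{f}$ by $g := T^{-k}\tilde{f}$, which is another lift of $f$ commuting with $T$ and satisfies $g|_{\tilde{\gamma}^{1}} = \mathrm{Id}$, it suffices to prove $\rho(g,\partial_{1}) = 0$.

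\emph{Annular case.} Suppose $B$ is an annulus. The pairwise non-homotopy of $\gamma_{1},\ldots,\gamma_{m}$ in $S \setminus K$ implies that no $\gamma_{l}$ with $l \neq 1$ lies in $B$, since any two essential simple closed curves in an annulus are freely homotopic. Hence $f$ is the identity on the connected component $W$ of $B \setminus A_{1}$ on the $\gamma^{1}$-side. Lifting, $\tilde{f}$ is a deck transformation on the connected component $\tilde{W}$ of $\tilde{B} \setminus \tilde{A}_{1}$ having $\tilde{\gamma}^{1}$ in its closure; continuity with $\tilde{f}|_{\tilde{\gamma}^{1}} = T^{k}$ forces this deck transformation to be $T^{k}$, so $g \equiv \mathrm{Id}$ on $\tilde{W}$. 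Because $g$ extends continuously to the prime-end boundary (as recalled just above Remark \ref{rem:partial_rot}) and $\partial_{1} \subset \overline{\tilde{W}}$, we conclude $g|_{\partial_{1}} = \mathrm{Id}$, whence $\rho(g,\partial_{1}) = 0$.

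\emph{Hyperbolic case.} Assume $\chi(B) < 0$. Let $V := \overline{E_{1}}$ and let $\{\tilde{A}_{l,j}\}$ be the connected components of $\pi^{-1}(\bigcup_{l} A_{l}) \cap V$, each a topological strip in $\mathbb{D}$. Since the $A_{l}$'s are pairwise disjoint, a tubular neighborhood of $\tilde{\gamma}^{1}$ in $V$ is disjoint from every $\tilde{A}_{l,j}$, so there is a unique connected component $C$ of $V \setminus \bigcup_{l,j}\tilde{A}_{l,j}$ whose closure contains $\tilde{\gamma}^{1}$. On $\pi(C) \subset B \setminus \bigcup_{l} A_{l}$ the map $f$ is the identity, hence $\tilde{f}|_{C}$ is a single deck transformation; continuity with $\tilde{f}|_{\tilde{\gamma}^{1}} = T^{k}$ makes it $T^{k}$, so $g|_{C} = \mathrm{Id}$. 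The decisive step is that $\overline{C}$ meets $\partial_{1}$: the closure in $\overline{\mathbb{D}}$ of each strip $\tilde{A}_{l,j}$ adds only the two endpoints on $S_{\infty}$ of the axis of a primitive hyperbolic element $\tau_{l,j}$ of the Fuchsian group of $B$. If those endpoints coincided with $\{T^{+},T^{-}\}$, then $\tau_{l,j}$ would preserve the axis of $T$ and therefore lie in $\langle T\rangle$, which is the full stabilizer of $\{T^{+},T^{-}\}$ in a torsion-free Fuchsian group with $T$ primitive. This would force either $\tilde{A}_{l,j} = \tilde{A}_{1}$ (impossible because $\tilde{A}_{1}$ does not lie on the $V$-side of $\tilde{\gamma}^{1}$) or $\gamma_{l}$ to be freely homotopic to $\gamma$ in $B \subset S \setminus K$ (contradicting pairwise non-homotopy). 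Therefore small neighborhoods in $\overline{\mathbb{D}}$ of $T^{+}$ and $T^{-}$ avoid every $\tilde{A}_{l,j}$, and their intersection with $V$ lies in $C$. Consequently $\overline{C}$ contains nondegenerate arcs of $\partial_{1}$ approaching $T^{\pm}$. Since $g$ extends continuously to $\overline{\mathbb{D}}$ by \cite{Cantwell-Conlon:lifts} and vanishes on $C$, it fixes pointwise those arcs of $\partial_{1}$, which forces $\rho(g,\partial_{1}) = 0$.

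The main obstacle is the last step in the hyperbolic case: verifying $\overline{C} \cap \partial_{1} \neq \emptyset$. This rests on the fact that distinct primitive hyperbolic elements of the torsion-free Fuchsian group $\pi_{1}(B)$ cannot share fixed points on $S_{\infty}$, combined with the pairwise non-homotopy hypothesis on the $\gamma_{l}$'s to rule out coincidence of the endpoints of the strips $\tilde{A}_{l,j}$ with those of $\tilde{\gamma}^{1}$.
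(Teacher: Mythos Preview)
Your reduction to a lift $g$ with $g|_{\tilde\gamma^{1}}=\mathrm{Id}$ and your treatment of the annular case are correct and match the paper (which calls that case ``obvious'').

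In the hyperbolic case there is a genuine gap. From ``no single strip $\tilde A_{l,j}$ has its pair of endpoints equal to $\{T^{+},T^{-}\}$'' you conclude that ``small neighborhoods of $T^{+}$ and $T^{-}$ in $\overline{\mathbb D}$ avoid every $\tilde A_{l,j}$''. This implication is false: applying $T^{n}$ to any fixed strip in $E_{1}$ produces a sequence of strips whose endpoints both converge to the attracting fixed point $T^{+}$, so the strips (indeed even the frontier strips of $C$, since $T$ preserves $C$) accumulate on $T^{\pm}$. Thus your claimed arcs of $\partial_{1}$ near $T^{\pm}$ lying in $\overline C$ are not justified.

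The paper bypasses this entirely with a path argument: choose any path $\beta:[0,1]\to\overline{\mathbb D}$ with $\beta(0)\in\tilde\gamma^{1}$, $\beta((0,1))\subset E_{1}$ and $\beta(1)\in\partial_{1}$. If $\beta((0,1))$ never meets a lift of $\cup_{l}A_{l}$, then $g$ is the identity along $\beta$ and in particular $g(\beta(1))=\beta(1)\in\partial_{1}$. Otherwise let $t_{0}$ be the infimum of such meeting times; then $\beta(t_{0})$ lies on a lift of some $\partial A_{l}$, which is a $T'$-invariant curve for a primitive deck transformation $T'\neq T$. Since $g$ is the identity on that curve, $g$ fixes its two endpoints on $S_{\infty}$, and these lie in $\partial_{1}$ because $T'\neq T$ and the curve is contained in $E_1$. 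Either way $g$ has a fixed point on $\partial_{1}$, so $\rho(g,\partial_{1})=0$. Your argument can be repaired in the same spirit: if $C$ has any frontier strip, the boundary arc of that strip lies in $\overline C$ (hence is fixed by $g$) and its endpoints are in $\partial_{1}$; if there are no strips in $E_{1}$, then $g|_{E_{1}}=\mathrm{Id}$ and continuity gives $g|_{\partial_{1}}=\mathrm{Id}$.
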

 \begin{proof}
 Assume $\chi (B) <0$ since otherwise the result is obvious.
 Let us show  $\rho (\tilde{f}, \tilde{\gamma}^{1})=\rho (\tilde{f}, \partial_1)$, the proof of 
 the other equality is analogous. 

 Let  $\tilde{f}_{1}$ be a lift of $f$ to ${\mathbb D}$ such that $\tilde{f}_{1}$ is the identity restricted to
$\tilde{\gamma}^{1}$. Since $\tilde{f}$ and $\tilde{f}_{1}$ commute with $T$ and the 
centralizer of $T$ in the group of deck transformations is $\langle T \rangle$, 
we deduce that $\tilde{f}_1 = T^{j} \circ \tilde{f}$ for some $j \in {\mathbb Z}$.
In particular, it suffices to show 
$\rho (\tilde{f}_1, \tilde{\gamma}^{1})=\rho (\tilde{f}_1, \partial_1)$.

Consider a path $\beta: [0,1] \to \overline{\mathbb D}$ such that
$\beta(0) \in \tilde{\gamma}^{1}$, $\beta(0,1) \subset E_{1}$ and
$\beta(1) \in   \partial_{1}$.
Consider the set $F= \{t \in (0,1) : \pi(\beta(t)) \in A_{1} \cup \hdots \cup A_{m} \}$.
If $F$ is empty then $\tilde{f}_{1}$ is the identity by restriction to $\beta[0,1]$ and
in particular the point $\beta(1)$ is a fixed point of $\tilde{f}_{1}$.
Since $\tilde{f}_{1}$ has fixed points in $\tilde{\gamma}^{1}$ and
$\partial_{1}$ then the translation numbers of
$\tilde{f}_{1}$ at $\tilde{\gamma}^{1}$ and
$\partial_{1}$ are both zero. 

So, we can assume $F \neq \emptyset$.  Let $t_{0} = \inf (F)$. The point
$\beta(t_{0})$ belongs to a lift of a component of some $\partial A_{l}$, hence
$\tilde{f}_{1}$ fixes the points in an axis of a covering transformation $T'$ associated to
some $\gamma_{l}$. Since clearly $T \neq T'$ then $\tilde{f}_{1}$ fixes two points in $\partial_{1}$
and $\rho (\tilde{f}_1, \tilde{\gamma}^{1})=\rho (\tilde{f}_1, \partial_1)=0$.
 \end{proof}

\subsection{Proof of Theorem \ref{teo:deriso}}
We denote ${\mathcal C}^{(j)} = {\mathcal C}^{(j)}(G)$ for simplicity.
Let $q$ be the nilpotency class of $G$. 
Since  ${\mathcal C}^{(q)} = \{ \mathrm{Id} \}$,
the result is obviously true for $k \geq q$.
Let us prove that if the result holds true for $k=j+1$ and $j \geq 1$ then it holds true for $k=j$.
An element $\phi \in {\mathcal C}^{(j)}$ is of the form
\[  \phi = [g_{1},h_{1}] \hdots [g_{m},h_{m}] \]
where either $(g_l , h_l)$ or $(h_l, g_l)$ belong to ${\mathcal C}^{(j-1)} \times G$ 
for any $1 \leq l \leq m$. 
Consider the diffeomorphisms $\alpha_k \in {\mathcal C}^{(j+1)}$ provided by Lemma 
\ref{lem:dist} for $k \geq 1$.

Consider the set $K= \mathrm{Fix}(\phi) \cap \mathrm{Fix}({\mathcal C}^{(j+1)})$.
If $K = \emptyset$ there is nothing to prove. Suppose that $K$ is a nonempty set.
Consider the Thurston decomposition of $\phi$ rel to $K$
(\cite[Proposition 3.1]{JR:arxivsp}, $H_{1}=\langle \phi,{\mathcal C}^{(j+1)} \rangle$, 
$H_{2}=G$, $K_{1}=K$, $K_{2}=\emptyset$).
Indeed there exists a finite set ${\mathcal R}$ of reducing curves in $S \setminus K$ 
that are core curves of a finite set of pairwise disjoint annuli ${\mathcal A}$ in $S \setminus K$ 
such that any $\psi \in G$ is isotopic rel $K$ to some $g \in \mathrm{Homeo}_{+}(S)$
satisfying
\begin{itemize}
\item $g$ preserves the decomposition, i.e. 
$g(\cup_{A \in {\mathcal A}} \partial A) = \cup_{A \in {\mathcal A}} \partial A$.
\item Let $M$ be a connected component of $S \setminus {\mathcal A}$.
Suppose $M \cap K$ is finite. Then $\chi (M \setminus K) <0$ and $g_{|M}$ has 
finite order or is a pseudo-Anosov homeomorphism of $M \setminus K$ if $g(M)=M$
\item Suppose $\sharp (M \cap K)= \infty$. Then $g_{|M} \equiv \mathrm{Id}$ if 
$\psi \in \langle \phi,{\mathcal C}^{(j+1)} \rangle$.
\end{itemize}
By induction hypothesis, $\eta$ is isotopic to $\mathrm{Id}$ rel $ \mathrm{Fix}({\mathcal C}^{(j+2)}, \eta)$
and then relatively to the smaller set $\mathrm{Fix}({\mathcal C}^{(j+1)}, \phi)$
for any $\eta \in {\mathcal C}^{(j+1)}$. Let
$f \in \mathrm{Homeo}_{+} (S)$ be a homeomorphism isotopic to $\phi$ rel $K$ in Thurston
normal form (see above). 
Consider a connected component $M_{0}$  of $S \setminus {\mathcal A}$.
and the class $[\phi]_{M_0} := [f_{|M_0}]$ induced in the mapping class group
 $\mathrm{MCG}(M_{0},M_{0} \cap K)$. It does not depend on the choice of $f$
 since $\sharp (M_0 \cap K) = \infty$ or $\chi (M_0 \setminus K) <0$.
 Notice that $[\phi]_{M_0}$ is trivial if $M_{0} \cap K$ is an infinite set.
If $M_{0} \cap K$ is finite then the subgroup of $\mathrm{MCG}(M_{0}, M_{0} \cap K)$ induced by
the stabilizer of $M_{0}$ in $G$ (modulo isotopy rel $K$)
is a nilpotent subgroup of a mapping class group of
a finite type surface.
Therefore it is virtually abelian \cite{BLM}, i.e it has a finite index normal subgroup that is 
abelian.
In particular there exist $k \in {\mathbb N}$ such that the classes induced by $g_j^{k}$ and 
$h_j^{k}$ belong to the stabilizer of $M_0$ and commute for any $1 \leq j \leq m$. 
Since $\alpha_{k}$ is isotopic to $\mathrm{Id}$ rel $K$, Equation (\ref{equ:k2}) implies that
$[\phi]_{M_0}$ is periodic.
Since $\phi$ is isotopic to $\mathrm{Id}$ and $\chi (M_0 \setminus K) <0$, 
$[\phi]_{M_0}$ satisfies the hypotheses of Lemma  \ref{lem:per-inv-mcg}. 
Therefore, we obtain $[\phi]_{M_0} \equiv \mathrm{Id}$. 
We deduce that $f$ is
either the identity map or
a product of non-trivial Dehn twists in closed annuli
$A_{1}$, $\hdots$ , $A_{m}$
whose core curves $\gamma_{1}$, $\hdots$, $\gamma_{m}$ 
are some of the curves in ${\mathcal R}$.
Let us prove that this last situation is impossible.

Consider the connected component $B$ of $S \setminus K$ containing $\gamma=\gamma_{1}$.
The elements of $G$ preserve the Thurston decomposition. Hence
up to replace $\phi$ with an iterate (see Equation (\ref{equ:k2}) in Lemma \ref{lem:dist}) 
we can suppose that $\phi$ is of the form
\[ \phi     =
[g_{1} ,h_{1}] \hdots [g_{m} ,h_{m}] \alpha \]
where  $g_{1}$, $h_{1}$, $\hdots$, $g_{m}$, $h_{m}$, fix $\gamma$ 
modulo isotopy rel $K$ and $\alpha \in {\mathcal C}^{(j+1)}$.
The open set $B$ is either an annulus or it has negative Euler characteristic. 
In the following, we adopt the notations in subsection \ref{subsec:translation}

Assume that $B$ is an annulus.
We have that $g_j, h_j, \alpha, \phi$ induce homeomorphisms 
$\hat{g}_j, \hat{h}_j, \hat{\alpha}, \hat{\phi}$ of the prime end compactification $\hat{B}$ of $B$. 
Since $\alpha$ is isotopic to $\mathrm{Id}$ rel $K$ by induction hypothesis, there exists a lift 
$\tilde{\alpha}$ of 
$\alpha$ to $\overline{B}$ such that $\tilde{\alpha}_{|\partial \overline{B}} \equiv \mathrm{Id}$.
We define
\begin{equation}
\label{equ:lift_p}
 \tilde{\phi} = [\tilde{g}_{1},\tilde{h}_{1}] \hdots [\tilde{g}_{m},\tilde{h}_{m}]  \tilde{\alpha},  
 \end{equation}
where $\tilde{g}_j$ (resp. $\tilde{h}_j$) is a lift of $\hat{g}_j$  (resp. $\hat{h}_j$)  for any 
$1 \leq j \leq m$.  Now, assume $\chi (B) <0$. 
Notice that since $\alpha$ is isotopic to $\mathrm{Id}$ rel $K$, 
it has a lift $\tilde{\alpha}$ to $\overline{\mathbb D}$
such that $\tilde{\alpha}_{|S_{\infty}} \equiv \mathrm{Id}$ (cf. \cite{Cantwell-Conlon:lifts}).
We consider lifts $\tilde{g}_j$ of $g_j$ and $\tilde{h}_j$ of $h_j$ such that they commute with $T$
for $1 \leq j \leq m$. We define $\tilde{\phi}$ as in Equation (\ref{equ:lift_p}).

We claim that the group
$\tilde{H}:=\langle \tilde{\phi}, \tilde{g}_{1}, \tilde{h}_{1}, \hdots ,\tilde{g}_{m},\tilde{h}_{m},T \rangle$ 
is nilpotent. Indeed, ${\mathcal C}^{(q)} (\tilde{H})$ is a subgroup of the centralizer of $T$
in the group of deck transformations and hence it is contained in $\langle T \rangle$.
We deduce  ${\mathcal C}^{(q+1)} (\tilde{H}) = \{ \mathrm{Id} \}$ since $T$ belongs to the center
of $\tilde{H}$. Therefore,  the translation number
defines a morphism from $\tilde{H}$ to ${\mathbb R}$ for $\partial_1$ and $\partial_2$.  
Since the translation numbers of commutators vanish and 
$\tilde{\alpha}_{|\partial \tilde{B}} \equiv  \mathrm{Id}$, it follows that  
the translation numbers of $\tilde{\phi}$ at $\partial_1$ and $\partial_2$
are both zero.   It implies $\rho (\tilde{f}, \partial_1)=\rho (\tilde{f}, \partial_2) =0$
for some lift $\tilde{f}$ of $f$ that commutes with $T$ by Remark \ref{rem:partial_rot}. 
This contradicts Lemma \ref{lem:trans} since
$f_{|A_1}$ is a non-trivial Dehn twist and thus 
$\rho (\tilde{f}, \tilde{\gamma}^{1}) \neq \rho (\tilde{f}, \tilde{\gamma}^{2})$.
%
%
%
\section{Thurston decomposition} 
\label{sec:thurston}
We introduce the configuration that will be used to show 
Theorems  \ref{teo:maina},   \ref{teo:mainb}, \ref{teo:mainc} and \ref{teo:baux}. 
\begin{defi} 
\label{def:frame}
 Suppose that ${\mathcal G}$ is a 
finitely generated nilpotent subgroup 
${\mathcal G}$ of $\mathrm{Diff}_{0}^{1}({\mathcal S})$, where ${\mathcal S}$ is an 
orientable connected finite type surface (not 
necessarily compact) of negative Euler
characteristic satisfying the following properties:
\begin{itemize}
\item ${\mathcal H}$ is a normal subgroup of ${\mathcal G}$ such that
${\mathcal G}/{\mathcal H}$ is abelian.
\item There exist $e \in {\mathbb N}$ and   $f_{1}$, $\hdots$, $f_{q} \in {\mathcal G}$ such that
$\langle {\mathcal H}, f_{i_1}, \hdots, f_{i_e} \rangle$ is a finite index
subgroup of ${\mathcal G}$ for any choice of $1 \leq i_1 < \hdots < i_e \leq q$.
\item Every element of ${\mathcal H}$ is isotopic to the identity map
rel $\mathrm{Fix}_{c}({\mathcal H})$.
\end{itemize}
This is the {\it framework} associated to ${\mathcal G}$, ${\mathcal H}$, $e$ and 
$f_1, \hdots, f_q$. 
\end{defi}
We will make ${\mathcal S}=S$, ${\mathcal G}= G$ and 
${\mathcal H}= G'$ for the proof of Theorems \ref{teo:maina}, \ref{teo:mainb} and \ref{teo:baux}
whereas we use a different setting for the proof of Theorem \ref{teo:mainc}. 

In this section we define a Thurston decomposition of ${\mathcal G}$ 
relative to a compact ${\mathcal G}$-invariant set ${\mathcal K}$.  
It is complicated to provide normal forms since 
${\mathcal S} \setminus {\mathcal K}$ is not in general a finite type surface. 
In order to make up for this problem, we will require extra properties for a fixed a priori family
$\langle {\mathcal H}, f_1 \rangle, \hdots, \langle {\mathcal H}, f_q \rangle$. 
of subgroups of ${\mathcal G}$ that help us to obtain global fixed points. 
Mostly, we generalize the approach of Franks, Handle and Parwani in \cite{FHP-g}
for the abelian case to the nilpotent setting.

We would like to consider the isotopy classes of elements of $G$ relative to the invariant set 
$Y=\cup_{j=1}^{q} \mathrm{Fix}_{c}  \langle {\mathcal H},f_{j} \rangle$
 but this set is not easy to 
handle since $Y \cap \mathrm{Fix} (f_j)$ is not open and closed in $Y$. We will replace 
$Y$ with a $G$-invariant compact subset $K$ of $Y$ such that 
$K \cap \mathrm{Fix} (f_j)$ is open and closed in $K$ for any $1 \leq j \leq q$.
Moreover, in a certain sense that we explain below, $K$ contains representatives of 
the $f_j$-Nielsen classes of elements of $Y$ for $1 \leq j \leq q$.

\begin{defi}
\label{def:homotopy}
We say that paths $\gamma:[0,1] \to {\mathcal S}$ and $\beta:[0,1] \to {\mathcal S}$
are homotopic rel ${\mathcal K}$ if there exists an 
homotopy  $I:[0,1] \times [0,1] \to {\mathcal S}$ such that
the following properties hold:
\begin{itemize}
\item $I(0,s)=\gamma(0)$ and $I(1,s)=\gamma(1)$ for any $s \in [0,1]$;
\item  $I(t,0)= \gamma(t)$ and $I(t,1)= \beta (t)$ for any $s \in [0,1]$;
\item If $I(t_0, s_0) \in {\mathcal K}$ then $I(t_0,s)=I(t_0, s_0)$ for any $s \in [0,1]$.
\end{itemize}
\end{defi}
\begin{defi}
\label{def:Nielsen_class}
Let $f \in \mathrm{Homeo} ({\mathcal S})$ and ${\mathcal K}$ a $f$-invariant closed set.
We say that  $z,w \in \mathrm{Fix}(f)$ are in the same $f$-Nielsen class rel ${\mathcal K}$ if 
there exists a path $\gamma:[0,1] \to {\mathcal S}$ such that 
$\gamma (0)=z$, $\gamma (1)=w$ and $\gamma$ is homotopic to $f \circ \gamma$ rel 
${\mathcal K}$.
\end{defi}
\begin{rem}
\label{rem:Nielsen_equ}
A $f_{|{\mathcal S} \setminus {\mathcal K}}$-Nielsen class is contained in a $f$-Nielsen class 
rel ${\mathcal K}$. If the latter class does not contain points of ${\mathcal K}$ then 
both Nielsen classes coincide. In particular, 
if ${\mathcal K} \cap \mathrm{Fix}(f)= \emptyset$ then $z, w \in  \mathrm{Fix}(f)$ 
are in the same $f$-Nielsen class rel ${\mathcal K}$ if and only if 
$z$ and $w$ are in the same $f_{|{\mathcal S} \setminus {\mathcal K}}$-Nielsen class.

Let $z \in {\mathrm Fix} (f) \setminus {\mathcal K}$. 
Let ${\mathcal N}$ and ${\mathcal N}'$ the $f$-Nielsen class rel ${\mathcal K}$ and the 
$f_{|{\mathcal S} \setminus {\mathcal K}}$-Nielsen class of $z$ respectively.
In general, the equality ${\mathcal N} \setminus {\mathcal K} = {\mathcal N}'$ does not hold.
This can be checked out by considering $f = \mathrm{Id}$ where 
${\mathcal N}={\mathcal S}$ and ${\mathcal N}'$ is the connected component of 
${\mathcal S} \setminus {\mathcal K}$ containing $z$. Anyway, for many of the situations
in this paper both concepts coincide but we will make precise which one we are
considering at any moment. 
\end{rem}
\begin{rem}
Since ${\mathcal H}$ is a normal subgroup of ${\mathcal G}$ and 
${\mathcal G}/ {\mathcal H}$ is abelian, any subgroup of ${\mathcal G}$ containing 
${\mathcal H}$ is normal in ${\mathcal G}$.
In particular $\langle {\mathcal H}, f_j \rangle$ is a normal subgroup of ${\mathcal G}$
and $\mathrm{Fix}_{c} \langle {\mathcal H}, f_j \rangle$ is a ${\mathcal G}$-invariant set
for any $1 \leq j \leq q$ by Remark \ref{rem:inv_nor0}.
\end{rem}
\begin{defi}
\label{def:good_excellent}
Denote $Y=  \cup_{j=1}^{q} \mathrm{Fix}_{c}  \langle {\mathcal H},f_{j} \rangle$.
We say that a compact subset ${\mathcal K}$ of  ${\mathcal S}$
is an {\it excellent} set for the group ${\mathcal G}$ and the subgroups 
 $\langle {\mathcal H},f_{1} \rangle$, $\hdots$, $\langle {\mathcal H},f_{q} \rangle$ if
\begin{itemize}
\item ${\mathcal K}$ is a ${\mathcal G}$-invariant subset of $Y$
and ${\mathcal K} \cap  \mathrm{Fix}_{c} (f_{j}) \neq \emptyset \ \forall 1 \leq j \leq q$.
\item $\sum_{j=1}^{q} {\bf 1}_{\mathrm{Fix}(f_{j})}$ is locally constant in ${\mathcal K}$
where ${\bf 1}_{\mathrm{Fix}(f_{j})}$ is the characteristic function of $\mathrm{Fix}(f_{j})$.
\item If $z \in Y \cap \mathrm{Fix}  \langle f_{j_{1}}, \hdots, f_{j_{l}} \rangle$
there exists $w \in K \cap \mathrm{Fix}  \langle f_{j_{1}}, \hdots, f_{j_{l}} \rangle$ such that
$z$ and $w$ are in the same $g$-Nielsen class rel ${\mathcal K}$ for any
$g \in \{ f_{j_{1}}, \hdots, f_{j_{l}} \}$.
\end{itemize}
We say that ${\mathcal K}$ is a {\it good} (compact) set for
the group ${\mathcal G}$ and the subgroups  
$\langle {\mathcal H},f_{1} \rangle$, $\hdots$, $\langle {\mathcal H},f_{q} \rangle$ if
\begin{itemize}
\item ${\mathcal K} \subset \mathrm{Fix}_{c} ({\mathcal H})$ is ${\mathcal G}$-invariant  
and ${\mathcal K} \cap  \mathrm{Fix}_{c} (f_{j}) \neq \emptyset \ \forall 1 \leq j \leq q$.
\item $\sum_{j=1}^{q} {\bf 1}_{\mathrm{Fix}(f_{j})}$ is locally constant in ${\mathcal K}$.
\item If $z \in Y \cap \mathrm{Fix}_{c}  \langle f_{j_{1}}, \hdots, f_{j_{l}} \rangle$
there exists $w \in {\mathcal K} \cap \mathrm{Fix}_{c} \langle f_{j_{1}}, \hdots, f_{j_{l}} \rangle$ 
such that $z$ and $w$ are in the same $g$-Nielsen class rel ${\mathcal K}$ for any
$g \in \{ f_{j_{1}}, \hdots, f_{j_{l}} \}$.
\end{itemize}
\end{defi}
\begin{rem}
It is clear that an excellent set is a good set.
\end{rem}
\begin{rem}
\label{rem:ex_ex}
The existence of an excellent set ${\mathcal K}$ is a consequence of Lemma 4.3 of \cite{FHP-g}. 
The original statement is for commutative
groups but the proof also works if all elements of ${\mathcal H}$
are isotopic to the identity rel to $Y$. Essentially, we are working with the commutative group 
${\mathcal G}/{\mathcal H}$. 
\end{rem}
\begin{rem}
\label{rem:from_ex_good}
 Consider an open and closed 
${\mathcal G}$-invariant subset ${\mathcal K}'$ of a good set ${\mathcal K}$ 
for the subgroups 
$\langle {\mathcal H},f_{1} \rangle$, $\hdots$, $\langle {\mathcal H},f_{q} \rangle$
such that 
\[ {\mathcal K}' \cap \mathrm{Fix}_{c} \langle {\mathcal H}, f_j \rangle =
{\mathcal K} \cap \mathrm{Fix}_{c} \langle {\mathcal H}, f_j \rangle  \]
for some $1 \leq q' \leq q$ and any $1 \leq j \leq q'$. 
Then ${\mathcal K}'$ is a good compact set for the group 
${\mathcal G}$ and the subgroups 
$\langle {\mathcal H},f_{1} \rangle$, $\hdots$, $\langle {\mathcal H},f_{q'} \rangle$.
We will use this remark later on and this is the reason justifying the introduction of good sets
since we can not guarantee that ${\mathcal K}'$ is excellent if ${\mathcal K}$ is.
\end{rem}
 The next step is considering a Thurston decomposition for the normal subgroups
$\langle {\mathcal H},f_{1} \rangle$, $\hdots$, $\langle {\mathcal H},f_{q} \rangle$
of ${\mathcal G}$ relative to a good  compact set ${\mathcal K}$.
The situation is analogous to the abelian one.
\begin{defi}
\label{def:decomp}
Let ${\mathcal K}$ be a {\it good} compact set for
the group ${\mathcal G}$ and the subgroups  
$\langle {\mathcal H},f_{1} \rangle$, $\hdots$, $\langle {\mathcal H},f_{q} \rangle$.
We say that a finite set ${\mathcal R}$ of curves
is a {\it reducing set} rel ${\mathcal K}$ if ${\mathcal R}$ consists of
disjoint pairwise non-homotopic simple closed
curves ${\mathcal R}$ in ${\mathcal S} \setminus {\mathcal K}$ such that
\begin{itemize}
\item ${\mathcal R}$ is ${\mathcal G}$-invariant modulo isotopy rel ${\mathcal K}$.
\item No connected component of $S \setminus {\mathcal R}$ is a disk
$D$ with $\sharp (D \cap {\mathcal K}) \leq 1$.
\item Given a connected component  $E$ of $S \setminus {\mathcal R}$
with $\sharp (E \cap {\mathcal K})= \infty$  
and  $E \cap {\mathcal K} \cap \mathrm{Fix}(f_{j}) \neq \emptyset$ for some $1 \leq j \leq q$, 
there exists a homeomorphism $\phi_{j}:S \to S$
such that $(\phi_{j})_{|E} \equiv Id$ and
$f_{j}$ is isotopic to $\phi_{j}$ rel ${\mathcal K}$.
\end{itemize} 
\end{defi}
%
%
%
%

\begin{rem}
\label{rem:red}
Consider the framework introduced in this section. 
The existence of a set of reducing curves 
for ${\mathcal G}$, the subgroups 
$\langle {\mathcal H},f_{1} \rangle$, $\hdots$, $\langle {\mathcal H},f_{q} \rangle$ and 
an excellent compact set ${\mathcal K}$ is a consequence of  \cite[Theorem 4.4.]{FHP-g}  
if ${\mathcal H}$ is trivial (and hence ${\mathcal G}$ is abelian).  A key part of
the proof is  by contradiction, and they use the next lemma for the case where
${\mathcal H} = \{ \mathrm{Id} \}$   
\cite[Lemma 3.10]{FHP-g}
to obtain points in $Y \cap \mathrm{Fix} \langle f_1, \hdots, f_l \rangle$ that are not 
$f_l$-Nielsen equivalent rel ${\mathcal K}$ to any point of ${\mathcal K}$, contradicting
the choice of ${\mathcal K}$. In our case, the points of ${\mathcal K}$ have to be 
contained in $\mathrm{Fix}_{c} ({\mathcal H})$ and hence we need to replace 
 \cite[Lemma 3.10]{FHP-g} with next lemma to obtain points in 
 $Y \cap \mathrm{Fix} \langle {\mathcal H}, f_1, \hdots, f_l \rangle$ that are not 
 $f_l$-Nielsen equivalent rel ${\mathcal K}$ to any point of ${\mathcal K}$, 
 providing the contradiction. 
 The remainder of the proof Theorem 4.4 in \cite{FHP-g} 
 works well if 
 ${\mathcal G}$ is abelian up to isotopy rel ${\mathcal K}$. 
 This is our case since 
  the elements of ${\mathcal H}$ are isotopic to $\mathrm{Id}$ rel 
$Y$ by hypothesis and ${\mathcal G}/{\mathcal H}$ is abelian.
\end{rem}
We denote by $\pi: \tilde{\mathcal S} \to {\mathcal S}$ the universal covering map. 
\begin{lem}
\label{lem:guarantees_thurston}
Let ${\mathcal G} = \langle {\mathcal H},f_0, f_1, \hdots, f_l \rangle$ 
be a nilpotent subgroup of $\mathrm{Diff}_{0}^{1} ({\mathcal S})$ 
where ${\mathcal S}$ is an orientable
finitely punctured compact surface with $\chi ({\mathcal S}) <0$, $f_0 = \mathrm{Id}$, $l \geq 1$ 
and ${\mathcal H}$ is a normal subgroup of ${\mathcal G}$ such that ${\mathcal G}/{\mathcal H}$
is abelian. 
Let ${\mathcal K}$ be a ${\mathcal G}$-invariant compact set such that 
every element $h$ of ${\mathcal H}$ is isotopic to $\mathrm{Id}$ rel ${\mathcal K}$. 
Assume that there exist a closed disc $D$ in ${\mathcal S}$
such that $\partial D \cap {\mathcal K} = \emptyset$,  
$\emptyset \neq {\mathcal K} \cap D  \subset \cap_{j=0}^{l-1} \mathrm{Fix}(f_j)$, 
${\mathcal K} \cap D \cap \mathrm{Fix}(f_l)=\emptyset$ and 
$f_{j} (\partial D)$ is homotopic to $\partial D$ rel ${\mathcal K}$ for any $1 \leq j \leq l$. 
Consider a lift $\tilde{f}_j$ of $f_j$ to $\tilde{\mathcal S}$ such that 
$\tilde{f}_j (\pi^{-1}({\mathcal K}) \cap \tilde{D}_0) = \pi^{-1}({\mathcal K}) \cap \tilde{D}_0$ 
for some connected component  $\tilde{D}_0$ of $\pi^{-1}(D)$ and $0 \leq j \leq l$. 
Then there exists a global fixed point $\tilde{z}$ of the group 
$\langle \tilde{\mathcal H}, \tilde{f}_1, \hdots, \tilde{f}_l \rangle$,
where $\tilde{\mathcal H}$ consists of the identity lifts of elements of ${\mathcal H}$,
such that $z:= \pi (\tilde{z})$ is not $f_l$-Nielsen equivalent rel ${\mathcal K}$ 
to any element of ${\mathcal K}$. 
In particular if $\tilde{f}_j$ is the identity lift of $f_j$ for some $1 \leq j \leq l$, $z$ belongs to 
$\mathrm{Fix}_{c} \langle {\mathcal H}, f_j \rangle$.
\end{lem}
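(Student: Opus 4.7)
The plan is to transport the problem to the universal cover $\tilde{\mathcal{S}}$ and apply Proposition~\ref{pro:clp3}, using the disc $D$ to produce a compact invariant set on which the various lifts act in a controlled way.

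First, since $D$ is a closed disc with $\partial D \cap \mathcal{K} = \emptyset$, the component $\tilde{D}_0$ maps homeomorphically onto $D$ via $\pi$, so $\tilde{K}_D := \pi^{-1}(\mathcal{K}) \cap \tilde{D}_0$ is a nonempty compact set bijecting with $\mathcal{K} \cap D$. Combining the invariance hypothesis $\tilde{f}_j(\tilde{K}_D) = \tilde{K}_D$ with $\mathcal{K} \cap D \subset \mathrm{Fix}(f_j)$ for $0 \leq j \leq l-1$ and the unique-lift property inside $\tilde{D}_0$ forces $\tilde{f}_j$ to fix $\tilde{K}_D$ pointwise for $0 \leq j \leq l-1$. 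For $h \in \mathcal{H}$, the isotopy from $\mathrm{Id}$ to $h$ rel $\mathcal{K}$ lifts to one that is stationary at every lift of $\mathcal{K}$, so the identity lift $\tilde{h}$ fixes $\pi^{-1}(\mathcal{K})$ pointwise; in particular $\tilde{K}_D \subset \mathrm{Fix}(\tilde{\mathcal{H}})$. On the other hand $\mathcal{K} \cap D \cap \mathrm{Fix}(f_l) = \emptyset$ together with the unique-lift property yields $\mathrm{Fix}(\tilde{f}_l) \cap \tilde{K}_D = \emptyset$.

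Set $\tilde{G} := \langle \tilde{\mathcal{H}}, \tilde{f}_1, \hdots, \tilde{f}_l \rangle$ and let $\tilde{N}$ be the normal closure in $\tilde{G}$ of $\{\tilde{\mathcal{H}}, \tilde{f}_1, \hdots, \tilde{f}_{l-1}\}$. Since every generator of $\tilde{N}$ fixes $\tilde{K}_D$ pointwise and $\tilde{f}_l$ preserves $\tilde{K}_D$ setwise, the whole of $\tilde{N}$ fixes $\tilde{K}_D$ pointwise. Moreover $\tilde{G}$ is nilpotent by an argument analogous to that in the proof of Proposition~\ref{pro:clp2}: the nilpotency class $q$ of $\mathcal{G}$ forces $\mathcal{C}^{(q)}(\tilde{G})$ to lie in the deck group of $\pi$, and iterating the commutator relations while using that nilpotent subgroups of $\pi_1(\mathcal{S})$ are cyclic bounds the class of $\tilde{G}$. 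Proposition~\ref{pro:clp3} applied to $\tilde{G} = \langle \tilde{N}, \tilde{f}_l \rangle$ on $\tilde{\mathcal{S}} \simeq \mathbb{R}^2$ with $K_0 = \tilde{K}_D$ then provides a global fixed point $\tilde{z} \in \mathrm{Fix}(\tilde{G}) \cap P(\tilde{K}_D, \tilde{f}_l)$. The final sentence of the lemma is immediate: if $\tilde{f}_j$ is the identity lift of $f_j$ for some $1 \leq j \leq l$, then $z := \pi(\tilde{z})$ lies in $\mathrm{Fix}_c(f_j) \cap \mathrm{Fix}_c(\mathcal{H}) = \mathrm{Fix}_c\langle \mathcal{H}, f_j \rangle$.

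For the Nielsen-equivalence claim, suppose for contradiction that $z$ is $f_l$-Nielsen equivalent rel $\mathcal{K}$ to some $w \in \mathcal{K}$. Lifting a witnessing path from $z$ starting at $\tilde{z}$ produces an endpoint $\tilde{w}$ that is a lift of $w$, and lifting the rel-$\mathcal{K}$ homotopy (which becomes a rel-$\pi^{-1}(\mathcal{K})$ homotopy in $\tilde{\mathcal{S}}$) forces $\tilde{f}_l(\tilde{w}) = \tilde{w}$. Since $\mathrm{Fix}(\tilde{f}_l) \cap \tilde{K}_D = \emptyset$, such a $\tilde{w}$ must lie outside $\tilde{D}_0$, and the peripheral membership $\tilde{z} \in P(\tilde{K}_D, \tilde{f}_l)$ is designed precisely to prevent the $\tilde{f}_l$-Nielsen class of $\tilde{z}$ in $\tilde{\mathcal{S}} \setminus \tilde{K}_D$ from reaching such a $\tilde{w}$. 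The main obstacle I expect is making this last step rigorous: translating Nielsen equivalence rel $\mathcal{K}$ into a condition on the peripheral behavior of the lifted Nielsen class, handling in particular the interaction between the lifted path and $\pi^{-1}(\mathcal{K}) \setminus \tilde{K}_D$, is the technical heart of the argument. A secondary but nontrivial point is the nilpotence of $\tilde{G}$, which requires controlling the deck transformations arising as commutators among the chosen lifts $\tilde{f}_1, \hdots, \tilde{f}_l$.
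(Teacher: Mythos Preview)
Your proposal is correct and follows the paper's proof: lift to $\tilde{\mathcal{S}}$, apply Proposition~\ref{pro:clp3} with $K_0 = \tilde{K}_D$ and $H = \langle \tilde{\mathcal{H}}, \tilde{f}_1, \hdots, \tilde{f}_{l-1} \rangle$ (the paper does not pass to the normal closure, but your doing so is a harmless refinement), then derive the Nielsen claim from $\tilde{z} \in P(\tilde{K}_D, \tilde{f}_l)$. The two points you flag are treated tersely in the paper as well; for the Nielsen step, the observation that dissolves your worry is that Nielsen equivalence rel $\pi^{-1}(\mathcal{K})$ implies Nielsen equivalence rel the \emph{smaller} set $\tilde{K}_D$, so (using $\tilde{K}_D \cap \mathrm{Fix}(\tilde{f}_l)=\emptyset$ and Remark~\ref{rem:Nielsen_equ}) $\tilde{w}$ would inherit membership in $P(\tilde{K}_D, \tilde{f}_l)$ from $\tilde{z}$, whereas every point of $\mathrm{Fix}(\tilde{f}_l) \cap \pi^{-1}(\mathcal{K})$ lies in $\pi^{-1}(\mathcal{K}) \setminus \tilde{K}_D \subset \tilde{\mathcal{S}} \setminus \tilde{D}_0$ and is therefore peripheral to $\infty$ rel $\tilde{K}_D$---the rest of $\pi^{-1}(\mathcal{K})$ plays no role because the peripheral condition is taken relative to $\tilde{K}_D$ only.
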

\begin{proof}
Since ${\mathcal H} \lhd  {\mathcal G}$, the group 
$\tilde{\mathcal H}$ is normal in 
$\tilde{\mathcal G}:=\langle \tilde{\mathcal H}, \tilde{f}_1, \hdots,  \tilde{f}_l \rangle$.
Therefore, ${\mathcal C}^{(q)} ({\mathcal G}) = \{ \mathrm{Id} \}$ implies 
${\mathcal C}^{(q)} (\tilde{\mathcal G}) = \{ \mathrm{Id}  \}$ and so $\tilde{\mathcal G}$ is nilpotent.
Moreover, the elements of  $\tilde{\mathcal H}$ are isotopic to $\mathrm{Id}$ rel 
$\pi^{-1} ({\mathcal K})$.

We denote $\tilde{\mathcal K}_0= \pi^{-1}({\mathcal K}) \cap \tilde{D}_0$.
Since 
\[ \tilde{\mathcal K}_0 \subset \mathrm{Fix} \langle \tilde{\mathcal H}, \tilde{f}_1, \hdots,  \tilde{f}_{l-1} \rangle \setminus \mathrm{Fix} (\tilde{f}_l) \]
we can apply Proposition \ref{pro:clp3} to 
$H =  \langle \tilde{\mathcal H}, \tilde{f}_1, \hdots,  \tilde{f}_{l-1} \rangle$, $f=\tilde{f}_l$
to obtain $\tilde{z} \in \mathrm{Fix} (\tilde{\mathcal G}) \cap P(\tilde{K}_0, \tilde{f}_l)$.

We claim that $z$ is not $f_l$-Nielsen equivalent rel ${\mathcal K}$ to $w \in {\mathcal K}$. 
 Otherwise, there exists 
 $\tilde{w} \in \pi^{-1}(w)$ that is $\tilde{f}_l$-Nielsen equivalent to $\tilde{z}$ rel 
 $\pi^{-1} ({\mathcal K})$. 
 Since $\tilde{\mathcal K}_0 \cap \mathrm{Fix}(\tilde{f}_l)= \emptyset$, it follows that
 $\tilde{w} \in P(\tilde{K}_0, \tilde{f}_l)$.
 Notice that 
 \[ \mathrm{Fix} (\tilde{f}_l) \cap \pi^{-1} ({\mathcal K} )= 
 \mathrm{Fix} (\tilde{f}_l)  \cap (\pi^{-1} ({\mathcal K} )\setminus \tilde{\mathcal K}_0) \]
 and thus all points in $ \mathrm{Fix} (\tilde{f}_l) \cap  \pi^{-1} ({\mathcal K} )$  
 are peripheral to $\infty$ rel $\tilde{\mathcal K}_0$ for $\tilde{f}_l$
contradicting  $\tilde{w} \in P(\tilde{K}_0, \tilde{f}_l)$.
 

 It is clear that $z \in \mathrm{Fix}_{c} ({\mathcal H})$. Moreover, if $\tilde{f}_j$ is the identity
 lift then $z \in \mathrm{Fix}_{c} \langle {\mathcal H}, f_j \rangle$. 
\end{proof}
We can refine the previous decomposition in the connected components $M$
of ${\mathcal S} \setminus {\mathcal R}$ where $M \cap {\mathcal K}$ is finite.
\begin{defi}
Let ${\mathcal R}$ be a reducing set of curves for a group ${\mathcal G}$, 
subgroups $\langle {\mathcal H},f_{1} \rangle$, $\hdots$, $\langle {\mathcal H},f_{q} \rangle$
and a good compact set ${\mathcal K}$. 
Consider a connected component $M$ of
${\mathcal S} \setminus {\mathcal R}$.
We define ${\mathcal G}_{M}$ as the subgroup of ${\mathcal G}$ of diffeomorphisms $f$
such that $f(M)$ is isotopic to $M$ rel ${\mathcal K}$.
In particular ${\mathcal G}_{M}$ contains ${\mathcal H}$.
Given $f \in {\mathcal G}_{M}$ there exists a diffeomorphism $\theta_{f}$
such that $\theta_{f}(M)=M$ and $f$ is isotopic to $\theta_{f}$ rel ${\mathcal K}$.
Consider the group 
$\hat{\mathcal G}_{M} = \langle (\theta_{f})_{|M}: f \in {\mathcal G}_{M} \rangle$.
Given $f \in {\mathcal G}_M$, the
element induced by $\theta_{f}$ in the mapping class group of the marked surface
$ (M, M \cap {\mathcal K})$ depends only on $f$ since either 
$\sharp (M \cap {\mathcal K})= \infty$ or $\chi (M \setminus {\mathcal K}) < 0$. 
Thus, we can define the group $\overline{\mathcal G}_M$ induced by 
$\hat{\mathcal G}_{M}$ in the mapping class group of $(M, M \cap {\mathcal K})$.
\end{defi}
\begin{defi}
Let ${\mathcal K}$ be a  good compact set for
the group ${\mathcal G}$ and the subgroups  
$\langle {\mathcal H},f_{1} \rangle$, $\hdots$, $\langle {\mathcal H},f_{q} \rangle$.
We say that a finite set ${\mathcal R}$ of simple closed curves in 
${\mathcal S} \setminus {\mathcal K}$ is a  {\it Thurston-reducing set} rel ${\mathcal K}$ if
${\mathcal R}$ is a reducing set rel ${\mathcal K}$ and given any connected 
component $M$ of ${\mathcal S} \setminus {\mathcal R}$ such that 
$\sharp (M \cap {\mathcal K}) < \infty$, the group $\overline{\mathcal G}_{M}$ 
consists of irreducible (periodic or pseudo-Anosov) elements.
\end{defi}
\begin{pro}
\label{pro:Tdecomp}
Let ${\mathcal K}$ be an excellent compact set for
the group ${\mathcal G}$ and the subgroups  
$\langle {\mathcal H},f_{1} \rangle$, $\hdots$, $\langle {\mathcal H},f_{q} \rangle$.
There exists a Thurston-reducing set 
rel ${\mathcal K}$. 
\end{pro}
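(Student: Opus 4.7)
The plan is to refine the reducing set $\mathcal{R}_{0}$ given by Remark \ref{rem:red} in each connected component $M$ of $\mathcal{S}\setminus \mathcal{R}_{0}$ with $\sharp(M\cap\mathcal{K})<\infty$ by adding finitely many curves coming from a canonical reduction system for the induced group $\overline{\mathcal{G}}_{M}$ in the mapping class group of the marked surface $(M,M\cap\mathcal{K})$.

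First, I would apply Remark \ref{rem:red} to fix an initial reducing set $\mathcal{R}_{0}$ for $\mathcal{G}$, the subgroups $\langle\mathcal{H},f_{1}\rangle,\ldots,\langle\mathcal{H},f_{q}\rangle$ and the excellent set $\mathcal{K}$. For a connected component $M$ of $\mathcal{S}\setminus\mathcal{R}_{0}$ with $\sharp(M\cap\mathcal{K})=\infty$, the third bullet of Definition \ref{def:decomp} already guarantees that each $f_{j}$ having a fixed point in $M\cap\mathcal{K}$ is isotopic rel $\mathcal{K}$ to a homeomorphism which is the identity on $M$; no refinement is needed here, and these components will later accept any additional curves we place outside them without affecting the properties. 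For a component $M$ with $\sharp(M\cap\mathcal{K})<\infty$, the surface $M\setminus\mathcal{K}$ is of finite type with $\chi(M\setminus\mathcal{K})<0$, so Thurston's classification and the existence of a canonical reduction system apply to $\overline{\mathcal{G}}_{M}$ inside $\mathrm{MCG}(M,M\cap\mathcal{K})$.

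Next I would verify that $\overline{\mathcal{G}}_{M}$ is a finitely generated virtually abelian subgroup of that mapping class group. Since $\mathcal{G}$ is finitely generated nilpotent and the map $f\mapsto[\theta_{f}]$ is a group homomorphism from $\mathcal{G}_{M}$ to $\mathrm{MCG}(M,M\cap\mathcal{K})$, the image $\overline{\mathcal{G}}_{M}$ is nilpotent and finitely generated; by the Birman--Lubotzky--McCarthy theorem it is virtually abelian. Consequently it admits a (possibly empty) canonical reduction system $\mathcal{R}_{M}\subset M\setminus\mathcal{K}$ of pairwise disjoint, pairwise non-homotopic simple closed curves in $M\setminus\mathcal{K}$, invariant modulo isotopy under $\overline{\mathcal{G}}_{M}$, such that cutting along $\mathcal{R}_{M}$ yields a Thurston normal form with all pieces periodic or pseudo-Anosov. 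Define
\[
\mathcal{R}=\mathcal{R}_{0}\cup\bigcup_{M}\mathcal{R}_{M},
\]
the union being over components $M$ of $\mathcal{S}\setminus\mathcal{R}_{0}$ with $\sharp(M\cap\mathcal{K})<\infty$, taken modulo isotopy rel $\mathcal{K}$ and with the natural $\mathcal{G}$-orbit structure (since $\mathcal{G}$ permutes such components modulo isotopy rel $\mathcal{K}$ and $\mathcal{R}_{M}$ is canonical).

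Finally I would check that $\mathcal{R}$ is a Thurston-reducing set. The curves in $\mathcal{R}_{M}$ are essential in $M\setminus\mathcal{K}$ by the definition of canonical reduction system, so no new component of $\mathcal{S}\setminus\mathcal{R}$ is a disc with at most one point of $\mathcal{K}$. The set $\mathcal{R}$ is $\mathcal{G}$-invariant modulo isotopy rel $\mathcal{K}$ because $\mathcal{R}_{0}$ is, $\mathcal{G}$ permutes the components $M$ (modulo isotopy rel $\mathcal{K}$), and each $\mathcal{R}_{M}$ is canonical inside its component. The third condition of Definition \ref{def:decomp} on infinite-intersection components is inherited from $\mathcal{R}_{0}$ since the refinement only adds curves inside finite-intersection components. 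Finally, in each component $M'$ of $\mathcal{S}\setminus\mathcal{R}$ with $\sharp(M'\cap\mathcal{K})<\infty$, the group $\overline{\mathcal{G}}_{M'}$ is irreducible by construction of $\mathcal{R}_{M}$. The main obstacle I foresee is the compatibility of the canonical reduction systems across the $\mathcal{G}$-orbit of the components $M$, and verifying that adding $\bigcup_{M}\mathcal{R}_{M}$ does not disturb the third clause of Definition \ref{def:decomp} for the new infinite-intersection components; both issues are settled by the canonicity and naturality of the Handel--Thurston reduction system, which is preserved under the conjugation action of $\mathcal{G}$.
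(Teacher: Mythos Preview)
Your proposal is correct and follows essentially the same approach as the paper: start from the reducing set provided by Remark~\ref{rem:red}, then refine inside each finite-intersection component $M$ by adding a Thurston (canonical) reduction system for the induced subgroup $\overline{\mathcal G}_M$ of $\mathrm{MCG}(M,M\cap\mathcal K)$. The only noteworthy difference is that the paper observes directly that $\overline{\mathcal G}_M$ is \emph{abelian} (since every element of $\mathcal H$ is isotopic to $\mathrm{Id}$ rel $\mathcal K$, and $\mathcal G/\mathcal H$ is abelian), which slightly shortcuts your appeal to nilpotency and the Birman--Lubotzky--McCarthy theorem; either justification suffices.
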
 
\begin{proof}
There exists a reducing set ${\mathcal R}$ of curves rel ${\mathcal K}$ by Remark \ref{rem:red}.
Consider a connected component $M$ of ${\mathcal S} \setminus {\mathcal R}$ such that 
$M \cap {\mathcal K}$ is finite.
Note that  the group $\overline{\mathcal G}_M$ 
is abelian since every element of ${\mathcal H}$ is isotopic to 
$\mathrm{Id}$ rel ${\mathcal K}$ and ${\mathcal G}/{\mathcal H}$ is abelian. 
It is known that there exists a Thurston-reducing set for an abelian subgroup of the mapping
class group of a finite type surface \cite[Lemma 2.2]{Handel-top}.
Thus, it suffices to add to ${\mathcal R}$ a set of Thurston-reducing curves of 
$\overline{\mathcal G}_M$ for any connected component $M$ of 
${\mathcal S} \setminus {\mathcal R}$ such that $M \cap {\mathcal K}$ is finite.
\end{proof}
At some points it will be convenient to consider fewer subgroups in our framework because 
that allows us to recover arguments of the case where ${\mathcal K}$ is finite.
We still obtain a Thurston decomposition.
\begin{rem}
\label{rem:tgood}
Let ${\mathcal K}$ be a  good compact set for
the group ${\mathcal G}$ and the subgroups  
$\langle {\mathcal H},f_{1} \rangle$, $\hdots$, $\langle {\mathcal H},f_{q} \rangle$.
Assume that there exists a 
reducing set ${\mathcal R}$ of curves rel ${\mathcal K}$. 
Consider a subset $M'$ of ${\mathcal S} \setminus {\mathcal R}$
that is a union of connected components of ${\mathcal S} \setminus {\mathcal R}$.
Assume that $M'$ is ${\mathcal G}$-invariant modulo isotopy rel ${\mathcal K}$. 
Let $e \leq q' \leq q$ such that 
\[ {\mathcal K}'' :=
M' \cap {\mathcal K} \cap (\cup_{j=1}^{q'}  \mathrm{Fix}_{c}  \langle {\mathcal H},f_{j} \rangle) \]
is a finite set. The set ${\mathcal K}' := ({\mathcal K} \setminus M') \cup {\mathcal K}''$
is a good compact set for the group ${\mathcal G}$ and 
the subgroups  
$\langle {\mathcal H},f_{1} \rangle$, $\hdots$, $\langle {\mathcal H},f_{q'} \rangle$
(Remark \ref{rem:from_ex_good}).

Assume that $\chi (M \setminus {\mathcal K}'' ) < 0$ for 
every connected component $M$ of $M'$.
This guarantees that ${\mathcal R}$ still satisfies 
the properties in Definition \ref{def:decomp}. More precisely, no connected component
of ${\mathcal S} \setminus {\mathcal R}$ is an annulus that contains no points of 
${\mathcal K}'$ or a disc containing at most one point of ${\mathcal K}'$.
In this case, we obtain a Thurston-reducing set of curves rel ${\mathcal K}'$ 
for the group ${\mathcal G}$ and the subgroups  
$\langle {\mathcal H},f_{1} \rangle$, $\hdots$, $\langle {\mathcal H},f_{q'} \rangle$
by adding the Thurston-reducing curves of the group induced by ${\mathcal G}$ in the mapping
class group of the finite type surface $M' \setminus {\mathcal K}''$.
\end{rem}
\begin{rem}
\label{rem:decomp}
Let ${\mathcal R}$ be a Thurston-reducing set of curves rel ${\mathcal K}$ for the group 
${\mathcal G}$ and the subgroups 
$\langle {\mathcal H},f_{1} \rangle$, $\hdots$, $\langle {\mathcal H},f_{q} \rangle$
where ${\mathcal K}$ is a good compact set. 
As a consequence of Definition \ref{def:decomp}
there exists a finite set ${\mathcal A}$
of pairwise disjoint non-homotopic closed annuli in ${\mathcal S} \setminus {\mathcal K}$ such that 
\begin{itemize}
\item 
every annulus of ${\mathcal A}$ contains exactly a reducing curve in ${\mathcal R}$
as a core curve of the annulus;
\item any $g \in {\mathcal G}$ is isotopic rel ${\mathcal K}$ 
to some $\theta_{g} \in \mathrm{Homeo}_{+} ({\mathcal S})$
such that $\theta_{g} (\partial {\mathcal A}) = \partial {\mathcal A}$;
\item Given a connected component $M$ of ${\mathcal S} \setminus {\mathcal A}$
such that $M \cap {\mathcal K}$ is an infinite set, we have 
$(\theta_{f_j})_{|M} \equiv \mathrm{Id}$ whenever 
$M \cap {\mathcal K} \cap \mathrm{Fix}(f_j) \neq \emptyset$.
 Given a connected component $M$ of ${\mathcal S} \setminus {\mathcal A}$
such that $M \cap {\mathcal K}$ is finite and $g \in {\mathcal G}$ with $\theta_{g} (M)=M$,
we have that $(\theta_{g})_{|M}$ is periodic or pseudo-Anosov rel $M \cap {\mathcal K}$.
\end{itemize}
\end{rem}
\section{Annular dynamics}
\label{sec:annular}
Roughly speaking, in this paper there are two ways of finding global fixed points 
of groups of diffeomorphisms depending
on whether the Euler characteristic of a relevant  subsurface is negative or not. 
In this section, we consider the case where such a subsurface is an annulus.

Let us introduce the setting of this section.
Let $A$ be an open annulus and $\pi: \tilde{A} \to A$ its universal covering map.
We denote by $0$ and $\infty$ the ends of $A$.
Let $K_{0}$ be a compact subset of $A$.
Consider $g \in \mathrm{Diff}_{+}^{1}(A)$
and a lift $\tilde{g} \in \mathrm{Diff}_{+}^{1}(\tilde{A})$ such that
$g(K_{0})=K_{0}$, $\mathrm{Fix}(\tilde{g}) \cap \pi^{-1}(K_{0}) = \emptyset$ and $g$ fixes the ends of $A$.
\begin{defi}
\label{def:p0i}
Let $z \in \mathrm{Fix}(g)$. We say that $z$ does not belong to
$P_{0}(K_{0},g)$ if there exists a path
$\gamma:[0,1) \to A \setminus K_{0}$ such that
$\gamma(0)=z$, $\lim_{t \to 1} \gamma(t)=0$ and
$\gamma$ is properly homotopic to $g \circ \gamma$ rel $0$.
The definition of  $P_{\infty}(K_{0},g)$ is analogous.
We define $P_{0,\infty}(K_{0},g) = P_{0}(K_{0},g) \cap P_{\infty}(K_{0},g)$.
\end{defi}
We want to find compactly covered $g$-Nielsen classes rel $K_0$ in 
$\pi (\mathrm{Fix}(\tilde{g}))$.  Next, and as a temporary substitute we consider classes
in $P_{0,\infty}(K_{0},g)$.
\begin{lem}
\label{lem:p0i}
Suppose that $K_{0}$ is   the support of a $g$-invariant ergodic measure
$\mu$ and there exists a lift $\tilde{g}$ of $g$ to $\tilde{A}$ 
with $\rho_{\mu}(\tilde{g})=0$. 
Furthermore, assume that $\mathrm{Fix}(\tilde{g}) \cap \pi^{-1}(K_{0}) = \emptyset$. 
Then the intersection of
$P_{0,\infty}(K_{0},g)$ and the projection of $\mathrm{Fix}(\tilde{g})$
is a non-empty set.
\end{lem}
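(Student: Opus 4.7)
The plan is to use Atkinson's ergodic lemma on the horizontal displacement of $\tilde g$, combined with a Brouwer/Franks-type fixed point argument on the universal cover, to produce a fixed point of $\tilde g$ whose projection lies in $P_{0,\infty}(K_0,g)$. First I will identify $\tilde A$ with $\mathbb{R}\times (0,1)$ so that the generator $T$ of the deck group acts by $T(x,y)=(x+1,y)$, and the ends $0$ and $\infty$ of $A$ correspond to $y\downarrow 0$ and $y\uparrow 1$. The displacement $\phi(\tilde z)=p_1(\tilde g(\tilde z))-p_1(\tilde z)$ is $T$-invariant, hence descends to a continuous function on $A$, and its $\mu$-average equals $\rho_\mu(\tilde g)=0$.

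Next I will invoke Atkinson's ergodic lemma: since $\mu$ is ergodic with $\int\phi\,d\mu=0$, for $\mu$-a.e.\ $x\in K_0$ and any lift $\tilde x$, the Birkhoff sums $S_n\phi(\tilde x)=p_1(\tilde g^n(\tilde x))-p_1(\tilde x)$ return to $0$ along a subsequence $n_k\to\infty$. Combined with compactness of $K_0$ and the equivariance $\tilde g\circ T = T\circ \tilde g$, this produces a point $\tilde x\in\pi^{-1}(K_0)$ whose $\tilde g$-orbit stays inside a bounded strip of $\tilde A$ and accumulates on $\tilde x$. Its $\omega$-limit set $\omega(\tilde x)\subset\pi^{-1}(K_0)$ is then a nonempty compact $\tilde g$-invariant set, disjoint from $\mathrm{Fix}(\tilde g)$ by hypothesis.

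The main step is to convert this recurrence into a fixed point of $\tilde g$ in the correct topological position. I will argue by contradiction: suppose no fixed point of $\tilde g$ projects into $P_{0,\infty}(K_0,g)$. Then for every $\tilde z\in\mathrm{Fix}(\tilde g)$, the projection $z$ admits a path in $A\setminus K_0$ to $0$ or to $\infty$ that is properly homotopic rel $0$ to its image under $g$. Lifting this to $\tilde A$, every point of $\mathrm{Fix}(\tilde g)$ lies in one of two disjoint $\tilde g$-invariant open regions $U_0$ and $U_\infty$ near the lifts of the two ends; these regions can be chosen to be disjoint from a neighborhood of $\omega(\tilde x)$. The open set $\tilde A\setminus(U_0\cup U_\infty)$ contains $\omega(\tilde x)$ but no fixed point of $\tilde g$, so $\tilde g$ acts on it as a fixed-point-free homeomorphism of a planar region. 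Applying an equivariant Brouwer plane translation argument (in the spirit of Franks' theorem for lifts of annulus homeomorphisms) yields a strictly monotone vertical drift of orbits in that region, contradicting the boundedness and recurrence of the orbit of $\tilde x$.

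The hard part will be the last step: ruling out simultaneously the ``peripheral'' Nielsen classes at both ends and turning the Brouwer-type drift into a genuine contradiction. Atkinson only supplies recurrence, not fixed points, and the complement of $U_0\cup U_\infty$ is not a full plane, so one must be careful to work with the prime-end compactification of the complement or invoke a sharper equivariant fixed point theorem (in the spirit of Le Calvez or Koropecki--Tal) that provides a fixed point of $\tilde g$ detecting the vanishing of the rotation number on $\mu$. Once this fixed point is produced, it automatically lies outside $U_0\cup U_\infty$ and so projects into $P_{0,\infty}(K_0,g)$, completing the proof.
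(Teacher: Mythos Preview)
Your opening move is the right one and matches the paper: Atkinson's lemma gives $\tilde g$-recurrent points in $\pi^{-1}(K_0)$. But the rest of your argument has a genuine gap, which you yourself flag in the last paragraph.

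The problem is the passage from ``every fixed point of $\tilde g$ projects outside $P_{0,\infty}(K_0,g)$'' to ``$\mathrm{Fix}(\tilde g)$ is contained in two disjoint $\tilde g$-invariant open sets $U_0, U_\infty$ near the ends and away from $\omega(\tilde x)$''. Being peripheral to an end is a \emph{homotopy} condition on Nielsen classes, not a geometric proximity condition: a fixed point can be peripheral to the end $0$ while sitting deep inside the annulus, arbitrarily close to $K_0$. There is no mechanism that collects all such points into a single invariant open set disjoint from $\omega(\tilde x)$, and once $U_0, U_\infty$ fail to exist the Brouwer-drift contradiction collapses. Invoking an unspecified ``sharper equivariant fixed point theorem'' does not repair this; it is precisely the content of the lemma.

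The paper sidesteps the whole issue. After Atkinson produces a recurrent point $\tilde z$, it perturbs $g$ by a homeomorphism $h$ supported in an arbitrarily small neighborhood of $z=\pi(\tilde z)$ so that $\tilde g\circ\tilde h$ has a \emph{finite} orbit $\mathcal O\subset\pi^{-1}(K_0)$, while $\mathrm{Fix}(\tilde g\circ\tilde h)=\mathrm{Fix}(\tilde g)$. For a finite orbit one has the clean result (Franks--Handel--Parwani, Proposition~5.3) that $P(\mathcal O,\tilde g\circ\tilde h)\neq\emptyset$. The remaining work is a stability argument: the complement of $P_{0,\infty}(K_0,g)$ inside $\pi(\mathrm{Fix}(\tilde g))$ restricted to a compact subannulus is compact, and for each point in it the witnessing homotopy stays a positive distance from $K_0$, so a sufficiently small perturbation cannot place such a point in $P(\mathcal O,\tilde g\circ\tilde h)$. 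Hence the point produced by the finite-orbit theorem already lies in $P_{0,\infty}(K_0,g)\cap\pi(\mathrm{Fix}(\tilde g))$. The finite-orbit reduction is the idea you are missing.
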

\begin{proof}
We denote by $\pi: \tilde{A} \to A$ the universal covering map.
Atkinson's lemma  \cite{Atkinson}
implies that the set of recurrent points in $\pi^{-1}({K}_{0})$ is of total measure.
Let $\tilde{z} \in \pi^{-1}({K}_{0})$ be a recurrent point.
Given an arbitrary small neighborhood $V$ of $z$ in $A$
there exists $h \in \mathrm{Homeo}_{+}(A)$ whose support is contained in $V$
and such that $\tilde{g} \circ \tilde{h}$ has a finite orbit ${\mathcal O}$ contained in
$\pi^{-1}({K}_{0})$, where $\tilde{h}$ is the lift of $h$ such that 
$\tilde{h}_{|\tilde{A} \setminus  \pi^{-1}(V)} \equiv \mathrm{Id}$.
We define $h = Id$ if the $\tilde{g}$-orbit of $\tilde{z}$ is finite.
The sets $\mathrm{Fix}(\tilde{g})$ and $\mathrm{Fix}(\tilde{g} \circ \tilde{h})$ coincide
if $V$ is a small neighborhood of $z$.
The set $P({\mathcal O}, \tilde{g} \circ \tilde{h})$ contains a point $\tilde{w}$
(Proposition 5.3 of \cite{FHPs}, see also \cite{Gamba}).
We claim that if $h$ is close enough to the identity in the $C^{0}$ topology then
the point $\pi (\tilde{w})$ also belongs  to $P_{0,\infty}(K_{0},g)$.

It is clear that there exists a compact subannulus $A_{K}$ of $A$
containing $K_{0}$ such that
$(\mathrm{Fix}(g) \setminus A_{K}) \cap \pi(P({\mathcal O}, \tilde{g} \circ \tilde{h}))$ is empty
if $V$ is a small neighborhood of $z$.
Consider a point $w_{0} \in \mathrm{Fix}(g) \setminus P_{0}(K_{0},g)$.
Let $\gamma$ be the path in Definition \ref{def:p0i}. Let $J$ be the homotopy
between $\gamma$ and $g \circ \gamma$.
The distance between the image of $J$ and $K_{0}$ is positive, say greater than
some $\epsilon>0$.
Hence if $V$ is a small neighborhood of $z$ the point ${w}_{0}$
does not belong to $P_{0}({\mathcal O},g \circ h)$.
Therefore no lift $\tilde{w}_{0}$ of $w_{0}$ belongs to
$P({\mathcal O}, \tilde{g} \circ \tilde{h})$.
Moreover $\tilde{w}$ does
not belong to $P({\mathcal O}, \tilde{g} \circ \tilde{h})$
for any lift $\tilde{w}$ of a point $w \in \mathrm{Fix} (g)$ in the
neighborhood of $w_{0}$.
An analogous property holds true for points in
$\mathrm{Fix}(g) \setminus P_{\infty}(K_{0},g)$.

Since $(\pi(\mathrm{Fix}(\tilde{g})) \cap A_{K}) \setminus P_{0,\infty}(K_{0},g)$ is compact then
\[ (\pi(\mathrm{Fix}(\tilde{g})) \setminus P_{0,\infty}(K_{0},g)) \cap 
\pi (P({\mathcal O}, \tilde{g} \circ \tilde{h})) = \emptyset \]
if $V$ is a small neighborhood of $z$.
Thus we obtain that $\pi(P({\mathcal O}, \tilde{g} \circ \tilde{h}))$ is a non-empty
set contained in
$P_{0,\infty}(K_{0},g) \cap \pi(\mathrm{Fix}(\tilde{g}))$.
\end{proof}
We want to find and localize global fixed points of a nilpotent subgroup 
$G = \langle H, f \rangle$ of $\mathrm{Diff}_{+}^{1} (S_0)$, where $H$ is a normal subgroup of 
$G$ consisting of isotopic to the identity elements. 
We will use Proposition \ref{pro:clr} to obtain
compactly covered $f$-Nielsen classes and Proposition \ref{pro:clp2} to show that
such Nielsen classes contain elements of $\mathrm{Fix}(G)$.
The localization of such classes is the subject of Proposition \ref{pro:ann}.

Let us introduce the setting of Propositions \ref{pro:clr}  and \ref{pro:ann}. 
\begin{defi}
\label{def:rho}
Let $A$ be an open annulus contained in a surface $S_{0}$.
We denote by $\pi: \tilde{S}_{0} \to S_{0}$ the universal covering map.
Suppose that the core curve of the annulus is not null-homotopic in $S_{0}$;
in this case the universal covering $\tilde{A}$ of $A$ is contained in
$\tilde{S}_{0}$.
Let $K_{0}$ be a non-empty compact subset of $A$.
Consider $f \in \mathrm{Diff}_{+}^{1}(S_{0})$ such that
$f$ is isotopic to some $\theta \in \mathrm{Homeo}_{+} (S_0)$ rel $K_0$ such that 
$f(K_{0})=K_{0}$, $\theta (A)=A$ and $\theta$ preserves the ends of $A$. 
Consider a lift $\tilde{f}$ of $f$ to $\tilde{S}_{0}$
such that $\tilde{f}(\tilde{A})$ is isotopic to $\tilde{A}$
rel $\pi^{-1}(K_{0})$.
Let $\mu$ be a $f$-invariant measure with support contained in $K_{0}$.
Up to fix a generator of $H_{1} (A, {\mathbb Z})$, we can define
the rotation number $\rho_{\mu}(\tilde{f})$ of
$\tilde{f}$ with respect to $\mu$ in $\tilde{A}$.

The rotation number is well-defined since $f$ preserves $A$ modulo isotopy
relative to the support of $\mu$.
\end{defi}
\begin{rem}
\label{rem:rho}
Suppose $f \in \mathrm{Diff}_{0}^{1}(S_{0})$, $\chi (S_0) <0$ and $\tilde{f}$ is the identity lift of 
$f$ to $\tilde{S}_0$.  Then $\rho_{\mu}(\tilde{f})$ does not depend on the choice of 
$\tilde{A}$ since it commutes with deck transformations.
\end{rem} 
\begin{pro}
\label{pro:clr}
Suppose that $K_{0}$ is the support of a $f$-invariant ergodic measure
$\mu$ such that $\rho_{\mu}(\tilde{f}) = 0$ and 
$\mathrm{Fix}(\tilde{f}) \cap (\pi^{-1}(K_{0}) \cap \tilde{A}) = \emptyset$.
Then there exists a lift $f^{\natural}$ such that
$\mathrm{Fix}(f^{\natural})$ is a non-empty compact set whose projection to $S_0$
is contained in the $f$-Nielsen class defined by $\tilde{f}$.
\end{pro}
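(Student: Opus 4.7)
The plan is to reduce the problem from $f$ to the homeomorphism $\theta$ preserving $A$ and fixing its ends, apply Lemma \ref{lem:p0i} in the annulus setting to produce a fixed point with good asymptotic behavior, and then pass to a smaller covering where the resulting Nielsen class becomes compactly covered.

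First, I would lift the isotopy $f \simeq \theta$ rel $K_0$ to an isotopy in $\tilde{S}_0$ fixing $\pi^{-1}(K_0)$ pointwise; by the hypothesis that $\tilde{f}(\tilde{A})$ is isotopic to $\tilde{A}$ rel $\pi^{-1}(K_0)$, the endpoint lift $\tilde{\theta}$ can be arranged so that $\tilde{\theta}(\tilde{A}) = \tilde{A}$. Since $\theta = f$ on $K_0 = \mathrm{supp}(\mu)$, the measure $\mu$ is $\theta$-invariant, $\rho_{\mu}(\tilde{\theta}|_{\tilde{A}}) = \rho_{\mu}(\tilde{f}) = 0$, and $\mathrm{Fix}(\tilde{\theta}|_{\tilde{A}}) \cap \pi^{-1}(K_0) \cap \tilde{A} = \emptyset$. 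Applying Lemma \ref{lem:p0i} to $\theta|_A$ with the lift $\tilde{\theta}|_{\tilde{A}}$ then produces a point $\tilde{w} \in \mathrm{Fix}(\tilde{\theta}|_{\tilde{A}})$ whose projection $w := \pi(\tilde{w})$ lies in $P_{0,\infty}(K_0, \theta|_A)$.

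Second, I would construct the lift $f^{\natural}$ via a suitable subcovering. Let $C$ be the connected component of $A \setminus K_0$ containing $w$; since $\theta$ permutes the components of $A \setminus K_0$ and fixes $w$, it fixes $C$. Let $\varpi: \hat{C} \to C$ be the universal covering and choose the lift $\theta^{\natural}$ of $\theta|_{C}$ fixing a preimage $\hat{w}$ of $w$. The membership $w \in P_{0,\infty}(K_0, \theta|_A)$ ensures that fixed points of $\theta^{\natural}$ cannot be peripheral to either end of the ambient annulus $A$, while the fact that $C$ avoids $K_0$ (combined with $\rho_{\mu}(\tilde{f}) = 0$ on the support $K_0$ of $\mu$) blocks accumulation in the angular direction of $A$. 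One concludes that $\mathrm{Fix}(\theta^{\natural})$ is non-empty and compact in $\hat{C}$. Transferring through the isotopy $f \simeq \theta$ rel $K_0$, which restricts to $C$ and lifts to $\hat{C}$, yields from $\theta^{\natural}$ a lift $f^{\natural}$ of $f|_{C}$ whose fixed set is also non-empty and compact; moreover $\varpi(\mathrm{Fix}(f^{\natural}))$ is contained in the $f$-Nielsen class defined by $\tilde{f}$, since the lifted isotopy $\tilde{f} \simeq \tilde{\theta}$ identifies the relevant Nielsen classes.

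The main obstacle is the compactness of $\mathrm{Fix}(\theta^{\natural})$ in $\hat{C}$: the $P_{0,\infty}$ hypothesis forbids escape toward the ends of $A$, but ruling out a chain of deck-translated fixed points escaping in the angular direction of the annulus requires a delicate use of ergodicity of $\mu$ and the vanishing of its rotation number, so that no periodic orbit of the lift can shift indefinitely along the core direction. Once this compactness is in hand, the remaining steps are routine consequences of the Nielsen-theoretic transfer under isotopy rel $K_0$.
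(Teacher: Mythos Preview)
Your proposal has two genuine gaps.

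First, the transfer from $\theta$ to $f$ does not work as stated. The isotopy $f\simeq\theta$ is only rel $K_0$; it does \emph{not} restrict to $C$ (nor even to $A$), since points of $C$ may leave $A$ entirely during the isotopy. Consequently ``$f|_C$'' is not a self-map of $C$ and there is no lift $f^{\natural}$ of $f$ to $\hat C$. The paper avoids this by working not in $A$ but in the intermediate annular cover $\hat S_0:=\tilde S_0/\langle T\rangle$, where $T$ is the primitive deck transformation with $T(\tilde A)=\tilde A$. There $f$ has a genuine global lift $\hat f$ to which Lemma~\ref{lem:p0i} applies directly, and one then passes to the universal cover of a component $\hat U$ of $\hat S_0\setminus\hat K_0$, which $\hat f$ genuinely preserves.

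Second, even for $\theta$ (or for $\hat f$ in the paper's setup), the compactness of the fixed set of the lift to the universal cover of the relevant complementary component can genuinely fail, and your sketch (``a delicate use of ergodicity of $\mu$ and the vanishing of its rotation number'') does not constitute an argument. In the paper the analogous lift $f^{\flat}$ may well have non-compact fixed set; when this occurs one obtains a nontrivial $\hat f$-invariant free homotopy class in $\hat U$, produces (via \cite[Lemma~2.12]{FHPs}) a finite-type subsurface $S\subset\hat U$ on which a representative of $\hat f$ has finite order, and then uses the ergodicity of $\mu$ together with $\rho_\mu(\tilde f)=0$ to show that $\partial S$ contains exactly one inessential curve $\gamma$. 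The disc $D_0\subset\hat S_0$ bounded by $\gamma$ lifts to a disc $\tilde D_0\subset\tilde S_0$ with $K_0':=\tilde D_0\cap\pi^{-1}(K_0)$ a non-empty compact $\tilde f$-invariant set disjoint from $\mathrm{Fix}(\tilde f)$, and Proposition~\ref{pro:clp} applied to $\tilde f$ and $K_0'$ yields the desired $f^{\natural}$. This entire alternative branch is essential and is missing from your outline.
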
 
\begin{proof}
Let $\pi: \tilde{S}_{0} \to {S}_{0}$ be the universal covering map.
Note that since the core curve of the annulus $A$ is not null-homotopic, $\tilde{S}_0$ is 
not a sphere and hence  $\tilde{S}_0$ is homeomorphic to ${\mathbb R}^{2}$.
Consider the primitive deck transformation
$T$ such that $T(\tilde{A})=\tilde{A}$. We define $\hat{S}_{0}$ as the quotient
of $\tilde{S}_{0}$ by the action of the group $\langle T \rangle$.
Let $\hat{\pi}: \hat{S}_{0} \to S_{0}$  and
$\varpi: \tilde{S}_{0} \to \hat{S}_{0}$ be the natural maps.
The annulus $\hat{A}:=\varpi(\tilde{A})$
satisfies that
$\hat{\pi}_{|\hat{A}}: \hat{A} \to A$ is a homeomorphism.
We denote $\hat{K}_{0} = \hat{\pi}^{-1}(K_{0}) \cap \hat{A}$.
Consider the unique lift $\hat{f}$ of $f$ to $\hat{S}_{0}$ such that
$\tilde{f}$ is a lift of $\hat{f}$. We obtain
$\hat{f}(\hat{K}_{0})  = \hat{K}_{0}$. Let $\hat{\mu}$ be the
measure $\hat{\pi}_{|\hat{A}}^{*} \mu$.

Since $\mathrm{Fix}(\tilde{f}) \cap \varpi^{-1}(\hat{K}_{0}) \cap \tilde{A} = \emptyset$
by hypothesis, there exists 
$\hat{z} \in P_{0,\infty}(\hat{K}_{0},\hat{f}) \cap \varpi(\mathrm{Fix}(\tilde{f}))$  by 
Lemma \ref{lem:p0i}.
Let $\hat{U}$ be the connected component of $\hat{S}_{0} \setminus \hat{K}_{0}$
containing $\hat{z}$.
We claim that $\hat{U}$ is not an annulus.
Indeed, since $\hat{K}_{0} = \mathrm{sup} (\hat{\mu})$ and 
$\hat{\mu}$ is ergodic, the set  $\hat{S}_0 \setminus \hat{U}$ has just one connected component. 
If $\hat{U}$ is an inessential annulus then  $\hat{S}_0 \setminus \hat{U}$
has two connected components and we get a contradiction. 
If $\hat{U}$ is an essential annulus such that 
 $\hat{S}_0 \setminus \hat{U}$ has just one connected component then
$\hat{z} \not \in P_{0}(\hat{K}_{0},\hat{f})$ or $\hat{z} \not \in P_{\infty}(\hat{K}_{0},\hat{f})$ and 
again we obtain a contradiction.

We denote by $\pi_{\hat{U}}: \tilde{U} \to \hat{U}$ the universal covering map.
Fix a lift $\tilde{z}$ of $\hat{z}$ in $\tilde{U}$.
We define $f^{\flat}$ as the lift of $\hat{f}$ to $\tilde{U}$
such that $f^{\flat}(\tilde{z})=\tilde{z}$.
If $\mathrm{Fix}(f^{\flat})$ is compact we define $f^{\natural} = f^{\flat}$, 
$D= \tilde{U}$ and $K_{0}' = \hat{K}_0$.
The map $f^{\natural}:D \to D$ is a lift of $f$ by the map
$\hat{\pi} \circ \pi_{\hat{U}}$.

Suppose that $\mathrm{Fix}(f^{\flat})$ is not compact.
There exists
a non-trivial $[\nu] \in \pi_{1}(\hat{U},\hat{z})$ such that $\hat{f} [\nu]=[\nu]$.
Up to replace $\nu$ with an homotopic closed path defining the same class in
 $\pi_{1}(\hat{U},\hat{z})$,
there exists a finite type connected subsurface
$S$ contained in $\hat{U}$, containing $\nu [0,1]$ and
a homeomorphism $\hat{\theta}$ of $\hat{S}_0$ isotopic to $\hat{f}$ rel $\hat{K}_{0}$
such that $\hat{\theta}_{|S}$ has finite order (Lemma 2.12 of \cite{FHPs}).
Moreover no boundary curve $\alpha$ in $\partial S$ bounds a disk contained
in $\hat{S}_{0} \setminus \hat{K}_{0}$.

%
%

Consider the connected components $S^{0}$ and $S^{\infty}$ of
$\hat{S}_{0} \setminus S$ containing the ends $0$ and $\infty$
respectively. If $S^{0} = S^{\infty}$ then we denote by $\gamma$ the unique 
curve in $\partial S \cap \partial S^{0} \cap \partial S^{\infty}$. The curve $\gamma$
is non-essential.

Suppose now $S^{0} \neq S^{\infty}$. 
The curve $\partial S \cap \partial S^{0}$ is essential. Moreover
the compact set $\hat{K}_{0} \cap S^{0}$
is invariant and either has measure
$1$ or $0$. Since $\hat{K}_{0} \setminus S^{0}$ is a compact invariant set and
$\mathrm{supp}(\hat{\mu})=\hat{K}_{0}$ then either
$\hat{K}_{0} \cap S^{0} = \emptyset$ or $\hat{K}_{0} \cap S^{0} = \hat{K}_{0}$.
The latter situation is impossible since it would imply $\hat{z} \not \in P_{\infty}(\hat{K}_{0},\hat{f})$.
Analogously we obtain that $\partial S \cap \partial S^{\infty}$ is an essential curve and 
$\hat{K}_{0} \cap S^{\infty} = \emptyset$.
Since $S$ is connected all the boundary curves other than $\partial S \cap \partial S^{0}$ 
and $\partial S \cap \partial S^{\infty}$ are non-essential.
There are at least one such non-essential boundary curve since otherwise we have 
$\hat{K}_0 = \emptyset$.
Consider the homeomorphism $\theta^{\flat}$ induced by $\hat{\theta}$ in the surface $S^{\flat}$
obtained by contracting the curves in $\partial S$. It has two fixed points corresponding to 
$\partial S \cap \partial S^{0}$ and $\partial S \cap \partial S^{\infty}$.  
Moreover since $\hat{\theta}$ is isotopic to $\hat{f}$ rel $\hat{K}_0$ and the rotation number 
$\rho_{\hat{\mu}} (\hat{f})$ vanishes, it follows that all boundary curves of $S$ correspond to 
fixed points of $\theta^{\flat}$ in $S^{\flat}$. Moreover since $\theta^{\flat}$ is a finite order 
homeomorphism of a sphere $S^{\flat}$ with at least three fixed points, we obtain 
$\hat{\theta}_{|S} \equiv Id$.
The ergodicity of $\hat{\mu}$ implies that
$\partial S$ contains exactly one non-essential curve $\gamma$.

Let $D_0$ be the topological disc in $\hat{S}_{0}$ enclosed by $\gamma$.
Since $\rho_{\mu}(\tilde{f}) = 0$
the diffeomorphism $\tilde{f}$ fixes (modulo isotopy rel $\varpi^{-1}(\hat{K}_{0})$)
a lift $\tilde{D}_0$ of $D_0$. Moreover $K_{0}' := \tilde{D}_0 \cap \varpi^{-1}(\hat{K}_{0})$
is a non-empty compact $\tilde{f}$-invariant set such that
$\pi_{|K_{0}'}: K_{0}' \to K_{0}$
is an homeomorphism. Hence we can lift $\mu$ to $K_{0}'$.
The hypothesis $\mathrm{Fix}(\tilde{f}) \cap (\pi^{-1}(K_{0}) \cap \tilde{A}) = \emptyset$
implies $\mathrm{Fix}(\tilde{f}) \cap  K_{0}'  = \emptyset$.
Now we just apply Proposition \ref{pro:clp} to $\tilde{f}$
and $K_{0}'$. Hence ${f}^{\natural}: D \to D$ is a lift of $f$ by the map
$\pi \circ \pi_{V}$ where $D = \tilde{V}$ and
$\pi_{V}: \tilde{V} \to V$ is the universal covering map
of a connected component $V$ of $\tilde{S}_{0} \setminus  K_{0}'$.
Note that $V$ is not an annulus since otherwise it does not contain non-empty
compactly covered $\tilde{f}_{|\tilde{S}_0 \setminus K_{0}'}$-Nielsen classes. 
Indeed, in such a case  the lift of $\tilde{f}$ commutes with 
every deck transformation of $\tilde{V}$.
\end{proof}
\begin{rem}
\label{rem:wlif}
We proved that the homeomorphism ${f}^{\natural}:D \to D$ is
a lift of $f$ by a map of the form
$\pi' \circ \pi_{W}$ where $\pi'$ is a covering of $S_{0}$, $f'$ is a lift of
$f$ to $(\pi')^{-1}(S_{0})$ and
$\pi_{W}:D \to W$ is the universal covering map of a connected component $W$ of
$(\pi')^{-1}(S_{0}) \setminus K_{0}'$, where $K_{0}'$ is a $f'$-invariant
compact set contained in $(\pi')^{-1}(K_{0})$. Moreover, $W$ is not an annulus. 
\end{rem}
\begin{rem}
The map $\pi' \circ \pi_{W}$ is not a covering map and it is just a local homeomorphism.
Anyway, since $\pi'$ and $\pi_{W}$ are covering maps we can lift $f$ to $(\pi')^{-1}(S_{0})$
by $\pi'$ to obtain $f'$ and then to $D$ by $\pi_{W}$ to obtain ${f}^{\natural}$. 
\end{rem}
 %
%
%
%
%
The next result finds and localizes fixed points.
\begin{pro}
\label{pro:ann}
Consider the hypotheses of Proposition \ref{pro:clr}.
Suppose that there exists a compact $f$-invariant
subset $K$ of $S_{0} \setminus (\overline{A} \setminus A)$ such that 
$\emptyset \neq K_{0} \subset K$,
$f(A)$ is isotopic to $A$ rel $K$ and $f$ preserves the ends of $A$ modulo isotopy rel $K$. 
Consider a nilpotent group $G= \langle H,f \rangle  \subset \mathrm{Diff}_{+}^{1}(S_{0})$.
Suppose that any point in $\mathrm{Fix}(H) \cap \pi (\mathrm{Fix}(\tilde{f}))$
is $f$-Nielsen equivalent rel $K$ to a point in $K$. Suppose further that
$H$ is a normal subgroup of $G$ of elements isotopic to the identity rel
$K$.  Then
$\mathrm{Fix}(H) \cap \pi (\mathrm{Fix}(\tilde{f})) \cap K \cap A 
\neq \emptyset$.
Moreover, if $\chi (S_0) <0$, we can  replace $\mathrm{Fix}(H)$ with $\mathrm{Fix}_{c}(H)$
in the statement and then 
$\mathrm{Fix}_{c}(H) \cap \pi (\mathrm{Fix}(\tilde{f})) \cap K \cap A  \neq \emptyset$ holds.
\end{pro}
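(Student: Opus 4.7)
The plan is to mimic the proof of Proposition \ref{pro:clp2} using the output of Proposition \ref{pro:clr}. On the domain $D$ provided by that proposition I build a lift $G^\natural$ of $G$, observe that $\mathrm{Fix}(f^\natural) \subset D$ is already non-empty compact, and apply Corollary \ref{cor:plane} to produce a global fixed point of $G^\natural$. Projecting to $S_0$ yields a point of $\mathrm{Fix}(G) \cap \pi(\mathrm{Fix}(\tilde f))$; the Nielsen-equivalence-rel-$K$ hypothesis is then used to push it into $K \cap A$.

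I begin by invoking Proposition \ref{pro:clr} to obtain a lift $f^\natural : D \to D$ of $f$ through a tower $D \xrightarrow{\pi_W} W \hookrightarrow (\pi')^{-1}(S_0) \xrightarrow{\pi'} S_0$ (Remark \ref{rem:wlif}), with $\mathrm{Fix}(f^\natural)$ a non-empty compact set whose projection in $S_0$ is contained in the Nielsen class $\mathcal N_f := \pi(\mathrm{Fix}(\tilde f))$, and with $W$ not an annulus, so $D \cong \mathbb R^2$. Here $K_0' \subset (\pi')^{-1}(K_0) \subset (\pi')^{-1}(K)$. Next, for each $h \in H$ the isotopy rel $K$ from $\mathrm{Id}$ to $h$ lifts through $\pi'$ to an isotopy fixing $(\pi')^{-1}(K)$ pointwise, and then through $\pi_W$ (the lift is unique because $W$ is not an annulus, exactly as in Proposition \ref{pro:clp2}) to an isotopy on $D$, producing a diffeomorphism $h^\natural \in \mathrm{Diff}_+^1(D)$. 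Let $H^\natural = \{h^\natural : h \in H\}$ and $G^\natural = \langle H^\natural, f^\natural\rangle$.

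As in the proof of Proposition \ref{pro:clp2}, the map $h \mapsto h^\natural$ is an isomorphism $H \to H^\natural$ whose image contains no non-trivial deck transformation, $H^\natural \lhd G^\natural$ follows from $H \lhd G$, and nilpotency of $G^\natural$ follows from that of $G$: since $G/H$ is cyclic hence abelian, the iterated commutator subgroups satisfy $\mathcal C^{(j)}(G) \subset H$ and $\mathcal C^{(j)}(G^\natural) \subset H^\natural$, with the latter projecting injectively onto the former. Corollary \ref{cor:plane}, applied to $G^\natural$ on $D \cong \mathbb R^2$ using that $\mathrm{Fix}(f^\natural)$ is non-empty compact, furnishes $\tilde z \in \mathrm{Fix}(G^\natural)$. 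Its projection $z = (\pi' \circ \pi_W)(\tilde z)$ lies in $\mathrm{Fix}(G) \cap \mathcal N_f$, in particular in $\mathrm{Fix}(H) \cap \pi(\mathrm{Fix}(\tilde f))$, so the hypothesis yields $w \in K$ that is $f$-Nielsen-equivalent rel $K$ to $z$. Nielsen equivalence rel $K$ refines the $f$-Nielsen equivalence on $S_0$, so $w \in \mathcal N_f$, and $K \subset \mathrm{Fix}(H)$ gives $w \in \mathrm{Fix}(H)$.

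The main obstacle is confining $w$ to $A$. Here I would use the geometric hypothesis $K \cap (\overline A \setminus A) = \emptyset$ together with the fact that the path witnessing the Nielsen equivalence rel $K$ can be homotoped (rel $K$) to stay on the same side of $\partial A$ as $z$; by the construction of the tower $\pi' \circ \pi_W$ around $\tilde A$, the point $z$ lies in $A$, which forces $w \in K \cap A$. Finally, for the $\chi(S_0) < 0$ addendum, the upgrade to $\mathrm{Fix}_c(H)$ is as in the last paragraph of the proof of Proposition \ref{pro:clp2}: each $h \in H$ is isotopic to $\mathrm{Id}$ via an isotopy $(h_t)$ fixing $w \in K$, so $t \mapsto h_t(w)$ is constant and hence null-homotopic in $S_0$, giving $w \in \mathrm{Fix}_c(h)$ for every $h \in H$ and therefore $w \in \mathrm{Fix}_c(H) \cap \pi(\mathrm{Fix}(\tilde f)) \cap K \cap A$.
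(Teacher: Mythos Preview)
Your overall strategy is the right one and tracks the paper closely: produce the lift $f^\natural$ via Proposition~\ref{pro:clr}, build a nilpotent lift of $G$, apply Corollary~\ref{cor:plane} to get a global fixed point, project and then use the Nielsen-rel-$K$ hypothesis to land in $K$. The construction of $H^\natural$ and the nilpotency of $G^\natural$ are fine, and your $\mathrm{Fix}_c(H)$ argument at the end (using $K\subset\mathrm{Fix}_c(H)$) is correct.

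The gap is the step confining $w$ to $A$. Two claims there are not justified. First, ``by the construction of the tower \dots\ the point $z$ lies in $A$'' is not true in general: the component $W\subset(\pi')^{-1}(S_0)\setminus K_0'$ is the complement of a compact set and is \emph{not} contained in any preimage of $A$, so its image in $S_0$ need not sit inside $A$. Second, ``the path witnessing the Nielsen equivalence rel $K$ can be homotoped rel $K$ to stay on the same side of $\partial A$'' has no content: a homotopy rel $K$ only pins down points of $K$ lying on the path and does nothing to prevent crossings of $\partial A$.

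What the paper actually does is work at the \emph{intermediate} level (the cover $(\pi')^{-1}(S_0)$, which is either the annulus $\hat S_0$ or the disc $\tilde S_0$), not all the way up in $D$. It lifts the Nielsen-rel-$K$ path $\gamma$ starting at the lifted fixed point $\hat w$ (resp.\ $\tilde w$), and observes that the homotopy between $\gamma$ and $f\circ\gamma$ cannot meet $K_0$: if it did, that point of $K_0$ would be fixed by $f$ and lie in the Nielsen class of $\tilde f$, contradicting $\mathrm{Fix}(\tilde f)\cap\pi^{-1}(K_0)\cap\tilde A=\emptyset$. Hence the lifted endpoint $\hat z$ (resp.\ $\tilde z$) lies in the \emph{same compactly covered} $\hat f_{|\hat S_0\setminus\hat K_0}$-class (resp.\ $\tilde f_{|\tilde S_0\setminus K_0'}$-class). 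Compact coverage is then what forces $\hat z\in\hat A$ (resp.\ $\tilde z\in\tilde D_0\cap\pi^{-1}(K)$), because points of the lifted $K$ lying outside $\hat A$ (resp.\ outside $\tilde D_0$) are peripheral to an end relative to $\hat K_0$ (resp.\ $K_0'$). Projecting gives $z\in K\cap A$. Your argument needs exactly this lifting-and-compact-coverage step; the appeal to ``same side of $\partial A$'' does not substitute for it.
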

\begin{proof}
Let  ${f}^{\natural}:D \to D$ be the lift of $f$
provided by Proposition \ref{pro:clr}.
Consider the description of ${f}^{\natural}$ in Remark \ref{rem:wlif}.
There are two cases, namely $(\pi')^{-1}(S_{0})$ is an annulus or a topological disc. 
We adopt the notations in Proposition \ref{pro:clr} and Remark \ref{rem:wlif}.

First, assume that  $\hat{S}_{0}:=(\pi')^{-1}(S_{0})$   is an annulus. 
Consider the set 
$\hat{H}$ consisting of the identity lifts rel $K$ of elements of $H$. 
It is a subgroup of $\mathrm{Diff}_{0} (\hat{S}_{0})$ consisting of elements isotopic to 
$\mathrm{Id}$ rel $\hat{\pi}^{-1}(K)$ and in particular rel $\hat{K}_{0}$.
Moreover, $\hat{H}$ is a normal subgroup of $\hat{G}:=\langle \hat{H}, \hat{f} \rangle$ and thus
$\hat{G}$ is nilpotent. By construction, 
there exists a non-empty compactly covered 
$\hat{f}_{|\hat{S}_{0} \setminus \hat{K}_{0}}$-Nielsen class 
${\mathcal N} = \pi_{\hat{U}} (\mathrm{Fix}(f^{\natural}))$. 
There exists $\hat{w} \in \mathrm{Fix} (\hat{G}) \cap {\mathcal N}$ by 
Proposition \ref{pro:clp2}; let $w = \hat{\pi} (w)$ be its projection in $S_0$. 
Since $w \in \mathrm{Fix}(H) \cap \pi (\mathrm{Fix}(\tilde{f}))$, there exists 
$z \in K$ such that $z$ is $f$-Nielsen equivalent to $w$ rel $K$ by hypothesis. 
In particular, we have $z \in  \pi (\mathrm{Fix}(\tilde{f}))$.
There exist $\gamma:[0,1] \to S_0$ such that $\gamma (0) = w$, $\gamma (1)=z$
and a homotopy $I$ rel $K$ between $\gamma$ and $f \circ \gamma$
(cf. Definition \ref{def:homotopy}).
We define $\hat{z} = \hat{\gamma} (1)$ where $\hat{\gamma}$ is the lift of $\gamma$ to 
$\hat{S}_{0}$ such that $\hat{\gamma} (0) = \hat{w}$. 
Moreover, we have that the image of $I$ does not intersect $\hat{K}_{0}$  since otherwise 
$\mathrm{Fix}(\tilde{f}) \cap (\pi^{-1}(K_{0}) \cap \tilde{A}) \neq \emptyset$, contradicting the
hypothesis. 
Thus $\hat{z}$ belongs to the same  $\hat{f}_{|\hat{S}_{0} \setminus \hat{K}_{0}}$-Nielsen class  
as $\hat{w}$.  Since such a class is compactly covered, we obtain 
$\hat{z} \in \hat{A}$. In particular $z$ belongs to 
$\mathrm{Fix}(H) \cap \pi (\mathrm{Fix}(\tilde{f})) \cap K \cap A$.  
Notice, that for the case $\chi (S_0) <0$, if we replace $\mathrm{Fix}(H)$ with 
$\mathrm{Fix}_{c}(H)$ in the statement of the proposition, we obtain 
$w \in \mathrm{Fix}_{c}(H)$ by Proposition \ref{pro:clp2}
and hence we can proceed analogously to obtain
$z \in \mathrm{Fix}_{c}(H) \cap \pi (\mathrm{Fix}(\tilde{f})) \cap K \cap A$. 

Finally, suppose that  $\tilde{S}_{0}:=(\pi')^{-1}(S_{0})$   is a topological disc. 
Consider the group $\tilde{H}$ of identity lifts rel $K$ of elements of $H$.
The group $\tilde{H}$ is a normal subgroup of the nilpotent group 
$\tilde{G}:= \langle \tilde{H}, \tilde{f} \rangle$.
Since 
$\mathrm{Fix}(\tilde{f}) \cap (\pi^{-1}(K_{0}) \cap \tilde{A}) = \emptyset$, it follows 
that $\mathrm{Fix} (\tilde{f}) \cap K_{0}' = \emptyset$.
We apply Corollary \ref{cor:clp2} to obtain $\tilde{w} \in \mathrm{Fix} (\tilde{G})$ such that its 
 $\tilde{f}_{|\tilde{S}_{0} \setminus K_{0}'}$-Nielsen class ${\mathcal N}$ 
 is compactly covered.
 Let $w = \pi (\tilde{w})$ and consider a point $z \in K$ such that it is 
 $f$-Nielsen equivalent to $w$ rel $K$. 
 We can proceed as above to show that there exists a lift $\tilde{z}$ of $z$ to $\tilde{S}_0$
 that belongs to ${\mathcal N}$. Since ${\mathcal N}$ is compactly covered 
 rel $K_{0}' =  \tilde{D}_0 \cap \pi^{-1}(K_0)$ and 
 $\tilde{z} \in \pi^{-1}(K)$, we deduce that 
 $\tilde{z}$ belongs to $ \tilde{D}_0 \cap \pi^{-1}(K)$ and then
 $z \in \mathrm{Fix}(H) \cap \pi (\mathrm{Fix}(\tilde{f})) \cap K \cap A$.
 Moreover, if $\chi (S_0) <0$, we obtain $z  \in \mathrm{Fix}_{c}(H)$ by construction. 
\end{proof}  
\section{Localization of fixed points in the ``cyclic" case}
\label{sec:cyclic}
 The goal of the next two section is finding global fixed points for suitable subgroups of 
nilpotent subgroup of $\mathrm{Diff}_{0}^{1} ({\mathcal S})$ where ${\mathcal S}$
is a finitely punctured compact surface of negative Euler characteristic. 
The next theorem is useful to localize such global fixed points. 
\begin{teo}
\label{teo:mainc}
Let $S$ be an orientable connected compact surface (maybe with boundary) of 
negative Euler characteristic.
Let $G$ be a finitely generated nilpotent subgroup of 
$\{ \phi \in \mathrm{Homeo}_{0} (S): 
\phi_{|S \setminus \partial S} \in  \mathrm{Diff}_{0}^{1}(S \setminus \partial S) \}$
of nilpotency class  $\iota$.
Assume that $\phi$ is isotopic to $\mathrm{Id}$ rel
$\mathrm{Fix}({\mathcal C}^{(k)}(G)) \cup \partial S$
for all $k \geq 1$ and $\phi \in {\mathcal C}^{(k)}(G)$.
Let $g \in G$ and $0 \leq l < \iota$.
Let $\langle G',g \rangle^{\sharp}$ be a lift of $\langle G',g \rangle$ 
by a covering $S^{\sharp} \to S \setminus (K \cup \partial S)$ where 
$K \subset S \setminus \partial S$ is a $g$-invariant compact set contained in 
$\mathrm{Fix}(G')$, $S^{\sharp}$ is a finitely punctured
surface of negative Euler characteristic and the natural map
$\pi_{1}(S^{\sharp}) \to \pi_{1}(S)$ is injective.
Suppose that $g^{\sharp}$ is isotopic to the identity and that
$\mathrm{Fix}_{c}((gh)^{\sharp})$ is a non-empty compact set 
for any $h \in {\mathcal C}^{(\iota-l)}(G)$.
Then $\mathrm{Fix}_{c} \langle {\mathcal C}^{(\iota-l)}(G),g \rangle^{\sharp} \neq \emptyset$.
\end{teo}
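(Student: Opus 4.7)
The plan is to proceed by induction on $l$. The base case $l=0$ is immediate: since ${\mathcal C}^{(\iota)}(G) = \{\mathrm{Id}\}$, the conclusion reduces to $\mathrm{Fix}_{c}(g^{\sharp}) \neq \emptyset$, which is the hypothesis applied with $h = \mathrm{Id}$.

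For the inductive step $l-1 \to l$, set $H = {\mathcal C}^{(\iota-l)}(G)$ and $H^{+} = {\mathcal C}^{(\iota-l+1)}(G)$. Because $H^{+} \subseteq H$, the hypothesis at level $l$ implies the corresponding hypothesis at level $l-1$, so the induction hypothesis yields $\mathrm{Fix}_{c} \langle H^{+}, g \rangle^{\sharp} \neq \emptyset$. By Remark \ref{rem:des_abe}, the quotient $\langle H, g \rangle / H^{+}$ is abelian, so $\langle H^{+}, g \rangle$ is normal in $\langle H, g \rangle$; hence the set $K_{0} := \mathrm{Fix}_{c} \langle H^{+}, g \rangle^{\sharp}$ is non-empty, compact (as a closed subset of the compact set $\mathrm{Fix}_{c}(g^{\sharp})$), and $\langle H, g \rangle^{\sharp}$-invariant by Remark \ref{rem:inv_nor0}.

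I would then use $K_{0}$ as a reducing compact invariant set and apply Proposition \ref{pro:clp2} (or Proposition \ref{pro:ann} in the annular case) to the nilpotent group $\langle H^{\sharp}, g^{\sharp} \rangle = \langle H, g \rangle^{\sharp}$ in order to extract a global contractible fixed point. Concretely, after a Thurston-type decomposition of $\langle H, g \rangle^{\sharp}$ relative to $K_{0}$ as in Proposition \ref{pro:Tdecomp}, one locates a connected component $M$ of the complement on which $g^{\sharp}$ admits a compactly covered Nielsen class; the localization proposition then produces a point in $\mathrm{Fix}_{c} \langle H, g \rangle^{\sharp}$, using that $\chi(S^{\sharp}) < 0$ to land the output in $\mathrm{Fix}_{c}(H^{\sharp})$ and therefore in $\mathrm{Fix}_{c} \langle {\mathcal C}^{(\iota-l)}(G), g \rangle^{\sharp}$.

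The hardest step is verifying the isotopy-triviality hypothesis of Proposition \ref{pro:clp2} for $H^{\sharp}$ relative to $K_{0}$: a priori we only know $K_{0} \subseteq \mathrm{Fix}(H^{+\sharp})$, whereas $\mathrm{Fix}(H^{+})$ may be strictly larger than $\mathrm{Fix}(H)$, so the global isotopies given by the standing assumption on ${\mathcal C}^{(k)}(G)$ need not fix $K_{0}$ pointwise. The plan to overcome this is to combine Theorem \ref{teo:deriso} at level $k = \iota-l$ (which gives each $h \in H$ isotopic to $\mathrm{Id}$ rel $\mathrm{Fix}(h) \cap \mathrm{Fix}(H^{+})$) with Corollary \ref{cor:cder}, and to refine $K_{0}$ to a larger $\langle H, g \rangle^{\sharp}$-invariant compact set on which the isotopies of $H^{\sharp}$ become trivial after passing to a subsurface decomposition. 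Securing the compactly covered Nielsen class of $g^{\sharp}$ on the chosen component requires an ergodic-theoretic argument analogous to Proposition \ref{pro:clr}, and the annular pieces of the decomposition are handled separately via Proposition \ref{pro:ann} and the rotation-number framework of Section \ref{sec:annular}.
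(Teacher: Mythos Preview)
Your inductive setup and base case are fine, and the paper proceeds the same way (it calls this property ${\bf B}_k$). The gap is in the inductive step.

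The difficulty you flag as ``hardest'' is in fact fatal for your scheme. You want each $h^{\sharp}$ with $h\in H={\mathcal C}^{(\iota-l)}(G)$ to be isotopic to the identity rel $K_0=\mathrm{Fix}_c\langle H^{+},g\rangle^{\sharp}$. But the standing hypothesis (and Theorem \ref{teo:deriso}) only gives isotopy-triviality of $h$ rel $\mathrm{Fix}(h)\cap\mathrm{Fix}(H^{+})$, and the projection of $K_0$ lies in $\mathrm{Fix}(H^{+})\cap\mathrm{Fix}(g)$, not in $\mathrm{Fix}(h)$; for a generic $h\in H\setminus H^{+}$ there is no reason for $K_0\subset\mathrm{Fix}(h^{\sharp})$ at all, let alone for $h^{\sharp}$ to be isotopically trivial rel $K_0$. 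Enlarging $K_0$ only makes this worse. There is a second, related problem: since $K_0\subset\mathrm{Fix}_c(g^{\sharp})$, the map $g^{\sharp}$ fixes $K_0$ pointwise, so there is no mechanism to produce a \emph{compactly covered} $g^{\sharp}$-Nielsen class in a component of $S^{\sharp}\setminus K_0$; Proposition \ref{pro:clp2} has no useful input here.

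The paper sidesteps both issues by \emph{not} trying to make $H^{\sharp}$ isotopically trivial. It keeps ${\mathcal H}=(H^{+})^{\sharp}={\mathcal C}^{(\iota-l+1)}(G)^{\sharp}$ as the isotopically trivial normal subgroup (this is genuinely guaranteed by the hypotheses) and absorbs the abelian quotient $H/H^{+}$ into the ``cyclic'' part of the framework: one picks $q=-2\chi(S^{\sharp})e$ elements $h_1,\dots,h_q\in H$ so that any $e$ of the subgroups $\langle H^{+},gh_{i_1},\dots,gh_{i_e}\rangle$ have finite index in $\langle H,g\rangle$, sets $f_j=(gh_j)^{\sharp}$, and then runs the excellent-set/Thurston-decomposition machinery of Section \ref{sec:thurston} with Propositions \ref{pro:inf}, \ref{pro:puzfne}, \ref{pro:puzfic}, \ref{pro:puzfitc}. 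The point is that the compact sets one works relative to are built from the $\mathrm{Fix}_c\langle{\mathcal H},f_j\rangle$, and for $j\neq j_0$ the map $f_j$ does \emph{not} fix $\mathrm{Fix}_c\langle{\mathcal H},f_{j_0}\rangle$ pointwise, so nontrivial Nielsen theory is available; the vanishing of the relevant rotation numbers (Lemma \ref{lem:irr}, Corollary \ref{cor:irr}) is where the distortion estimate of Lemma \ref{lem:dist} for commutators enters. A pigeonhole argument over the $\leq -\chi(S^{\sharp})$ pieces of the decomposition then yields a common contractible fixed point for some $\langle{\mathcal H},f_{i_1},\dots,f_{i_e}\rangle$, and Lemma \ref{lem:fin_ord} upgrades this to $\mathrm{Fix}_c\langle H,g\rangle^{\sharp}\neq\emptyset$.
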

We introduce the following auxiliary property:
\begin{itemize}
\item ${\bf B_{k}}$: Theorem \ref{teo:mainc} holds true for any finitely generated group $G$,
any nilpotency class  $\iota$ and any $l \leq \min(k, \iota -1)$.
\end{itemize}
We show Property $\bf{B_k}$ by induction on $k$.  Property $\bf{B_0}$ holds by hypothesis  
since ${\mathcal C}^{(\iota)} (G) = \{ \mathrm{Id} \}$.
\subsection{Global fixed points and the framework}
Later on, we will describe the particular framework (see Definition \ref{def:frame})
that we will use to show Theorem \ref{teo:mainc}. 
In this section, we will introduce general properties that will be used also in the proof of 
Theorem \ref{teo:maina}.
\begin{rem}
\label{rem:frame}
Consider the framework described in Definition \ref{def:frame}. 
There exists  an excellent compact set  ${\mathcal K}$  for  
${\mathcal G}$ and $\langle {\mathcal H}, f_{1} \rangle$, $\hdots$, 
$\langle {\mathcal H}, f_{q} \rangle$ by Remark \ref{rem:ex_ex}. We fix
 a Thurston decomposition for ${\mathcal G}$ and
$\langle {\mathcal H}, f_{1} \rangle$, $\hdots$, $\langle {\mathcal H}, f_{q} \rangle$ 
rel ${\mathcal K}$ by Proposition 
\ref{pro:Tdecomp} and Remark \ref{rem:decomp}.
\end{rem}
\begin{defi}
We denote by ${\mathcal R}$ and ${\mathcal R}_{e}$ the sets of
reducing curves and essential (in ${\mathcal S}$) reducing curves
respectively for the Thurston decomposition. 
\end{defi}
We are going to present several results that allow us to find global fixed points of 
subgroups in the framework.  Then, next lemma ensures 
that there are global contractible fixed points of ${\mathcal G}$. 
More precisely, we will apply the next result to subgroups of the form 
$\langle {\mathcal H}, f_{i_1}, \hdots, f_{i_e} \rangle$
where $1 \leq i_1 < i_2 < \hdots < i_e \leq q$. 
\begin{lem}
\label{lem:fin_ord}
Let ${\mathcal G}$ be a subgroup of $\mathrm{Diff}_{0}^{1}({\mathcal S})$ where 
${\mathcal S}$ is a  connected surface of negative Euler characteristic. Consider a finite index 
subgroup ${\mathcal J}$ of ${\mathcal G}$ such that 
$\mathrm{Fix}_c ({\mathcal J}) \neq \emptyset$. 
Then $\mathrm{Fix}_c ({\mathcal G}) \neq \emptyset$. 
\end{lem}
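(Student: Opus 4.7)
The plan is to lift the problem to the universal cover $\pi:\tilde{\mathcal{S}} \to \mathcal{S}$, produce a finite $\tilde{\mathcal{G}}$-invariant set there, and invoke Theorem \ref{teo:plane}. First I would reduce to the case where $\mathcal{J}$ is a normal subgroup of $\mathcal{G}$: the normal core $\mathcal{J}^{*} = \bigcap_{g \in \mathcal{G}} g \mathcal{J} g^{-1}$ is normal in $\mathcal{G}$ and of finite index, and since $\mathcal{J}^{*} \subset \mathcal{J}$ one has $\mathrm{Fix}_{c}(\mathcal{J}^{*}) \supset \mathrm{Fix}_{c}(\mathcal{J}) \neq \emptyset$. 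After replacing $\mathcal{J}$ by $\mathcal{J}^{*}$, we may assume $\mathcal{J} \lhd \mathcal{G}$.

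Next, since $\chi(\mathcal{S})<0$, each $f \in \mathcal{G}$ admits a unique identity lift $\tilde{f}:\tilde{\mathcal{S}}\to\tilde{\mathcal{S}}$ and, by Remark \ref{rem:iso_nor}, the assignment $f \mapsto \tilde{f}$ is an injective group homomorphism. Denote by $\tilde{\mathcal{G}}$ and $\tilde{\mathcal{J}}$ the groups of identity lifts of elements of $\mathcal{G}$ and $\mathcal{J}$ respectively; then $\tilde{\mathcal{J}}$ is a finite-index normal subgroup of $\tilde{\mathcal{G}}$. The core observation is that
\[
\mathrm{Fix}(\tilde{\mathcal{J}}) \;=\; \pi^{-1}(\mathrm{Fix}_{c}(\mathcal{J})),
\]
the inclusion $\subset$ being immediate from Definition \ref{def:identity} and the inclusion $\supset$ following from the fact that an identity lift commutes with every deck transformation: if $\tilde{f}(\tilde{w})=\tilde{w}$ for some $\tilde{w} \in \pi^{-1}(z)$ and $\tilde{z}=T\tilde{w}$ with $T$ a deck transformation, then $\tilde{f}(\tilde{z})=T\tilde{f}(\tilde{w})=T\tilde{w}=\tilde{z}$. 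In particular $\mathrm{Fix}(\tilde{\mathcal{J}})$ is non-empty, and since $\tilde{\mathcal{J}} \lhd \tilde{\mathcal{G}}$ it is $\tilde{\mathcal{G}}$-invariant.

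Pick any $\tilde{z}_{0} \in \mathrm{Fix}(\tilde{\mathcal{J}})$. Its stabilizer in $\tilde{\mathcal{G}}$ contains $\tilde{\mathcal{J}}$, hence the orbit $O = \tilde{\mathcal{G}} \cdot \tilde{z}_{0}$ has at most $[\tilde{\mathcal{G}}:\tilde{\mathcal{J}}]$ elements and is therefore a finite, hence compact, $\tilde{\mathcal{G}}$-invariant subset of $\tilde{\mathcal{S}}$. Since $\chi(\mathcal{S})<0$, the universal cover $\tilde{\mathcal{S}}$ is homeomorphic to $\mathbb{R}^{2}$, and $\tilde{\mathcal{G}}$ is a nilpotent subgroup of $\mathrm{Diff}_{+}^{1}(\tilde{\mathcal{S}})$ (inherited from the ambient nilpotence of $\mathcal{G}$ assumed throughout the applications of this lemma in the paper). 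Theorem \ref{teo:plane}, applied to $\tilde{\mathcal{G}}$ and the compact invariant set $O$, then yields $\tilde{w} \in \mathrm{Fix}(\tilde{\mathcal{G}})$, and $\pi(\tilde{w}) \in \mathrm{Fix}_{c}(\mathcal{G})$ is the desired global contractible fixed point.

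The main subtlety lies in the identity $\mathrm{Fix}(\tilde{\mathcal{J}}) = \pi^{-1}(\mathrm{Fix}_{c}(\mathcal{J}))$: this saturation of the fixed set under deck transformations is precisely what guarantees that the $\tilde{\mathcal{G}}$-orbit of $\tilde{z}_{0}$ stays inside $\mathrm{Fix}(\tilde{\mathcal{J}})$, producing the compact invariant set required by Theorem \ref{teo:plane}. Without this step, one would only know that \emph{some} lift of each $z \in \mathrm{Fix}_{c}(\mathcal{J})$ is fixed by each $\tilde{f}$, which is insufficient to assemble a single $\tilde{\mathcal{G}}$-invariant compact set.
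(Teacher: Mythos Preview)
Your proof is correct and follows the same strategy as the paper: lift to the universal cover, observe that a lift of a point in $\mathrm{Fix}_c(\mathcal{J})$ is fixed by all of $\tilde{\mathcal{J}}$, deduce that its $\tilde{\mathcal{G}}$-orbit is finite, and apply Theorem~\ref{teo:plane}. The reduction to a normal $\mathcal{J}$ and the full saturation identity $\mathrm{Fix}(\tilde{\mathcal{J}})=\pi^{-1}(\mathrm{Fix}_c(\mathcal{J}))$ are correct but unnecessary---the orbit-stabilizer argument already gives finiteness of the orbit once the stabilizer contains the finite-index subgroup $\tilde{\mathcal{J}}$---and your remark that nilpotence of $\mathcal{G}$ is being assumed from the ambient framework (Definition~\ref{def:frame}) is well taken, since the lemma statement omits it while the invocation of Theorem~\ref{teo:plane} requires it.
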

\begin{proof}
Denote by $\tilde{\mathcal J}$ and $\tilde{\mathcal G}$ the groups of identity lifts of 
elements of ${\mathcal J}$ and ${\mathcal G}$ respectively to the universal covering 
$\tilde{\mathcal S}$ of ${\mathcal S}$.
Fix $z \in \mathrm{Fix}_c ({\mathcal J})$ and one of its lifts $\tilde{z} \in \tilde{\mathcal S}$. 
Since $\tilde{z} \in \mathrm{Fix}(\tilde{\mathcal J})$ and $\tilde{\mathcal J}$ is a finite subgroup 
of $\tilde{\mathcal G}$, we deduce that the $\tilde{\mathcal G}$-orbit of $\tilde{z}$ is finite. 
Thus $\tilde{\mathcal G}$ preserves a non-empty compact set and as a consequence 
$\tilde{\mathcal G}$ has a global fixed point by Theorem \ref{teo:plane}. 
Therefore, we get $\mathrm{Fix}_c ({\mathcal G}) \neq \emptyset$.   
\end{proof}
Later on, we will need to localize the global fixed points of the subgroups of the framework
in some specific regions of the Thurston decomposition that are introduced in next 
definition.
\begin{defi}
\label{def:xm}
Let ${\mathcal C}$ be the set of connected components of
${\mathcal S} \setminus {\mathcal R}_{e}$ that either have
negative Euler characteristic or are annuli bounding connected
components of negative Euler characteristic. Let ${\mathcal C}_{-}$
be the subset of ${\mathcal C}$ of connected components of negative Euler characteristic.
Given a connected component $M$ of $S \setminus {\mathcal R}_{e}$
we denote by $X_{M}$ the set obtained by removing from $M$ all the topological
discs enclosed by curves in $({\mathcal R} \setminus {\mathcal R}_{e}) \cap M$
and the points in ${\mathcal A}$.
\end{defi}
\subsection{Chasing global fixed points}
\label{subsec:chase}
Now, we are going to study several configurations, for the global fixed points of the auxiliary 
subgroups in the framework, that induce global fixed points of ${\mathcal G}$.
\subsubsection{Infinite case}
\begin{pro}
\label{pro:inf}
Let $M$ be a connected component of ${\mathcal S} \setminus {\mathcal R}_{e}$
such that 
$\sharp (X_{M} \cap {\mathcal K} \cap \mathrm{Fix}_{c} \langle {\mathcal H},f_{b} \rangle)= \infty$
for any $1 \leq b \leq e$.
Then $\mathrm{Fix}_{c}({\mathcal G})$ is a non-empty set.
\end{pro}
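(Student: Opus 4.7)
The plan is to reduce to a finite-index subgroup, extract a common sub-component $M^*$ of the Thurston decomposition on which every generator acts trivially in normal form, and then harvest contractible fixed points via the identity lift. Set $G_0 := \langle {\mathcal H}, f_1, \dots, f_e \rangle$; by Definition \ref{def:frame}, $G_0$ has finite index in ${\mathcal G}$, so by Lemma \ref{lem:fin_ord} it suffices to produce a point in $\mathrm{Fix}_{c}(G_0)$.

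For each $b \in \{1,\dots,e\}$, the infiniteness hypothesis combined with Remark \ref{rem:decomp} applied to the connected components $M'$ of ${\mathcal S}\setminus {\mathcal A}$ inside $X_M$ with $M' \cap {\mathcal K}$ infinite forces $\theta_{f_b}|_{M'} \equiv \mathrm{Id}$ whenever $M' \cap {\mathcal K} \cap \mathrm{Fix}(f_b)\neq \emptyset$. Since $f_b$ and $\theta_{f_b}$ are isotopic rel ${\mathcal K}$, this yields for every such $M'$ the dichotomy $M'\cap {\mathcal K} \subset \mathrm{Fix}(f_b)$ or $M'\cap {\mathcal K}\cap \mathrm{Fix}(f_b)=\emptyset$. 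The delicate combinatorial step, which I expect to be the main obstacle, is to produce a single sub-component $M^*$ of $X_M$ with $M^*\cap {\mathcal K}$ infinite and $\theta_{f_b}|_{M^*}\equiv \mathrm{Id}$ for \emph{every} $b \leq e$ simultaneously. This I plan to do by exploiting that ${\mathcal G}/{\mathcal H}$ is abelian, so its action on the finite set of sub-components of $X_M$ with infinite ${\mathcal K}$-intersection factors through an abelian quotient; the infiniteness of each $X_M\cap {\mathcal K}\cap \mathrm{Fix}_{c}\langle {\mathcal H},f_b\rangle$ ensures every $f_b$ lies in the pointwise stabilizer of some orbit of sub-components, and after replacing $G_0$ by a finite-index subgroup stabilizing a common orbit---using the freedom in the framework, namely that any $e$ elements of $\{f_1,\dots,f_q\}$ generate a finite-index subgroup---$M^*$ may be taken in such an orbit.

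Once $M^*$ is in hand, I lift the picture to the universal cover $\pi:\tilde{\mathcal S}\to{\mathcal S}$. Since each $\theta_{f_b}\in \mathrm{Diff}_0^{1}({\mathcal S})$ and $\chi({\mathcal S})<0$, the identity lift $\tilde{\theta}_{f_b}$ commutes with every deck transformation. Picking $z_b\in M^*\cap {\mathcal K}\cap \mathrm{Fix}_{c}(f_b)$ and a lift $\tilde{z}_b$ fixed by $\tilde{f}_b$, the rel-${\mathcal K}$ isotopy from $f_b$ to $\theta_{f_b}$ lifts to a rel-$\pi^{-1}({\mathcal K})$ isotopy from $\tilde{f}_b$ to $\tilde{\theta}_{f_b}$, so $\tilde{\theta}_{f_b}(\tilde{z}_b)=\tilde{z}_b$. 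Because $\theta_{f_b}|_{M^*}\equiv \mathrm{Id}$, the restriction of $\tilde{\theta}_{f_b}$ to each connected component of $\pi^{-1}(M^*)$ is a single deck transformation by continuity and connectedness, trivial on the component containing $\tilde{z}_b$ and, by commutation with deck transformations, trivial on every component of $\pi^{-1}(M^*)$. The rel-${\mathcal K}$ identification then gives $\tilde{f}_b=\tilde{\theta}_{f_b}$ on $\pi^{-1}({\mathcal K})$, so every point of $M^*\cap {\mathcal K}$ is a contractible fixed point of $f_b$.

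Running this argument for every $b\leq e$ and combining with ${\mathcal K}\subset \mathrm{Fix}_{c}({\mathcal H})$ (a consequence of ${\mathcal K}$ being good), I obtain $M^*\cap {\mathcal K}\subset \mathrm{Fix}_{c}(G_0)$, which is non-empty by hypothesis. Lemma \ref{lem:fin_ord} then yields $\mathrm{Fix}_{c}({\mathcal G})\neq \emptyset$, completing the argument.
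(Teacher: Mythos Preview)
Your overall strategy is right and matches the paper's, but the ``delicate combinatorial step'' you flag is a non-issue that the paper bypasses entirely. By Definition~\ref{def:xm}, $X_M$ is obtained from the connected set $M$ by removing topological discs (bounded by the inessential reducing curves in $M$) and the annuli in ${\mathcal A}$; removing discs does not disconnect $M$, and the annuli removed are either inside those discs or collars of $\partial M$. Hence $X_M$ is itself a single connected component of ${\mathcal S}\setminus{\mathcal A}$. There is no need to hunt for a sub-component $M^*$: you may simply take $M^*=X_M$. Since $X_M\cap{\mathcal K}$ is infinite and $X_M\cap{\mathcal K}\cap\mathrm{Fix}(f_b)\neq\emptyset$ for every $b\le e$, Remark~\ref{rem:decomp} gives $(\theta_{f_b})_{|X_M}\equiv\mathrm{Id}$ for all $b$ at once, with no combinatorics required. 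This is exactly what the paper's one-line appeal to ``the properties of the Thurston decomposition'' is doing.

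Your proposed workaround for the combinatorial step---swapping to other elements of $\{f_1,\dots,f_q\}$ to arrange a common stabilized orbit---would in any case be illegitimate: the hypothesis of the proposition is stated for the specific indices $1\le b\le e$, and replacing $f_1,\dots,f_e$ by other $f_i$'s could destroy the infiniteness assumption you need.

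Once $M^*=X_M$ is in hand, your third paragraph (lifting to $\tilde{\mathcal S}$, using that the identity lift commutes with deck transformations, and concluding $X_M\cap{\mathcal K}\subset\mathrm{Fix}_c(f_b)$) is correct and is a careful unpacking of the paper's terse assertion ``$X_{M}\cap{\mathcal K}\subset\mathrm{Fix}_{c}\langle{\mathcal H},f_{b}\rangle$''. The finish via Lemma~\ref{lem:fin_ord} is identical to the paper's.
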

\begin{proof}
Every element of ${\mathcal J}:= \langle {\mathcal H}, f_1, \hdots, f_e \rangle$ 
is isotopic rel ${\mathcal K} \cap X_{M}$
to a diffeomorphism that is the identity map in $M$ by the properties of the 
Thurston decomposition.
Since 
$X_{M} \cap {\mathcal K} \cap \mathrm{Fix}_{c} \langle {\mathcal H},f_{b} \rangle \neq \emptyset$,
we deduce that 
$X_{M} \cap {\mathcal K} \subset \mathrm{Fix}_{c} \langle {\mathcal H},f_{b} \rangle$
for any $1 \leq b \leq e$. 
Thus $X_{M} \cap {\mathcal K}$ is contained in $\mathrm{Fix}_{c} ( {\mathcal J})$.
We obtain $\mathrm{Fix}_c ({\mathcal G}) \neq \emptyset$ by Lemma \ref{lem:fin_ord}.
\end{proof}

\subsubsection{Finite case in negative Euler characteristic}
\begin{pro}
\label{pro:puzfne}
Let $M$ be a connected component of ${\mathcal S} \setminus {\mathcal R}_{e}$
such that $\chi (M) <0$. Suppose that
\[ \sharp (X_{M} \cap {\mathcal K} \cap \mathrm{Fix}_{c} \langle {\mathcal H}, f_{b}\rangle)< \infty
 \ \mathrm{and} \ 
M \cap {\mathcal K} \cap \mathrm{Fix}_{c} \langle {\mathcal H}, f_{b} \rangle \neq \emptyset \]
for any $1 \leq b \leq e$.
Then $\mathrm{Fix}_{c}({\mathcal G})$ is a non-empty set.
\end{pro}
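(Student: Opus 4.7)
The plan is to reduce to a finite-type situation inside $M$ and combine the Thurston decomposition with the localization Theorem~\ref{teo:mainc}. Since the subgroup $\mathcal{J} = \langle \mathcal{H}, f_1, \hdots, f_e \rangle$ is of finite index in $\mathcal{G}$ by the definition of the framework, Lemma~\ref{lem:fin_ord} reduces the problem to producing a point of $\mathrm{Fix}_c(\mathcal{J}) \cap M$. The hypothesis here is the finite counterpart of Proposition~\ref{pro:inf}: whereas there the fact that $X_M \cap \mathcal{K} \cap \mathrm{Fix}_c\langle \mathcal{H}, f_b\rangle$ is infinite forced each $f_b$ to be isotopic rel $\mathcal{K}$ to the identity on $M$, here the $f_b$ can genuinely act nontrivially on $M$ as periodic or pseudo-Anosov maps, so the proof cannot rely on the triviality of the normal form.

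First, I would apply Remark~\ref{rem:tgood} with $M'$ the union of the components of $\mathcal{S}\setminus\mathcal{R}$ that compose $M$ (together with the annuli of $\mathcal{A}$ on its boundary), in order to replace $\mathcal{K}$ by a new good compact set $\mathcal{K}'$ that agrees with $\mathcal{K}$ outside $M$ and whose intersection with $M$ is the finite set $\mathcal{K}'' := M \cap \mathcal{K} \cap \bigcup_{b=1}^{e} \mathrm{Fix}_c\langle \mathcal{H}, f_b\rangle$. Finiteness of $\mathcal{K}''$ follows from the hypothesis, since any point of $M\setminus X_M$ lies in a disc bounded by a non-essential reducing curve or in one of the annuli of $\mathcal{A}$, and hence cannot belong to any $\mathrm{Fix}_c\langle \mathcal{H}, f_b\rangle$ unless it already sits in $X_M$. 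The induced Thurston decomposition of $M$ rel $\mathcal{K}''$ then provides normal forms (Remark~\ref{rem:decomp}): each $f_b$ is isotopic rel $\mathcal{K}'$ to a homeomorphism that is periodic or pseudo-Anosov on each piece of $M$ with $\chi<0$.

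Next, for each $1\leq b\leq e$ I would apply Theorem~\ref{teo:mainc} to $g = f_b$ and the subgroup $\langle \mathcal{H}, f_b \rangle$, using a covering of $M \setminus (K\cup\partial M)$ for an appropriate $\mathcal{G}$-invariant compact set $K \subset \mathrm{Fix}(\mathcal{H})$ built from $\mathcal{K}'$ so that the identity lift $f_b^{\sharp}$ has non-empty compact contractible fixed point set. Such a set is produced by the hypothesis $M \cap \mathcal{K}\cap\mathrm{Fix}_c\langle\mathcal{H},f_b\rangle\neq\emptyset$ combined with Theorem~\ref{teo:deriso} and Corollary~\ref{cor:cont}, which ensure that $\mathrm{Fix}_c(\mathcal{H})$ contains the candidate points. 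The conclusion of Theorem~\ref{teo:mainc} yields $\mathrm{Fix}_c\langle\mathcal{H},f_b\rangle^{\sharp}\neq\emptyset$, and projecting down gives fixed points of $\langle \mathcal{H}, f_b\rangle$ localized inside $M$.

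The main obstacle is assembling these $e$ individual fixed sets into a common global fixed point of $\mathcal{J}$. Here I would invoke the fact that $\mathcal{G}/\mathcal{H}$ is abelian and every element of $\mathcal{H}$ is isotopic to $\mathrm{Id}$ rel $\mathcal{K}'$, so that the image of $\mathcal{G}$ in $\mathrm{MCG}(M,\mathcal{K}''\cap M)$ is abelian, bringing us into the framework of \cite{FHP-g} on the finite type surface $M$. The combinatorics of $f_b$-Nielsen classes rel $\mathcal{K}'$ then reproduces the abelian argument: the finite index condition satisfied by any choice of $e$ generators among $f_1,\hdots,f_q$ makes a pigeonhole on the finite sets $X_M \cap \mathcal{K} \cap \mathrm{Fix}_c\langle \mathcal{H}, f_b\rangle$ produce indices $1\leq i_1<\hdots<i_e\leq q$ for which $\bigcap_{b=1}^{e}\mathrm{Fix}_c\langle\mathcal{H}, f_{i_b}\rangle\cap M$ is non-empty, and a final application of Lemma~\ref{lem:fin_ord} yields the global contractible fixed point of $\mathcal{G}$.
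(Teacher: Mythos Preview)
Your approach diverges substantially from the paper's and contains genuine gaps.

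First, the finiteness claim for $\mathcal{K}''$ is wrong. You assert that a point of $M\setminus X_M$ ``cannot belong to any $\mathrm{Fix}_c\langle\mathcal{H},f_b\rangle$ unless it already sits in $X_M$.'' But $M\setminus X_M$ consists of discs bounded by inessential reducing curves (and annuli of $\mathcal{A}$), and those discs may contain arbitrarily many points of $\mathcal{K}\cap\mathrm{Fix}_c\langle\mathcal{H},f_b\rangle$; the hypothesis only bounds the intersection with $X_M$. The paper in fact treats the case $(M\setminus X_M)\cap\mathcal{K}\cap\mathrm{Fix}_c(f_b)\neq\emptyset$ explicitly, so this situation is not vacuous. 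Consequently Remark~\ref{rem:tgood} cannot be applied as you propose.

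Second, the ``assembling'' step does not work. The proposition is stated for the fixed indices $1\le b\le e$; there is no freedom left to run a pigeonhole over $q$ indices and select a new $e$-tuple $i_1<\cdots<i_e$. You must produce a contractible fixed point for $\mathcal{J}=\langle\mathcal{H},f_1,\ldots,f_e\rangle$ directly. Your invocation of Theorem~\ref{teo:mainc} for each $f_b$ separately, even if its hypotheses could be checked, yields only $\mathrm{Fix}_c\langle\mathcal{H},f_b\rangle\neq\emptyset$ inside $M$, which was already part of the hypothesis; it does not give a common fixed point.

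The paper's argument is shorter and avoids both issues by working with the abelian image $[\mathcal{J}]$ in $\mathrm{MCG}(X_M,\mathcal{K}\cap X_M)$, which consists of irreducible classes. It introduces the kernel $\mathcal{J}_0$ of $\mathcal{J}\to[\mathcal{J}]$ and shows $\mathrm{Fix}_c(\mathcal{J}_0)\neq\emptyset$ directly: either some $z\in X_M\cap\mathcal{K}$ works (since $\chi(M)<0$), or one uses a lift of a disc $D_\gamma$ bounded by an inessential reducing curve and Theorem~\ref{teo:plane}. If $[\mathcal{J}]$ is finite one is done by Lemma~\ref{lem:fin_ord}. If $[\mathcal{J}]$ is infinite then it is virtually cyclic generated by a pseudo-Anosov $[f_b]$, so $\langle\mathcal{J}_0,f_b\rangle$ has finite index in $\mathcal{J}$, and one repeats the same two-case argument to get $\mathrm{Fix}_c\langle\mathcal{J}_0,f_b\rangle\neq\emptyset$. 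No appeal to Theorem~\ref{teo:mainc} or to changing $\mathcal{K}$ is needed.
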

\begin{proof}
It suffices to show  $\mathrm{Fix}_{c}({\mathcal J}) \neq \emptyset$, where 
${\mathcal J}= \langle {\mathcal H},f_{1},\hdots, f_{e} \rangle$, 
by Lemma \ref{lem:fin_ord}.
The group $[{\mathcal J}]$ induced in $\mathrm{MCG}(X_{M},{\mathcal K} \cap X_{M})$
by ${\mathcal J}$ is an abelian group ($[{\mathcal H}]=1$)
whose elements are all irreducible.
Consider the group ${\mathcal J}_{0}$ of ${\mathcal J}$ consisting of   
the elements whose image
in $\mathrm{MCG}(X_{M},{\mathcal K} \cap X_{M})$ is trivial.
Since the boundary of $\partial M$ consists of essential curves, 
the universal covering $\tilde{M}$ of $M$ can be interpreted as a subset
of $\tilde{\mathcal S}$.
We denote by $\pi: \tilde{\mathcal S} \to {\mathcal S}$ the universal covering.

We claim that $\mathrm{Fix}_{c}({\mathcal J}_{0})$ is non-empty.
Suppose that there exists  $z \in X_{M} \cap {\mathcal K}$.
Any $\phi \in {\mathcal J}_{0}$ is isotopic rel $\{z\}$
to a map $\theta$ that is the identity in $M$. Since
$\chi (M)<0$, we obtain $z \in \mathrm{Fix}_{c}({\mathcal J}_{0})$.
Obviously $\mathrm{Fix}(\tilde{\phi})$ contains
$\pi^{-1}(z)$ where $\tilde{\phi}$ is the identity lift.
Analogously, if there exists a reducing curve $\gamma$ in
$M \cap ({\mathcal R} \setminus  {\mathcal R}_{e})$ 
we obtain that $\tilde{\phi}(\tilde{\gamma})$ is isotopic to
$\tilde{\gamma}$ rel $\pi^{-1}({\mathcal K})$ for any
lift $\tilde{\gamma}$ of $\gamma$.
Let $D_{\gamma}$ be the disc in ${\mathcal S}$ bounded by $\gamma$.
Let $\tilde{D}_{\gamma}$ be a lift of $D_{\gamma}$. We have
that $\pi^{-1}({\mathcal K}) \cap \tilde{D}_{\gamma}$ is a non-empty
compact ${\mathcal J}_{0}$-invariant set.
We obtain $\mathrm{Fix}_{c}({\mathcal J}_{0}) \neq \emptyset$ by
Theorem \ref{teo:plane}.

Suppose that $[{\mathcal J}]$ is finite. Then
${\mathcal J}_{0}$ is a finite index normal subgroup of ${\mathcal J}$
and the result is a consequence of Lemma \ref{lem:fin_ord}.
%
%

Suppose $ \sharp [{\mathcal J}]=\infty$.
Since ${\mathcal J}$ is nilpotent, its subset $\mathrm{Tor} ({\mathcal J})$ of 
finite order elements is a subgroup
\cite[Theorem 16.2.7]{Karga}. Hence  $ \sharp [{\mathcal J}]=\infty$
implies the existence of 
%
%
$1 \leq b \leq e$ such that $[f_{b}]$ is of infinite order
and then pseudo-Anosov.
%
%
Moreover $[{\mathcal J}]$ is virtually cyclic (see Lemma 2.1 of \cite{JR:arxivsp})
so $\langle {\mathcal J}_{0},f_{b} \rangle$ is a finite index normal subgroup of
${\mathcal J}$.
It suffices to prove  
$\mathrm{Fix}_{c} \langle {\mathcal J}_{0}, f_{b} \rangle \neq \emptyset$
by Lemma \ref{lem:fin_ord}.
There are two cases, namely
$X_{M} \cap {\mathcal K} \cap \mathrm{Fix}_{c}(f_{b}) \neq \emptyset$
and $(M \setminus X_{M}) \cap {\mathcal K} \cap \mathrm{Fix}_{c}(f_{b}) \neq \emptyset$.
The former case is easy since
$X_{M} \cap {\mathcal K} \cap \mathrm{Fix}_{c}(f_{b})$
is contained in $\mathrm{Fix}_{c} \langle {\mathcal J}_{0}, f_{b} \rangle$.
In the latter case, consider a reducing curve $\gamma$ in
$M \cap ({\mathcal R} \setminus  {\mathcal R}_{e})$ 
such that
$D_{\gamma} \cap (M \setminus X_{M}) \cap {\mathcal K} \cap \mathrm{Fix}_{c}(f_{b}) \neq \emptyset$.
The curve 
$\tilde{f}_{b}(\tilde{\gamma})$ is isotopic to $\tilde{\gamma}$ rel $\pi^{-1}({\mathcal K})$.
The compact set $\pi^{-1}({\mathcal K}) \cap \tilde{D}_{\gamma}$ is
invariant by the identity lift of $\langle {\mathcal J}_{0}, f_{b} \rangle$.
Thus $\mathrm{Fix}_{c} \langle {\mathcal J}_{0},f_{b} \rangle$ is non-empty 
by Theorem \ref{teo:plane}. 
\end{proof}
\subsubsection{Finite irrotational case and finite isotopically trivial case}
\begin{pro}
\label{pro:puzfic}
Let $M$ be a connected component of ${\mathcal S} \setminus {\mathcal R}_{e}$
such that $\chi (M) =0$. Suppose that
\[ \sharp (X_{M} \cap {\mathcal K} \cap \mathrm{Fix}_{c} \langle{\mathcal H}, f_{b} \rangle)< \infty
\ \mathrm{and} \ 
M \cap {\mathcal K} \cap \mathrm{Fix}_{c}\langle{\mathcal H}, f_{b} \rangle \neq \emptyset \]
for any $1 \leq b \leq e$.
Suppose further that there exists a ${\mathcal G}$-invariant measure
$\mu$ such that $\mathrm{supp}(\mu) \subset {\mathcal K} \cap M$
and $\rho_{\mu}(\tilde{\phi}) =0$ (see Definition \ref{def:rho} and Remark \ref{rem:rho})
for any $\phi \in {\mathcal G}$.
Then $\mathrm{Fix}_{c}({\mathcal G})$ is a non-empty set.
\end{pro}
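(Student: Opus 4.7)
The plan is to establish $\mathrm{Fix}_{c}({\mathcal J}) \neq \emptyset$ for the finite-index subgroup ${\mathcal J}:=\langle {\mathcal H}, f_{1}, \hdots, f_{e}\rangle$ of ${\mathcal G}$ and then invoke Lemma \ref{lem:fin_ord}. Since $\chi(M)=0$ and the components of $\partial M$ are essential reducing curves of ${\mathcal S}$, $M$ is an open annulus whose universal covering $\tilde{M}$ embeds in $\tilde{{\mathcal S}}$; let $T$ denote the primitive deck transformation preserving $\tilde{M}$. Up to replacing $\mu$ with one of its ergodic components I may assume $\mu$ is ergodic.

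Mimicking the strategy of Proposition \ref{pro:puzfne}, I would consider the image $[{\mathcal J}]$ of ${\mathcal J}$ in $\mathrm{MCG}(X_{M}, X_{M} \cap {\mathcal K})$. Because every element of ${\mathcal H}$ is isotopic to $\mathrm{Id}$ rel ${\mathcal K}$ and ${\mathcal G}/{\mathcal H}$ is abelian, $[{\mathcal J}]$ is abelian; as $X_{M}$ is an annulus it is in fact cyclic. Let ${\mathcal J}_{0} \lhd {\mathcal J}$ be the kernel of ${\mathcal J} \to [{\mathcal J}]$. As in Proposition \ref{pro:puzfne} I first verify $\mathrm{Fix}_{c}({\mathcal J}_{0}) \neq \emptyset$: if $X_{M} \cap {\mathcal K} \neq \emptyset$ then any such point is fixed by every identity lift of elements of ${\mathcal J}_{0}$; otherwise I would pick a non-essential reducing curve $\gamma \in M \cap ({\mathcal R} \setminus {\mathcal R}_{e})$ whose enclosed disc $D_{\gamma}$ meets ${\mathcal K}$, lift $D_{\gamma}$ to $\tilde{{\mathcal S}}$, and apply Theorem \ref{teo:plane} to the identity lift of ${\mathcal J}_{0}$ acting on the compact invariant set $\pi^{-1}({\mathcal K}) \cap \tilde{D}_{\gamma}$.

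If $[{\mathcal J}]$ is finite, then ${\mathcal J}_{0}$ has finite index in ${\mathcal J}$ and Lemma \ref{lem:fin_ord} concludes. Otherwise $[{\mathcal J}] \cong {\mathbb Z}$ and there exists $1 \leq b \leq e$ with $[f_{b}]$ of infinite order; then $\langle {\mathcal J}_{0}, f_{b}\rangle$ is a finite-index subgroup of ${\mathcal J}$, and $f_{b}$ is isotopic rel ${\mathcal K}$ to a non-trivial power of a Dehn twist along the core of $X_{M}$. The hypothesis $\rho_{\mu}(\tilde{f}_{b})=0$ on the identity lift, together with the fact that ${\mathcal J}_{0}$ consists of elements isotopic to $\mathrm{Id}$ rel ${\mathcal K}$ and with the Nielsen-equivalence property built into the excellence of ${\mathcal K}$, supplies exactly the input of Proposition \ref{pro:ann} applied with $S_{0}={\mathcal S}$, $A=M$, $K_{0}=\mathrm{supp}(\mu)$, $K={\mathcal K}$, $G=\langle {\mathcal J}_{0}, f_{b}\rangle$, $H={\mathcal J}_{0}$ and $f=f_{b}$. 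This yields a point in $\mathrm{Fix}_{c}({\mathcal J}_{0}) \cap \pi(\mathrm{Fix}(\tilde{f}_{b})) \cap {\mathcal K} \cap M$, hence a global contractible fixed point of $\langle {\mathcal J}_{0}, f_{b}\rangle$, and a second application of Lemma \ref{lem:fin_ord} completes the argument.

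The main obstacle is the degenerate case in which $\mathrm{Fix}(\tilde{f}_{b}) \cap \pi^{-1}(\mathrm{supp}(\mu)) \cap \tilde{M} \neq \emptyset$, since the hypothesis of the underlying Proposition \ref{pro:clr} then fails. In that situation I would argue directly: any such $\tilde{z}$ projects into ${\mathcal K} \subset \mathrm{Fix}_{c}({\mathcal H})$ and is fixed by every identity lift of an element of ${\mathcal J}_{0}$, since these act as the identity on $\pi^{-1}({\mathcal K})$, so $\tilde{z}$ itself is a global fixed point of the identity lift of $\langle {\mathcal J}_{0}, f_{b}\rangle$ and projects to a point in $\mathrm{Fix}_{c}\langle {\mathcal J}_{0}, f_{b}\rangle$. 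A minor additional technicality, to be handled at the outset, is the possibility that $f_{b}$ swaps the two ends of $M$; if so I would replace $f_{b}$ with $f_{b}^{2}$, which still has infinite order in $[{\mathcal J}]$ and hence still yields a finite-index subgroup, and then proceed as above.
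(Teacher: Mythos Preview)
Your outline matches the paper's (reduce to ${\mathcal J}=\langle{\mathcal H},f_1,\hdots,f_e\rangle$, define ${\mathcal J}_0$, split on $\sharp[{\mathcal J}]$), but there is a recurring gap: you treat elements of ${\mathcal J}_0$ as if their identity lifts fix $\pi^{-1}({\mathcal K})$ pointwise. Membership in ${\mathcal J}_0$ only says $[\theta_\phi|_{X_M}]=1$ in $\mathrm{MCG}(X_M, X_M\cap{\mathcal K})$, not that $\phi$ is isotopic to $\mathrm{Id}$ rel ${\mathcal K}$. In Proposition~\ref{pro:puzfne} the claims $X_M\cap{\mathcal K}\subset\mathrm{Fix}_c({\mathcal J}_0)$ and ``$\tilde\phi(\tilde\gamma)$ is isotopic to $\tilde\gamma$ rel $\pi^{-1}({\mathcal K})$'' were deduced from $\chi(M)<0$; when $\chi(M)=0$ the identity lift of a map equal to $\mathrm{Id}$ on $M$ can act on $\tilde M$ as a nontrivial power $T^k$ of the deck transformation. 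The paper's proof is precisely this observation: if either claim fails for some $\phi\in{\mathcal J}_0$ then $\rho(\tilde\phi,z)=k\neq 0$ for all $z\in{\mathcal K}\cap M$, contradicting the hypothesis $\rho_\mu(\tilde\phi)=0$. You must invoke $\rho_\mu=0$ for every $\phi\in{\mathcal J}_0$ at this step --- this is the whole content of the extra hypothesis --- not only for $f_b$ later.

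Your route through Proposition~\ref{pro:ann} in the infinite-$[{\mathcal J}]$ case inherits the same misconception: that proposition requires $H$ to consist of elements isotopic to $\mathrm{Id}$ rel $K$, and $H={\mathcal J}_0$ generally does not satisfy this. (Also, $[f_b]$ need not be a Dehn twist along the core of $X_M$: when $\chi(X_M\setminus(X_M\cap{\mathcal K}))<0$, an infinite-order irreducible class is pseudo-Anosov.) The paper does not take this detour; once the second paragraph is repaired via $\rho_\mu=0$, the disc-and-Theorem~\ref{teo:plane} argument of Proposition~\ref{pro:puzfne} carries over verbatim, since now both the identity lifts of ${\mathcal J}_0$ and $\tilde f_b$ preserve $\pi^{-1}({\mathcal K})\cap\tilde D_\gamma$.
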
 
\begin{pro}
\label{pro:puzfitc}
Let $M$ be a connected component of ${\mathcal S} \setminus {\mathcal R}_{e}$
such that $\chi (M) =0$. Suppose that
\[ \sharp (X_{M} \cap {\mathcal K} \cap \mathrm{Fix}_{c} \langle{\mathcal H}, f_{b} \rangle)< \infty
\ \mathrm{and} \ 
M \cap {\mathcal K} \cap \mathrm{Fix}_{c}\langle{\mathcal H}, f_{b} \rangle \neq \emptyset \]
for any $1 \leq b \leq e$.
Assume that the element $[f_{b}]$ of
$\mathrm{MCG}(X_{M},{\mathcal K} \cap X_{M})$ induced by
$f_{b}$ is trivial for any $1 \leq b \leq e$.
Then $\mathrm{Fix}_{c}({\mathcal G})$ is a non-empty set.
\end{pro}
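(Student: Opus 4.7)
The plan is to follow closely the strategy used in the proof of Proposition \ref{pro:puzfne}, greatly simplified by exploiting the present hypothesis that $[f_b]=1$ in $\mathrm{MCG}(X_M,{\mathcal K}\cap X_M)$ for every $1\leq b\leq e$. Set ${\mathcal J}:=\langle {\mathcal H},f_1,\hdots,f_e\rangle$, a finite-index subgroup of ${\mathcal G}$, so that by Lemma \ref{lem:fin_ord} it suffices to show $\mathrm{Fix}_c({\mathcal J})\neq\emptyset$. Since every element of ${\mathcal H}$ is isotopic to the identity rel ${\mathcal K}$ and thus induces the trivial class in $\mathrm{MCG}(X_M,{\mathcal K}\cap X_M)$, when combined with the hypothesis the image $[{\mathcal J}]$ of ${\mathcal J}$ in $\mathrm{MCG}(X_M,{\mathcal K}\cap X_M)$ is itself trivial. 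The finite-versus-infinite dichotomy on $[{\mathcal J}]$ that drives Proposition \ref{pro:puzfne} is therefore unnecessary, and all of ${\mathcal J}$ plays the role of ${\mathcal J}_0$.

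First, assume $X_M\cap{\mathcal K}\neq\emptyset$ and fix any $z\in X_M\cap{\mathcal K}$. For each $\phi\in{\mathcal J}$, I combine the isotopy rel ${\mathcal K}$ to the Thurston normal form $\theta_\phi$ with an isotopy of $(\theta_\phi)_{|X_M}$ to the identity rel $X_M\cap{\mathcal K}$ supplied by the triviality of $[\phi]$, and extend the latter by the identity outside $X_M$. This produces an isotopy in ${\mathcal S}$ from $\phi$ to a diffeomorphism that is the identity on $X_M$, fixing $z$ throughout. Since $\chi({\mathcal S})<0$, the identity lift $\tilde\phi$ to $\tilde{\mathcal S}$ is well-defined, and a further isotopy to the identity of ${\mathcal S}$ through the path-connected subgroup of diffeomorphisms that are the identity on $X_M$ shows that $\tilde\phi$ acts as the identity on $\pi^{-1}(X_M)$. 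In particular $\tilde\phi$ fixes every lift of $z$, so $z\in\mathrm{Fix}_c(\phi)$, giving $z\in\mathrm{Fix}_c({\mathcal J})$. That $\chi(M)=0$ rather than $\chi(M)<0$ plays no role, because contractibility is measured in $\tilde{\mathcal S}$.

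If instead $X_M\cap{\mathcal K}=\emptyset$, then since ${\mathcal A}\subset{\mathcal S}\setminus{\mathcal K}$ the non-empty set $M\cap{\mathcal K}$ is contained in the union of topological discs bounded by non-essential reducing curves in $M$. Pick $\gamma\in({\mathcal R}\setminus{\mathcal R}_e)\cap M$ with $D_\gamma\cap{\mathcal K}\neq\emptyset$. Passing to the finite-index subgroup of ${\mathcal J}$ that preserves $\gamma$ modulo isotopy rel ${\mathcal K}$ (permitted by Lemma \ref{lem:fin_ord}, as there are only finitely many non-essential reducing curves in $M$), I reproduce the disc portion of the proof of Proposition \ref{pro:puzfne}: the reducing property guarantees that the identity lift $\tilde\phi$ of any $\phi$ in this subgroup sends an appropriate lift $\tilde\gamma$ of $\gamma$ to a curve isotopic to $\tilde\gamma$ rel $\pi^{-1}({\mathcal K})$, so the corresponding lift $\tilde D_\gamma$ is preserved and $\tilde D_\gamma\cap\pi^{-1}({\mathcal K})$ is a non-empty compact set invariant under the nilpotent group of identity lifts in the plane $\tilde{\mathcal S}$. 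Theorem \ref{teo:plane} then produces a global fixed point, whose projection belongs to $\mathrm{Fix}_c({\mathcal J})$.

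The main obstacle I anticipate is in Case A: to argue rigorously that the MCG-triviality $[\phi]=1$ in $\mathrm{MCG}(X_M,{\mathcal K}\cap X_M)$ can be parlayed into an isotopy in ${\mathcal S}$ from $\phi$ to the identity that simultaneously fixes $z$ \emph{and} through which the identity lift is identifiable as the identity on $\pi^{-1}(X_M)$. This is the step corresponding to the invocation of $\chi(M)<0$ in Proposition \ref{pro:puzfne}; here the weaker $\chi({\mathcal S})<0$ must suffice, and the argument relies on the path-connectedness of the subgroup of diffeomorphisms of ${\mathcal S}$ fixing $X_M$ pointwise, a point that is standard but has to be checked carefully within the Thurston-decomposition framework.
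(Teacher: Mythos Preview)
Your overall structure is right: reduce to ${\mathcal J}=\langle{\mathcal H},f_1,\hdots,f_e\rangle$ via Lemma~\ref{lem:fin_ord}, observe that the hypothesis gives ${\mathcal J}={\mathcal J}_0$, and then rerun the argument of Proposition~\ref{pro:puzfne}. But the step you yourself flag as the ``main obstacle'' is a genuine gap, and the resolution you propose (path-connectedness of the diffeomorphisms of ${\mathcal S}$ that are the identity on $X_M$) is false precisely because $\chi(M)=0$.

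Here is the issue in Case~A. From $[\phi]=1$ in $\mathrm{MCG}(X_M,X_M\cap{\mathcal K})$ you correctly build an isotopy rel $z$ from $\phi$ to some $\psi$ with $\psi_{|X_M}\equiv\mathrm{Id}$. You then want $\psi$ to be isotopic to $\mathrm{Id}_{\mathcal S}$ through maps that are the identity on $X_M$. This fails: take $M=S^1\times[0,3]$ embedded so that its core curve is essential in ${\mathcal S}$, set $X_M=S^1\times[1,2]$, and let $\psi(\theta,r)=(\theta+2\pi h(r),r)$ with $h(0)=h(3)=0$ and $h_{|[1,2]}\equiv1$, extended by the identity off $M$. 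Then $\psi_{|X_M}\equiv\mathrm{Id}$, and $\psi$ lies in $\mathrm{Homeo}_0({\mathcal S})$ via the isotopy $\psi_t(\theta,r)=(\theta+2\pi t\,h(r),r)$; but lifting this isotopy shows that the identity lift $\tilde\psi$ acts on each component of $\pi^{-1}(X_M)$ as the primitive deck transformation $T$ associated to the core curve, so no point of $X_M$ is in $\mathrm{Fix}_c(\psi)$. In other words, triviality in $\mathrm{MCG}(X_M,X_M\cap{\mathcal K})$ controls the lift only up to a power of $T$, and when $M$ is an annulus nothing in your argument pins down that power. Exactly the same defect appears in your Case~B: preserving $\gamma$ modulo isotopy rel ${\mathcal K}$ does not force the identity lift $\tilde\phi$ to preserve a \emph{given} lift $\tilde\gamma$, only the full preimage $\pi^{-1}(\gamma)$.

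The paper closes this gap by a rotation-number argument that uses the hypothesis $M\cap{\mathcal K}\cap\mathrm{Fix}_c\langle{\mathcal H},f_b\rangle\neq\emptyset$, which you never invoke beyond ensuring $M\cap{\mathcal K}\neq\emptyset$. If the desired property fails for some $\phi\in{\mathcal J}_0$, the identity lift $\tilde\phi$ acts on a component of $\pi^{-1}(M)$ as $T^k$ with $k\neq0$, so $\rho(\tilde\phi,z)=k$ for every $z\in M\cap{\mathcal K}$ and hence $\mathrm{Fix}_c(\phi)\cap M\cap{\mathcal K}=\emptyset$. For $\phi=f_b$ this contradicts $M\cap{\mathcal K}\cap\mathrm{Fix}_c\langle{\mathcal H},f_b\rangle\neq\emptyset$; for $\phi\in{\mathcal H}$ it contradicts that $\phi$ is isotopic to $\mathrm{Id}$ rel ${\mathcal K}$. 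Since $\phi\mapsto k$ is a homomorphism, the property then holds for all of ${\mathcal J}_0={\mathcal J}$, and the proof of Proposition~\ref{pro:puzfne} goes through unchanged.
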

\begin{proof}[Proof of Propositions \ref{pro:puzfic} and \ref{pro:puzfitc}]
It suffices to show $\mathrm{Fix}_{c} ({\mathcal J}) \neq \emptyset$, where 
${\mathcal J}=  \langle {\mathcal H},f_{1},\hdots, f_{e} \rangle$, 
by Lemma \ref{lem:fin_ord}.
The proof is the same as in Proposition \ref{pro:puzfne}
except that we have to change slightly the second paragraph.
Indeed we need to prove that in the new setting it is still true
that $X_{M} \cap {\mathcal K} \subset \mathrm{Fix}_{c}({\mathcal J}_{0})$
and that given a curve $\gamma$ in $M \cap ({\mathcal R} \setminus  {\mathcal R}_{e})$ 
then $\tilde{\phi}(\tilde{\gamma})$ is isotopic to $\tilde{\gamma}$
rel $\pi^{-1}({\mathcal K})$ for any lift $\tilde{\gamma}$ of $\gamma$ and
any $\phi \in {\mathcal J}_{0}$.
If any of these properties does not hold true for some $\phi \in {\mathcal J}_{0}$ then
there exists $k \in {\mathbb Z} \setminus \{0\}$
such that $\rho(\tilde{\phi},z) =k$ for any $z \in {\mathcal K} \cap M$.
This contradicts the property $\rho_{\mu}(\tilde{\phi})=0$. Hence the properties
hold true in the setting of Proposition \ref{pro:puzfic}.

Let us consider the setting in Proposition \ref{pro:puzfitc}; we have ${\mathcal J}={\mathcal J}_{0}$.
We claim that the properties hold true for any
$\phi \in {\mathcal H} \cup \{f_{1}, \hdots, f_{e}\}$.
Otherwise we deduce
$\mathrm{Fix}_{c}(\phi) \cap M \cap {\mathcal K}=\emptyset$;
it contradicts that the set 
$M \cap {\mathcal K} \cap \mathrm{Fix}_{c} \langle {\mathcal H}, f_{b} \rangle$
is non-empty for any $1 \leq b \leq e$.
Hence the properties hold true for any $\phi \in {\mathcal J}_{0}$.
\end{proof}
\subsection{Framework for the proof of Theorem \ref{teo:mainc}}
The results providing global fixed points in subsection \ref{subsec:chase} are a fundamental
ingredient of the proofs of Theorems \ref{teo:maina} and  \ref{teo:mainc}.
Let us focus on the latter result and how to particularize the framework in 
Definition \ref{def:frame} to such a situation. Let us remind the reader that our goal in this
section is showing that ${\bf B_k}$ implies ${\bf B_{k+1}}$ for $k \geq 0$.

Assume that the hypotheses of Theorem  \ref{teo:mainc} and ${\bf B_{k}}$ hold and 
consider $l \leq \min(k+1, \iota -1)$.
Consider the groups 
\[ H={\mathcal C}^{(\iota+1-l)}(G), \ 
{\mathcal H}={\mathcal C}^{(\iota+1-l)}(G)^{\sharp} \ \mbox{and} \ 
{\mathcal G}=\langle {\mathcal C}^{(\iota -l)}(G),g \rangle^{\sharp}. \]
Notice that every subgroup of a finitely generated nilpotent group is finitely generated
(cf. \cite[Theorem 17.2.2]{Karga}).
The group 
$\langle {\mathcal C}^{(\iota -l)}(G),g \rangle / H$ is a finitely generated abelian group
by Remark \ref{rem:des_abe}
.  
There exists a subgroup $J$ of ${\mathcal C}^{(\iota-l)}(G)$ such that $H \subset J$, the
group $J/H$ is torsion free and ${\mathcal C}^{(\iota-l)}(G)/J$ is a finite group by the fundamental
theorem of finitely generated abelian groups.
We denote $e= \mathrm{rank}(J/H) +1$ and $q = -2   \chi (S^{\sharp}) e$.
Consider a sequence
\[ h_{1}, \hdots, h_{q} \in J \]
where the group generated by
$\langle H, h_{i_2}^{-1} h_{i_1},  h_{i_3}^{-1} h_{i_1}, h_{i_{e}}^{-1} h_{i_1} \rangle$
is a finite index subgroup of  ${\mathcal C}^{(\iota-l)}(G)$ for any choice of 
pairwise different indexes $i_1, \hdots, i_{e}$ in $\{ 1, \hdots, q\}$.
Hence, 
$\langle H, g h_{i_{1}}, \hdots, g h_{i_{e}} \rangle$ is a finite index subgroup
of $\langle {\mathcal C}^{(\iota-l)}(G), g \rangle$ for any choice of 
$1 \leq i_{1} < \hdots < i_{e} \leq q$.
We denote $g_j = g h_j$, $f_j = g_{j}^{\sharp}$ 
and  $H_{j}=\langle H, g_{j} \rangle$ for $1 \leq j \leq q$.
Any group of the form $H_{j}$ is a normal subgroup of
$\langle {\mathcal C}^{(\iota-l)}(G),g \rangle$ (Remark  \ref{rem:des_abe}) that
satisfies the hypotheses of ${\bf B_{k}}$.
We deduce that $\mathrm{Fix}_{c} (H_{j}^{\sharp})$ is a non-empty compact set for
any $1 \leq j \leq q$. 

 Note that every element of 
${\mathcal H}$ is isotopic to $\mathrm{Id}$ rel $\mathrm{Fix} ({\mathcal H})$
by the inclusions $K \subset \mathrm{Fix}(G') \subset \mathrm{Fix}(H)$ and by hypothesis.
Thus, the conditions established in the framework in 
Definition \ref{def:frame} are satisfied. 
Consider an excellent compact set ${\mathcal K}$ with respect to the group
${\mathcal G}$, the
subgroups $H_{1}^{\sharp}$, $\hdots$, $H_{q}^{\sharp}$ and the set
$Y=\cup_{j=1}^{q} \mathrm{Fix}_{c} (H_{j}^{\sharp})$
(Remark \ref{rem:ex_ex}).
We also consider a Thurston decomposition
for ${\mathcal G}$ and $H_{1}^{\sharp}$, $\hdots$, $H_{q}^{\sharp}$ 
relative to ${\mathcal K}$ (Proposition \ref{pro:Tdecomp}).
We define ${\mathcal S}=S^{\sharp}$.   
\subsection{Cyclic nature of ${\mathcal G}$} 
We keep moving forward with the proof of Theorem \ref{teo:mainc}.
Let us introduce the idea behind the results in this section. 
Our point of view relies on profiting on the ``isotopically trivial"  nature of the 
subgroups  ${\mathcal C}^{(j)} (G)$ for $j \geq 1$ in the descending
central series.
As a consequence, it is reasonable to expect that many of the properties of 
the group $\langle {\mathcal C}^{(\iota -l)}(G),f \rangle$ are going to be derived from
the properties of $f$.

There are at most $-\chi ({\mathcal S})$ components of negative Euler characteristic
in ${\mathcal S} \setminus {\mathcal R}_{e}$.
Obviously if there is no annulus in ${\mathcal S} \setminus {\mathcal R}_{e}$ then
the points in $\mathrm{Fix}_{c}  \langle {\mathcal H}, f_j \rangle$ are localized
in those components for $1 \leq j \leq q$.

The situation is even simpler if
one of the connected components $A$ of ${\mathcal S} \setminus {\mathcal R}_{e}$
is an annulus. We have
$A \cap {\mathcal K} \cap \mathrm{Fix}_{c}  \langle{\mathcal H}, f_{j_0} \rangle \neq \emptyset$ 
by construction for some $1 \leq j_{0} \leq q$.   
Notice that $A \cap {\mathcal K} \cap \mathrm{Fix}_{c}  \langle{\mathcal H}, f_{j_0} \rangle$
is ${\mathcal G}$-invariant by the invariance of the Thurston decomposition and 
Remark  \ref{rem:inv_nor0}.

%
%

Let $\pi:\tilde{\mathcal S} \to {\mathcal S}$ be the universal covering.
Let $\tilde{A}$ be one of the lifts of $A$ to $\tilde{\mathcal S}$.
Given $\phi \in  \langle G',f \rangle$, we denote by $\phi^{\sharp}$ its lift to  
$\langle G',g \rangle^{\sharp}$
and by $\tilde{\phi}^{\sharp}: \tilde{\mathcal S} \to \tilde{\mathcal S}$ 
the identity lift of $\phi^{\sharp}$ to $\tilde{\mathcal S}$.
The rotation number $\rho_{\mu}(\tilde{\phi}^{\sharp})$ of $\tilde{\phi}^{\sharp}$ in 
$\tilde{A}$ is well-defined (Definition \ref{def:rho}) and does not depend on the choice of 
$\tilde{A}$ (Remark \ref{rem:rho}).
\begin{lem}
\label{lem:irr}
Let ${\mathcal K}_{0}$ be a non-empty compact subset of
$A \cap {\mathcal K} \cap \mathrm{Fix}_{c}(f_{j_{0}})$.
Suppose that there exist $1 \leq j \leq q$ and a measure $\mu$
such that $\mathrm{supp}(\mu)$ is contained in ${\mathcal K}_{0}$ and
$\mu$ is $f_{j}$-invariant. Then $\rho_{\mu}(\tilde{f}_{j})=0$.
\end{lem}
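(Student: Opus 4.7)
The plan is to reduce the computation to showing that a single auxiliary element in the descending central series of $G$ has vanishing rotation number, and then to bound that rotation number using the distortion formula of Lemma~\ref{lem:dist}. First I would observe that $\mu$ is automatically $f_{j_{0}}$-invariant: since $\mathrm{supp}(\mu)\subset{\mathcal K}_{0}\subset\mathrm{Fix}_{c}(f_{j_{0}})\subset\mathrm{Fix}(f_{j_{0}})$, the diffeomorphism $f_{j_{0}}$ acts as the identity on $\mathrm{supp}(\mu)$. I then would check that $\rho_{\mu}(\tilde{f}_{j_{0}}^{\sharp})=0$. Since $\tilde{f}_{j_{0}}^{\sharp}$ is the identity lift it commutes with every deck transformation of $\pi\colon\tilde{\mathcal S}\to{\mathcal S}$, and therefore $\pi^{-1}(\mathrm{Fix}_{c}(f_{j_{0}}))\subset\mathrm{Fix}(\tilde{f}_{j_{0}}^{\sharp})$; in particular every lift in $\tilde A$ of a point of $\mathrm{supp}(\mu)$ is fixed by $\tilde{f}_{j_{0}}^{\sharp}$, forcing the rotation number to be zero.

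Next I would exploit additivity of the rotation number on the stabilizer of $\mu$. Both $f_{j}$ and $f_{j_{0}}$ preserve $\mu$, hence so does $\xi^{\sharp}:=f_{j_{0}}^{-1}f_{j}$. The change of variables $\int\tau(\tilde\phi_{1}\tilde\phi_{2})\,d\mu=\int\tau(\tilde\phi_{1})\,d(\phi_{2})_{*}\mu+\int\tau(\tilde\phi_{2})\,d\mu$ shows that for $\phi_{2}$ preserving $\mu$ the assignment $\phi\mapsto\rho_{\mu}(\tilde\phi^{\sharp})$ is additive on identity lifts, giving $\rho_{\mu}(\tilde{f}_{j}^{\sharp})=\rho_{\mu}(\tilde{f}_{j_{0}}^{\sharp})+\rho_{\mu}(\tilde\xi^{\sharp})=\rho_{\mu}(\tilde\xi^{\sharp})$. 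Since $\xi^{\sharp}$ projects downstairs to $g_{j_{0}}^{-1}g_{j}=h_{j_{0}}^{-1}h_{j}=:\eta\in J\subset{\mathcal C}^{(\iota-l)}(G)$, it remains to show $\rho_{\mu}(\tilde\eta^{\sharp})=0$.

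For this I would apply Lemma~\ref{lem:dist} to $\eta$: for every $k\geq 1$ there is a product decomposition $\eta^{k^{2}}=\prod_{i=1}^{m}[a_{i}^{k},b_{i}^{k}]\,\alpha_{k}$ with $\alpha_{k}\in{\mathcal C}^{(\iota-l+1)}(G)=H$. Pushing up through $\sharp$ and taking identity lifts, $\tilde\alpha_{k}^{\sharp}\in{\mathcal H}$ fixes lifts of $\mathrm{supp}(\mu)$, since by the framework every element of ${\mathcal H}$ is isotopic to $\mathrm{Id}$ rel ${\mathcal K}$, so its contribution to the rotation number vanishes. Viewing $\Phi(\tilde\phi):=\int\tau(\tilde\phi)\,d\mu$ as an extension of $\rho_{\mu}$ to all lifts, its defect on a product is controlled by $2\|\tau(\tilde\phi_{1})\|_{\infty}$ evaluated on $\mathrm{supp}(\mu)\cup\phi_{2}(\mathrm{supp}(\mu))$, and the displacement of $k$-th powers grows at most linearly in $k$. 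Bookkeeping of the $m$ commutator factors then yields $|\Phi(\tilde\eta^{\sharp,k^{2}})|=O(k)$ with constant independent of $k$; since $\eta^{\sharp}$ preserves $\mu$ we also have $\Phi(\tilde\eta^{\sharp,k^{2}})=k^{2}\rho_{\mu}(\tilde\eta^{\sharp})$, and letting $k\to\infty$ forces $\rho_{\mu}(\tilde\eta^{\sharp})=0$.

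The main obstacle will be the careful bookkeeping of the lifts in Lemma~\ref{lem:dist}: the factors $a_{i},b_{i}$ are arbitrary elements of $G$ and need not belong to $\langle G',g\rangle$, so they may not lift individually to $S^{\sharp}$ in our framework. One must either produce a decomposition with $a_{i}\in{\mathcal C}^{(\iota-l-1)}(G)$ and $b_{i}\in\langle G',g\rangle$ (exploiting $K\subset\mathrm{Fix}(G')$ so that $G'$ lifts canonically), or work with the whole commutators $[a_{i}^{k},b_{i}^{k}]$ as single elements of $G'\subset\langle G',g\rangle$, bounding their displacement on $\mathrm{supp}(\mu)$ by the $C^{0}$-norms of the original generators times $k$, without splitting them into individual factors.
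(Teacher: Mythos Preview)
Your overall strategy coincides with the paper's: reduce $\rho_{\mu}(\tilde f_{j})$ to $\rho_{\mu}(\tilde\beta^{\sharp})$ for $\beta=g_{j_{0}}^{-1}g_{j}\in{\mathcal C}^{(\iota-l)}(G)$ using that $\tilde f_{j_{0}}$ fixes lifts of ${\mathcal K}_{0}$, then kill this rotation number via the distortion formula of Lemma~\ref{lem:dist}. The reduction step is fine.

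The gap is exactly where you locate it, and your two proposed workarounds do not close it. The first alternative fails in the critical case $l=\iota-1$: then $\eta\in{\mathcal C}^{(1)}(G)=G'$ and the factors $a_{i},b_{i}$ in Lemma~\ref{lem:dist} lie only in $G={\mathcal C}^{(0)}(G)$, so there is no way to force them into $\langle G',g\rangle$. The second alternative --- bounding the displacement of the whole commutator $[a_{i}^{k},b_{i}^{k}]^{\sharp}$ on $\tilde{\mathcal S}$ by $O(k)$ via ``$C^{0}$-norms on $S$'' --- is not justified as stated: the translation cocycle lives on $\tilde{\mathcal S}$, and without splitting the commutator you have no a priori control on how its \emph{identity lift} to $\tilde{\mathcal S}$ displaces points; the $C^{0}$-norm on the compact surface $S$ does not bound displacements of lifts.

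The paper resolves this by invoking the hypothesis you never use: the injectivity of $\pi_{1}(S^{\sharp})\to\pi_{1}(S)$. This lets one regard $\tilde A$ as sitting inside $\tilde S$ (not just $\tilde{\mathcal S}$), with the primitive deck transformation $T^{\flat}$ of $\tilde{\mathcal S}$ inducing a hyperbolic deck transformation $T$ of $\tilde S$ of translation length $\kappa>0$. On $\tilde S$ \emph{every} element of $G$ has an identity lift, so one may write $\tilde\beta^{k^{2}}=[\tilde a_{1}^{k},\tilde b_{1}^{k}]\cdots[\tilde a_{m}^{k},\tilde b_{m}^{k}]\,\tilde\alpha_{k}$ with all factors defined. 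Since $\mathrm{supp}(\mu)\subset\mathrm{Fix}_{c}({\mathcal H})$ projects into $\mathrm{Fix}({\mathcal C}^{(\iota+1-l)}(\tilde G))$, the $\tilde\alpha_{k}$-term fixes the relevant points; and because $S$ is compact, each $\tilde a_{i},\tilde b_{i}$ has uniformly bounded hyperbolic displacement on all of $\tilde S$, giving $d(z_{0},\tilde\beta^{k^{2}}(z_{0}))=O(k)$. After passing to an ergodic component, Birkhoff yields $\lim d(z_{0},\tilde\beta^{n}(z_{0}))/n=|\rho_{\mu}(\tilde\beta)|\,\kappa$ for a.e.\ $z_{0}$, and comparing along $n=k^{2}$ forces $\rho_{\mu}(\tilde\beta)=0$. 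This is precisely the missing step: pass from $\tilde{\mathcal S}$ to $\tilde S$ to split the commutators.
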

\begin{proof}

We can assume that $\mu$ is $f_{j}$-ergodic by the ergodic decomposition theorem.
Since $\pi_{1}(S^{\sharp}) \to \pi_{1}(S)$ is injective by hypothesis, 
the primitive deck transformation $T^{\flat}:\tilde{\mathcal S} \to \tilde{\mathcal S}$
preserving $\tilde{A}$ induces by construction
a deck transformation $T:\tilde{S} \to \tilde{S}$.
We can consider that $\tilde{A}$ is contained in $\tilde{S}$.
Moreover,  $T$ is associated to a non-null homotopic closed curve in $S$.
Since $S$ is compact,
$T$ is a hyperbolic isometry of the Poincar\'{e} disc.
As a consequence $\lim_{|n| \to \infty} d(z,T^{n}(z))/n$ is equal to $\kappa$
where $\kappa$ is the length of the geodesic in the free homotopy class determined
by $T$ and $d$ is the hyperbolic metric in $\tilde{S}$.
We denote $\beta =g_{j_{0}}^{-1} g_{j}$. Since $f_{j} = f_{j_{0}} \beta^{\sharp}$
and ${\mathcal K}_0 \subset  \mathrm{Fix}_{c}(f_{j_{0}})$, we deduce
$(\tilde{f}_{j})_{|\pi^{-1}({\mathcal K}_{0})} \equiv
\tilde{\beta}^{\sharp}_{|\pi^{-1}({\mathcal K}_{0})}$
and then $\rho_{\mu}(\tilde{f}_{j})= \rho_{\mu} ( \tilde{\beta}^{\sharp})$.
Since $\mu$ is $f_j$-ergodic, we have
$\lim_{|n| \to \infty} d(z_0,\tilde{\beta}^{n}(z_0))/n = \rho_{\mu} (\tilde{\beta}) \kappa$
for almost every point $z_0$ in $\tilde{A} \cap \pi^{-1}({\mathcal K}_{0})$ 
where we consider $\tilde{A} \subset \tilde{S}$ and $\tilde{\beta}$ is the identity
lift of $\beta$   to $\tilde{S}$. 
Notice that $\beta$ belongs to ${\mathcal C}^{(\iota-l)}(G)$
and $z_0$ belongs to $\mathrm{Fix}({\mathcal C}^{(\iota+1-l)}(\tilde{G}))$
where $\tilde{G}$ is the group consisting of the identity lifts of elements of $G$ to $\tilde{S}$.
Equation (\ref{equ:k2}) implies
$\lim_{|n| \to \infty} d(z_0,(\tilde{\beta}^{n^{2}}(z_0))/n^{2} = 0$.
We deduce that 
$\rho_{\mu}(\tilde{f}_{j}) = \rho_{\mu}(\tilde{g}_j)= \rho_{\mu}(\tilde{\beta})=0$.
\end{proof} 
\begin{cor}
\label{cor:locb}
Let $A$ be an annulus connected component of ${\mathcal S} \setminus {\mathcal R}_e$.
Then the set 
$A \cap {\mathcal K} \cap  \mathrm{Fix}_{c}  \langle{\mathcal H}, f_{j} \rangle$
is non-empty for any $1 \leq j \leq q$.
\end{cor}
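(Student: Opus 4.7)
The plan is to derive the corollary from Proposition~\ref{pro:ann} applied with $H={\mathcal H}$, $f=f_j$ and $S_0={\mathcal S}$, using Lemma~\ref{lem:irr} to supply the vanishing rotation number and the excellent set condition to supply the Nielsen equivalence hypothesis. Fix $1 \leq j \leq q$; for $j=j_0$ there is nothing to do, so I work with a general $j$. The starting ingredient is the non-empty ${\mathcal G}$-invariant set
\[ K^{(0)} := A \cap {\mathcal K} \cap \mathrm{Fix}_{c} \langle {\mathcal H}, f_{j_0} \rangle, \]
which is compact because ${\mathcal K}$ is compact, ${\mathcal R}_e \cap {\mathcal K}=\emptyset$ prevents a sequence in $A \cap {\mathcal K}$ from escaping to $\partial A$, and $\mathrm{Fix}_{c} \langle {\mathcal H}, f_{j_0} \rangle$ is closed when intersected with compacta. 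Because $K^{(0)} \subset A$ is $f_j$-invariant and non-empty, $f_j(A)$ must be isotopic to $A$ rel ${\mathcal K}$; and since $f_j$ is isotopic to the identity on ${\mathcal S}$ it preserves the two ends of $A$, so the annular setup of Definition~\ref{def:rho} is in place for $f_j$ with its identity lift $\tilde{f}_j$.

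Next I split into cases on whether $K^{(0)} \cap \mathrm{Fix}_{c}(f_j)$ is empty. If it is not, then since the excellent set property forces ${\mathcal K} \subset \mathrm{Fix}_{c}({\mathcal H})$, any point of the intersection lies automatically in $A \cap {\mathcal K} \cap \mathrm{Fix}_{c} \langle {\mathcal H}, f_j \rangle$ and I am done. In the opposite case, $\mathrm{Fix}(\tilde{f}_j) \cap \pi^{-1}(K^{(0)}) = \emptyset$. Applying Krylov-Bogolyubov to $f_j|_{K^{(0)}}$ and passing to an ergodic component yields an $f_j$-invariant ergodic probability measure $\mu$ whose support $K_0 := \mathrm{supp}(\mu) \subset K^{(0)}$ satisfies, by Lemma~\ref{lem:irr}, $\rho_{\mu}(\tilde{f}_j)=0$. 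Together with $\mathrm{Fix}(\tilde{f}_j) \cap (\pi^{-1}(K_0) \cap \tilde{A})=\emptyset$, this fulfils the hypotheses of Proposition~\ref{pro:clr}, which Proposition~\ref{pro:ann} inherits.

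I would then invoke Proposition~\ref{pro:ann} with $K={\mathcal K}$: the group $\langle {\mathcal H}, f_j \rangle$ is nilpotent, ${\mathcal H}$ is a normal subgroup whose elements are isotopic to the identity rel ${\mathcal K}$ (by the third bullet of Definition~\ref{def:frame} combined with ${\mathcal K} \subset \mathrm{Fix}_{c}({\mathcal H})$), and the Nielsen equivalence hypothesis is exactly the excellent set condition applied to the singleton $\{f_j\}$, since $\mathrm{Fix}_{c}({\mathcal H}) \cap \mathrm{Fix}_{c}(f_j) \subset Y \cap \mathrm{Fix}(f_j)$. The negative Euler characteristic variant of Proposition~\ref{pro:ann} then delivers
\[ A \cap {\mathcal K} \cap \mathrm{Fix}_{c}({\mathcal H}) \cap \mathrm{Fix}_{c}(f_j) \neq \emptyset, \]
which is $A \cap {\mathcal K} \cap \mathrm{Fix}_{c} \langle {\mathcal H}, f_j \rangle \neq \emptyset$. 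The main point to verify carefully is the compatibility of $f_j$ with the annulus $A$ — specifically, that $f_j(A)$ is isotopic to $A$ rel ${\mathcal K}$ and that $f_j$ does not permute the ends of $A$ — since this does not hold a priori but only by virtue of $K^{(0)}$ being ${\mathcal G}$-invariant and meeting $A$; once this is cleared, the rest of the argument is a clean plug-and-play into the annular machinery already developed.
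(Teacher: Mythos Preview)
Your proof is correct and follows essentially the same route as the paper's: start from the ${\mathcal G}$-invariant set $A \cap {\mathcal K} \cap \mathrm{Fix}_{c}\langle{\mathcal H},f_{j_0}\rangle$, use Lemma~\ref{lem:irr} to obtain $\rho_{\mu}(\tilde{f}_j)=0$ for an $f_j$-ergodic measure supported there, and feed this into Proposition~\ref{pro:ann} with $K={\mathcal K}$, $H={\mathcal H}$; the excellent-set condition supplies the Nielsen-equivalence hypothesis. The only cosmetic difference is the order of operations: the paper first chooses the measure and then disposes of the case where its support meets $\mathrm{Fix}_{c}(f_j)$, whereas you first check whether $K^{(0)}$ already meets $\mathrm{Fix}_{c}(f_j)$ and only then pick the measure; both are fine.
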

\begin{proof}
There exists $1 \leq j_0 \leq q$ such that 
$A \cap {\mathcal K} \cap \mathrm{Fix}_{c}  \langle{\mathcal H}, f_{j_0} \rangle \neq \emptyset$ 
by construction.  
Since 
$A \cap {\mathcal K} \cap \mathrm{Fix}_{c}  \langle{\mathcal H}, f_{j_0} \rangle \neq \emptyset$ 
is ${\mathcal G}$-invariant, we consider a  $f_{j}$-invariant measure $\mu$ with 
${\mathcal K}_{0}:=\mathrm{supp}(\mu)$ contained in
$A \cap {\mathcal K} \cap \mathrm{Fix}_{c}  \langle{\mathcal H}, f_{j_0} \rangle$.
We obtain $\rho_{\mu}(\tilde{f}_{j})=0$ by Lemma \ref{lem:irr}. We can assume 
$\mathrm{Fix}(\tilde{f}_{j}) \cap (\pi^{-1} ({\mathcal K}_0) \cap \tilde{A}) =
\emptyset$ since otherwise there is nothing to show.
Let us apply Proposition \ref{pro:ann} with $f=f_{j}$, $\tilde{f} = \tilde{f}_{j}$. The set
$\mathrm{Fix}_{c} ( {\mathcal H} ) \cap \pi(\mathrm{Fix}(\tilde{f}_{j}))$
coincides with $\mathrm{Fix}_{c}  \langle{\mathcal H}, f_{j} \rangle$.
Moreover, ${\mathcal H}$ is a normal subgroup of $\langle {\mathcal H}, f_{j} \rangle$
because $H$ is a normal subgroup of $G$. 
We deduce
$A \cap {\mathcal K} \cap  \mathrm{Fix}_{c}  \langle{\mathcal H}, f_{j} \rangle \neq \emptyset$
by Proposition \ref{pro:ann}.
\end{proof} 
\begin{cor}
\label{cor:irr}
Let $A$ be an annulus connected component of ${\mathcal S} \setminus {\mathcal R}_e$.
Suppose that there exists a ${\mathcal G}$-invariant measure $\mu$
whose support is contained in
$A \cap {\mathcal K} \cap \mathrm{Fix}_{c}(f_{j_{0}})$.
Then $(\rho_{\mu})_{|\tilde{\mathcal G}} \equiv 0$.
\end{cor}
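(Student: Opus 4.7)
The plan is to exhibit $\rho_{\mu}$ as a group homomorphism from $\tilde{\mathcal{G}}$ to the torsion-free abelian group $\mathbb{R}$, verify that it vanishes on enough elements to generate a finite-index subgroup of $\tilde{\mathcal{G}}$, and then conclude by torsion-freeness.

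First, I would check that $\rho_{\mu}$ is a well-defined homomorphism. Every element of $\tilde{\mathcal{G}}$ is the identity lift of some $\phi \in \mathcal{G}$ and therefore commutes with every deck transformation of $\tilde{\mathcal{S}}$, in particular with the primitive deck transformation $T$ preserving $\tilde{A}$; hence $\rho_{\mu}(\tilde{\phi})$ is well defined and independent of the chosen $\tilde{A}$ by Remark \ref{rem:rho}. Using that $\mu$ is $\mathcal{G}$-invariant and that the identity-lift assignment $\mathcal{G} \to \tilde{\mathcal{G}}$ is a group homomorphism (cf.\ Remark \ref{rem:iso_nor}), the standard change-of-variables computation yields additivity, i.e.\ $\rho_{\mu}(\tilde{\phi}\tilde{\psi}) = \rho_{\mu}(\tilde{\phi}) + \rho_{\mu}(\tilde{\psi})$ for all $\tilde{\phi}, \tilde{\psi} \in \tilde{\mathcal{G}}$.

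Second, I would show that $\rho_{\mu}$ vanishes on $\tilde{\mathcal{H}}$. Since $\mathcal{K}$ is a good compact set, $\mathcal{K} \subset \mathrm{Fix}_{c}(\mathcal{H})$ by Definition \ref{def:good_excellent}, and the hypothesis $\mathrm{supp}(\mu) \subset \mathcal{K}$ gives $\mathrm{supp}(\mu) \subset \mathrm{Fix}_{c}(h)$ for every $h \in \mathcal{H}$. Because the identity lift $\tilde{h}$ commutes with every deck transformation, if it fixes one lift of a point $z \in \mathrm{Fix}_{c}(h)$ then it fixes every lift of $z$; in particular $\tilde{h}$ pointwise fixes $\pi^{-1}(\mathrm{supp}(\mu)) \cap \tilde{A}$, so the integrand defining $\rho_{\mu}(\tilde{h})$ vanishes $\mu$-almost everywhere, giving $\rho_{\mu}(\tilde{h}) = 0$.

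Third, Lemma \ref{lem:irr} applied to each $1 \leq j \leq q$ with the measure $\mu$ (which is $f_{j}$-invariant because it is $\mathcal{G}$-invariant, and whose support is contained in $A \cap \mathcal{K} \cap \mathrm{Fix}_{c}(f_{j_{0}})$) yields $\rho_{\mu}(\tilde{f}_{j}) = 0$. Combining these facts, the framework (Definition \ref{def:frame}) guarantees that $\langle \mathcal{H}, f_{1}, \ldots, f_{q} \rangle$ contains $\langle \mathcal{H}, f_{i_{1}}, \ldots, f_{i_{e}} \rangle$, which is a finite index subgroup of $\mathcal{G}$; its preimage under the identity-lift isomorphism is therefore a finite index subgroup of $\tilde{\mathcal{G}}$ on which $\rho_{\mu}$ vanishes. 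Since $\mathbb{R}$ is torsion-free, $\rho_{\mu}$ must vanish on all of $\tilde{\mathcal{G}}$. The most delicate step is verifying the additivity of the rotation number; this requires setting up the identity-lift assignment and the rotation number on the annular cover of $A$ consistently, which is standard once Remarks \ref{rem:rho} and \ref{rem:iso_nor} are in place.
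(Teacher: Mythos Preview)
Your proof is correct and follows essentially the same route as the paper's: establish that $\rho_{\mu}$ is a group homomorphism $\tilde{\mathcal G}\to\mathbb R$, check it vanishes on $\tilde{\mathcal H}$ and on each $\tilde f_j$ via Lemma~\ref{lem:irr}, and conclude by the finite-index/torsion-free argument. The only cosmetic difference is that the paper invokes nilpotence of $\tilde{\mathcal G}$ to assert $\rho_{\mu}$ is a morphism, whereas you derive additivity directly from the ${\mathcal G}$-invariance of $\mu$; both justifications are valid here and yield the same conclusion.
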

\begin{proof}
Let us remind the reader that ${\mathcal G}=\langle {\mathcal C}^{\iota -l}(G),f \rangle^{\sharp}$. 
Since the group $\tilde{\mathcal G}$ of identity lifts of elements of ${\mathcal G}$ to 
$\tilde{\mathcal S}$ is nilpotent, the map
$\rho_{\mu}: \tilde{\mathcal G} \to {\mathbb R}$ is a morphism of groups.
We know that $\rho_{\mu}$ vanishes  in 
$\{  \tilde{f}_{1}, \hdots, \tilde{f}_{q} \}$ and $\tilde{\mathcal H}$ 
by Lemma \ref{lem:irr} and because the elements of ${\mathcal H}$ are isotopic to 
$\mathrm{Id}$ rel ${\mathcal K}$ (see the proof of Corollary \ref{cor:locb}).
Since  $\langle \tilde{\mathcal H}, \tilde{f}_{1}, \hdots, \tilde{f}_{q} \rangle$
is  a finite index subgroup of $\tilde{\mathcal G}$ by construction, 
$\rho_{\mu}$ vanishes over $\tilde{\mathcal G}$.
\end{proof}
\subsection{Proof of ${\bf B_k}$ implies ${\bf B_{k+1}}$}
\label{subsec:bkbkp1}
Let us consider the possible cases for the configuration of global fixed points of subgroups of
the framework. First,  suppose 
$\sharp 
(X_{M} \cap {\mathcal K} \cap  \mathrm{Fix}_{c} \langle {\mathcal H}, f_{j}\rangle)=\infty$
for some $M \in {\mathcal C}$ (see Definition \ref{def:xm}) and
at least $e$ values of $j \in \{1, \hdots,q\}$.
We can suppose that the property holds true for $j=1,\hdots,e$ without lack of
generality. We obtain $\mathrm{Fix}_{c}({\mathcal G}) \neq \emptyset$
by Proposition \ref{pro:inf}.

Now suppose that  one of the connected components of 
${\mathcal S} \setminus {\mathcal R}_{e}$ is an annulus $A$. We have that 
$A \cap {\mathcal K} \cap \mathrm{Fix}_{c} \langle {\mathcal H}, f_{j} \rangle$
is a non-empty set for any $1 \leq j \leq q$ by Corollary \ref{cor:locb}.
If $\sharp (X_{A} \cap {\mathcal K} \cap \mathrm{Fix}_{c} \langle {\mathcal H},f_{j} \rangle)=\infty$
for at least $e$ values of $j$ then there is nothing to prove by 
the argument in the above paragraph. 
Up to reordering the indexes we can suppose
$\sharp (X_{A} \cap {\mathcal K} \cap \mathrm{Fix}_{c} \langle {\mathcal H},f_{j} \rangle)<\infty$
for $1 \leq j \leq q  -(e -1)$.
Notice that $q -e +1 >e$ since $q=-2 e \chi({\mathcal S})$. We define
\[ K_{1} = 
\cup_{j=1}^{e} (X_{A} \cap {\mathcal K} \cap  \mathrm{Fix}_{c} \langle{\mathcal H},f_{j}\rangle), \
K = ({\mathcal K} \setminus X_{A}) \cup K_{1}. \]
The set $K$ is a good compact set for the group ${\mathcal G}$ and the subgroups
$\langle {\mathcal H}, f_{1} \rangle$, $\hdots$, $\langle {\mathcal H}, f_{e} \rangle$
by Remark \ref{rem:tgood}.
The group ${\mathcal G}$ induces a group $[{\mathcal G}]$
in $\mathrm{MCG}(X_{A},X_{A} \cap K)$
that does not necessarily consist of irreducible elements.
Anyway we can add the curves of the Thurston decomposition 
(see Remark \ref{rem:decomp}) of $[{\mathcal G}]$ to ${\mathcal R}$.
We obtain a connected component $A'$ of
${\mathcal S} \setminus {\mathcal R}_{e}$
such that $A'$ is an annulus contained in $A$,
$A' \cap K \cap \cup_{j=1}^{e} \mathrm{Fix}_{c} \langle {\mathcal H}, f_{j} \rangle$
is non-empty, $X_{A'} \cap K$ is finite and the group induced by ${\mathcal G}$ in
$\mathrm{MCG}(X_{A'},X_{A'} \cap K)$ is irreducible.
We replace ${\mathcal K}$ with $K$ and $A$ with $A'$.
The old elements in ${\mathcal C}$ contain the new ones.

We have that 
$A \cap {\mathcal K} \cap \mathrm{Fix}_{c} \langle {\mathcal H}, f_{j} \rangle$
is non-empty for any $1 \leq j \leq e$ by Corollary \ref{cor:locb}.
Fix $1 \leq j_{0} \leq e$.
Since $A \cap {\mathcal K} \cap \mathrm{Fix}_{c} \langle {\mathcal H}, f_{j_{0}} \rangle$
is a compact non-empty ${\mathcal G}$-invariant set and ${\mathcal G}$ is nilpotent there exists
a ${\mathcal G}$-invariant measure $\mu$ such that
$\mathrm{supp}(\mu) \subset 
A \cap {\mathcal K} \cap \mathrm{Fix}_{c} \langle {\mathcal H}, f_{j_{0}} \rangle$.
We obtain $\rho_{\mu}(\tilde{\phi})=0$ for any $\phi \in {\mathcal G}$ by
Corollary \ref{cor:irr}.
Proposition \ref{pro:puzfic}
implies $\mathrm{Fix}_{c}({\mathcal G}) \neq \emptyset$.
Hence, we can suppose that there are no annuli connected components in  
${\mathcal S} \setminus {\mathcal R}_{e}$. We deduce  
$\sharp {\mathcal C} \leq - \chi ({\mathcal S})$.

 \strut

By the first paragraph of the proof and up to  reordering the indexes we can suppose
$\sharp (X_{M} \cap {\mathcal K} \cap  \mathrm{Fix}_{c} \langle {\mathcal H}, f_{j} \rangle) 
< \infty$
for all $M \in {\mathcal C}$ and  $1 \leq j \leq q'$ where 
$q'=q+e \chi({\mathcal S}) = -e \chi({\mathcal S})$. We define
\[ K_{1} = \cup_{M \in {\mathcal C}} \cup_{j=1}^{q'} (X_{M} \cap {\mathcal K} \cap
\mathrm{Fix}_{c} \langle {\mathcal H}, f_{j} \rangle), \
K = ({\mathcal K} \setminus  \cup_{M \in {\mathcal C}} X_{M}) \cup K_{1}. \]
The set $K$ is a good compact set for the group ${\mathcal G}$ and
the subgroups
$\langle {\mathcal H}, f_{1} \rangle$, $\hdots$, $\langle {\mathcal H}, f_{q'} \rangle$.
By adding simple closed curves contained in 
$(\cup_{M \in {\mathcal C}} X_M) \setminus K$ to ${\mathcal R}$, we can obtain 
a Thurston-reducing set ${\mathcal R}$ of curves for 
${\mathcal G}$ and 
$\langle {\mathcal H}, f_{1} \rangle$, $\hdots$, $\langle {\mathcal H}, f_{q'} \rangle$
rel $K$ (Remark \ref{rem:tgood}). We replace ${\mathcal K}$ with $K$.


We can suppose that there is no annulus in
${\mathcal S} \setminus {\mathcal R}_{e}$ 
for the new Thurston decomposition
since otherwise we obtain 
$\mathrm{Fix}_{c}({\mathcal G}) \neq \emptyset$.
Since $\sharp {\mathcal C}= \sharp {\mathcal C}_{-} \leq - \chi ({\mathcal S})$ and
$q' = -e \chi ({\mathcal S})$, we get 
$M \cap {\mathcal K} \cap \mathrm{Fix}_{c}\langle{\mathcal H}, f_{j} \rangle \neq \emptyset$
for $e$ values of $j$ and  some $M \in {\mathcal C}_{-}$.
Then $\mathrm{Fix}_{c}({\mathcal G}) \neq \emptyset$ by Proposition \ref{pro:puzfne}, completing
the proof.
\section{Localization of fixed points in the general case}
\label{sec:loc} 
Our next goal is proving Theorem \ref{teo:maina}. 
It will be obtained as a corollary of the following theorem:
\begin{teo}
\label{teo:baux}
Let $G$ be a finitely generated nilpotent subgroup of $\mathrm{Homeo}_{0} (S)$ where
$S$ is a compact connected orientable 
surface (maybe with boundary) of negative Euler characteristic. 
Suppose $\phi_{|S \setminus \partial S} \in \mathrm{Diff}_{0}^{1} (S \setminus \partial S)$
for any $\phi \in G$.
Furthermore, assume that $\phi$ is isotopic to $\mathrm{Id}$ rel 
$\mathrm{Fix} ({\mathcal C}^{(k)} (G)) \cup \partial S$ for all $k \geq 1$ and 
$\phi \in {\mathcal C}^{(k)} (G)$. Then $\mathrm{Fix}_{c} (G) \neq \emptyset$.
\end{teo}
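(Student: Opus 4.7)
The plan is to mirror Subsection~\ref{subsec:bkbkp1} applied to $G$ acting on ${\mathcal S} := S \setminus \partial S$, instantiating the framework of Definition~\ref{def:frame} with ${\mathcal G} = G$ and ${\mathcal H} = G'$. Theorem~\ref{teo:deriso} ensures that every element of $G'$ is isotopic to the identity rel $\mathrm{Fix}(G')$, so the isotopy condition on ${\mathcal H}$ is satisfied. Setting $e = \mathrm{rank}(G/G')$ and $q = -2e\chi(S)$, choose $f_{1}, \ldots, f_{q} \in G$ so that the classes in $G/G'$ of any $e$ of them generate a finite-index subgroup. The remaining framework assumption $\mathrm{Fix}_{c}\langle G', f_{j} \rangle \neq \emptyset$ for each $j$ is established by applying Theorem~\ref{teo:mainc} with $g = f_{j}$, $l = \iota - 1$ (where $\iota$ is the nilpotency class of $G$), the trivial cover $S \setminus \partial S \to S \setminus \partial S$ and $K = \emptyset$; its hypothesis $\mathrm{Fix}_{c}(f_{j}h) \neq \emptyset$ for every $h \in G'$ follows from $L(\mathrm{Fix}_{c}(\phi),\phi) = \chi(S) \neq 0$ valid on all of $G \subset \mathrm{Homeo}_{0}(S)$, together with compactness of $S$.

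\textbf{Trichotomy.} Fix an excellent compact set ${\mathcal K}$ (Remark~\ref{rem:ex_ex}) and a Thurston-reducing set of curves (Proposition~\ref{pro:Tdecomp}), and carry out the case analysis of Subsection~\ref{subsec:bkbkp1}. First, if some component $M \in {\mathcal C}$ has $X_{M} \cap {\mathcal K} \cap \mathrm{Fix}_{c}\langle G', f_{j} \rangle$ infinite for at least $e$ indices $j$, Proposition~\ref{pro:inf} gives $\mathrm{Fix}_{c}(G) \neq \emptyset$. Second, if some connected component of ${\mathcal S} \setminus {\mathcal R}_{e}$ is an annulus $A$, apply Proposition~\ref{pro:ann} to each $\langle G', f_{j} \rangle$ to propagate $A \cap {\mathcal K} \cap \mathrm{Fix}_{c}\langle G', f_{j} \rangle \neq \emptyset$ from one distinguished index $j_{0}$ to every $j$; after refining ${\mathcal K}$ via Remark~\ref{rem:tgood}, pick a $G$-invariant measure $\mu$ supported in the common intersection, show that the rotation-number morphism satisfies $\rho_{\mu}(\tilde\phi) = 0$ for every $\phi \in G$, and conclude via Proposition~\ref{pro:puzfic}. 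Third, if no annular component arises, trim ${\mathcal K}$ again so that $X_{M} \cap {\mathcal K}$ is finite for every $M \in {\mathcal C}$; then since $\sharp {\mathcal C}_{-} \leq -\chi(S)$ while the reshuffled index set has cardinality $-e\chi(S)$, a pigeonhole argument yields some $M \in {\mathcal C}_{-}$ with $M \cap {\mathcal K} \cap \mathrm{Fix}_{c}\langle G', f_{j} \rangle \neq \emptyset$ for $e$ values of $j$, and Proposition~\ref{pro:puzfne} closes the argument. In every branch a finite-index subgroup of $G$ acquires a non-empty contractible fixed-point set, and Lemma~\ref{lem:fin_ord} promotes this to $\mathrm{Fix}_{c}(G) \neq \emptyset$.

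\textbf{Main obstacle.} The annular step is the chief difficulty. In Subsection~\ref{subsec:bkbkp1} the vanishing of $\rho_{\mu}$ on all of ${\mathcal G}$ came from Lemma~\ref{lem:irr}, which crucially used that $f_{j_{0}}^{-1} f_{j}$ belonged to a deep central term ${\mathcal C}^{(\iota-l)}(G)$, so that the distortion identity (\ref{equ:k2}) forced sublinear displacement. In the setting of Theorem~\ref{teo:baux}, the element $f_{j_{0}}^{-1} f_{j}$ lies merely in $G$, so a direct adaptation fails. The remedy is to reuse Theorem~\ref{teo:mainc}, through Proposition~\ref{pro:ann}, at the annular scale to obtain contractible fixed points of each $\langle G', f_{j} \rangle$ inside $A$; after a further refinement of ${\mathcal K}$ these fixed points force the support of $\mu$ to lie in $\mathrm{Fix}_{c}(f_{j})$ for every $j$, hence $\rho_{\mu}$ vanishes on every identity lift $\tilde f_{j}$, and since together with the identity lifts of $G'$ they generate a finite-index subgroup of the group of identity lifts of $G$, the morphism vanishes everywhere.
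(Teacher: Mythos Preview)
Your setup and the identification of the annular step as the crux are correct, but the proposed remedy is circular. To invoke Proposition~\ref{pro:ann} for $f=f_j$ you must first exhibit an $f_j$-invariant ergodic measure $\mu$ supported in $A\cap{\mathcal K}$ with $\rho_\mu(\tilde f_j)=0$: this is a hypothesis of Proposition~\ref{pro:clr}, on which Proposition~\ref{pro:ann} rests. In Subsection~\ref{subsec:bkbkp1} that vanishing was supplied by Lemma~\ref{lem:irr}, which used that $f_{j_0}^{-1}f_j$ lay in a deep term of the central series and was therefore distorted. Here $f_{j_0}^{-1}f_j$ is an arbitrary element of $G$, and nothing you wrote produces $\rho_\mu(\tilde f_j)=0$; the chain ``Theorem~\ref{teo:mainc} $\Rightarrow$ Proposition~\ref{pro:ann} $\Rightarrow$ fixed points in $A$ $\Rightarrow$ $\rho_\mu\equiv 0$'' begs the question at its second arrow. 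Consequently you cannot discard the annular components and fall back on the pigeonhole with $\sharp{\mathcal C}_{-}\le -\chi(S)$. A secondary gap: when $\partial S\neq\emptyset$, applying Theorem~\ref{teo:mainc} with the trivial cover requires $\mathrm{Fix}_c(f_jh)$ to be compact in $S\setminus\partial S$, which fails unless you first arrange $\rho_\gamma(\tilde f_j)\neq 0$ on every boundary circle $\gamma$ as the paper does in Section~\ref{sec:frameb}.

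The paper's route is genuinely different: it never tries to force fixed points into the annular pieces. Instead it develops Property~${\bf C}$ (all of Section~\ref{sec:loc}): for each region $C=N\cup A_1\cup\dots\cup A_r$ with $\chi(N)<0$, if every bordering annulus $A_l$ either carries a nontrivial relative mapping class for $f_j$ or has $A_l\cap{\mathcal K}\cap\mathrm{Fix}_c(f_j)=\emptyset$, then $N\cap{\mathcal K}\cap\mathrm{Fix}_c(f_j)\neq\emptyset$. The proof splits the annuli into ${\mathcal F}_f$ and the ``rotating'' set ${\mathcal E}_f$, carries out a Nielsen--Lefschetz count on $C$ (Lemmas~\ref{lem:nn}, \ref{lem:nn2}, \ref{lem:lann}), and then applies Theorem~\ref{teo:mainc} on the cover $S^\sharp$ associated to $\pi_1(N)$ (Section~\ref{sec:proof_c_k}). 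This localization is what permits the final pigeonhole in Section~\ref{sec:proofa} even when annular components persist; it also explains why the paper takes $q=-8\chi(S)e$ rather than your $-2\chi(S)e$ and finishes via Proposition~\ref{pro:puzfitc} rather than Proposition~\ref{pro:puzfic}.
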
 
In this section we introduce another fundamental result, providing localization 
of global fixed points. 
The localization process executed in the proof of Theorem \ref{teo:mainc} 
has some simplifying characteristics: 
either there exists one 
connected component of ${\mathcal S} \setminus {\mathcal R}_e$ that is a topological annulus 
(and we find global fixed points in such a component) or the number of connected 
components of ${\mathcal S} \setminus {\mathcal R}_e$ is bounded by above by
$- \chi ({\mathcal S})$.
In order to prove Theorem \ref{teo:maina} we can not suppose anymore that there are finitely
many connected components of ${\mathcal S} \setminus {\mathcal R}_e$ and hence we
need to localize the global fixed points in some specific ones.
\subsection{Framework for the proof of Theorem \ref{teo:baux}}  
\label{sec:frameb}
 First, let us specify the framework 
introduced in Definition  \ref{def:frame} fits with our current target.

Let $\pi: \tilde{S} \to S$ be the universal covering.
Given $f \in G$, we consider the identity lift $\tilde{f}$ of $f$ to $\tilde{S}$.
We denote by $\tilde{G}$ the group consisting of the identity lifts of elements of $G$.
 Suppose first $\partial S \neq \emptyset$. Fix a boundary component  $\gamma$ of $S$. 
 Since $G_{|\gamma}$ is nilpotent, we obtain that 
 $\rho_{\gamma}: \tilde{G} \to {\mathbb R}$ is a morphism of groups.  
 Moreover,   there exists a 
 $G$-invariant measure $\mu$ supported in $\gamma$.
 
Assume that  $\rho_{\gamma} (\tilde{f})=0$ for any $\tilde{f} \in \tilde{G}$. 
 We obtain 
 \[ \mathrm{Fix}_{c} (f) \cap \gamma = \mathrm{Fix}(f) \cap \gamma \neq \emptyset \]
 for any $f \in G$.  Since 
 $\mathrm{Fix} (f)  \cap \gamma \neq \emptyset$ and $f$ acts by translations in the connected
 components of $\gamma \setminus \mathrm{Fix}(g)$, we deduce 
 $\mathrm{sup} (\mu) \subset \mathrm{Fix}_{c} (f) \cap \gamma$ for any $f \in G$. 
 In particular we obtain $\mathrm{Fix}_{c}(G) \cap \gamma \neq \emptyset$. 
 So we can suppose that there exists $f_{0,\gamma} \in G$ such that 
 $\rho_{\gamma} (\tilde{f}_{0,\gamma}) \neq 0$ for any boundary component $\gamma$ of $S$.
 Moreover, since $\rho_{\gamma}$ is a morphism, it is easy to see that there exists 
 $f_0 \in G$ such that
 $\rho_{\gamma'} (\tilde{f}_0) \neq 0$ for any boundary component $\gamma'$ of $S$. 
 For the case $\partial S = \emptyset$ we define $f_0 = \mathrm{Id}$.

Since the group $G/G'$ is commutative, there exists a normal subgroup $J$ of $G$
such that $G' \subset J$, $\sharp (G/J) < \infty$ and $J/G'$ is torsion free
by the fundamental theorem of finitely generated abelian groups.
We can suppose $f_0 \in J$ up to replace it with a non-trivial iterate.
We denote $e = \mathrm{rank}(G/G')$. 
Consider a sequence $f_{1}, \hdots, f_{q} \in J$ where $q= -8 \chi(S) e$
and $\langle [f_{i_{1}}],\hdots,[f_{i_{e}}] \rangle \subset G/G'$ has rank $e$ 
(and thus $\langle G', f_{i_{1}},\hdots,f_{i_{e}} \rangle$ is a finite index normal subgroup of $G$) 
for any choice of $1 \leq i_{1} < \hdots < i_{e} \leq q$.
This property is preserved if we replace $f_{1}, \hdots, f_{q}$ with 
$f_{1} f_{0}^{k}, \hdots, f_{q} f_{0}^{k}$ for some $k \in {\mathbb N}$ big enough
(cf. \cite[Lemma 7.1]{FHP-g}). In particular
the conditions in the framework in Definition \ref{def:frame} are satisfied
for ${\mathcal S}=S$, ${\mathcal G}=G_{|S \setminus \partial S}$, 
${\mathcal H}=G_{|S \setminus \partial S}'$ and on top of that we can suppose 
$\rho_{\gamma} (\tilde{f}_j) \neq 0$ for all boundary component $\gamma$ of $S$ and 
$1 \leq j \leq q$.

We denote $H_{j}= {\langle G', f_{j} \rangle}_{|S \setminus \partial S}$.
Since $L(\mathrm{Fix}_{c} (f_j g), f_j g) = \chi (S)$  and 
$\mathrm{Fix}_{c} (f_j g) \cap \partial S = \emptyset$, it follows that $\mathrm{Fix}_{c} (f_j g)$
is a non-empty compact subset of $S \setminus \partial S$
for any $g \in G'$. Hence we obtain $\mathrm{Fix}_{c} (H_{j}) \neq \emptyset$ by 
Theorem \ref{teo:mainc} (where $K=\emptyset$, $S^{\sharp} = S \setminus \partial S$ and the 
covering $S^{\sharp} \to S \setminus \partial S$ is the identity map).
There exists  an excellent compact set  ${\mathcal K}$ with respect to the group 
${\mathcal G}$, the
subgroups $H_{1}$, $\hdots$, $H_{q}$ and the set
$Y=\cup_{j=1}^{q} \mathrm{Fix}_{c} (H_{j})$ by Remark \ref{rem:ex_ex}. We also consider
a Thurston-reducing set ${\mathcal R}$ of curves rel ${\mathcal K}$ for the group 
${\mathcal G}$ and the subgroups 
$H_1$, $\hdots$, $H_q$ (Proposition \ref{pro:Tdecomp}). We obtain a Thurston decomposition
of $G$ for the normal subgroups $H_{1}$, $\hdots$, $H_{q}$ relative to ${\mathcal K}$ by 
Remark \ref{rem:decomp}.

Consider the set ${\mathfrak C}$ of connected components of $S \setminus {\mathcal R}_{e}$.
There are at most $-\chi (S)$ such components of negative Euler characteristic, i.e.
$\sharp {\mathcal C}_{-} \leq - \chi (S)$ (cf. Definition \ref{def:xm}).
Moreover, the maximal number $I$ of disjoint essential
simple closed curves that are pairwise non-homotopical 
satisfies $2 I + b = - 3 \chi (S)$ where $b$ is the number of boundary components of $S$. 
There are no three  homotopical annuli, 
bounding connected components of
$S \setminus {\mathcal R}_e$ of negative Euler characteristic
(or two homotopic annuli whose core curve is homotopic to a boundary component). 
We deduce that there are at most $2I+b$ annuli in ${\mathfrak C}$ 
bounding components of negative Euler characteristic, i.e. 
$\sharp ({\mathcal C} \setminus {\mathcal C}_{-}) \leq -3 \chi (S)$.

Next, we introduce the definition of the region $C$ in which we localize global fixed points. 
\begin{defi}
\label{def:c}
Consider the framework introduced in this section.
We define  ${\mathcal R}_{e}$ (resp. ${\mathcal A}_e$) as the set of essential 
curves in ${\mathcal R}$ (resp. annuli in ${\mathcal A}$).
We denote by ${\mathfrak a}$ the set of connected components of
$S \setminus {\mathcal R}_{e}$ that are annuli.
Consider the closure $N'$  of a connected component of 
$S \setminus \cup_{A \in {\mathfrak a}} \overline{A}$;
it satisfies $\chi (N') <0$. 
Let $A_{1}'$, $\hdots$, $A_{r}'$ be the open annuli in
$S \setminus {\mathcal R}_{e}$ bounding $N'$.
We denote by ${\mathcal A}_{N,l}$ the set of annuli $A$ of ${\mathcal A}$ such that 
$A \cap \overline{A_{l}'} \cap \partial N' \neq \emptyset$. We  define
${\mathcal A}_{N} = \cup_{l=1}^{r} {\mathcal A}_{N,l}$ and
$N = \overline{N' \setminus {\mathcal A}_N}$. 
We denote   $C= N' \cup A_{1}' \cup \hdots \cup A_{r}'$ and
$A_l = \overline{A_{l}' \setminus {\mathcal A}_e} \cup {\mathcal A}_{N,l}$.
\end{defi}
\subsection{Property ${\bf C}$}
Our goal is finding points of ${\mathcal K} \cap \mathrm{Fix}_{c}(f_j)$ in $C$
(cf. Definition \ref{def:c}). 

First, we introduce a result that allows to localize global fixed points in the annuli of 
the Thurston decomposition.
Fix $1 \leq j \leq q$ and denote $f=f_j$ for simplicity.
\begin{lem}
\label{lem:vrot}
Suppose that $K_{l} \subset {\mathcal K} \cap A_{l}$  is the support  of a $f$-ergodic 
measure $\mu_{l}$ for some $1 \leq l \leq r$ such that $\rho_{\mu_{l}}(\tilde{f})=0$.
Then $A_{l} \cap {\mathcal K} \cap \mathrm{Fix}_{c}(f) \neq \emptyset$
\end{lem}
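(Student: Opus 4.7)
The plan is to deduce Lemma \ref{lem:vrot} as a direct application of Proposition \ref{pro:ann}, with the annulus $A_l$ and the measure $\mu_l$ playing the roles of $A$ and $\mu$, and with $H = G'$ and the nilpotent group $\langle G', f \rangle$ replacing the group of Proposition \ref{pro:ann}. First I would reduce to the interesting case. If there is any $z \in K_l$ admitting a lift $\tilde{z}$ with $\tilde{f}(\tilde{z}) = \tilde{z}$, then $z \in \mathrm{Fix}_c(f) \cap \mathcal{K} \cap A_l$ and we are already done. So from now on assume that for the relevant lift $\tilde{A}_l$ of the open annulus underlying $A_l$ we have $\mathrm{Fix}(\tilde{f}) \cap \pi^{-1}(K_l) \cap \tilde{A}_l = \emptyset$, which is precisely the remaining hypothesis of Proposition \ref{pro:clr} that was not free.

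Next I would verify the full list of hypotheses of Proposition \ref{pro:ann}, applied with $S_0 = S \setminus \partial S$, $A$ the open annulus underlying $A_l$, $K_0 = K_l$, $K = \mathcal{K}$, $f = f_j$, $\tilde{f}$ the identity lift, $H = G'_{|S \setminus \partial S}$ and $G = \langle G', f_j \rangle_{|S \setminus \partial S}$. The hypotheses from Proposition \ref{pro:clr} hold by the assumption on $\rho_{\mu_l}$, the ergodicity of $\mu_l$, and the reduction above; the core curve of $A_l$ is essential in $S$ by construction of the Thurston decomposition (see Definition \ref{def:c}), so its universal cover embeds in $\tilde{S}$. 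The compact set $\mathcal{K}$ is $f_j$-invariant with $\mathcal{K} \subset S \setminus \partial S$, contains $K_l$, and $f_j$ preserves $A_l$ together with its ends modulo isotopy rel $\mathcal{K}$ by the properties of the Thurston decomposition (Remark \ref{rem:decomp}). The group $G'$ is a normal subgroup of $\langle G', f_j \rangle$, the whole group is nilpotent, and by Theorem \ref{teo:deriso} together with Corollary \ref{cor:cder} every element of $G'$ is isotopic to $\mathrm{Id}$ rel $\mathrm{Fix}(G') = \mathrm{Fix}_c(G')$, hence rel the smaller set $\mathcal{K} \subset \mathrm{Fix}_c(G')$.

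The remaining hypothesis of Proposition \ref{pro:ann}, the Nielsen equivalence condition, is exactly the third bullet of the definition of an excellent set (Definition \ref{def:good_excellent}): any point $z$ in
\[
\mathrm{Fix}_c(G') \cap \pi(\mathrm{Fix}(\tilde{f})) \;=\; \mathrm{Fix}_c \langle G', f_j \rangle \;\subset\; Y \cap \mathrm{Fix}(f_j)
\]
is $f_j$-Nielsen equivalent rel $\mathcal{K}$ to some $w \in \mathcal{K} \cap \mathrm{Fix}_c(f_j)$. The invocation of Proposition \ref{pro:ann} (in its $\chi(S_0) < 0$ form, since $\chi(S) < 0$) now yields
\[
\mathrm{Fix}_c(G') \cap \pi(\mathrm{Fix}(\tilde{f})) \cap \mathcal{K} \cap A_l \;\neq\; \emptyset,
\]
and any point in this intersection belongs to $\mathrm{Fix}_c(f_j) \cap \mathcal{K} \cap A_l$, which is the desired conclusion.

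The step I expect to be most delicate is the bookkeeping around the annulus $A_l$: its definition in Definition \ref{def:c} is piecewise (the interior annulus $A_l'$ together with annuli of $\mathcal{A}_{N,l}$) so one must check that everything takes place inside a single topological annulus with essential core curve and that the chosen lift $\tilde{f}$ has the image $\tilde{f}(\tilde{A}_l)$ isotopic to $\tilde{A}_l$ rel $\pi^{-1}(\mathcal{K})$, as required in Definition \ref{def:rho}. This is, however, built into the construction of the Thurston-reducing set, so it ultimately reduces to invoking Remark \ref{rem:decomp} and the preservation of ends of $A_l$ guaranteed by the excellence of $\mathcal{K}$ and $\mathcal{G}$-invariance of the decomposition.
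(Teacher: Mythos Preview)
Your proposal is correct and follows essentially the same route as the paper: reduce to the case $K_l \cap \mathrm{Fix}_c(f) = \emptyset$ (equivalently $\mathrm{Fix}(\tilde{f}) \cap \pi^{-1}(K_l) \cap \tilde{A}_l = \emptyset$) and then invoke Proposition~\ref{pro:ann} with $A=A_l$, $K_0=K_l$, $K=\mathcal{K}$, $H=G'$, using the good-set property of $\mathcal{K}$ to supply the Nielsen-equivalence hypothesis. The paper's proof is terser but identical in substance; your added verification of the remaining hypotheses (normality of $G'$, isotopic triviality rel $\mathcal{K}$ via Theorem~\ref{teo:deriso}, preservation of $A_l$ and its ends via the Thurston decomposition) is exactly what the paper's sentence ``Since $\mathcal{K}$ is a good set, the hypotheses of Proposition~\ref{pro:ann} hold'' is abbreviating.
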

Notice that $\rho_{\mu_{l}}(\tilde{f})$ is well-defined by Definition \ref{def:rho} and Remark 
\ref{rem:rho}.
\begin{proof}
If $K_{l} \cap \mathrm{Fix}_{c}(f) \neq \emptyset$
there is nothing to prove.  Otherwise, 
we define $A=A_{l}$, $K_{0}=K_{l}$, $H=G'$.  
Note  that $K_{0} \cap \mathrm{Fix}_{c}(f) \neq \emptyset$ implies 
$\mathrm{Fix}(\tilde{f}) \cap (\pi^{-1}(K_{0}) \cap \tilde{A}) = \emptyset$.
Since ${\mathcal K}$ is a good set, the hypotheses of Proposition \ref{pro:ann}
hold. Therefore, we deduce $A_{l} \cap {\mathcal K} \cap \mathrm{Fix}_{c}(f) \neq \emptyset$.
\end{proof}
Next, we introduce the setup to localize global fixed points in the case of negative 
Euler characteristic.
We denote $X_{l}=X_{A_{l}'}$ (cf. Definition \ref{def:xm}). 
There exists an isotopy $(f_t)_{t \in [0,1]}$ in $\mathrm{Homeo}_{+} (S)$
rel ${\mathcal K} \setminus N$ such that $f=f_1$ and $\theta:=f_0$ satisfies
$\theta_{|N} \equiv Id$, $\theta({\mathcal A}_{N}) ={\mathcal A}_{N}$ and
$\theta(X_{l})= X_{l}$ for any $1 \leq l \leq r$.
Moreover we can suppose that $\theta_{|X_{l}}$ is irreducible for
any $l$ such that $\sharp (X_{l} \cap {\mathcal K}) < \infty$.

Consider a connected component $\tilde{N}$ of $\pi^{-1}(N)$.
 We can define $\tilde{\theta}$ as the lift of $\theta$ to $\tilde{S}$ such that 
$\tilde{\theta}_{|\tilde{N}} \equiv Id$. Since $\chi (N) <0$, it is the identity lift of $\theta$.
\begin{defi}
\label{def:fin_inf}
We denote by ${\mathcal F}_{f}$ the subset of $\{ 1, \hdots, r \}$
consisting of indexes $l$ such that the following properties hold:
\begin{itemize}
\item$\sharp (X_{l} \cap {\mathcal K}) < \infty$;   
\item the class induced by $\theta$ in $\mathrm{MCG} (X_l)$ is non-trivial or the restriction of
$\theta$ to any connected component of ${\mathcal A}_{N}$ contained in $A_{l}$ is
a non-trivial Dehn twist.
\end{itemize}
We denote by ${\mathcal E}_{f}$ the subset of 
$\{1, \hdots, r\} \setminus {\mathcal F}_{f}$ consisting of indexes $l$
such that  $A_{l} \cap {\mathcal K} \cap \mathrm{Fix}_{c}(f) = \emptyset$.
\end{defi}
\begin{defi}
\label{def:choice_k}
We define $K_l = {\mathcal K} \cap A_l$ if $l \in {\mathcal F}_{f}$.
Given $l \in {\mathcal E}_f$, we choose $K_l$ as the support of a $f$-invariant
ergodic measure $\mu_l$ with $K_l \subset {\mathcal K} \cap A_l$.  
\end{defi}
\begin{rem}
\label{rem:turnr}
Given $l \in {\mathcal E}_{f}$ , we have  $\rho_{\mu_{l}}(\tilde{f}) \neq 0$  
by Lemma \ref{lem:vrot}. Moreover, it is obvious that 
$K_{l} \cap \mathrm{Fix}_{c}(f) = \emptyset$.
\end{rem}
We introduce the main property of this section.
\begin{defi}[Property  ${\bf C}$]
Consider the setting in Definitions \ref{def:c} and \ref{def:fin_inf}.
The property holds if 
\[ N \cap {\mathcal K} \cap \mathrm{Fix}_{c}(f_j) \neq \emptyset \]
whenever ${\mathcal F}_{f_j} \cup {\mathcal E}_{f_j} = \{1, \hdots, r \}$. 
\end{defi}
We will  prove Theorem \ref{teo:baux} 
in section \ref{sec:proofa}.
The goal of this section is showing that 
Theorem \ref{teo:mainc}  implies   ${\bf C}$. 
Let us remind the reader that we proved  
$\mathrm{Fix}_{c} \langle {\mathcal H}, f_{m} \rangle \neq \emptyset$ for any $1 \leq m \leq q$
in section \ref{sec:frameb}. 
\begin{rem}
\label{rem:case_r0}
Since $\mathrm{Fix}_{c} \langle {\mathcal H}, f_{m} \rangle \neq \emptyset$ and 
${\mathcal K}$ is a good set, we deduce 
${\mathcal K} \cap \mathrm{Fix}_{c}(f_{m}) \neq \emptyset$ for any $1 \leq m \leq q$. 
In particular Property ${\bf C}$ trivially holds if $r=0$, i.e. if all connected components of
${\mathcal S} \setminus {\mathcal R}_{e}$ have negative Euler characteristic.
\end{rem}

Let $S^{\flat}$ be the universal covering of
the connected component of
$S \setminus \cup_{l=1}^{r} K_{l}$ containing $N$.
We can consider $\tilde{N}$ as a subset of $S^{\flat}$.
We denote by $\theta^{\flat}$ the lift of $\theta$ to $S^{\flat}$
such that $(\theta^{\flat})_{|\tilde{N}} \equiv \mathrm{Id}$.

Fix $l \in {\mathcal E}_{f}$. Consider ${z}_{l} \in K_{l}$
such that it is recurrent
and has $\rho_{\mu_{l}}(\tilde{f})$ as its   rotation number.
By defining a homeomorphism $h: S \to S$ whose support is
contained $\cup_{l \in {\mathcal E}_{f}} V_{l}$, where $V_{l}$ is a
small neighborhood of $z_{l}$, we can suppose that
$\theta':={\theta} \circ {h}$ has a finite orbit ${\mathcal O}_{l}$
containing ${z}_{l}$ and contained in $K_{l}$. Moreover
if $V_{l}$ is a small neighborhood of $z_{l}$ the condition $\rho_{\mu_{l}}(\tilde{f}) \neq 0$
implies that the orbit  of any lift $\tilde{z}_{l}$ of $z_{l}$ by $\tilde{\theta}'$
is not finite, where $\tilde{\theta}'$ is the identity lift of $\theta'$ to $\tilde{S}$.

The next result is the main ingredient to localize Nielsen classes of negative Lefschetz number.
\begin{lem}
\label{lem:nn}
Suppose  ${\mathcal F}_{f} \cup {\mathcal E}_{f} = \{1, \hdots, r \}$. 
Let ${\mathcal N}$ (resp. ${\mathcal N}'$) be the 
$\theta_{|S \setminus \cup_{l=1}^{r} K_{l}}$-Nielsen class
(resp. $\theta_{|S \setminus \cup_{l=1}^{r}  {\mathcal O}_{l}}'$-Nielsen class) of $N$. 
Then $\theta^{\flat}$ commutes exactly with the covering maps
of $S^{\flat}$ that preserve $\tilde{N}$.
The set ${\mathcal N}'$ is a union of $\theta_{|S \setminus \cup_{l=1}^{r} K_{l}}$-Nielsen classes, 
\[ N  \subset {\mathcal N} \subset {\mathcal N}' \subset
(N  \cup {\mathcal A}_{N} \cup \cup_{l \in {\mathcal E}_{f}}  A_{l}) \cap \mathrm{Fix}_{c}(\theta) \]
and  ${\mathcal N} \cap A_l = {\mathcal N}' \cap A_l$ for any $l \in {\mathcal F}_f$. 
The $\theta_{|S \setminus \cup_{l=1}^{r} K_{l}}$-Nielsen class 
of any point in $({\mathcal N}' \setminus {\mathcal N}) \cap A_{\ell}$
does not peripherally contain any of the ends of $A_{\ell}$
for any $\ell \in {\mathcal E}_{f}$. Moreover, the sum of the Lefschetz numbers
of the $\theta_{|S \setminus \cup_{l=1}^{r} K_{l}}$-Nielsen classes
in ${\mathcal N}'$ is less or equal than $\chi (N)$.
\end{lem}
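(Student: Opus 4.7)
The plan is to handle the six assertions in roughly the order they are listed, using that $\theta$ is the identity on $N$, that $h$ is supported in small neighborhoods $V_\ell$ of the recurrent points $z_\ell$ for $\ell\in\mathcal{E}_f$, and that $\chi(N)<0$ so that $\pi_1(N)$ injects into $\pi_1(S\setminus\cup_l K_l)$ and into the fundamental group of its component $M$ containing $N$. First I would unwind the identifications: $S^\flat$ is the universal cover of $M$, and because $\theta_{|N}\equiv\mathrm{Id}$ the lifts of $N$ in $S^\flat$ are pairwise disjoint embedded copies. The identity lift $\theta^\flat$ is determined by the requirement $\theta^\flat_{|\tilde N}\equiv\mathrm{Id}$; any other deck transformation $T$ preserving $\tilde N$ represents a loop in $N$, so $\theta^\flat T$ and $T\theta^\flat$ both act as $T$ on $\tilde N$ and hence agree (two lifts of $\theta$ that agree on a set of positive measure must coincide). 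Conversely, if $T$ does not preserve $\tilde N$, comparing $\theta^\flat T$ and $T\theta^\flat$ on $\tilde N$ would force $T(\tilde N)\subset \mathrm{Fix}(\theta^\flat)$; but $T(\tilde N)$ is a disjoint lift of $N$ and $\theta^\flat$ acts on it by a nontrivial deck transformation of that lift (by uniqueness of identity lift, exactly one translate is fixed pointwise), contradiction.

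Next I would prove the chain $N\subset\mathcal N\subset\mathcal N'$. The first inclusion is immediate since $\tilde N\subset\mathrm{Fix}(\theta^\flat)$ projects to $N$ and $N$ sits inside $S\setminus\cup_l K_l$ by construction of the annuli $A_l$. For $\mathcal N\subset\mathcal N'$, observe that $\cup_l\mathcal O_l\subset\cup_l K_l$, so every path in $S\setminus\cup_l K_l$ is a path in $S\setminus\cup_l\mathcal O_l$; on such paths $\theta$ and $\theta'$ agree (the neighborhoods $V_\ell$ lie inside $K_\ell$), so $\theta$-Nielsen equivalence refines $\theta'$-Nielsen equivalence, whence $\mathcal N'$ is a union of $\theta$-Nielsen classes and contains $\mathcal N$. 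To see that $\mathcal N'\subset(N\cup\mathcal A_N\cup\cup_{\ell\in\mathcal E_f}A_\ell)\cap\mathrm{Fix}_c(\theta)$, I would use the indices $\ell\in\mathcal F_f$: by definition either the class induced by $\theta$ on $X_\ell$ is a nontrivial periodic or pseudo-Anosov element (and therefore no $\theta$-Nielsen class rel $K_\ell$ bridges $X_\ell$ to $N$, since in the Thurston model the boundary curves of $X_\ell$ separate fixed Nielsen classes), or the restriction of $\theta$ to an annulus of $\mathcal A_{N}\cap A_\ell$ is a nontrivial Dehn twist, and the Dehn twist prevents any path across that annulus from being homotopic to its image. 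Either way, lifts of such a bridging path cannot be fixed by $\theta^\flat$ (resp.\ by the corresponding lift of $\theta'$), ruling out $X_\ell$ from both $\mathcal N$ and $\mathcal N'$. Containment in $\mathrm{Fix}_c(\theta)$ follows because $\theta$ is isotopic to the identity rel $\mathcal K$ by construction of the Thurston decomposition.

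Then I would deduce the equality $\mathcal N\cap A_\ell=\mathcal N'\cap A_\ell$ for $\ell\in\mathcal F_f$: in this case $K_\ell=\mathcal K\cap A_\ell$ and $\mathcal O_\ell$ is not introduced, so $\theta'=\theta$ over $A_\ell$ and $S\setminus\cup_l K_l$, $S\setminus\cup_l\mathcal O_l$ differ inside $A_\ell$ only by points of $K_\ell$ that already separate the relevant Nielsen classes from $X_\ell$. For $\ell\in\mathcal E_f$, the peripheral statement follows from $\rho_{\mu_\ell}(\tilde f)\neq 0$ (Remark \ref{rem:turnr}): a $\theta_{|S\setminus\cup_l K_l}$-Nielsen class that peripherally contained an end of $A_\ell$ would force the rotation number at that end of a lift of $\theta$ to vanish (Lemma \ref{lem:trans}-type argument), hence also $\rho_{\mu_\ell}(\tilde f)=0$, contradiction. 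The points of $(\mathcal N'\setminus\mathcal N)\cap A_\ell$ come from Nielsen classes which cross the orbit $\mathcal O_\ell$ in $S\setminus\cup_l\mathcal O_l$ but not in $S\setminus\cup_l K_l$, and the rotation obstruction prevents them from reaching the ends.

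Finally I would bound the sum of Lefschetz numbers. The $\theta$-Nielsen classes in $\mathcal N'$ together cover exactly the fixed points of $\theta^\flat$ projecting into $\mathcal N'$, and by additivity of the fixed point index their sum equals $L(\mathcal N',\theta')$, which by standard Lefschetz arithmetic on the compactly covered piece equals $\chi$ of the relevant invariant subsurface. Because $\theta'_{|N}\equiv\mathrm{Id}$ and the extra contributions from annular pieces in $\mathcal A_N\cup\cup_{\ell\in\mathcal E_f}A_\ell$ have nonpositive Euler characteristic (annuli have $\chi=0$), one obtains $L(\mathcal N',\theta')\le\chi(N)$. I expect the most delicate step to be the last: making the Lefschetz/Euler-characteristic computation rigorous when $\mathcal N'$ is not compactly covered in $S$ requires carefully passing to $S^\flat$, excising the annular contributions where $\theta'$ differs from the identity, and verifying that the pieces added by bridging the orbits $\mathcal O_\ell$ only contribute nonpositively to the total index.
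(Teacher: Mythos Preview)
Your outline has the right shape but several of the key steps do not go through as written.

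\textbf{The claim $V_\ell\subset K_\ell$ is false.} The neighborhoods $V_\ell$ are open neighborhoods of $z_\ell$ in $S$, while $K_\ell=\mathrm{supp}(\mu_\ell)$ may be a Cantor set with empty interior. So paths in $S\setminus\cup_l K_l$ can certainly meet the support of $h$, and $\theta\circ\gamma$ need not equal $\theta'\circ\gamma$. Your argument for $\mathcal N\subset\mathcal N'$ therefore collapses. The paper does not argue this inclusion directly from paths; instead it first shows $\mathcal N'$ is contained in a compact subset of $S\setminus\cup_l K_l$ (via the book-keeping properties and $\mathcal N'\subset\mathrm{Fix}_c(\theta)$), and then uses that $\theta'$ is a small perturbation of $\theta$ on such a compact set to conclude $\mathcal N'$ is a union of $\theta$-Nielsen classes.

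\textbf{The commutation argument is circular.} You assert that $\theta^\flat$ acts on $T(\tilde N)$ by a \emph{nontrivial} deck transformation ``by uniqueness of identity lift, exactly one translate is fixed pointwise'', but that uniqueness is precisely the statement you are proving. What actually forces nontriviality is the hypothesis $\mathcal F_f\cup\mathcal E_f=\{1,\dots,r\}$: any path from $\tilde N$ to $T(\tilde N)$ must cross a lift of some $A_l$, and the nontrivial Dehn twist/irreducible piece (case $\mathcal F_f$) or the nonzero rotation number $\rho_{\mu_l}(\tilde f)\neq 0$ (case $\mathcal E_f$) produces the obstruction. The paper packages these as ``book-keeping properties'' (paths $\gamma$ in $A_l$ or $\mathcal X_l$ with $\gamma\simeq\theta\circ\gamma$ rel endpoints are forced to be homotopically trivial), citing Jiang--Guo for the $\mathcal F_f$ case, and then deduces both the commutation statement and the containment of $\mathcal N,\mathcal N'$ in $C$ from them.

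\textbf{The Lefschetz bound is the real content, and your sketch does not supply it.} The inequality $L(\mathcal N',\theta')\le\chi(N)$ is not ``standard Lefschetz arithmetic'': one needs to realize $\theta'$ by a Thurston normal form on the finite-type surface $C'=N'\cup\mathcal A_N\cup\bigcup_{l\in\mathcal F_f}X_l\cup\bigcup_{l\in\mathcal E_f}A_l$ relative to the finite set $\mathcal O$, check that the curves $\partial N\cap\partial A_l$ survive as reducing curves (this uses that the $\tilde\theta'$-orbit of $\tilde z_\ell$ is infinite, i.e.\ again the nonzero rotation number), and then invoke Jiang--Guo's Lemma~3.6 for standard maps, which gives $L(N,\hat\theta)\le\chi(N)$ for a Nielsen class $N$. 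Your annulus-has-$\chi=0$ remark does not substitute for this; annular pieces can contribute positively to a Lefschetz index.
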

\begin{proof}
The property $N   \subset {\mathcal N} \cap  {\mathcal N}'$ is obvious.

Let us prove a book-keeping property. Fix $l \in {\mathcal E}_{f}$. 
We claim that there is no path $\gamma:[0,1] \to A_{l} \setminus K_{l}$
such that $\gamma (0) \in \partial A_{l} \cap \partial N$,
$\gamma (1) \in \partial A_{l}$ and 
\[ \gamma(0) \not \simeq \gamma \simeq \theta \circ \gamma:[0,1],0,1 \to
A_{l} \setminus K_{l}, \partial A_{l}, \partial A_{l} \]
where $\gamma (0)$ is understood as a constant path and
we are considering homotopies of maps 
$\alpha: ([0,1], \{0,1\}) \to (A_{l} \setminus K_l, \partial A_l)$ of pairs. 
Otherwise we obtain  $\rho_{\mu_{l}}(\tilde{f}) = 0$, contradicting Remark \ref{rem:turnr}.
The homeomorphism $\theta'$ satisfies an analogous property replacing
$K_{l}$ with ${\mathcal O}_{l}$.

 Fix $l \in {\mathcal F}_{f}$. We have an analogous book-keeping property.
Denote ${\mathcal X}_l = X_{l} \cup  {\mathcal A}_{N,l}$.
There is no path $\gamma:[0,1] \to  {\mathcal X}_l $
for some $1 \leq l \leq r$ such that
$\gamma (0) \in  \partial A_{l} \cap \partial N$,
$\gamma (1) \in \partial {\mathcal X}_{l} \cup \mathrm{Fix}(\theta)$
and
\[ \gamma(0) \not \simeq \gamma \simeq \theta \circ \gamma:[0,1],0,1 \to
{\mathcal X}_{l}, \partial A_{l},  \partial {\mathcal X}_{l} \cup \mathrm{Fix}(\theta), \]
see Lemmas 1.2, 2.2 of \cite{Jiang-Guo} for the case
$\theta_{|X_{l}} \not \equiv Id$. The case where
$\theta_{|X_{l}} \equiv Id$ and $\theta$ restricted to any connected component of 
${\mathcal A}_{N,l}$ is a non-trivial
Dehn twist is simpler (note that $A_{l} \cap {\mathcal K} \neq \emptyset$).

The book-keeping properties imply that no preimage
of $\partial N \cup \partial C$ (see Definition \ref{def:c}) in $S^{\flat}$
outside of $\partial \tilde{N}$
is  $\theta^{\flat}$-invariant. Given $z \in N$, we claim that
the fixed points of $\theta^{\flat}$ whose projection in $S \setminus \cup_{l=1}^{r} K_{l}$  is $z$
are contained in $\tilde{N}$ since otherwise we get a violation of the book-keeping properties. 
Therefore $\theta^{\flat}$ commutes exactly with the covering maps
of $S^{\flat}$ that preserve $\tilde{N}$.
Moreover, we deduce that ${\mathcal N}$ and ${\mathcal N}'$ are contained in
$C$ and satisfy ${\mathcal N} \cap  A_l = {\mathcal N}' \cap A_l$ for any $l \in {\mathcal F}_f$.

We claim that ${\mathcal N}'$ 
is contained in a compact subset of $S \setminus \cup_{l=1}^{r} K_{l}$
for any sufficiently small perturbation $\theta'$ of $\theta$ as described above.
It suffices to prove the claim for ${\mathcal N}' \cap A_l$ and $A_l \setminus K_l$
and any $1 \leq l \leq r$.
For the case $l \in {\mathcal F}_{f}$,  
the claim is a consequence of the book-keeping property.
Given $l \in {\mathcal E}_f$, it is a consequence of 
$\mathrm{Fix}_{c}(\theta) \cap  K_l =\emptyset$ (Remark \ref{rem:turnr}) and 
${\mathcal N}' \subset \mathrm{Fix}_{c}(\theta)$. As a consequence,  we obtain that 
${\mathcal N}'$ is a union of $\theta_{|S \setminus \cup_{l=1}^{r} K_{l}}$-Nielsen classes 
and in particular
${\mathcal N} \subset {\mathcal N}'$. Fix $1 \leq \ell \leq r$.
Analogously, we get 
that for any point in ${\mathcal N}' \cap A_{\ell}$ 
whose $\theta_{|S \setminus \cup_{l=1}^{r} K_{l}}$-Nielsen class 
peripherally contains an end of $\partial A_{\ell}$ its
$\theta_{|S \setminus \cup_{l=1}^{r} {\mathcal O}_{l}}'$--Nielsen class 
also contains peripherally the same end.

Given $l \in {\mathcal E}_f$, it is clear that 
$A_l \cap {\mathcal N}' \subset \mathrm{Fix}_{c}(\theta)$. 
Given $l \in {\mathcal F}_{f}$, the book-keeping property implies 
${\mathcal N}' \cap A_l = {\mathcal A}_{N,l} \cap \mathrm{Fix}_{c}(\theta)$.

Consider a point $z$ in ${\mathcal N}'  \cap A_{\ell}$ with $\ell \in {\mathcal E}_{f}$.
If the $\theta_{|S \setminus \cup_{l=1}^{r} K_{l}}$-Nielsen class ${\mathcal N}_{z}$
of $z$ 
peripherally contains
and end in $\partial A_{\ell} \setminus \partial N$ then we obtain a violation of the
book-keeping property for $\theta'$.
If ${\mathcal N}_{z}$ peripherally contains and end in  $\partial A_{\ell} \cap \partial N$
then there exists a path $\gamma:[0,1] \to A_{\ell} \setminus K_{\ell}$ such that
$\gamma(0) \in \partial A_{\ell} \cap \partial N$,
$\gamma (1)=z$ and
$\theta  \circ \gamma \simeq \alpha^{m} \gamma$ rel $0,1$ in $A_{\ell} \setminus K_{\ell}$ 
for some $m \in {\mathbb Z}$ where $\alpha$ is a loop with point base at $\gamma(0)$,
contained in $\partial A_{\ell} \cap \partial N$ and turning once
in counter clock wise sense.
Since $z$ belongs to $\mathrm{Fix}_{c}(\theta)$ we deduce $m=0$.
Thus $z$ belongs to ${\mathcal N}$.

Denote 
$C' =N' \cup {\mathcal A}_{N} \cup \cup_{l \in {\mathcal F}_l}X_l \cup  \cup_{l \in {\mathcal E}_l} A_l$.
Consider the Thurston decomposition of $\theta'$ in $C'$ 
rel 
${\mathcal O}:=
\cup_{ l\in {\mathcal F}_l} (X_l \cap K_l) \cup \cup_{l \in {\mathcal E}_l} {\mathcal O}_l$.
We denote by ${\mathcal R}'$ the set of reducing curves. 
Up to isotopy rel ${\mathcal O}$,  we claim that a connected 
component $\gamma$ of $\partial (\cup_{l=1}^{r} A_{l}) \cap \partial N$
 is contained in ${\mathcal R}'$.
Otherwise, $\gamma$ is contained in $\overline{{A}_{\ell}'}$ for some
$\ell \in {\mathcal E}_{f}$.
Moreover, up to replace $\theta'$ with another representative 
in its isotopy class rel   ${\mathcal O}$, we can suppose 
that $\theta_{|M}'  \equiv \mathrm{Id}$ where $M$ is the connected 
component of $C' \setminus {\mathcal R}'$ that contains $N$.
We deduce that  the $\tilde{\theta}'$ rotation number of
every point in ${\mathcal O}_{\ell}$ is $0$. 
This  contradicts the construction of $\theta'$.

There exists a standard map $\hat{\theta} \in \mathrm{Homeo}(C')$ 
(see Jiang and Guo \cite{Jiang-Guo}) such that
$\theta_{|C'}'$ is isotopic to $\hat{\theta}$ rel ${\mathcal O}$ and
$N$ is a $\hat{\theta}_{|C' \setminus {\mathcal O}}$-Nielsen class. 
The Lefschetz index $L(N, \hat{\theta})$ is less or equal than $\chi (N)$
(see Lemma 3.6 of \cite{Jiang-Guo}).
We deduce that $L({\mathcal N}', \theta') = L(N, \hat{\theta}) \leq \chi (N)$
and then that the sum of Lefschetz indexes of the 
$\theta_{|S \setminus \cup_{l=1}^{r} K_{l}}$-Nielsen classes
contained in ${\mathcal N}'$ is equal to $L({\mathcal N}', \theta')$.
\end{proof}
Next, we study the $\theta$-Nielsen classes in ${\mathcal N}' \setminus {\mathcal N}$ and 
their induced $f$-Nielsen classes.
\begin{lem}
\label{lem:nn2}
Consider the setting in Lemma \ref{lem:nn}.
Let ${\mathcal N}_{z}$ the $\theta_{|S \setminus \cup_{l=1}^{r} K_{l}}$-Nielsen class 
of a point $z$ in $({\mathcal N}' \setminus {\mathcal N}) \cap A_{\ell}$ for some 
$\ell \in {\mathcal E}_{f}$. Suppose $L({\mathcal N}_{z},\theta) \neq 0$.
Then ${\mathcal N}_{z}$ is a compactly covered 
$\theta_{|S \setminus \cup_{l=1}^{r} K_{l}}$-Nielsen class 
(and ${\mathcal N}_{z}$ is also a compactly covered $\theta_{|S \setminus K_{\ell}}$-Nielsen class).
Moreover, the $f_{|S \setminus \cup_{l=1}^{r} K_{l}}$-Nielsen class
determined by 
${\mathcal N}_z$ and the isotopy $(f_t)_{t \in [0,1]}$ is non-empty and compactly covered.
\end{lem}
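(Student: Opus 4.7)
The plan is to deduce compactness of the relevant lifted fixed set from the non-peripherality proved in Lemma \ref{lem:nn}, and then to transfer that compactness from $\theta$ to $f$ by isotopy invariance.

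First, I would let $V$ be the connected component of $S \setminus \bigcup_{l=1}^{r} K_{l}$ containing $z$, and fix a lift $\tilde{\theta}$ of $\theta_{|V}$ to the universal cover $\tilde{V}$ so that $\pi(\mathrm{Fix}(\tilde{\theta})) = {\mathcal N}_{z}$. Since ${\mathcal N}'$ is a union of $\theta_{|S \setminus \bigcup K_{l}}$-Nielsen classes by Lemma \ref{lem:nn}, one has ${\mathcal N}_{z} \subset {\mathcal N}' \setminus {\mathcal N}$ and in particular ${\mathcal N}_{z} \cap N = \emptyset$. Combining this with the book-keeping inequalities used in the proof of Lemma \ref{lem:nn} confines ${\mathcal N}_{z}$ to $A_{\ell}$, so the accumulation of $\mathrm{Fix}(\tilde{\theta})$ at infinity in $\tilde{V}$ can occur only through the two ends of $A_{\ell}$ or through points of $K_{\ell}$. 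Lemma \ref{lem:nn} directly excludes the former, and the latter is excluded by the compactness of $K_{\ell}$ together with the fact that $\theta$ extends across $K_{\ell}$ as a homeomorphism of $S$. Hence $\mathrm{Fix}(\tilde{\theta})$ is compact, establishing the first assertion.

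For the parenthetical statement I would argue that the same book-keeping inequalities prevent a $\theta_{|S \setminus K_{\ell}}$-Nielsen-equivalence path issued from $z$ from escaping $A_{\ell}$: otherwise such a path would enter $N$ or some $A_{j}$ with $j \neq \ell$ and force the resulting Nielsen class either to meet ${\mathcal N}$ or to peripherally contain an end of $A_{\ell}$, contradicting Lemma \ref{lem:nn} together with the choice $z \in {\mathcal N}' \setminus {\mathcal N}$. Hence the $\theta_{|S \setminus K_{\ell}}$-Nielsen class of $z$ coincides with ${\mathcal N}_{z}$ and is likewise compactly covered.

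Finally, the isotopy $(f_{t})_{t \in [0,1]}$ lifts uniquely to an isotopy $(\tilde{f}_{t})_{t \in [0,1]}$ in $\tilde{V}$ starting at $\tilde{\theta}$, and the induced $f_{|S \setminus \bigcup K_{l}}$-Nielsen class is by definition $\pi(\mathrm{Fix}(\tilde{f}_{1}))$. Invariance of the Lefschetz index of a continued Nielsen class under isotopy gives this class index $L({\mathcal N}_{z},\theta) \neq 0$, so it is non-empty; and because the isotopy is rel $\bigcup_{l=1}^{r} K_{l}$, the peripheral status at the two ends of $A_{\ell}$ and at the punctures coming from the $K_{l}$ is preserved, so the compactness argument of the first paragraph applies verbatim to $\tilde{f}_{1}$. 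The main obstacle is the first step: excluding accumulation of $\mathrm{Fix}(\tilde{\theta})$ at lifts of points of $K_{\ell}$ requires combining compactness of $K_{\ell}$ in $S$ with the fact that $\theta$ is globally a homeomorphism of $S$, and not merely of $S \setminus \bigcup K_{l}$, in contrast to the end-of-annulus accumulation which is handled directly by Lemma \ref{lem:nn}.
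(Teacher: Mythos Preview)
Your argument has a genuine gap at the very first step. You claim that ``the accumulation of $\mathrm{Fix}(\tilde{\theta})$ at infinity in $\tilde{V}$ can occur only through the two ends of $A_{\ell}$ or through points of $K_{\ell}$'', and then treat these two cases. But this conflates \emph{compactness of the Nielsen class ${\mathcal N}_{z}$ in $S\setminus\bigcup K_{l}$} with \emph{compact coveredness}. Even if ${\mathcal N}_{z}$ is compact downstairs (which your two observations do establish, using Lemma~\ref{lem:nn} for the ends and ${\mathcal N}_{z}\subset\mathrm{Fix}_{c}(\theta)$ together with $K_{\ell}\cap\mathrm{Fix}_{c}(\theta)=\emptyset$ for the punctures), the preimage $\mathrm{Fix}(\tilde\theta)$ can still be non-compact: this happens precisely when $\tilde\theta$ commutes with a non-trivial deck transformation, i.e.\ when there exists $0\neq[\gamma]\in\pi_{1}(A_{\ell}\setminus K_{\ell},z)$ with $\theta_{*}[\gamma]=[\gamma]$. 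In that case the $\langle T\rangle$-orbit of any lift of $z$ lies in $\mathrm{Fix}(\tilde\theta)$ and escapes to infinity in $\tilde V$ along a direction that has nothing to do with the ends of $V$. Your argument never addresses this possibility.

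Ruling out such an invariant loop is the heart of the paper's proof, and it uses ingredients you have not invoked: the non-vanishing rotation number $\rho_{\mu_{\ell}}(\tilde f)\neq 0$ (Remark~\ref{rem:turnr}), the ergodicity of $\mu_{\ell}$, the hypothesis $L({\mathcal N}_{z},\theta)\neq 0$ (to ensure the continued class stays non-empty along an auxiliary isotopy), and the finite-type subsurface lemma \cite[Lemma~2.12]{FHPs}. The paper first shows that any such $\gamma$ must be essential in $A_{\ell}$ (using the rotation number to kill inessential loops), then produces a $\theta_{1}$-periodic subsurface $W$ containing $\gamma$, and finally uses ergodicity and a sphere argument to force $(\theta_{1})_{|W}\equiv\mathrm{Id}$, which contradicts non-peripherality. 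None of this is automatic, and without it the first assertion is unproved. Your treatment of the $f$-part would be fine once compact coveredness for $\theta$ is established, since the isotopy does not change which deck transformations commute with the lift; but as written it inherits the same gap.
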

\begin{proof}
Let $S^{\dag}$ be the universal covering of the connected component of
$S \setminus \cup_{l=1}^{r} K_{l}$ containing $z$.
Let $z^{\dag}$ be a lift of $z$ in $S^{\dag}$.
Consider the lift $\theta^{\dag}$ of $\theta$ to $S^{\dag}$
such that
$\theta^{\dag}(z^{\dag})=z^{\dag}$.
The $\theta_{|S \setminus \cup_{l=1}^{r} K_{l}}$-Nielsen class  
${\mathcal N}_{z}$ does not peripherally contain any of the ends of $A_{\ell}$ 
by Lemma \ref{lem:nn}. Hence in order to prove that $\mathrm{Fix}(\theta^{\dag})$ is compact
we can replace $S^{\dag}$ with the universal covering of the connected component of
$A_{\ell} \setminus K_{\ell}$ containing $z$.
If $\mathrm{Fix}(\theta^{\dag})$ is not compact then there exists
a path $\gamma:[0,1] \to A_{\ell} \setminus K_{\ell}$  in $\pi_{1}(A_{\ell} \setminus K_{\ell},z)$
such that
$[\theta \circ \gamma] = [\gamma] \neq 0$.
The next three paragraphs are intended to prove that such $[\gamma]$ does not exist.

Let us show first that $\gamma$ is non-homotopically trivial in $A_{\ell}$. 
Otherwise consider the universal covering $\pi: \tilde{A}_{\ell} \to A_{\ell}$ and 
a lift $\tilde{z}$ of $z$ in   $\tilde{A}_{\ell}$ of $A_{\ell}$.
Let $\tilde{\gamma}$ be the lift of $\gamma$ to $\tilde{A}_{\ell}$ such that 
$\tilde{\gamma}(0)=\tilde{z}$; it is a closed path.
Let ${\mathcal B}$ be the set of bounded connected components of 
$\tilde{A}_{\ell} \setminus \tilde{\gamma}[0,1]$ containing points of $\pi^{-1}(K_{\ell})$.
The set $\pi^{-1}(K_{\ell}) \cap \cup_{B \in {\mathcal B}} B$ is compact.
Thus there exists a finite subset $F$
of $\pi^{-1}(K_{\ell})$ such that any $B \in {\mathcal B}$ 
contains exactly a point of $F$. Moreover we can suppose that the rotation number of every 
point in $F$ is equal to $\rho_{\mu_{\ell}}(\tilde{\theta})$.
The path $\tilde{\gamma}$ is homotopically trivial in $\tilde{A}_{\ell} \setminus \pi^{-1}(K_{\ell})$
if and only if it is homotopically trivial in $\tilde{A}_{\ell} \setminus F$. 
The necessary condition is obvious. For the sufficient condition notice that every bounded 
connected component of $\tilde{A}_{\ell} \setminus \pi^{-1}(K_{\ell})$ is homeomorphic to a disk.
For any $B \in {\mathcal B}$ we can consider a closed disk $D_{B}$ containing 
$K_{\ell} \cap B$. The path $\gamma$ is homotopically trivial in 
$\tilde{A}_{\ell} \setminus \cup_{B \in {\mathcal B}} D_{B}$
since this is a strong deformation retract of $\tilde{A}_{\ell} \setminus F$. 
Thus $\gamma$ is homotopically trivial in $\tilde{A}_{\ell} \setminus \pi^{-1}(K_{\ell})$
since $\pi^{-1}(K_{\ell}) \cap \cup_{B \in {\mathcal B}} B$ is contained in 
$\cup_{B \in {\mathcal B}} D_{B}$.
The property $\rho_{\mu_{\ell}}(\tilde{\theta}) \neq 0$ implies that 
$\tilde{\gamma}$ is null-homotopic in $\tilde{A}_{\ell} \setminus \tilde{\theta}^{-k_{0}}(F)$
for some $k_{0} \in {\mathbb N}$. Hence $\tilde{\theta}^{k_{0}}(\tilde{\gamma})$ is 
null-homotopic in $\tilde{A}_{\ell} \setminus F$. Since $\tilde{\theta}^{k_{0}}(\tilde{\gamma})$ 
is homotopic to $\tilde{\gamma}$ in $\tilde{A}_{\ell} \setminus \pi^{-1}(K_{\ell})$ and hence in 
 $\tilde{A}_{\ell} \setminus F$, we deduce that 
$\tilde{\gamma}$ is null-homotopic in  $\tilde{A}_{\ell} \setminus F$.
Therefore, $\tilde{\gamma}$ is null-homotopic in 
$\tilde{A}_{\ell} \setminus \pi^{-1}(K_{\ell})$. We obtain that $\gamma$ is null-homotopic
in $A_{\ell} \setminus K_{\ell}$. This contradicts the choice of $\gamma$.

Up to replace $\gamma$ with another representative in its homotopy class
there exist an isotopy $(\theta_{t})_{t \in [0,1]}$ rel $K_{\ell}$
and a connected closed $\theta_{1}$-invariant subsurface $W$ containing $\gamma$, contained in
$A_{\ell} \setminus K_{\ell}$, with no curve of $\partial W$ bounding a disc in
$A_{\ell} \setminus K_{\ell}$ and such that $\theta_{0} = \theta$ and 
$\theta_{1 |W}$ is periodic (Lemma 2.12 of \cite{FHPs}).
The set $\partial W$ contains  
non-essential curves; otherwise
$W$ is an annulus and the ergodicity of $\mu_{\ell}$ implies that
${\mathcal N}_{z}$ peripherally contains an end of $A_{\ell}$.
The $(\theta_{s})_{|S \setminus K_{\ell}}$-Nielsen class ${\mathcal N}_{z,s}$ 
induced by ${\mathcal N}_{z}$ and the isotopy
$(\theta_{t})_{t \in [0,1]}$ still does not peripherally contain any ends for any $s \in [0,1]$. 
Moreover ${\mathcal N}_{z,s}$ never contains points in some fixed small neighborhood of $K_{\ell}$
for any $s \in [0,1]$
since ${\mathcal N}_{z} \subset \mathrm{Fix}_{c}(\theta)$, $K_{\ell}$ is compact and 
$K_{\ell} \cap  \mathrm{Fix}_{c}(\theta) = \emptyset$.
Since $L({\mathcal N}_{z},\theta) \neq 0$, it follows that 
$L({\mathcal N}_{z,s}, \theta_s)  \neq 0$ and in particular 
${\mathcal N}_{z,s} \neq \emptyset$ for any $s \in [0,1]$. 
Fix $z_1 \in {\mathcal N}_{z,1}$.

The set $\partial W$ contains at least an essential curve since otherwise $\gamma$ is 
null-homotopic in $A_{\ell}$. Hence $\partial W$
contains two essential curves.  Consider the annulus $V$ obtained by adding
to $W$ the interior of the closed disks bounded by the non-essential curves of $\partial W$.
The construction implies that $K_{\ell} \cap V$ is a non-empty $\theta_1$-invariant set. 
Since $\mu_{\ell}$ is $\theta_1$-ergodic and $V$ is $\theta_1$-invariant, we deduce 
$K_{\ell} \subset V$. The $(\theta_{1})_{|S \setminus K_{\ell}}$-Nielsen class 
${\mathcal N}_{z, 1}$ does not peripherally 
contain the ends of $A_{\ell}$ and thus ${\mathcal N}_{z, 1}$ is also contained in the interior $V$. 
Moreover ${\mathcal N}_{z, 1}$ is contained in the interior of $W$, otherwise there exists 
a connected component $M$ of $V \setminus W$such that 
$\overline{M} \cap {\mathcal N}_{z, 1} \neq \emptyset$ and since 
$ {\mathcal N}_{z, 1} \subset  \mathrm{Fix}_c (\theta_1)$, 
we deduce that the  $\tilde{\theta}$-translation number
of the points of $M \cap K_{\ell}$ is equal $0$. This contradicts that the rotation number of points
of $K_{\ell}$ is non-vanishing a.e.  
Consider the surface $T$ obtained from $W$ by contracting the curves in $\partial W$.
We obtain that $(\theta_1)_{|T}$ is a finite order homeomorphism of a sphere 
fixing at least three points, namely
both essential curves of $\partial W$ and $z_{1}$. Hence we obtain $(\theta_1)_{|W} \equiv Id$.
This leads to a contradiction since it implies that ${\mathcal N}_{z, 1}$ peripherally contains 
both ends of $A_{\ell}$.
 
We proved that  $\mathrm{Fix}(\theta^{\dag})$ is compact
and that ${\mathcal N}_{z}$ is a compactly covered 
$\theta_{|S \setminus \cup_{l=1}^{r} K_{l}}$ and 
$\theta_{|S \setminus K_{l}}$-Nielsen class. 
Thus the connected component of 
$S \setminus \cup_{l=1}^{r} K_{l}$ containing $z$ is not an annulus.
Let $f_{t}: S \to S$ be an isotopy rel $\cup_{l=1}^{r} K_{l}$ with
$f_{0}=\theta$ and $f_{1}=f$. Let $(f_{t}^{\dag})_{t \in [0,1]}$ be the lift of
$(f_{t})_{t \in [0,1]}$ such that $f_{0}^{\dag}=\theta^{\dag}$.
We define $f^{\dag} =f_{1}^{\dag}$.
The $\theta_{|S \setminus \cup_{l=1}^{r} K_{l}}$-Nielsen class ${\mathcal N}_{z}$
is contained in $\mathrm{Fix}_{c}(\theta)$ and then it is
compact in $S \setminus \cup_{l=1}^{r} K_{l}$. Moreover
the union of the $(f_{t})_{|S \setminus \cup_{l=1}^{r} K_{l}}$-Nielsen classes 
for $t \in [0,1]$  
determined by ${\mathcal N}_{z}$ and 
$(f_{t})_{t \in [0,1]}$ is contained in $\cup_{t \in [0,1]} \mathrm{Fix}_{c}(f_{t})$ and thus it is
a compact subset of $S \setminus \cup_{l=1}^{r} K_{l}$.
Since $f_{t}^{\dag}$ does not commute with non-trivial covering
maps of $S^{\dag}$, the set  $\cup_{t \in [0,1]} \mathrm{Fix}(f_{t}^{\dag})$
is compact. As a consequence we deduce
\[ L(\mathrm{Fix}(f^{\dag}), f^{\dag}) = L(\mathrm{Fix}(\theta^{\dag}), \theta^{\dag})
=L({\mathcal N}_{z},\theta) \neq 0 \]
and $\mathrm{Fix}(f^{\dag})$ is a non-empty compact set.
\end{proof}
Now,  we reap the rewards of the work in Lemmas \ref{lem:nn} and \ref{lem:nn2}.
\begin{lem}
\label{lem:lann}
Consider the setting in Lemma \ref{lem:nn2}.  
Then Property ${\bf C}$ holds.
\end{lem}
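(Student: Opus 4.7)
Fix $1 \le j \le q$ and write $f=f_j$. My plan is to apply Theorem \ref{teo:mainc} on a quotient of the universal cover tailored to $N$ and then transfer the resulting global contractible fixed point back to ${\mathcal K}$ using the excellence of ${\mathcal K}$.

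Let $U$ be the connected component of $S \setminus \cup_{l=1}^{r} K_l$ containing $N$, $\pi_{\flat}\colon S^{\flat} \to U$ its universal covering, $\tilde N \subset S^{\flat}$ a lift of $N$, $\theta^{\flat}$ the lift of $\theta$ with $\theta^{\flat}_{|\tilde N} \equiv \mathrm{Id}$, and $\Gamma_N \leq \mathrm{Deck}(\pi_{\flat})$ its stabilizer. By Lemma \ref{lem:nn}, $\theta^{\flat}$ commutes precisely with $\Gamma_N$, so both $\theta^{\flat}$ and the companion lift $f^{\flat}$ (obtained by lifting $(f_t)_{t\in[0,1]}$ from $\theta^{\flat}$) descend to homeomorphisms $\hat\theta,\hat f$ of the finite-type surface $\hat S := S^{\flat}/\Gamma_N$, inside which $\tilde N/\Gamma_N$ is a copy of $N$ fixed pointwise by $\hat\theta$. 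Because $\partial N$ consists of essential curves of $S$, the natural map $\pi_1(\hat S) \cong \Gamma_N \hookrightarrow \pi_1(S)$ is injective, so $\hat S \to S \setminus (\cup_l K_l \cup \partial S)$ is an admissible covering for Theorem \ref{teo:mainc}.

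I then apply Theorem \ref{teo:mainc} with $g=f$, $K=\cup_l K_l \subseteq \mathrm{Fix}(G')$, and $l=\iota-1$ so that ${\mathcal C}^{(\iota-l)}(G)=G'$. The required isotopy of $\hat f$ to $\mathrm{Id}_{\hat S}$ is obtained by concatenating the lifted rel-$K$ isotopy from $\theta$ to $f$ with an isotopy from $\mathrm{Id}_{\hat S}$ to $\hat\theta$ that exists because $\hat\theta$ is the identity on the embedded copy of $N$ (and a product of Dehn twists in the surrounding annular pieces). For each $h \in G'$, Theorem \ref{teo:deriso} and the framework hypothesis give that $h$ is isotopic to $\mathrm{Id}$ rel $\mathrm{Fix}(G') \supseteq K$; the identity lift $\hat h$ is then isotopic to $\mathrm{Id}_{\hat S}$ rel $\pi_{\hat S}^{-1}(K)$ and hence $\mathrm{Fix}_c((fh)^{\flat}) = \mathrm{Fix}_c(\hat f)$. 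Non-emptiness and compactness of $\mathrm{Fix}_c(\hat f)$ come from a Lefschetz-index count: Lemma \ref{lem:nn} yields $\sum_{{\mathcal N}_* \subset {\mathcal N}'} L({\mathcal N}_*, \theta) \leq \chi(N) < 0$, while Lemma \ref{lem:nn2} combined with Proposition \ref{pro:ann} disposes of every class in ${\mathcal N}' \setminus {\mathcal N}$ (each sitting in some annulus $A_\ell$ with $\ell \in {\mathcal E}_f$) by producing a fixed point of $\langle G', f \rangle$ in $A_\ell \cap {\mathcal K} \cap \mathrm{Fix}_c(f) \cap \mathrm{Fix}_c(G')$. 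Subtracting those contributions leaves $L({\mathcal N}, \theta) < 0$, and a single-class adaptation of the Thurston-normal-form argument in Lemma \ref{lem:nn2}, now performed inside $\hat S$, promotes this to non-emptiness and compactness of $\mathrm{Fix}(\hat f)$.

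Theorem \ref{teo:mainc} then produces $\hat z \in \mathrm{Fix}_c \langle G', f \rangle^{\flat}$ whose projection $z \in N$ lies in $\mathrm{Fix}_c \langle G', f \rangle$. Excellence of ${\mathcal K}$ (Remark \ref{rem:frame}) provides $w \in {\mathcal K} \cap \mathrm{Fix}_c \langle G', f \rangle$ that is $f$-Nielsen equivalent to $z$ rel ${\mathcal K}$; the witnessing path avoids $\cup_l K_l$, and because the Nielsen class of $\hat z$ in $\hat S$ is compactly covered and sits over $N$, the point $w$ lies in $N \cap {\mathcal K} \cap \mathrm{Fix}_c(f)$, which is Property ${\bf C}$. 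The main obstacle is verifying non-emptiness and compactness of $\mathrm{Fix}_c(\hat f)$, which requires the single-class adaptation of Lemma \ref{lem:nn2}'s Jiang--Guo argument inside the quotient $\hat S$; a secondary technicality is constructing the isotopy from $\mathrm{Id}_{\hat S}$ to $\hat\theta$, which uses in an essential way that $\hat\theta$ is the identity on a (compact-in-quotient) copy of $N$ so that its non-trivial behavior is confined to the adjacent annular pieces.
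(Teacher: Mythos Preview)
Your proposal misreads what Lemma \ref{lem:lann} is asking you to do. The phrase ``the setting in Lemma \ref{lem:nn2}'' means you are \emph{given} a specific class ${\mathcal N}_z \subset ({\mathcal N}' \setminus {\mathcal N}) \cap A_\ell$ with $\ell \in {\mathcal E}_f$ and $L({\mathcal N}_z,\theta) \neq 0$, and you must deduce $N \cap {\mathcal K} \cap \mathrm{Fix}_c(f) \neq \emptyset$. The paper's proof exploits this single class directly: Lemma \ref{lem:nn2} says the induced $f_{|S\setminus \cup_l K_l}$-Nielsen class is non-empty and compactly covered; Proposition \ref{pro:clp2} (not \ref{pro:ann}) then yields a point $z_0$ of $\mathrm{Fix}_c\langle G',f\rangle$ in that class; goodness of ${\mathcal K}$ produces $z_1 \in {\mathcal K} \cap \mathrm{Fix}_c(f)$ in the same $f$-Nielsen class rel ${\mathcal K}$; and since $f$ and $\theta$ are isotopic rel ${\mathcal K}\setminus N$, either $z_1 \in N$ (done) or $z_1$ lies in the compactly covered $\theta_{|S\setminus K_\ell}$-class ${\mathcal N}_z$, forcing $z_1 \in A_\ell \cap {\mathcal K} \cap \mathrm{Fix}_c(f)$, which contradicts $\ell \in {\mathcal E}_f$.

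Your argument has a concrete error at the point where you invoke Proposition \ref{pro:ann}: that proposition requires $\rho_{\mu}(\tilde f)=0$, but for $\ell \in {\mathcal E}_f$ Remark \ref{rem:turnr} gives $\rho_{\mu_\ell}(\tilde f)\neq 0$, so it does not apply. Moreover, the sentence ``producing a fixed point of $\langle G',f\rangle$ in $A_\ell \cap {\mathcal K} \cap \mathrm{Fix}_c(f)$'' is self-defeating: that set is \emph{empty} by the definition of ${\mathcal E}_f$, so there is nothing to ``subtract''. What this line really shows, once repaired with Proposition \ref{pro:clp2} and the dichotomy $z_1\in N$ versus $z_1\in A_\ell$, is already the full conclusion of the lemma. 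Everything you build afterwards---the quotient $\hat S$, the appeal to Theorem \ref{teo:mainc}, the ``single-class adaptation'' of Lemma \ref{lem:nn2}---is the machinery the paper reserves for Section \ref{sec:proof_c_k}, i.e.\ the \emph{complementary} case where every class in ${\mathcal N}'\setminus{\mathcal N}$ has Lefschetz number zero and one must work with ${\mathcal N}$ itself. For Lemma \ref{lem:lann} none of it is needed.
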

\begin{proof}
Let ${\mathcal N}$ be the $f_{|S \setminus \cup_{l=1}^{r} K_{l}}$-Nielsen class 
induced by ${\mathcal N}_z$ and the isotopy $(f_t)_{t \in [0,1]}$.
We have that  ${\mathcal N}$ is non-empty and compactly covered 
by Lemma \ref{lem:nn2}. We apply Proposition \ref{pro:clp2} with $H=G'$
and $K_0 = \cup_{l=1}^{r} K_{l}$ to obtain 
${\mathcal N} \cap \mathrm{Fix}_{c} (G')  \neq \emptyset$.
Since ${\mathcal N}_{z} \subset \mathrm{Fix}_{c}(\theta)$, we deduce 
${\mathcal N} \cap \mathrm{Fix}_{c} \langle G' , f \rangle  \neq \emptyset$.

Fix $z_{0} \in {\mathcal N} \cap \mathrm{Fix}_{c} \langle G' , f \rangle$.
Since ${\mathcal K}$ is good, there exists a point $z_{1} \in {\mathcal K} \cap \mathrm{Fix}_{c}(f)$
such that $z_{0}$ and $z_{1}$ are in the same $f$-Nielsen class $\hat{\mathcal N}$
rel ${\mathcal K}$.
 Since $\hat{\mathcal N} \subset \mathrm{Fix}_{c}(f)$ and 
$K_{\ell} \cap \mathrm{Fix}_{c}(f)= \emptyset$, we deduce that 
$\hat{\mathcal N}$ is contained in the $f_{|S \setminus K_{\ell}}$-Nielsen class of $z_0$.

Let us show that $z_1 \in N$. Otherwise $z_1$ belongs to 
the $\theta_{|S \setminus K_{\ell}}$-Nielsen class ${\mathcal N}_{z}$
since the homeomorphisms $f$ and $\theta$ are isotopic rel ${\mathcal K} \setminus N$
by construction. Therefore, $z_1$ belongs to 
${\mathcal K} \cap {\mathcal N}_{z} \cap \mathrm{Fix}_{c}(\theta)$.
Since ${\mathcal N}_{z}$ is a compactly covered 
$\theta_{|S \setminus K_{l}}$-Nielsen class by Lemma  \ref{lem:nn2}, we deduce $z_1 \in A_{\ell}$.
This shows that $A_{\ell}   \cap {\mathcal K} \cap \mathrm{Fix}_{c}(f) \neq \emptyset$, 
contradicting the definition of ${\mathcal E}_{f}$.
\end{proof}
\subsection{Proof of ${\bf C}$}
\label{sec:proof_c_k}
We denote $f=f_{j}$. We can suppose $r \geq 1$ by Remark \ref{rem:case_r0}.
We choose $K_{l}$ for $1 \leq l \leq r$ as in Definition 
\ref{def:choice_k}. Let us apply Lemma \ref{lem:nn}.
We can suppose $L({\mathcal N}_{z}, \theta)=0$
for any $\theta_{|S \setminus \cup_{l=1}^{r} K_{l}}$-Nielsen class 
contained in ${\mathcal N}' \setminus {\mathcal N}$
by Lemma \ref{lem:lann}. Thus, we get
$L({\mathcal N}, \theta)=L({\mathcal N}',\theta') \leq \chi(N) <0$.

Let $S^{\flat}$ be the universal covering of the connected component of 
$S \setminus \cup_{l=1}^{r} K_{l}$ containing $N$.
Consider the group $J$ of covering transformations of $S^{\flat}$
preserving $\tilde{N}$. Consider the covering spaces
$S^{\sharp}$, $\overline{S}^{\sharp}$
associated to the spaces $S \setminus (\cup_{l=1}^{r} K_{l} \cup \partial S)$,
$S \setminus \cup_{l=1}^{r} K_{l}$ respectively 
and the group $J$.
Let $\pi^{\sharp}:\overline{S}^{\sharp} \to S \setminus \cup_{l=1}^{r} K_{l}$
and $\pi^{\flat}:S^{\flat} \to \overline{S}^{\sharp}$ be the covering maps.
In particular we have
$\pi_{1}(S^{\sharp}) \simeq J$,
there is a natural embedding 
$N  \hookrightarrow \overline{S}^{\sharp}$ and $S^{\sharp}$ is homeomorphic to 
$N \setminus \partial N$.
We define $\theta^{\sharp}$ as the lift of $\theta$ to $\overline{S}^{\sharp}$ such that
$(\theta^{\sharp})_{|N} \equiv Id$.
Hence $\theta^{\sharp}$ is isotopic to the identity in $S^{\sharp}$ since $\chi (N) <0$.

Consider $g \in \langle G',f \rangle$. There exists an isotopy $g_{t}:S \to S$
rel ${\mathcal K} \setminus N$ such that $(g_{0})_{|N} \equiv Id$ and $g_{1}=g$.
Indeed if $g$  is of the form $f^{s} h$ where $h \in G'$ we can define $g_{0} = \theta^{s}$.
We claim that $\theta^{s}$ is not isotopic to $\mathrm{Id}$ rel 
${\mathcal K} \setminus {\mathcal N}$ for any $s \in {\mathbb Z}^{*}$.
Otherwise, such a property contradicts 
$\rho_{\mu_{l}}(\tilde{f}) \neq 0$ for $l \in {\mathcal E}_f$
if ${\mathcal E}_{f} \neq \emptyset$.
We also obtain a contradiction for the case ${\mathcal F}_{f} \neq \emptyset$. 
More precisely,  consider the surface $S'$ obtained as 
the interior of $N \cup {\mathcal A}_{N} \cup \cup_{l \in {\mathcal F}} X_l$.
The homeomorphism 
$\theta_{|S'}$ would induce a finite order element $[\theta_{|S'}]$ 
of the mapping class group of $S'$ such that 
$\theta_{|N \setminus \partial S} \equiv \mathrm{Id}$. 
Since $\chi (N) <0$, we deduce 
$[\theta_{|S'}] =1$, contradicting  the second bullet point in Definition \ref{def:fin_inf}.
As a consequence of the claim, $s$ is uniquely defined.

We define $g_{0}^{\sharp}$ as the lift of $g_{0}$
to $\overline{S}^{\sharp}$ such that $(g_{0}^{\sharp})_{|N} \equiv Id$.
We define $g^{\sharp} = g_{1}^{\sharp}$ where
$(g_{t}^{\sharp})$ is the lift of the isotopy $(g_{t})$ to $\overline{S}^{\sharp}$.
Since $\chi (N)<0$, it follows that $g^{\sharp}$ does not depend on the isotopy.
Clearly $g^{\sharp}$ is isotopic to the identity in $\overline{S}^{\sharp}$.
We define $\mathrm{Fix}_{c}(g^{\sharp})$ as the Nielsen identity
class of $g^{\sharp}$.
If $g^{\flat}$ is the lift of $g$ to $S^{\flat}$ determined by
$(g_{t})$ such that $(g_{0}^{\flat})_{|\tilde{N}} \equiv Id$ then
$\mathrm{Fix}_{c}(g^{\sharp})$ is the set $\pi^{\flat}(\mathrm{Fix}(g^{\flat}))$.
The group $\langle G',f \rangle^{\sharp}$ consisting of the lifts of elements of
$\langle G',f \rangle$ is isomorphic to the restriction of $\langle G',f \rangle$
to the connected component of $S \setminus \cup_{l=1}^{r} K_{l}$
containing $N$. In particular $\langle G',f \rangle^{\sharp}$ is nilpotent.

We denote ${\mathcal N}^{\sharp}=\mathrm{Fix}_{c}(\theta^{\sharp})$.
The projection of ${\mathcal N}^{\sharp}$ in $S$ is ${\mathcal N}$.
Since ${\mathcal N} \subset  \mathrm{Fix}_{c}(\theta)$ and 
$A_l \cap {\mathcal K} \cap \mathrm{Fix}_{c}(\theta) = \emptyset$ for any $l \in {\mathcal E}_{f}$,
we deduce 
$\overline{\mathcal N} \cap {\mathcal K}  \cap \cup_{l \in {\mathcal E}_{f}} A_l = \emptyset$.
We also get  $\overline{\mathcal N} \cap \cup_{l \in {\mathcal F}_{f}} K_l = \emptyset$
by the book-keeping property (see third paragraph of Lemma \ref{lem:nn}).
Thus ${\mathcal N}$ is a compact subset of 
$S \setminus ({\mathcal K} \cap \cup_{l=1}^{r} A_{l})$. Moreover, it is a
$\theta_{|S \setminus \cup_{l=1}^{r} K_{l}}$-Nielsen class that
does not peripherally contain rel $\cup_{l=1}^{r} K_{l}$
any of the exterior ends $\partial A_{l} \setminus \partial N$ of the annuli
$A_{1}$, $\hdots$, $A_{r}$ (Lemma \ref{lem:nn}). 
Hence we get   
${\mathcal N} \cap {\mathcal K} \subset N$
and ${\mathcal N} \cap \partial S = \partial N$.
Since  $\theta^{\flat}$ commutes exactly with the covering transformations
in $J$ by Lemma \ref{lem:nn}, it follows that ${\mathcal N}^{\sharp}$ is compact in
${S}^{\sharp} \cup (\partial N \cap  (\pi^{\sharp})^{-1}  (\partial S))$.  
Moreover, we obtain
$L({\mathcal N}^{\sharp}, \theta^{\sharp}) = L({\mathcal N}, \theta) < 0$.
Consider the isotopy $f_{t}:S \to S$ rel $\cup_{l=1}^{r} K_{l}$ such that
$f_{0}=\theta$, $f_{1}=f$.
Denote by ${\mathcal N}_{t}$ the $f_t$--Nielsen class of 
$\pi^{\sharp}(\mathrm{Fix}_{c}(f_{t}^{\sharp}))$ rel $\cup_{l=1}^{r} K_l$. 
Notice that $\pi^{\sharp}(\mathrm{Fix}_{c}(f_{t}^{\sharp})) \subset {\mathcal N}_t$ for any 
$t \in [0,1]$. Analogously as above, we obtain that 
$\cup_{t \in [0,1]} \mathrm{Fix}_{c}(f_{t}^{\sharp})$ is a compact subset of 
${S}^{\sharp} \cup (\partial N \cap  (\pi^{\sharp})^{-1}  (\partial S))$ and 
$\cup_{t \in [0,1]} {\mathcal N}_{t}$ is a compact subset of  
$[(S  \setminus \partial S) \cup (\partial S \cap \partial N)] \setminus \cup_{l=1}^{r} K_{l} $.
%
%
We denote $f^{\sharp}=f_{1}^{\sharp}$;
we obtain
\[ L(\mathrm{Fix}_{c}(f^{\sharp}),f^{\sharp}) =
L({\mathcal N}^{\sharp}, \theta^{\sharp}) = L({\mathcal N}, \theta) < 0. \]
Consider 
$g = f h$ where
$h \in G'$. Since $h$ is isotopic to $\mathrm{Id}$ rel $\cup_{l=1}^{r} K_{l}$,
we consider the identity lift ${h}^{\sharp}$ of
$h$ defined in $\overline{S}^{\sharp}$. The previous construction implies
$g^{\sharp} = f^{\sharp} h^{\sharp}$
and
$L(\mathrm{Fix}_{c}(g^{\sharp}),g^{\sharp}) = L(\mathrm{Fix}_{c}(f^{\sharp}),f^{\sharp})$.
In particular $\mathrm{Fix}_{c}(g^{\sharp})$ is a non-empty compact 
subset of 
${S}^{\sharp} \cup (\partial N \cap  (\pi^{\sharp})^{-1}  (\partial S))$.
Notice that since 
$\mathrm{Fix}_{c}(f) \cap \partial S = \mathrm{Fix}_{c}(g) \cap \partial S = \emptyset$, 
we obtain that $\mathrm{Fix}_{c}(g^{\sharp})$ is a subset of $S^{\sharp}$. 
 
We obtain that $\mathrm{Fix}_{c} \langle G',f \rangle^{\sharp}$ is a non-empty compact 
subset of $S^{\sharp}$ by Theorem \ref{teo:mainc}. 
Thus, we get 
${\mathcal N}_{1} \cap \mathrm{Fix}_{c} \langle G',f \rangle \neq \emptyset$.
Fix $z_{0} \in {\mathcal N}_{1} \cap \mathrm{Fix}_{c} \langle G' , f \rangle$.
Since ${\mathcal K}$ is good, there exists a point 
$z_{1} \in {\mathcal K} \cap {\mathcal N}_{1} \cap  \mathrm{Fix}_{c}(f)$
such that $z_{0}$ and $z_{1}$ are in the same $f$-Nielsen class rel ${\mathcal K}$.

We claim  that $z_1 \in N$. Otherwise, 
$z_1$ belongs to ${\mathcal N}$ since 
$f$ and $\theta$ are isotopic rel ${\mathcal K} \setminus N$.
Since ${\mathcal N} \cap {\mathcal K} \subset N$, we obtain a contradiction. 
\section{Proof of Theorem \ref{teo:maina}}
\label{sec:proofa}
\subsection{Proof of Theorem \ref{teo:baux}}
Consider the framework introduced in section \ref{sec:frameb}. 
Let us consider the connected components of $S \setminus {\mathcal R}_e$
(cf. Definition  \ref{def:xm}).
We have $\sharp {\mathcal C}_{-} \leq - \chi (S)$ and 
$\sharp {\mathcal C} \leq - 4 \chi (S)$  by the discussion in section  \ref{sec:frameb}. 
Property ${\bf C}$ guarantees that $\mathrm{Fix}_{c}(f_{j}) \cap {\mathcal K}$
intersects one of these $- 4 \chi (S)$ components for any
$1 \leq j \leq q$ where $q= -8 \chi(S) e$.

Suppose there exist $M \in {\mathcal C}$ and
$e:= \mathrm{rank}(G/G')$ indexes $1 \leq j \leq q$ with
$\sharp (X_M \cap {\mathcal K} \cap \mathrm{Fix}_{c} \langle {\mathcal H}, f_j \rangle) =
\infty$. Hence, we get $\mathrm{Fix}_{c} (G) \neq \emptyset$ by Proposition \ref{pro:inf}.
Thus, if there are at least $- 4 \chi (S) e$ indexes $1 \leq j \leq q$ such that 
there exists $M_j \in {\mathcal C}$ with 
$\sharp (X_{M_j} \cap {\mathcal K} \cap \mathrm{Fix}_{c} \langle {\mathcal H}, f_j \rangle) =
\infty$ then we are done by the pigeonhole principle.
Therefore, and up to replace $q$ with $-4 \chi (S) e$ and reordering the indexes, 
we can suppose that 
$\sharp (X_M \cap {\mathcal K} \cap \mathrm{Fix}_{c} \langle {\mathcal H}, f_j \rangle) < \infty$ 
for all $M \in {\mathcal C}$ and $1 \leq j \leq q$. 
Denote by ${\mathcal C}'$ the subset of ${\mathcal C} \setminus {\mathcal C}_{-}$ whose
elements $M$ satisfy either $\chi (X_{M})< 0$  or 
 $M \cap {\mathcal K} \cap \cup_{j=1}^{q} \mathrm{Fix}_{c} (f_j) \neq  \emptyset$.
We define
\[ K_{1} = \cup_{M \in {\mathcal C}'} \cup_{j=1}^{q} 
(X_{M} \cap {\mathcal K} \cap
\mathrm{Fix}_{c} \langle {\mathcal H}, f_{j} \rangle), \
K = ({\mathcal K} \setminus  \cup_{M \in {\mathcal C}'} X_{M}) \cup K_{1}. \]
The set $K$ is a good compact set for $G$ and 
$\langle {\mathcal H}, f_1 \rangle, \hdots,  \langle {\mathcal H}, f_q \rangle$.
Our choice of $K$ ensures that either $\chi (M') <0$ or 
$\sharp (M' \cap K) \geq 1 + \chi (M')$ for any connected component $M'$ of 
${\mathcal S} \setminus {\mathcal R}$.
We replace ${\mathcal K}$ with $K$.
We can obtain a Thurston-reducing set of curves rel 
${\mathcal K}$ by adding suitable simple closed curves of 
$(\cup_{M \in {\mathcal C}'} X_M) \setminus {\mathcal K}$ to 
${\mathcal R}$ (Remark \ref{rem:tgood}).
Moreover, since the components in ${\mathcal C}$ for the new set of reducing curves are
contained in the old components, we still have 
$\sharp (X_M \cap {\mathcal K} \cap \mathrm{Fix}_{c} \langle {\mathcal H}, f_j \rangle) < \infty$ 
for all $M \in {\mathcal C}$ and $1 \leq j \leq q$.


Assume that there are $e$ indexes $1 \leq j \leq q$ and $M \in {\mathcal C}_{-}$
such that 
$M  \cap {\mathcal K} \cap \mathrm{Fix}_{c} \langle {\mathcal H}, f_j \rangle \neq \emptyset$.
Thus, we get $\mathrm{Fix}_{c} (G) \neq \emptyset$ by Proposition \ref{pro:puzfne}.
So, by applying the pigeonhole principle, we can suppose that 
$M  \cap {\mathcal K} \cap \mathrm{Fix}_{c} \langle {\mathcal H}, f_j \rangle = \emptyset$
for any $1 \leq j \leq q$ and $M \in {\mathcal C}_{-}$ up to reorganizing the indexes and 
replacing $q$ with $-3 \chi (S) e$.  The set 
${\mathcal K} \setminus \cup_{M \in {\mathcal C}_{-}} M$ is a good compact set for 
$G$ and $\langle {\mathcal H}, f_1 \rangle, \hdots,  \langle {\mathcal H}, f_q \rangle$.
Analogously as above, up to replace ${\mathcal K}$ with 
${\mathcal K} \setminus \cup_{M \in {\mathcal C}_{-}} M$  and 
updating the set ${\mathcal R}$ by adding new curves, we
obtain a Thurston-reducing set for $G$ and 
$\langle {\mathcal H}, f_1 \rangle, \hdots,  \langle {\mathcal H}, f_q \rangle$
rel ${\mathcal K}$.  Moreover since the updated components of 
${\mathcal C}$ (resp. ${\mathcal C}_{-}$) are contained in the old ones, we have
${\mathcal K} \cap \cup_{M \in {\mathcal C}_{-}} M = \emptyset$ and 
$\sharp (X_M \cap {\mathcal K} \cap \mathrm{Fix}_{c} \langle {\mathcal H}, f_j \rangle) <
\infty$ for  all $1 \leq j \leq q$ and $M \in {\mathcal C}$.

 We claim that there exist $M \in {\mathcal C}' \setminus {\mathcal C}_{-}$ and
$e$ values of $1 \leq j \leq q$ such that
the class $[f_{j}]_{M}$ of $f_j$ in $\mathrm{MCG}(X_{M}, X_{M} \cap {\mathcal K})$
is trivial and
$M \cap {\mathcal K} \cap \mathrm{Fix}_{c}\langle{\mathcal H}, f_{j}\rangle  \neq \emptyset$.
Otherwise, $\sharp ({\mathcal C}' \setminus {\mathcal C}_{-}) \leq -3 \chi({\mathcal S})$
and the pigeonhole principle imply that 
there exists $1 \leq j_{0} \leq q$ such that
either 
\begin{itemize}
\item $M \not \in {\mathcal C}'$ or 
\item $[f_{j_{0}}]_{M}$ is non-trivial or
\item $M \cap {\mathcal K} \cap \mathrm{Fix}_{c}\langle{\mathcal H}, f_{j_{0}}\rangle  = \emptyset$
\end{itemize}
for any $M \in {\mathcal C} \setminus {\mathcal C}_{-}$.
 Given a connected component
 $N'$ of $S \setminus \cup_{A \in {\mathfrak a}} A$ (see Definition \ref{def:c}), 
 let $A_{1}'$, $\hdots$, $A_{r}'$ be the open annuli in
$S \setminus {\mathcal R}_{e}$ bounding $N'$. 
As a consequence of the discussion, we obtain 
 that any $A_{l}' \in {\mathcal C}'$ satisfies 
 $l \in {\mathcal F}_{f_{j_0}}$. Moreover, $A_{l}' \not \in {\mathcal C}'$  implies 
 $A_{l}' \cap {\mathcal K} \cap \mathrm{Fix}_{c} (f_{j_0}) =  \emptyset$
 by the choice of ${\mathcal C}'$.  We deduce 
${\mathcal F}_{f_{j_{0}}} \cup {\mathcal E}_{f_{j_{0}}} = \{1, \hdots, r \}$ 
and hence 
 \[ {\mathcal K} \cap N'    \cap \mathrm{Fix}_{c} (f_{j_0}) 
 \neq \emptyset \]
 (see Definition \ref{def:fin_inf}) by Property   ${\bf C}$,
 contradicting 
${\mathcal K} \cap \cup_{M \in {\mathcal C}_{-}} M = \emptyset$. Finally, 
we obtain $\mathrm{Fix}_{c}(G) \neq \emptyset$ by Proposition \ref{pro:puzfitc}.

\subsection{End of the proof of Theorem \ref{teo:maina}}
\label{subsec:end}
As a consequence of Theorems \ref{teo:deriso} and \ref{teo:baux},  
Theorem \ref{teo:maina} holds if $S$ is orientable and
$G$ is finitely generated. 

Assume that $S$ is orientable (but $G$ is not necessarily finitely generated). 
Since $S$ is compact, Theorem \ref{teo:maina} is a consequence of the finite intersection property
for compact subsets of $S$. 

Finally, suppose that $S$ is non-orientable. 
Let us consider the orientable double cover $\hat{S}$ of $S$.
Since any $f \in G$ isotopic to $\mathrm{Id}$ has a unique  lift $\hat{f}$ isotopic to 
$\mathrm{Id}$, the group $\hat{G} := \{ \hat{f} : f \in G \}$ is a lift of $G$ contained in 
$\mathrm{Diff}_{0}^{1} (\hat{S})$.
The set $\mathrm{Fix}_{c}(\hat{G})$ is non-empty by Theorem \ref{teo:maina} for the case
where $\hat{S}$ is orientable and it projects to $\mathrm{Fix}_{c}(G)$.
\section{Proof of Theorem \ref{teo:mainb}}
\label{sec:mainb}
The proof has four steps.
We show Theorem \ref{teo:mainb} for nilpotent groups in the first three steps. 
Indeed, we reduce the proof to a simpler setting in section
\ref{subsec:reduction}. Finally, we complete it in sections  \ref{subsec:anosov} and \ref{subsec:dehn} for
nilpotent groups. The locally nilpotent case is treated in 
section \ref{subsec:loc_nil}.
\subsection{Reduction step}
\label{subsec:reduction}
Assume that $G$ is nilpotent.
Analogously as in subsection \ref{subsec:end} we can  
suppose that $S$ is orientable by replacing $S$ and $G$ with its orientable double cover and 
the group of orientation-preserving lifts of elements of $G$ if necessary.
Moreover, we can suppose that $G \subset \mathrm{Diff}_{+}^{1}(S)$ by replacing $G$
with its normal subgroup $G \cap  \mathrm{Diff}_{+}^{1}(S)$ of index at most $2$.

Let us consider first the case $\chi (S) >0$, i.e. $S= {\mathbb S}^{2}$. Then $G$ has a 
finite orbit with at most $2$ elements \cite[Theorem 1.2]{JR:arxivsp}. So we can suppose 
$\chi (S) <0$ from now on.

Let ${\mathcal R}$ be a minimal Thurston-reducing set of curves for the group $G$. 
We denote by $r$ the number of reducing curves and by ${\mathcal C}_T$ the set of 
connected components of $S \setminus {\mathcal R}$.
We consider the subgroup $G^{0}$ of elements of $G$
that fix every reducing curve and every set of ${\mathcal C}_T$ modulo isotopy; it is a normal 
subgroup of $G$ containing $G_{0}$.  
Moreover since $r \leq - \chi (S)$ and $\sharp {\mathcal C}_T \leq - 3 \chi (S)/2$,  
the index $|G: G_0|$ is bounded 
by a function of $\chi (S)$. Thus, up to replace $G$ with $G^{0}$, we can suppose 
$G= G^{0}$.

Given $f \in G$, there exists $\theta_{f} \in \mathrm{Homeo}_{+} (S)$
such that $f$ is isotopic to $\theta_{f}$ and $\theta_{f} (N)=N$ for any $N \in {\mathcal C}_T$. 
Fix $M \in {\mathcal C}_T$. 
Consider the group $[G]_{M}$ consisting of the classes $[f]_{M}$ of 
$\theta_{f}|_M$ in $\mathrm{MCG}(M)$ for $f \in G$. The class  $[f]_{M}$  
does not depend on the choice of $\theta_{f}$ since $\chi (M) <0$.
The group $[G]_{M}$ is clearly nilpotent. Moreover, it is finitely generated since such 
a property holds for solvable subgroups of a finitely punctured compact surface \cite{BLM}.
The group $\mathrm{Tor} [G]_M$ of finite order elements of $[G]_M$ is a finite normal
subgroup of $[G]_M$; moreover, either $[G]_M= \mathrm{Tor} [G]_M$
(and we define $\beta_M = 1 \in \mathrm{MCG}(M)$) or $[G]_M$
is generated by a pseudo-Anosov class $\beta_M$  and $\mathrm{Tor} [G]_M$
\cite[Lemma 2.1]{JR:arxivsp}. Kerckhoff's   
 realization theorem \cite{Kerck} and Hurwitz's automorphisms theorem 
 (cf. \cite[section 7.2]{Farb-Margalit:primer}) imply 
\[  |\mathrm{Tor} [G]_M |  \leq -42 \chi (M) \leq -42 \chi (S). \]
As a consequence 
$G^{0}:= \{ f \in G : [f]_M \in \langle \beta_M \rangle \ \forall M \in {\mathcal C}_T \}$
is a finite index subgroup of $G$ containing $G_0$ and such that 
$[G:G^0|$ is bounded by a function of $\chi (S)$. From now on, we can suppose $G=G^{0}$
without lack of generality. We divide the proof in two cases.

\subsection{First case: There exists $M \in {\mathcal C}_T$ such that $[G]_M \neq \{1\}$.}
\label{subsec:anosov}
Consider a pseudo-Anosov homeomorphism $g_M \in \mathrm{Homeo}_{+} (M)$ that induces
the mapping class $\beta_M$.
We can assume that $\theta_{f}|_M$ is an iterate of $g_M$, up to replace 
 $\theta_f$ with an isotopic homeomorphism, for any $f \in G$.

Let ${\mathcal F}^{s}$ be the stable foliation of $g_M$. Consider the Euler-Poincar\'{e} formula
\[ - \sharp (\mathrm{Sing} ({\mathcal F}^{s}) \cap \partial M)  +
\sum_{x \in \mathrm{Sing} ({\mathcal F}^{s}) \setminus \partial M} (2 - P_x) = 2 \chi (M), \]
where $P_x$ is the number of prongs of ${\mathcal F}^{s}$ at $x$ 
(cf. \cite[section 5.1]{Fathi-Laudenbach-Poenaru}).
Therefore, there exists $m_M$ (depending just on $\chi (M)$) such that 
$g_{M}^{m_M}$ fixes all singular points and prongs of ${\mathcal F}^{s}$. 
We can replace $g_M$, $\beta_M$ and $G$ 
with $g_M^{m_M}$,  $\beta_M^{m_M}$and $\{ f \in G : [f]_M \in \langle \beta_M^{m_M} \rangle \}$
respectively. 

Let $g \in G$ with $[g]_M = \beta_M$. 
Fix $m \in {\mathbb Z}^{*}$. Assume that  
$\mathrm{Fix}(g_M^{m})$ contains a point $z$ in $M \setminus \partial M$.
Consider a lift $\tilde{z}$ of $z$ to $\tilde{S}$.
Let $G^{0}$ be  the subgroup  of elements $f \in G$ such that 
$[f]_{M} \in \langle \beta_M^{m} \rangle$.
Since $G_0 \subset \{ f \in G: [f]_{M}= 1 \}$, 
we deduce that $G_0$ is a subgroup of $G^{0}$.
Given $f \in G^{0}$,   consider an isotopy $(f_{t})_{t \in [0,1]}$ such that 
$f_0 = \theta_{f}$ and $f_{1}=f$.
We define $\tilde{f}$ as $\tilde{f}_{1}$ where $(\tilde{f}_{t})$
is the lift of $(f_{t})$ such that $\tilde{f}_{0}(\tilde{z})=\tilde{z}$.
The definition of $\tilde{f}$ does not depend on the isotopy since $\chi (S) <0$.
Since $\chi (M) <0$, it is clear that $\tilde{f}$ is the identity lift of $f$
if $f$ is isotopic to the identity.
The group $\tilde{G}^{0}:= \{ \tilde{f}: f \in G^{0} \}$ is isomorphic to $G^{0}$ and thus
nilpotent. Since $g_M^{m}$ is pseudo-Anosov, it does not commute with deck 
transformations of $M$ and the $g_M^{m}$-Nielsen class of $z$ 
satisfies $L(z, g_M^{m}) \neq 0$ and
does not contain peripherally any end of $M$.   Thus $\widetilde{{g}^{m}}$ 
does not commute  with deck transformations of $S$ and 
$\mathrm{Fix} (\widetilde{{g}^{m}})$ is a non-empty compact set.
We obtain $\mathrm{Fix}(\tilde{G}^{0}) \neq \emptyset$ by Corollary \ref{cor:plane}.
Hence the group $G$ has a finite orbit of cardinal at most $m$ contained in the projection of
$\mathrm{Fix}(\tilde{G}^{0})$ and then in $\mathrm{Fix}_{c}(G_{0})$.

As a consequence of the previous discussion, it suffices to show that 
$\mathrm{Fix}(g_M^{m})$ contains an interior point of $M$ for some 
$1 \leq m \leq -\chi (M) +2$.
Otherwise, we deduce that $\mathrm{Sing} ({\mathcal F}^{s}) = {\mathfrak S}$, 
where ${\mathfrak S} := \mathrm{Sing} ({\mathcal F}^{s}) \cap \partial M$, 
since $\mathrm{Sing} ({\mathcal F}^{s}) \subset \mathrm{Fix}(g_M)$ by construction.
We get  $\sharp{\mathfrak S}  = -2 \chi (M)$ by the 
Euler-Poincar\'{e} formula and hence $M \neq S$. 
Since $g_M$ fixes the points in ${\mathfrak S} $
and $\mathrm{Fix} (g_M^{m}) \setminus \partial M= \emptyset$,  we get 
$L(g_M^{m}) = - \sharp {\mathfrak S}$ and then
\[ \mathrm{tr}((g_M^{m})_{*}|H_1 (M,{\mathbb Q}) )= 1 -  L(g_M^{m})=
1 + \sharp {\mathfrak S} \]
for any $1 \leq m \leq 2-\chi (M)$ (cf. \cite{Jiang-Guo}).  
Consider the characteristic polynomial $x^{a} + \sum_{j=0}^{a-1} c_j x^{j}$ of 
$(g_M)_{*}|H_1 (M,{\mathbb Q})$ where 
$a=\dim H_1 (M,{\mathbb Q})$. We have $a=1-\chi(M)$.

We get
\begin{equation}
\label{equ:rec}
 \mathrm{tr}[((g_M)_{*}|H_1 (M,{\mathbb Q}))^{a+k}]  = 
- \sum_{j=0}^{a-1} c_{j}  \mathrm{tr}[((g_M)_{*}|H_1 (M,{\mathbb Q}))^{j+k}]  
\end{equation}
for any $k \geq 0$. Since the first $a+1$ terms of 
$(  \mathrm{tr}((g_M^{k})_{*}|H_1 (M,{\mathbb Q})) )_{k \geq 1}$ are equal to 
$1 + \sharp {\mathfrak S}$, the recurrence equation implies that the sequence is constant. 
We obtain that $a$ is such a  constant by applying equation (\ref{equ:rec}) with $k=0$ and noticing that 
$c_0 \neq 0$.
  This implies 
\[ 1 - 2 \chi (M)=1 + \sharp {\mathfrak S}  = a = 1 - \chi (M), \]
contradicting $\chi (M) \neq 0$.
 
\subsection{Second case: $[G]_M  = \{1\}$ for any $M \in {\mathcal C}_T$.}
 \label{subsec:dehn}
 All the elements in $G$ are isotopic to a product of $k$ Dehn twists
along disjoint pairwise non-isotopic annuli,
a so called Dehn twist subgroup \cite{FHP-g}.
We assume that $k$ is minimal with this property.
We have a natural monomorphism $\xi: \mathrm{MCG}(G) \to {\mathbb Z}^{k}$.
Hence, the derived group $G'$ is contained in $G_{0}$.
If $k=0$ the group $G$ is contained in $\mathrm{Diff}_{0}^{1}(S)$.
The result is a consequence of Theorem \ref{teo:maina}.
We suppose $k>0$.

Consider a connected component $N$ of $S \setminus {\mathcal A}$
(Remark \ref{rem:decomp}).
We choose a connected component $\tilde{N}$ of $\pi^{-1}(N)$ in
$\tilde{S}$. Indeed $\tilde{N}$ is a universal covering of $N$.
Given $f \in G$ there exists a isotopy $f_{t}$
such that $(f_{0})_{|N} \equiv Id$ and $f_{1}=f$.
Consider the group $J$ of covering transformations of $\tilde{S}$
preserving $\tilde{N}$. Consider the covering space
$S^{\dag}$ associated to the space $S$
and the group $J$.
Let $\pi^{\dag}:S^{\dag} \to S$
and $\pi^{\flat}:\tilde{S} \to S^{\dag}$ be the covering maps.
In particular we have
$\pi_{1}(S^{\dag}) \simeq J$,
there is a natural embedding $N \hookrightarrow S^{\dag}$ and $S^{\dag}$ is homeomorphic to $N$.
We define $f_{0}^{\dag}$
as the lift of $f_{0}$ to $S^{\dag}$ such that
$(f_{0}^{\dag})_{|N} \equiv Id$.
Hence $f_{0}^{\dag}$ is isotopic to the identity in $S^{\dag}$.
We define $f^{\dag}$   as $f_{1}^{\dag}$
where $(f_{t}^{\dag})$  is the lift of $(f_{t})$.
Let $\tilde{f}_{0}$ be the lift of $f_{0}$ to $\tilde{S}$ such that
$(\tilde{f}_{0})_{|\tilde{N}} \equiv Id$.
We define $\tilde{f}$ analogously as $f^{\dag}$.
The lifts $\tilde{f}$ and $f^{\dag}$ are well-defined since $\chi (N)<0$
and satisfy $\pi^{\flat} (\mathrm{Fix}(\tilde{f})) = \mathrm{Fix}_{c} (f^{\dag})$ for any $f \in G$.
We obtain lifts $G^{\dag}$ and $\tilde{G}$ of $G$ to $S^{\dag}$
and $\tilde{S}$ respectively.

All the elements of $G^{\dag}$ admit a continuous extension to
a compact surface $\overline{S}^{\dag}$ with boundary. More precisely
$\tilde{S}$ is identified with the Poincar\'{e} disk ${\mathbb D}$ and 
$\overline{S}^{\dag}$ is obtained as a quotient of 
${\mathbb D}  \cup (\partial {\mathbb D} \setminus \overline{\tilde{N})}$
by the action of $J$.
Let $\gamma_1, \hdots, \gamma_{\ell}$ be the circles in $\partial N$. The surface
$\overline{S}^{\dag} \setminus N$ consists of $\ell$ closed annuli $C_1, \hdots, C_{\ell}$
such that $\partial C_j$ consists of two circles, namely 
$\partial_j \subset \partial \overline{S}^{\dag}$ and $\gamma_j$ for any $1 \leq j \leq \ell$.
We can consider the rotation numbers $\rho_j (\tilde{f})$
associated to the homeomorphism $f_{|\partial_j}$ and the lift $\tilde{f}$
for $1 \leq j \leq \ell$. 
We can consider the coordinate $c_j$ of $\xi [f]$ that corresponds to the annulus $A_j$
in ${\mathcal A}$ that contains $\gamma_j$. 
By Remark \ref{rem:partial_rot} and Lemma  \ref{lem:trans} we obtain 
$\rho_{j} (\tilde{f}) = \pm c_j$ where the sign depends on the choice of a generator of 
$H_1 (A_j, {\mathbb Z})$. 
Since ${\mathcal R}$ is minimal, it follows that 
$\mathrm{Fix}_{c}(G^{\dag}) \cap  \partial \overline{S}^{\dag} = \emptyset$. 

Let $H$ be a finitely generated subgroup of $G$. We get that $\phi$ is isotopic to 
$\mathrm{Id}$ rel $\mathrm{Fix} ({\mathcal C}^{(k)} (H))$ for all $k \geq 1$ and 
$\phi \in  {\mathcal C}^{(k)} (H)$ by $H' \subset H_0$ and Theorem \ref{teo:deriso}.
Thus 
$\phi^{\dag}$ is isotopic to 
$\mathrm{Id}$ rel 
$\mathrm{Fix} ({\mathcal C}^{(k)} (H^{\dag})) \cup (\overline{S}^{\dag} \setminus {S}^{\dag} )$ 
for all $k \geq 1$ and 
$\phi \in  {\mathcal C}^{(k)} (H)$. 
We obtain that $H^{\dag}$ has a contractible global fixed point in $\overline{S}^{\dag}$ by 
Theorem \ref{teo:baux}. The finite intersection property for compact sets implies that 
$G^{\dag}$ has a contractible global fixed point in $\overline{S}^{\dag}$. 
Since $k \geq 1$, the discussion in the previous paragraph implies 
$\mathrm{Fix}_{c} (G^{\dag}) \cap (\overline{S}^{\dag} \setminus {S}^{\dag} ) = \emptyset$
and hence $\mathrm{Fix}_{c} (G^{\dag}) \cap S^{\dag} \neq \emptyset$.
Since 
\[ \pi^{\dag} ( \mathrm{Fix}_{c} (G^{\dag}) \cap S^{\dag})  \subset 
\pi^{\dag} ( \mathrm{Fix}_{c} (G_{0}^{\dag}) \cap S^{\dag}) = \mathrm{Fix}_{c}(G_0), \] 
we obtain $\mathrm{Fix} (G) \cap \mathrm{Fix}_{c}(G_0) \neq \emptyset$.
This completes the proof of Theorem \ref{teo:mainb} for the case where $G$ is nilpotent.
\subsection{Locally nilpotent case}
\label{subsec:loc_nil}
A group $G$ is {\it locally nilpotent} if every finitely generated subgroup of $G$ is nilpotent.
For instance, consider a complex coordinate $w$ in the Riemann sphere. The group 
generated by $1/w$ and the rational rotations $e^{\frac{2 \pi i  a}{2^{k}}} w$ ($a \in {\mathbb Z}$, $k \in {\mathbb Z}_{\geq 0}$) 
of order power of $2$ is a locally nilpotent group of complex analytic diffeomorphisms of the sphere but is non-nilpotent.

Notice that a group of homeomorphisms such that every of its finitely generated subgroups has a finite orbit, does
not necessarily have a finite orbit. An example is the group of rational rotations of the circle. The situation is simpler in our case
since we have a uniform upper bound for the order of the smallest orbit of every finitely generated subgroup.
More precisely, 
Theorem \ref{teo:mainb} is a consequence of 
the next two lemmas and the version of Theorem \ref{teo:mainb} for nilpotent groups.
\begin{lem}
\label{lem:fin_orb} 
Let $G$ be a group of homeomorphisms of a compact space $S$. 
Suppose that there exists $m \in {\mathbb N}$ such that any finitely generated
subgroup of $G$ has a finite orbit with at most $m$ elements. Then $G$ has a finite
orbit with at most $m$ elements.
\end{lem}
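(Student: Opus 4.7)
The plan is to upgrade the finite-orbit property from finitely generated subgroups to the whole group by a standard compactness argument in the product space $S^{m}$.

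For each finitely generated subgroup $H \leq G$, define
\[ K_{H} = \{ (z_{1}, \ldots, z_{m}) \in S^{m} : \{z_{1}, \ldots, z_{m}\} \text{ is } H\text{-invariant}\}. \]
First I would check that $K_{H}$ is a nonempty closed subset of $S^{m}$. Nonemptiness: the hypothesis provides a finite $H$-orbit ${\mathcal O} = \{w_{1}, \ldots, w_{k}\}$ with $k \leq m$, and the tuple $(w_{1}, \ldots, w_{k}, w_{k}, \ldots, w_{k}) \in S^{m}$ (padding with repetitions) lies in $K_{H}$. Closedness: a tuple $(z_{1}, \ldots, z_{m})$ belongs to $K_{H}$ if and only if for every $h \in H$ and every $i \in \{1,\ldots, m\}$ there exists $j \in \{1,\ldots,m\}$ with $h(z_{i}) = z_{j}$; this is an intersection over $h \in H$ and $i$ of finite unions (over $j$) of the closed conditions $\{h(z_{i}) = z_{j}\}$, hence closed.

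Next, the family $\{K_{H}\}_{H}$, where $H$ ranges over finitely generated subgroups of $G$, has the finite intersection property: given $H_{1}, \ldots, H_{n}$, the subgroup $\tilde{H} := \langle H_{1}, \ldots, H_{n}\rangle$ is finitely generated, and $K_{\tilde{H}} \subset K_{H_{i}}$ for each $i$, so $\bigcap_{i=1}^{n} K_{H_{i}} \supset K_{\tilde{H}} \neq \emptyset$. Since $S$ is compact, $S^{m}$ is compact, and therefore
\[ \bigcap_{H} K_{H} \neq \emptyset. \]
A tuple $(z_{1}, \ldots, z_{m})$ in this intersection gives a nonempty set $F := \{z_{1}, \ldots, z_{m}\}$ with $|F| \leq m$ that is invariant under every finitely generated subgroup of $G$. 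Since every $g \in G$ is contained in the finitely generated subgroup $\langle g \rangle$, the set $F$ is $G$-invariant, and hence $G$ has a finite orbit of cardinal at most $m$.

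No serious obstacle is anticipated; the only subtlety is that $G$ may fail to be countable, which is why I use compactness of $S^{m}$ together with the finite intersection property rather than extracting a convergent subsequence.
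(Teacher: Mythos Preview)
Your proof is correct and follows essentially the same approach as the paper: a compactness argument using the finite intersection property of a family of closed sets indexed by the finitely generated subgroups of $G$. The only difference is cosmetic: the paper works directly in $S$ with $K(H)=\{z\in S: \sharp(H\cdot z)\le m\}$, whereas you encode the small orbit as a tuple in $S^{m}$; both reach the same conclusion by the same mechanism.
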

\begin{lem}
\label{lem:fin_orb0} 
Let $G$ be a group of homeomorphisms of a compact surface $S$ of negative Euler characteristic. 
Suppose that there exists $m \in {\mathbb N}$ such that any finitely generated
subgroup $H$ of $G$ has a finite orbit with at most $m$ elements contained in $\mathrm{Fix}_{c}(H_0)$. 
Then $G$ has a finite orbit with at most $m$ elements contained in $\mathrm{Fix}_{c}(G_0)$.
\end{lem}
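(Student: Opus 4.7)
The plan is to adapt the compactness/finite-intersection argument of Lemma \ref{lem:fin_orb} to carry the extra constraint that the invariant finite set lies in $\mathrm{Fix}_{c}(G_{0})$. I would work in the hyperspace $K_{m}(S)$ of non-empty subsets of $S$ of cardinality at most $m$, equipped with the Hausdorff topology; since $S$ is compact, so is $K_{m}(S)$. For every finitely generated subgroup $H \leq G$, set
\[ C_{H} = \{ O \in K_{m}(S) : h(O) = O \ \forall h \in H \ \text{and} \ O \subset \mathrm{Fix}_{c}(H_{0}) \}. \]

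The first step is to check that each $C_{H}$ is a closed subset of $K_{m}(S)$. The invariance condition is closed since each $h$ is continuous and Hausdorff convergence $O_{n} \to O$ forces $h(O_{n}) \to h(O)$. The containment condition is closed provided $\mathrm{Fix}_{c}(H_{0}) = \cap_{h \in H_{0}} \mathrm{Fix}_{c}(h)$ is a closed subset of $S$, and this reduces to showing $\mathrm{Fix}_{c}(f)$ is closed for every $f \in \mathrm{Homeo}_{0}(S)$. For this I would use that the identity lift $\tilde{f}$ arises as the time-$1$ map of the unique lift of an identity-based isotopy starting at $\mathrm{Id}_{\tilde{S}}$, so $\tilde{f}$ commutes with every deck transformation; consequently $\pi^{-1}(\mathrm{Fix}_{c}(f)) = \mathrm{Fix}(\tilde{f})$, which is closed in $\tilde{S}$, and since $\pi$ is a quotient map, $\mathrm{Fix}_{c}(f)$ is closed in $S$. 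By hypothesis, every $C_{H}$ is non-empty: take $O$ to be the given finite orbit provided by the hypothesis applied to $H$.

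Next I would verify the finite intersection property. Given finitely generated subgroups $H_{1}, \dots, H_{n}$, their join $H = \langle H_{1}, \dots, H_{n} \rangle$ is again finitely generated; since $(H_{i})_{0} \subset H_{0}$ one has $\mathrm{Fix}_{c}(H_{0}) \subset \mathrm{Fix}_{c}((H_{i})_{0})$, and $H$-invariance obviously implies $H_{i}$-invariance, so $C_{H} \subset \cap_{i} C_{H_{i}}$ and the latter is non-empty. Compactness of $K_{m}(S)$ then yields $O \in \bigcap_{H} C_{H}$, the intersection ranging over all finitely generated subgroups $H$ of $G$. Applying this to $H = \langle g \rangle$ for $g \in G$ shows $O$ is $G$-invariant, and applying it to $H = \langle g \rangle \subset G_{0}$ for $g \in G_{0}$ (in which case $H_{0}=H$) gives $O \subset \mathrm{Fix}_{c}(g)$, hence $O \subset \mathrm{Fix}_{c}(G_{0})$. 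Any single $G$-orbit contained in $O$ then has at most $m$ elements and lies in $\mathrm{Fix}_{c}(G_{0})$, concluding the proof.

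The only delicate point I anticipate is the closedness of $\mathrm{Fix}_{c}(f)$ for $f$ isotopic to the identity, which is handled by the deck-transformation-equivariance of the identity lift described above; the remainder is a routine Hausdorff-hyperspace compactness argument in the spirit of Lemma \ref{lem:fin_orb}.
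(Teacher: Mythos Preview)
Your proof is correct and follows essentially the same finite-intersection-property argument as the paper. The only cosmetic difference is that the paper works directly in $S$, defining $K(H)=\{z\in\mathrm{Fix}_c(H_0):\sharp(H\cdot z)\le m\}$ and intersecting these compact subsets of $S$, whereas you pass to the Hausdorff hyperspace $K_m(S)$ of finite subsets; both routes reduce to the same compactness principle and the same monotonicity $C_{\langle H_1,\dots,H_n\rangle}\subset\bigcap_i C_{H_i}$.
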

\begin{proof}[Proof of Lemmas \ref{lem:fin_orb} and \ref{lem:fin_orb0}]
Let us show Lemma \ref{lem:fin_orb0}. The proof of Lemma \ref{lem:fin_orb} is simpler.
Let  ${\mathcal T}$ be the set of finitely generated subgroups of $G$.
Given $H \in {\mathcal T}$,  
consider the set $K(H)$ of points $z \in \mathrm{Fix}_{c}(H_0)$ such that its $H$-orbit has at most $m$ elements.
It is a compact set. Moreover, it is non-empty by hypothesis. 
Given finitely many elements  $H_1, \hdots, H_k$ of ${\mathcal T}$, we have
$\langle H_1, \hdots, H_k \rangle \in {\mathcal T}$. Thus, we get 
\[ \emptyset \neq K \langle H_1, \hdots, H_k \rangle \subset \cap_{j=1}^{k} K(H_j) . \]
Since the family $\{{K(H)\}_{H \in {\mathcal T}}}$ of compact subsets of $S$
 has the finite intersection property,  there exists
$z_0 \in \cap_{H \in {\mathcal T}} K(H)$. Moreover, $z_0$ belonts to $\mathrm{Fix}_{c} (G_0)$ and its $G$-orbit has
at most $m$ elements.
\end{proof}


\bibliography{rendu}

\def\cprime{$'$} \def\cprime{$'$} \def\cprime{$'$} \def\cprime{$'$}
  \def\cprime{$'$} \def\cprime{$'$}
\begin{thebibliography}{10}

\bibitem{Atkinson}
Giles Atkinson.
\newblock Recurrence of co-cycles and random walks.
\newblock {\em J. London Math. Soc. (2)}, 13(3):486--488, 1976.

\bibitem{Beg-Cal-Fir-Mie}
F.~B\'{e}guin, P.~Le~Calvez, S.~Firmo, and T.~Miernowski.
\newblock Des points fixes communs pour des diff\'{e}omorphismes de ${{\mathbb
  S}}^{2}$ qui commutent et pr\'{e}servent une mesure de probabilit\'{e}.
\newblock {\em J. Inst. Math. Jussieu}, pages 1--31, 2013.

\bibitem{BLM}
Joan~S. Birman, Alex Lubotzky, and John McCarthy.
\newblock Abelian and solvable subgroups of the mapping class groups.
\newblock {\em Duke Math. J.}, 50(4):1107--1120, 1983.

\bibitem{Bonatti-s2}
Christian Bonatti.
\newblock Un point fixe commun pour des diff\'eomorphismes commutants de {$S\sp
  2$}.
\newblock {\em Ann. of Math. (2)}, 129(1):61--69, 1989.

\bibitem{Bonatti-com}
Christian Bonatti.
\newblock Diff\'eomorphismes commutants des surfaces et stabilit\'e des
  fibrations en tores.
\newblock {\em Topology}, 29(1):101--126, 1990.

\bibitem{Cantwell-Conlon:lifts}
John {Cantwell} and Lawrence {Conlon}.
\newblock {Hyperbolic geometry and homotopic homeomorphisms of surfaces.}
\newblock {\em {Geom. Dedicata}}, 177:27--42, 2015.

\bibitem{D-F-F}
S.~Druck, F.~Fang, and S.~Firmo.
\newblock Fixed points of discrete nilpotent group actions on {$S\sp 2$}.
\newblock {\em Ann. Inst. Fourier (Grenoble)}, 52(4):1075--1091, 2002.

\bibitem{epstein:prime}
D.~B.~A. {Epstein}.
\newblock {Prime ends.}
\newblock {\em {Proc. Lond. Math. Soc. (3)}}, 42:385--414, 1981.

\bibitem{Farb-Margalit:primer}
Benson {Farb} and Dan {Margalit}.
\newblock {\em {A primer on mapping class groups}}.
\newblock Princeton, NJ: Princeton University Press, 2011.

\bibitem{Fathi-Laudenbach-Poenaru}
Albert {Fathi}, Fran\c{c}ois {Laudenbach}, and Valentin {Po\'enaru}.
\newblock {\em {Thurston's work on surfaces. Transl. from the French by Djun
  Kim and Dan Margalit}}.
\newblock Princeton, NJ: Princeton University Press, 2012.

\bibitem{saponga-loc}
S.~Firmo.
\newblock Localisation des points fixes communs pour des diff\'eomorphismes
  commutants du plan.
\newblock {\em Bull. Braz. Math. Soc. (N.S.)}, 42(3):373--397, 2011.

\bibitem{Firmo-Ribon:finite}
S.~{Firmo} and J.~{Rib\'on}.
\newblock {Finite orbits for nilpotent actions on the torus.}
\newblock {\em {Proc. Am. Math. Soc.}}, 146(1):195--208, 2018.

\bibitem{Firmo-Ribon:global}
S.~{Firmo} and J.~{Rib\'on}.
\newblock {Global fixed points for nilpotent actions on the torus}.
\newblock {\em {Ergodic Theory Dyn. Syst.}}, 40(12):3339--3367, 2020.

\bibitem{FRV:arxiv}
S.~{Firmo}, J.~{Rib\'on}, and J.~{Velasco}.
\newblock {Fixed points for nilpotent actions on the plane and the
  Cartwright-Littlewood theorem.}
\newblock {\em {Math. Z.}}, 279(3-4):849--877, 2015.

\bibitem{Firmo-LeCalvez-Ribon:fixed}
Sebasti\~{a}o Firmo, Patrice Le~Calvez, and Javier Rib\'{o}n.
\newblock Fixed points of nilpotent actions on {${\mathbb R}^{2}$}.
\newblock {\em Asterisque}, (415):113--156, 2020.

\bibitem{FHP-g}
John Franks, Michael Handel, and Kamlesh Parwani.
\newblock Fixed points of abelian actions.
\newblock {\em J. Mod. Dyn.}, 1(3):443--464, 2007.

\bibitem{FHPs}
John Franks, Michael Handel, and Kamlesh Parwani.
\newblock Fixed points of abelian actions on {$S\sp 2$}.
\newblock {\em Ergodic Theory Dynam. Systems}, 27(5):1557--1581, 2007.

\bibitem{Gamba}
J.-M. Gambaudo.
\newblock Periodic orbits and fixed points of a {$C\sp 1$}
  orientation-preserving embedding of {$D\sp 2$}.
\newblock {\em Math. Proc. Cambridge Philos. Soc.}, 108(2):307--310, 1990.

\bibitem{Handel-top}
Michael Handel.
\newblock Commuting homeomorphisms of {$S\sp 2$}.
\newblock {\em Topology}, 31(2):293--303, 1992.

\bibitem{Jiang-Guo}
Bo~Ju Jiang and Jian~Han Guo.
\newblock Fixed points of surface diffeomorphisms.
\newblock {\em Pacific J. Math.}, 160(1):67--89, 1993.

\bibitem{Karga}
M.~I. Kargapolov and Ju.~I. Merzljakov.
\newblock {\em Fundamentals of the theory of groups}, volume~62 of {\em
  Graduate Texts in Mathematics}.
\newblock Springer-Verlag, New York, 1979.
\newblock Translated from the second Russian edition by Robert G. Burns.

\bibitem{Kerck}
Steven~P. Kerckhoff.
\newblock The {N}ielsen realization problem.
\newblock {\em Ann. of Math. (2)}, 117(2):235--265, 1983.

\bibitem{Lima-com}
Elon~L. Lima.
\newblock Common singularities of commuting vector fields on {$2$}-manifolds.
\newblock {\em Comment. Math. Helv.}, 39:97--110, 1964.

\bibitem{Plante-fix}
J.~F. Plante.
\newblock Fixed points of {L}ie group actions on surfaces.
\newblock {\em Ergodic Theory Dynam. Systems}, 6(1):149--161, 1986.

\bibitem{JR:arxivsp}
Javier {Rib\'on}.
\newblock {Fixed points of nilpotent actions on $\mathbb{S}^{2}$.}
\newblock {\em {Ergodic Theory Dyn. Syst.}}, 36(1):173--197, 2016.

\end{thebibliography}
\end{document}